\tikzset{>=stealth',
        cvertex/.style={circle,draw=black,inner sep=1pt,outer sep=3pt},
        vertex/.style={circle,fill=black,inner sep=1pt,outer sep=3pt},
        star/.style={circle,fill=yellow,inner sep=0.75pt,outer sep=0.75pt},
        tvertex/.style={inner sep=1pt,font=\scriptsize},
        gap/.style={inner sep=0.5pt,fill=white}}
\newcommand{\arrow}[2][20]
{
 \hspace{-5pt}
 \begin{tikzpicture}
  \node (A) at (0,0) {};
  \node (B) at (#1pt,0) {};
  \draw [#2] (A) -- (B);
 \end{tikzpicture}
 \hspace{-5pt}
}
\newtheorem{them}{Theorem}[section]
\theoremstyle{definition} 
\newtheorem{thm}[them]{Theorem}
\newtheorem{prop}[them]{Proposition}
\newtheorem{lemma}[them]{Lemma}
\newtheorem{defin}[them]{Definition}
\newtheorem{Tdefin}[them]{Temporary Definition}
\newtheorem{cor}[them]{Corollary}
\newtheorem{facts}[them]{Facts}
\newtheorem{fact}[them]{Fact}
\newtheorem{notation}[them]{Notation}
\newtheorem{example}[them]{Example}
\newtheorem{setting}[them]{Setting}
\newtheorem{examples}[them]{Examples}
\newtheorem{remark}[them]{Remark}
\newtheorem{ex}[them]{Exercise}
\numberwithin{equation}{section}
\newcommand{\p}{\mathfrak{p}}
\newcommand{\m}{\mathfrak{m}}
\renewcommand{\c}[1]{\mathcal{#1}}
\renewcommand{\u}[1]{\underline{#1}}
\renewcommand{\t}[1]{\textnormal{#1}}
\renewcommand{\tt}[1]{\mathtt{#1}}
\newcommand{\e}{\varepsilon}
\def\H{\mathop{\rm H_{}}\nolimits}
\def\RHom{\mathop{\rm {\bf R}Hom}\nolimits}
\def\Rep{\mathop{\rm Rep}\nolimits}
\def\det{\mathop{\rm det}\nolimits}
\def\op{\mathop{\rm op}\nolimits}
\def\GL{\mathop{\rm GL}\nolimits}
\def\thick{\mathop{\rm thick}\nolimits}
\def\Irr{\mathop{\rm Irr}\nolimits}
\def\SL{\mathop{\rm SL}\nolimits}
\def\uCM{\mathop{\underline{\rm CM}}\nolimits}
\def\umod{\mathop{\underline{\rm mod}}\nolimits}
\def\CM{\mathop{\rm CM}\nolimits}
\def\depth{\mathop{\rm depth}\nolimits}
\def\fl{\mathop{\sf fl}\nolimits}
\def\hgt{\mathop{\rm ht}\nolimits}
\def\mod{\mathop{\rm mod}\nolimits}
\def\coh{\mathop{\rm coh}\nolimits}
\def\Qcoh{\mathop{\rm Qcoh}\nolimits}
\def\Mod{\mathop{\rm Mod}\nolimits}
\def\fdmod{\mathop{\rm fdmod}\nolimits}
\def\refl{\mathop{\rm ref}\nolimits}
\def\proj{\mathop{\rm proj}\nolimits}
\def\pd{\mathop{\rm proj.dim}\nolimits}
\def\id{\mathop{\rm inj.dim}\nolimits}
\def\Hom{\mathop{\rm Hom}\nolimits}
\def\End{\mathop{\rm End}\nolimits}
\def\Ext{\mathop{\rm Ext}\nolimits}
\def\Tor{\mathop{\rm Tor}\nolimits}
\def\Tr{\mathop{\rm Tr}\nolimits}
\def\add{\mathop{\rm add}\nolimits}
\def\Ker{\mathop{\rm Ker}\nolimits}
\def\Sing{\mathop{\rm Sing}\nolimits}
\def\Supp{\mathop{\rm Supp}\nolimits}
\def\Ass{\mathop{\rm Ass}\nolimits}
\def\Spec{\mathop{\rm Spec}\nolimits}
\def\Max{\mathop{\rm Max}\nolimits}
\def\gl{\mathop{\rm gl.dim}\nolimits}
\def\D{\mathop{\rm{D}^{}}\nolimits}
\def\C{\mathop{\rm{C}^{}}\nolimits}
\def\Dsg{\mathop{\rm{D}_{\sf sg}}\nolimits}
\def\Db{\mathop{\rm{D}^b}\nolimits}
\def\Kb{\mathop{\rm{K}^b}\nolimits}
\def\Perf{\mathop{\mathfrak{Perf}}\nolimits}
\def\homdim{\mathop{\rm homdim}\nolimits}
\def\homdim{\mathop{\rm homdim}\nolimits}
\begin{document}
\begingroup
\centering 
\vspace*{0.12\textheight } 
{\large \textsc{MSRI Summer School}}\\[0.05\textheight] 
{\Huge LECTURES}\\[\baselineskip] 
{\Large ON}\\[\baselineskip]  
{\Huge NONCOMMUTATIVE}\\[\baselineskip] 
{\Huge RESOLUTIONS}\\[0.07\textheight]
{\scshape Michael Wemyss}\\ [0.12\textheight ] 
{\FiveStarOpenCircled}\\[0.5\baselineskip] 
{\small\scshape June 2012}\par 
\vfill\null 
\endgroup

\clearpage

\setcounter{section}{-1}

\section{Introduction}\label{L0}

The notion of a noncommutative crepant resolution (NCCR) was introduced by Van den Bergh \cite{VdBNCCR}, following his interpretation \cite{VdB1d} of work of Bridgeland \cite{Bridgeland} and Bridgeland--King--Reid \cite{BKR}.  Since then, NCCRs has appeared prominently in both the mathematics and physics literature as a general homological structure that underpins many topics currently of interest, for example moduli spaces, dimer models, curve counting Donaldson--Thomas invariants, spherical--type twists, the minimal model program and mirror symmetry. 

My purpose in writing these notes is to give an example based approach to some of the ideas and constructions for NCCRs, with latter sections focusing more on the explicit geometry and restricting mainly to dimensions two and three, rather than simply presenting results in full generality.  The participants at the MSRI Summer School had a wonderful mix of diverse backgrounds, so the content and presentation of these notes reflect this.  There are exercises scattered throughout the text, at various levels of sophistication, and also computer exercises that hopefully add to the intuition.

The following is a brief outline of the content of the notes.  In Lecture 1 we begin by outlining some of the motivation and natural questions for NCCRs through the simple example of the $\mathbb{Z}_3$ surface singularity.  which form the basis of our discussions.  This is done in the setting of two-dimensional Gorenstein quotient singularities for simplicity, although most things work much more generally.  We introduce the notion of Auslander algebras, and link to the idea of finite CM type.  We then introduce skew group rings and use this to show that a certain endomorphism ring in the running example has finite global dimension. 

Lecture 2 begins with the formal definitions of Gorenstein and CM rings, depth and CM modules, before giving the definition of a noncommutative crepant resolution (NCCR).  This comes in two parts, and the second part is motivated using some classical commutative algebra.  We then deal with uniqueness issues, showing that in dimension two NCCRs are unique up to Morita equivalence, whereas in dimension three they are unique up to derived equivalence.  We give examples to show that these results are the best possible.  Along the way, the three key technical results of the depth lemma, reflexive equivalence and the Auslander--Buchsbaum formula are formulated.

Lecture 3 breaks free of the algebraic shackles imposed in the previous two sections, by giving a brief overview of quiver GIT.  This allows us to extract geometry from NCCRs, and we illustrate this in examples of increasing complexity.

In Lecture 4 we go homological so as to give a language in which to compare the geometry to NCCRs.  We sketch some aspects of derived categories, and give an outline of tilting theory.  We then illustrate tilting explicitly in the examples from Lecture 3.  In a purely homological section we then relate the crepancy of birational morphisms to the condition $\End_R(M)\in\CM R$.  The section then considers  CY categories and algebras, and we prove that NCCRs are $d$-CY.  We formulate singular derived categories as a mechanism to relate the constructions involving CM modules to the CY property, and also  as an excuse to introduce AR duality, which links questions from Lecture 1 to AR sequences, which appear in Lecture 5.

Lecture 5 begins by overviewing the McKay Correspondence in dimension two.  It starts with the classical combinatorial version, before giving the Auslander version using AR sequences and the category of CM modules.  We then upgrade this and give the derived version, which homologically relates minimal resolutions to NCCRs for ADE surface singularities.  The last, and main, section gives the three-dimensional version, which is the original motivation for introducing NCCRs. We sketch the proof.

There is a short appendix (\S\ref{QuiverAppendix}) which gives very basic background on quiver representations, and sets the notation that is used in the examples and exercises.

\medskip
\noindent {\bf Acknowledgements.}  I would like to thank all the participants and co-organizers of the summer school for their questions, and for making the course such fun to teach.  In addition, thanks go to Pieter Belmans, Kosmas Diveris, Will Donovan, Martin Kalck, Joe Karmazyn, Boris Lerner, Alice Rizzardo and Toby Stafford for their many comments on previous drafts of these notes.

\newpage

\section{Motivation and First Examples}\label{L1}

\subsection{The Basic Idea}\label{L1 basic idea}  The classical method for resolving singularities is to somehow associate to the singularity $X=\Spec R$ an ideal $I$.  This becomes the centre of the blowup and we hope to resolve the singularity via the picture
\[
\begin{tikzpicture}
\node (1) at (0,2) {${\rm Bl}_{I}(X)$};
\node (1b) at (0,0) {$\Spec R$};
\node (2) at (2,1) {$I$};
\draw[->,decorate, 
decoration={snake,amplitude=.4mm,segment length=2mm,post length=1mm}] (1b) to  (2);
\draw[->,decorate, 
decoration={snake,amplitude=.4mm,segment length=2mm,post length=1mm}] (2) to (1);
\draw[->] (1) to (1b);  
\end{tikzpicture}
\]

Although this is entirely inside the world of commutative algebra,
we hope to obtain a better understanding of this process by
introducing noncommutative methods.  Instead of finding an ideal ${I}$, we want to, without referring to a resolution (i.e.\
the answer), produce a non-commutative ring $A$ from which we can extract resolution(s) of $X$.
\begin{eqnarray}
\begin{array}{c}
\begin{tikzpicture}
\node (1) at (0,2) {$\mathcal{M}$};
\node (1b) at (0,0) {$\Spec R$};
\node (2) at (2,1) {$A$};
\draw[->,decorate, 
decoration={snake,amplitude=.4mm,segment length=2mm,post length=1mm}] (1b) to (2);
\draw[->,decorate, 
decoration={snake,amplitude=.4mm,segment length=2mm,post length=1mm}] (2) to (1);
\draw[->] (1) to (1b);  
\end{tikzpicture}
\end{array}
\label{L1mot2}
\end{eqnarray}

Just as there are many different ideal sheaves which give the same blowup, there are in general many different non-commutative rings that can be used to resolve the singularity and so the subtlety comes through asking for the `best' one --- for a noncommutative ring to be called a noncommutative resolution it needs to satisfy some extra conditions.

The purpose of these lectures is to explain how to go about constructing such an $A$, and they will also outline some of the methods that are used to extract the geometry.  There are both algebraic and geometric consequences.  The main benefit of this noncommutative approach is that we equip the geometry with extra structure in the form of tautological bundles, which can then be used in various homological (and explicit) constructions.

\subsection{Motivation and Questions}

Here we input a finite subgroup $G$ of $\SL(2,\mathbb{C})$.  Then $G$ acts on $\mathbb{C}^2$, so it acts on $\mathbb{C}[[x,y]]$ via inverse transpose.  We define $R:=\mathbb{C}[[x,y]]^G$ and consider the quotient germ $\mathbb{C}^2/G=\Spec R$.  

\begin{example}\label{Z3 running}
The running example will be $G=\frac{1}{3}(1,2):=\left\langle g:=\left( \begin{smallmatrix} \varepsilon_3 &0\\ 0&\varepsilon_3^2  \end{smallmatrix}\right)\right\rangle$ where $\varepsilon_3$ is a primitive third root of unity.  Here $g$ sends $x\mapsto \varepsilon_3^2x$ and $y\mapsto \varepsilon_3y$, so it is clear that $x^3, xy, y^3$ are all invariants.  In fact they generate the invariant ring, so 
\[
R=\mathbb{C}[[x,y]]^{\frac{1}{3}(1,2)}=\mathbb{C}[[x^3,y^3,xy]]\cong \mathbb{C}[[a,b,c]]/ (ab-c^3).
\]
\end{example}

\begin{setting}\label{L1 setting}
Throughout the remainder of this section, $R$ will always denote $\mathbb{C}[[x,y]]^G$ for some $G\leq \SL(2,\mathbb{C})$.  We remark that experts can instead take their favourite complete local Gorenstein ring $R$ with $\dim R=2$, as all results remain true. Indeed most results still hold when $R$ is a complete local CM normal ring of dimension two, provided that $R$ has a canonical module.  Note that dropping the `complete local' is possible, but at the expense of making the language a bit  more technical, and the proofs much more so.
\end{setting}

Recall that if $M$ is a finitely generated $R$-module (written $M\in\mod R$), there exists a surjection $R^n\twoheadrightarrow M$ for some $n\in\mathbb{N}$, and the kernel is denoted $\Omega M$.  This is called the syzygy of $M$.  

\begin{Tdefin}\label{CM defin syz}
$M\in\mod R$ is called a Cohen--Macaulay (=CM) module if $M\cong\Omega(\Omega X)$ for some $X\in\mod R$. We denote the category of CM modules by $\CM R$.
\end{Tdefin}

We remark that the definition should be treated with caution, as assumptions are needed (which are satisfied in Setting~\ref{L1 setting} above) for it to be equivalent to the more standard definition that will be explained later (\ref{depth definition}).  Note that Temporary Definition~\ref{CM defin syz} ensures that $R\in\CM R$.  

\begin{example}\label{CM of Z3}
Let $G=\frac{1}{3}(1,2):=\left\langle\left( \begin{smallmatrix} \varepsilon_3 &0\\ 0&\varepsilon_3^2  \end{smallmatrix}\right)\right\rangle$ where $\varepsilon_3$ is a primitive third root of unity.  In this situation $R=\mathbb{C}[[x^3,y^3,xy]]\cong\mathbb{C}[[a,b,c]]/(ab-c^3)$.  We claim that $R$, together with the ideals $M_1:=(a,c)$ and $M_2:=(a,c^2)$, are all CM $R$-modules.  In fact, the situation is particularly nice since the calculation below shows that $\Omega M_1\cong M_2$ and $\Omega M_2\cong M_1$, hence $\Omega^2 M_1\cong M_1$ and $\Omega^2 M_2\cong M_2$.   

For the calculation, just note that we have a short exact sequence
\begin{eqnarray}
0\to (a,c^2)\xrightarrow{(-\frac{c}{a}\ inc)}R^2\xrightarrow{a\choose c}(a,c)\to 0\label{M1}
\end{eqnarray}
where the second map sends $(r_1,r_2)\mapsto r_1a+r_2c$, and the first map sends $ra+sc^2\mapsto ((ra+sc^2)(-\frac{c}{a}),ra+sc^2)=(-rc-sb,ra+sc^2)$.  In a similar way, we have an exact sequence
\begin{eqnarray}
0\to (a,c)\xrightarrow{(-\frac{c^2}{a}\ inc)}R^2\xrightarrow{a\choose c^2}(a,c^2)\to 0.\label{M2}
\end{eqnarray}
\end{example}

Notice that with our Temporary Definition~\ref{CM defin syz}, the above example shows that CM modules are in fact quite easy to produce --- just find a module, then syzygy twice.   Note at this stage it is not clear how many other CM modules there are in Example~\ref{CM of Z3}, never mind what this has to do with \S\ref{L1 basic idea} and the relationship to the geometry.

\begin{example}\label{quiver of Z3}
Continuing the above example, in the spirit of discovery let's compute $\End_R(R\oplus (a,c)\oplus (a,c^2))$.  Why we do this will only become clear afterwards.   We write each indecomposable module as a vertex
\[
\begin{tikzpicture} [bend angle=45, looseness=1.2]
\node[name=s,regular polygon, regular polygon sides=3, minimum size=2cm] at (0,0) {}; 
\node (C1) at (s.corner 1)  {$\scriptstyle (a,c)$};
\node (C2) at (s.corner 2)  {$\scriptstyle R$};
\node (C3) at (s.corner 3)  {$\scriptstyle (a,c^2)$};
\end{tikzpicture}
\]
Clearly we have the inclusions $(a,c^2)\subseteq(a,c)\subseteq R$ and so we have morphisms
\[
\begin{tikzpicture} [bend angle=45, looseness=1.2]
\node[name=s,regular polygon, regular polygon sides=3, minimum size=2cm] at (0,0) {}; 
\node (C1) at (s.corner 1)  {$\scriptstyle (a,c)$};
\node (C2) at (s.corner 2)  {$\scriptstyle R$};
\node (C3) at (s.corner 3)  {$\scriptstyle (a,c^2)$};
\draw [->,bend right] (C1) to node [left] {$\scriptstyle { inc}$} (C2);
\draw [->,bend right] (C3) to node [right] {$\scriptstyle { inc}$} (C1);
\end{tikzpicture}
\]
If we multiply an element of $R$ by $c$ we get an element of $(a,c)$, and similarly if we multiply an element of $(a,c)$ by $c$ we get an element of $(a,c^2)$.  Thus we add in 
\[
\begin{tikzpicture} [bend angle=45, looseness=1.2]
\node[name=s,regular polygon, regular polygon sides=3, minimum size=2cm] at (0,0) {}; 
\node (C1) at (s.corner 1)  {$\scriptstyle (a,c)$};
\node (C2) at (s.corner 2)  {$\scriptstyle R$};
\node (C3) at (s.corner 3)  {$\scriptstyle (a,c^2)$};
\draw[->] (C2) -- node  [gap] {$\scriptstyle c$} (C1); 
\draw[->] (C1) -- node  [gap] {$\scriptstyle c$} (C3);
\draw [->,bend right] (C1) to node [left] {$\scriptstyle { inc}$} (C2);
\draw [->,bend right] (C3) to node [right] {$\scriptstyle { inc}$} (C1);
\end{tikzpicture}
\]
We can multiply an element of $R$ by $a$ to get an element of $(a,c^2)$ and so obtain 
\[
\begin{tikzpicture} [bend angle=45, looseness=1.2]
\node[name=s,regular polygon, regular polygon sides=3, minimum size=2cm] at (0,0) {}; 
\node (C1) at (s.corner 1)  {$\scriptstyle (a,c)$};
\node (C2) at (s.corner 2)  {$\scriptstyle R$};
\node (C3) at (s.corner 3)  {$\scriptstyle (a,c^2)$};
\draw[->] (C2) -- node  [gap] {$\scriptstyle c$} (C1); 
\draw[->] (C1) -- node  [gap] {$\scriptstyle c$} (C3);
\draw [->,bend right] (C1) to node [left] {$\scriptstyle { inc}$} (C2);
\draw [->,bend right] (C2) to node [below] {$\scriptstyle a$} (C3);
\draw [->,bend right] (C3) to node [right] {$\scriptstyle { inc}$} (C1);
\end{tikzpicture}
\]
It is possible to multiply an element of $R$ by $a$ and get an element of $(a,c)$, but we don't draw it since this map is just the composition of the arrow $a$ with the arrow $inc$, so it is already taken care of.  The only morphism which is not so obvious is the map $(a,c^2)\to R$ given by $\frac{c}{a}$ (as in (\ref{M1})), thus this means that we have guessed the following morphisms between the modules:
\[
\begin{tikzpicture} [bend angle=45, looseness=1.2]
\node[name=s,regular polygon, regular polygon sides=3, minimum size=2cm] at (0,0) {}; 
\node (C1) at (s.corner 1)  {$\scriptstyle (a,c)$};
\node (C2) at (s.corner 2)  {$\scriptstyle R$};
\node (C3) at (s.corner 3)  {$\scriptstyle (a,c^2)$};
\draw[->] (C2) -- node  [gap] {$\scriptstyle c$} (C1); 
\draw[->] (C1) -- node  [gap] {$\scriptstyle c$} (C3);
\draw[->] (C3) -- node  [gap,pos=0.4] {$\scriptstyle \frac{c}{a}$} (C2); 
\draw [->,bend right] (C1) to node [left] {$\scriptstyle { inc}$} (C2);
\draw [->,bend right] (C2) to node [below] {$\scriptstyle a$} (C3);
\draw [->,bend right] (C3) to node [right] {$\scriptstyle { inc}$} (C1);
\end{tikzpicture}
\]
It turns out that these are in fact all necessary morphisms, in that any other must be a linear combination of compositions of these.  This can be shown directly, but at this stage it is not entirely clear.  Is this algebra familiar? 
\end{example}
\noindent

The above simple example already illustrates some interesting phenomenon that later we will put on a more firm theoretical basis.  Indeed, the following five questions emerge naturally, and motivate much of the content of these notes.
\begin{enumerate}
\item[Q1.] Is there a systematic way of computing the quiver? We just guessed.
\item[Q2.] Are the three CM $R$-modules we guessed in Example~\ref{CM of Z3} all the indecomposable CM $R$-modules up to isomorphism? 
\item[Q3.] Is it a coincidence that $\Omega^2=\mathrm{Id}$ on the CM modules? 
\item[Q4.] Why are we only considering noncommutative rings that look like $\End_R(M)$?  It seems that \S\ref{L1 basic idea} allows for almost arbitrary rings. 
\item[Q5.] How do we extract the geometry from these noncommutative rings, as in (\ref{L1mot2})?
\end{enumerate}
In short, the answers are:
\begin{enumerate}
\item[A1.] Yes.  This is one of the things that Auslander--Reiten (=AR) theory does.  
\item[A2.] Yes.  The proof, using Auslander algebras, is remarkably simple.  This will lead to our main definition of noncommutative crepant resolutions (=NCCRs).
\item[A3.] No.  This is a special case of matrix factorizations, which appear for any hypersurface. Amongst other things, this leads to connections with Mirror Symmetry.
\item[A4.] Mainly for derived category reasons.  See \S\ref{L4} and remarks there.
\item[A5.] We use quiver GIT.  See \S\ref{L3}.
\end{enumerate}
The remainder of \S\ref{L1} will focus on Q2.

\subsection{Auslander Algebras and Finite Type} 
This section is purely algebraic, and showcases Auslander's philosophy that endomorphism rings of finite global dimension are important from a representation--theoretic viewpoint.  In algebraic geometry commutative rings with finite global dimension correspond precisely to non-singular varieties (see \S\ref{gldim section}), so Auslander's philosophy will guide us forward.
\begin{thm}\label{thm Auslander}(Auslander) Let $R$ be as in Setting~\ref{L1 setting}, and let $\mathcal{C}$ be a finite set of  indecomposable CM $R$-modules, such that $R\in\mathcal{C}$.  Then the following are equivalent\\
\t{(1)} $\gl\End_R(\bigoplus_{C\in\mathcal{C}}C)\leq 2$.\\
\t{(2)} $\mathcal{C}$ contains all indecomposable CM $R$-modules (up to isomorphism).
\end{thm}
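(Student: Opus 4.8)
The plan is to work entirely through the functor $\Hom_R(M,-)$, writing $M=\bigoplus_{C\in\mathcal{C}}C$ and $\Lambda=\End_R(M)$, and to reduce both implications to two standard tools: the depth lemma and the noncommutative Auslander--Buchsbaum formula. First I would record the ambient facts. (i) \emph{Projectivisation}: $\Hom_R(M,-)$ restricts to an additive equivalence $\add M\xrightarrow{\ \sim\ }\proj\Lambda$; thus every finitely generated projective $\Lambda$-module is $\Hom_R(M,M')$ for some $M'\in\add M$, and every $\Lambda$-linear map between two such modules has the form $\Hom_R(M,g)$ for a unique $R$-linear $g$. (ii) Because $R$ is a direct summand of $M$, with $e\in\Lambda$ the corresponding idempotent, the functor $(-)e$ of restriction to the $R$-summand carries $\Hom_R(M,Y)$ to $Y$, naturally in $Y$; in particular $\Hom_R(M,-)$ reflects isomorphisms, so if $\Hom_R(M,X)$ is projective then $X\cong M'$ for the associated $M'\in\add M$, i.e.\ $X\in\add M$. (iii) If $Y\in\CM R$ then $\Hom_R(M,Y)\in\CM R$: applying $\Hom_R(-,Y)$ to a free presentation of $M$ exhibits $\Hom_R(M,Y)$ as the kernel of a map $Y^{n}\to Y^{m}$ between CM modules, whence $\depth_R\Hom_R(M,Y)\ge 2$ by the depth lemma, and since $Y$ is a direct summand it has dimension two. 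Taking $Y=M$ gives $\Lambda\in\CM R$ and $\depth_R\Lambda=2$.

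For (2)$\Rightarrow$(1): given $N\in\mod\Lambda$, choose by (i) a projective presentation $\Hom_R(M,M_1)\xrightarrow{\,f_*\,}\Hom_R(M,M_0)\to N\to 0$ coming from a map $f\colon M_1\to M_0$ with $M_i\in\add M$. Left-exactness of $\Hom_R(M,-)$ identifies $\ker f_*$ with $\Hom_R(M,K)$, where $K:=\ker f$. Now $K\subseteq M_1\in\CM R$, so $\Im f$ is torsion-free over the normal domain $R$ and hence $\depth_R\Im f\ge 1$; applying the depth lemma to $0\to K\to M_1\to\Im f\to 0$ gives $\depth_R K\ge 2$, so $K\in\CM R$. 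By hypothesis (2), $K\in\add M$, so $\Hom_R(M,K)$ is projective and $0\to\Hom_R(M,K)\to\Hom_R(M,M_1)\to\Hom_R(M,M_0)\to N\to 0$ is a length-two projective resolution of $N$. As $N$ was arbitrary, $\gl\Lambda\le 2$.

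For (1)$\Rightarrow$(2): let $X$ be an indecomposable CM $R$-module. By (iii), $\depth_R\Hom_R(M,X)=2$, while $\pd_\Lambda\Hom_R(M,X)\le\gl\Lambda\le 2<\infty$. The noncommutative Auslander--Buchsbaum formula for the module-finite $R$-algebra $\Lambda$ then gives
\[
\pd_\Lambda\Hom_R(M,X)=\depth_R\Lambda-\depth_R\Hom_R(M,X)=2-2=0,
\]
so $\Hom_R(M,X)$ is projective. By (ii) this forces $X\in\add M$, and since $X$ is indecomposable and $\add M$ is Krull--Schmidt, $X\cong C$ for some $C\in\mathcal{C}$. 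Hence $\mathcal{C}$ contains every indecomposable CM $R$-module.

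The one genuinely external ingredient, and the step I would flag as the crux, is the noncommutative Auslander--Buchsbaum formula: it is what upgrades the feeble bound $\pd_\Lambda\Hom_R(M,X)\le 2$ into the equality $\pd_\Lambda\Hom_R(M,X)=0$ in the second implication, and it (with the depth lemma) is why this quick result is stated here but its supporting commutative-algebra machinery is only set up afterwards. I would also note that nothing above uses the group $G$ or even the Gorenstein hypothesis essentially: a complete local CM normal ring of dimension two with a canonical module suffices, since that is exactly what the depth-lemma arguments in (iii) and in the first implication, and the identity $\depth_R\Lambda=\dim R=2$, require.
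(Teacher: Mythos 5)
Your proof of (2)$\Rightarrow$(1) is essentially the paper's argument: the same projective presentation lifted through projectivization, the same kernel $K$, the same depth-lemma step to get $K\in\CM R$ (your torsion-free justification for $\depth_R\Im f\ge 1$ is a fine way to fill in the detail the paper leaves implicit). Your (1)$\Rightarrow$(2), however, takes a genuinely different route. The paper uses Fact \ref{facts1and2}(\ref{F2}) (reflexivity of CM modules) to present $X$ as a second syzygy, $0\to X\to R^a\to R^b$, applies $\Hom_R(M,-)$, and reads off projectivity of $\Hom_R(M,X)$ directly from $\gl\Lambda\le 2$, since $\Hom_R(M,X)$ is then a second syzygy of a $\Lambda$-module of projective dimension at most $2$. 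No depth computation and no Auslander--Buchsbaum formula is needed, which is why the paper can remark that this implication holds in arbitrary dimension and can run it using only the two Facts stated in \S\ref{L1}. You instead compute $\depth_R\Hom_R(M,X)=2$ and invoke the noncommutative Auslander--Buchsbaum formula \ref{facts3and4}(\ref{F4})(b) to force $\pd_\Lambda\Hom_R(M,X)=0$. This is correct --- the hypotheses there ($R$ Gorenstein, $\Lambda=\End_R(M)\in\CM R$, finite projective dimension) are all in place, and your verification that $\Lambda\in\CM R$ is sound --- but it imports machinery the paper only develops in \S\ref{L2}, and it is tied to $\dim R=2$ in a way the paper's syzygy argument is not. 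One qualification to your closing remark: the version of Auslander--Buchsbaum you quote is stated for Gorenstein $R$ (or for NCCRs, which you cannot yet assume), so in the general CM-normal-with-canonical setting your first implication would need the corresponding Gorenstein-order version of the formula, whereas the paper's reflexivity argument goes through unchanged.
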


In fact, as the proof below shows, \t{(1)}$\Rightarrow$\t{(2)} holds in arbitrary dimension, whereas \t{(2)}$\Rightarrow$\t{(1)} needs $\dim R=2$.   To prove \ref{thm Auslander} will require two facts.  The first is quite easy to prove, the second requires a little more technology.  Recall if $M\in\mod R$ we denote $\add M$ to be the collection of all direct summands of all finite direct sums of $M$.  If $\Lambda$ is a ring, then $\proj\Lambda:=\add \Lambda$, the category of projective $\Lambda$-modules. 
\begin{facts}\label{facts1and2} With notation as above,
\begin{enumerate}
\item\label{F1}  If $M\in\mod R$ contains $R$ as a summand, then the functor
\[
\Hom_R(M,-):\mod R\to\mod\End_R(M)
\]
is fully faithful, restricting to an equivalence $\add M\stackrel{\simeq}{\to}\proj\End_R(M)$.
\item\label{F2} Since $R$ is Gorenstein (or normal CM), CM $R$-modules are always reflexive, i.e.\ the natural map $X\to X^{**}=\Hom_R(\Hom_R(X,R),R)$ is an isomorphism.
\end{enumerate}
\end{facts}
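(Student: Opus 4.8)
The statement packages two independent facts, so I would prove them separately: Fact~\ref{F1} is formal (``projectivisation''), while Fact~\ref{F2} is where the commutative-algebra input sits.

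\textbf{Fact~\ref{F1}.} Write $\Lambda=\End_R(M)$ and $F=\Hom_R(M,-)\colon\mod R\to\mod\Lambda$. The plan is: (i) establish the equivalence $\add M\xrightarrow{\sim}\proj\Lambda$ first, then (ii) bootstrap to full faithfulness of $F$ on all of $\mod R$. For (i), the key observation is $F(M)=\Hom_R(M,M)=\Lambda$ as a left $\Lambda$-module; additivity gives $F(M^{n})=\Lambda^{n}$, and since every $P\in\proj\Lambda$ is a summand of some $\Lambda^n$ and idempotents split, $F|_{\add M}$ is essentially surjective onto $\proj\Lambda$. Full faithfulness on $\add M$ reduces, by additivity in both variables, to the tautology that $\Hom_R(M,M)=\Lambda\to\Hom_\Lambda(\Lambda,\Lambda)$ is the identity. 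For (ii), given $X,Y\in\mod R$ I would choose a presentation $M_1\to M_0\to X\to 0$ in which $M_0\to X$ and $M_1\to\ker(M_0\to X)$ are right $\add M$-approximations; because $R\in\add M$ these are surjective, so $F$ carries this to a projective presentation $FM_1\to FM_0\to FX\to 0$ over $\Lambda$. Applying $\Hom_R(-,Y)$ and $\Hom_\Lambda(-,FY)$ to these two presentations produces two left-exact sequences whose second and third terms are identified by step (i) (each $\Hom_R(M_i,Y)\cong\Hom_\Lambda(FM_i,FY)$, reducing to $M_i=M$), and a short diagram chase then yields $\Hom_R(X,Y)\cong\Hom_\Lambda(FX,FY)$. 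The only point needing a word is existence of these $\add M$-approximations, which in the complete local setting is automatic (one can simply manufacture them from free covers, using $R\in\add M$).

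\textbf{Fact~\ref{F2}.} Here I would show directly that the biduality map $\eta_X\colon X\to X^{**}$ is bijective for $X\in\CM R$, taking $R$ complete local normal CM of dimension two (hence a normal domain); the Gorenstein case is the special case $\omega_R=R$, and higher dimension works identically after dualising into $\omega_R$. The argument has three moves. First, $X$ is torsion-free: a maximal CM module is unmixed, so $\Ass X\subseteq\{\p:\dim R/\p=2\}=\{(0)\}$; hence $\eta_X$ is injective and we obtain $0\to X\to X^{**}\to Q\to 0$. Second, $Q$ is supported only at the maximal ideal: at a prime $\p$ of height $\le 1$ the ring $R_\p$ is regular, $X_\p$ is free, and free modules are reflexive, so $(\eta_X)_\p$ is an isomorphism. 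Third, $X$ and $X^{**}$ both have depth two: $X$ because it is CM of dimension two, and $X^{**}=\Hom_R(X^*,R)$ because the $R$-dual of any finitely generated module over a normal domain satisfies Serre's condition $(S_2)$. Now feed $0\to X\to X^{**}\to Q\to 0$ into the depth lemma: $\depth Q\ge\min\{\depth X-1,\ \depth X^{**}\}\ge\min\{1,2\}=1$, contradicting $\depth Q=0$ (which holds if $Q$ is nonzero of finite length) unless $Q=0$. Hence $\eta_X$ is an isomorphism.

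\textbf{Where the work is.} Fact~\ref{F1} is pure diagram-chasing once the $\add M$-approximation presentations are in hand, so the genuine content is Fact~\ref{F2}: it rests on the depth lemma and on the stability of $(S_2)$ (equivalently, reflexivity) under $R$-duality over a normal ring --- precisely the ``little more technology'' the text flags, and precisely the depth-lemma and reflexive-equivalence circle that Lecture~2 will make precise. In the write-up I would therefore either forward-reference those results or quote Auslander--Bridger / Evans--Griffith for the characterisation of reflexive modules as second syzygies, from which Fact~\ref{F2} is immediate in any dimension. A minor extra point, needed only if one insists on general $d$, is that $\Hom_R(-,\omega_R)$ preserves CM modules; in dimension two, which is all Theorem~\ref{thm Auslander} requires, this is unnecessary.
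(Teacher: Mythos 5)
The notes never actually prove \ref{facts1and2}: part (\ref{F1}) is delegated in the credits to projectivization \cite[II.2.1]{ARS}, and part (\ref{F2}) to ``most commutative algebra textbooks'', so your proposal is being measured against citations rather than an argument. Both of your proofs are correct and are the standard ones. For (\ref{F1}), your two-step scheme --- the equivalence $\add M\simeq\proj\End_R(M)$ first, then full faithfulness on all of $\mod R$ via an $\add M$-presentation and a comparison of the two left-exact $\Hom$ sequences --- is exactly the projectivization argument being cited. For (\ref{F2}), your torsion-free / supported-in-codimension-two / depth-chase argument is precisely the circle of ideas the notes themselves deploy later (the second-syzygy direction of \ref{CM defin same}, and the localize-then-use-depth trick in step (2) of the proof of \ref{NCCRs Db}), so it sits naturally inside the toolkit of Lecture 2; note only that the inequality you use for the cokernel, $\depth Q\geq\min\{\depth X-1,\depth X^{**}\}$, is the third standard depth inequality, which follows from the same $\Ext(R/\m,-)$ long exact sequence even though \ref{depth lemma} as printed lists just two.

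Two cautions. In (\ref{F1}) the parenthetical that the approximations can be ``manufactured from free covers'' is misleading if read over $R$: a surjection $R^n\to X$ is in general \emph{not} a right $\add M$-approximation, since a map $M\to X$ need not lift through it when $M$ is not projective. The correct one-line construction is to choose generators $f_1,\dots,f_n$ of $\Hom_R(M,X)$ as a module over $\End_R(M)$ (equivalently, transport a free $\End_R(M)$-cover of $\Hom_R(M,X)$ through $\add M\simeq\proj\End_R(M)$) and take $(f_1,\dots,f_n)\colon M^{n}\to X$; this is an approximation by construction and is surjective precisely because $R\in\add M$. In (\ref{F2}) your reduction ``the Gorenstein case is the special case $\omega_R=R$'' silently retains normality: for a non-normal Gorenstein surface the height-one freeness and torsion-freeness steps fail, and there one instead quotes the canonical-module duality statement that maximal CM modules over a Gorenstein local ring have $\Ext^{i}_R(X,R)=0$ for $i>0$ and are reflexive, see e.g.\ \cite[3.3.10]{BH}. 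In Setting~\ref{L1 setting} the ring $R$ is a normal domain, so this does not affect anything the notes need, but as stated the Gorenstein branch of the Fact is wider than your argument.
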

With these facts, the proof of (1)$\Rightarrow$(2) is quite straightforward.  The proof (2)$\Rightarrow$(1) uses the depth lemma, which will be explained in \S\ref{L2}.

\medskip
\noindent
{\it Proof of \ref{thm Auslander}.}  Denote $M:=\bigoplus_{C\in\mathcal{C}}C$ and $\Lambda:=\End_R(M)$. 

\medskip
\noindent
(1)$\Rightarrow$(2) Suppose that $\gl\Lambda\leq 2$ and let $X\in\CM R$.  Consider a projective resolution $R^b\to R^a\to X^*\to 0$, then dualizing via $(-)^*=\Hom_R(-,R)$ and using \ref{facts1and2}(\ref{F2}) gives an exact sequence $0\to X\to R^a\to R^b$.  Applying $\Hom_R(M,-)$ then gives an exact sequence
\[
0\to \Hom_R(M,X)\to \Hom_R(M,R^a)\to\Hom_R(M,R^b).
\]
Both $\Hom_R(M,R^a)$ and $\Hom_R(M,R^b)$ are projective $\Lambda$-modules by \ref{facts1and2}(\ref{F1}), so since by assumption $\gl\Lambda\leq 2$ it follows that $\Hom_R(M,X)$ is a projective $\Lambda$-module. One last application of \ref{facts1and2}(\ref{F1}) shows that $X\in\add M$.

\medskip
\noindent
(2)$\Rightarrow$(1) Suppose that $\dim R=2$, and that $\mathcal{C}$ contains all indecomposable CM $R$-modules up to isomorphism.  Let $Y\in\mod\Lambda$, and consider the initial terms in a projective resolution
$P_1\stackrel{f}{\to} P_0\to Y\to 0$. By \ref{facts1and2}(\ref{F1}) there exists a morphism
$M_1\stackrel{g}{\to} M_0$ in $\add M$ such that
\[
(P_1\stackrel{f}{\to} P_0)=(\Hom_R(M,M_1)\stackrel{\cdot g}{\to}\Hom_R(M,M_0)).
\]
Put $X:=\Ker g$.  Since $\dim R=2$, by the depth lemma (see \ref{depth lemma} in the next section) we have $X\in\CM R$, so by assumption $X\in\mathcal{C}$.  Hence we have an exact sequence
\[
0\to X\to M_1\to M_0
\] 
with each term in $\add M$.  Simply applying $\Hom_R(M,-)$ gives an exact sequence
\[
0\to\Hom_R(M,X)\to\Hom_R(M,M_1)\to\Hom_R(M,M_0)\to Y\to0.
\]
Thus we have $\pd{}_\Lambda Y\le 2$.  Since this holds for all $Y$, $\gl\End_R(M)\leq 2$.  In fact, since $\End_R(M)$ has finite length modules, a global dimension of less than 2 would contradict the depth lemma.\qed

\medskip

Thus to answer Q2, by \ref{thm Auslander} we show that $\gl\End_R(R\oplus (a,c)\oplus (a,c^2))\leq 2$.  It is possible just to do this directly,  using the calculation in \ref{quiver of Z3}, but the next subsection gives a non-explicit proof.

\subsection{Skew Group Rings} Recall our setting $G\leq\SL(2,\mathbb{C})$ and $R=\mathbb{C}[[x,y]]^G$.  Since $R$ is defined as a quotient of the smooth space $\Spec \mathbb{C}[[x,y]]$ by $G$, the basic idea is that we should use the module theory of $\mathbb{C}G$, together with the module theory of $\mathbb{C}[[x,y]]$, to encode  some of the geometry of the quotient.  The (false in general) slogan is that `$G$-equivariant sheaves on $\mathbb{C}[[V]]$ encode the geometry of the resolution of $V/G$'.

\begin{example}\label{Z3 new coord}
Let $G=\frac{1}{3}(1,2)=\langle g\rangle$ as in \ref{CM of Z3}.  Consider the one-dimensional representations $\rho_0$, $\rho_1$ and $\rho_2$ of $G$, and denote their bases by $e_0$, $e_1$ and $e_2$.  Our convention is that $g$ acts on $e_i$ with weight $\varepsilon_3^i$.  Recall $g$ acts on the polynomial ring via $x\mapsto\varepsilon_3^{-1}x$ and $y\mapsto\varepsilon_3y$, hence $G$ acts on both side of the tensor $\mathbb{C}[[x,y]]\otimes_\mathbb{C}\rho_i$ and so we can consider the invariants $(\mathbb{C}[[x,y]]\otimes_\mathbb{C}\rho_i)^G$.  Note that since $G$ acts trivially on $\rho_0$, we have that $R=\mathbb{C}[[x,y]]^G=(\mathbb{C}[[x,y]]\otimes_\mathbb{C}\rho_0)^G$.  Denote $N_1:=(\mathbb{C}[[x,y]]\otimes_\mathbb{C}\rho_1)^G$ and $N_2:=(\mathbb{C}[[x,y]]\otimes_\mathbb{C}\rho_2)^G$.

Note that $x\otimes e_1$ belongs to $N_1$ since under the action of $G$, $x\otimes e_1\mapsto \varepsilon^{-1}_3x\otimes\varepsilon_3 e_1=x\otimes e_1$.  Similarly $y^2\otimes e_1\in N_1$.  In fact $x\otimes e_1$ and $y^2\otimes e_1$ generate $N_1$ as an $R$-module.  Similarly $x^2\otimes e_2$ and $y\otimes e_2$ generate $N_2$ as an $R$-module.  In fact, $N_2\cong M_1$ and $N_1\cong M_2$ where the $M_i$ are as in \ref{CM of Z3}, and in these new coordinates, dropping tensors we have
\[
\End_R(R\oplus M_1\oplus M_2)\cong
\End_R(R\oplus N_2\oplus N_1)\cong
\begin{array}{c}
\begin{tikzpicture} [bend angle=45, looseness=1.2]
\node[name=s,regular polygon, regular polygon sides=3, minimum size=2cm] at (0,0) {}; 
\node (C1) at (s.corner 1)  {$\scriptstyle N_2$};
\node (C2) at (s.corner 2)  {$\scriptstyle R$};
\node (C3) at (s.corner 3)  {$\scriptstyle N_1$};
\draw[->] (C2) -- node  [gap] {$\scriptstyle y$} (C1); 
\draw[->] (C1) -- node  [gap] {$\scriptstyle y$} (C3);
\draw[->] (C3) -- node  [gap] {$\scriptstyle y$} (C2); 
\draw [->,bend right] (C1) to node [left] {$\scriptstyle x$} (C2);
\draw [->,bend right] (C2) to node [below] {$\scriptstyle x$} (C3);
\draw [->,bend right] (C3) to node [right] {$\scriptstyle x$} (C1);
\end{tikzpicture}
\end{array}
\]
\end{example}

\begin{defin}\label{skew def}
For a $\mathbb{C}$-algebra $A$ and a finite group $G$ together with a group homomorphism $G\to {\rm Aut}_{\mathbb{C}\rm{-alg}}(A)$, we define the skew group ring $A\# G$ as follows:  as a vector space it is $A\otimes_\mathbb{C}\mathbb{C}G$, with multiplication defined as  
\[
(f_1\otimes g_1)(f_2\otimes g_2):=(f_1\cdot g_1(f_2))\otimes g_1g_2,
\]
for any $f_1, f_2\in A$ and $g_1,g_2\in G$, extended by linearity.
\end{defin}

In these notes we will always use \ref{skew def} in the setting where $A$ a power series (or polynomial) ring in finitely many variables, and $G$ a finite subgroup of $\GL(n,\mathbb{C})$. The following theorem is due to Auslander.
\begin{thm}\label{Auslander skew}
(Auslander) Let $G\leq\SL(n,\mathbb{C})$ be a finite subgroup and denote $S:=\mathbb{C}[[x_1,\hdots,x_n]]$ and $R:=\mathbb{C}[[x_1,\hdots,x_n]]^G$.  Then 
\[
S\# G\cong \End_R
\left(\bigoplus_{\rho\in\Irr G}((S\otimes\rho)^{G})^{\oplus\dim_\mathbb{C}\rho}\right).
\]
\end{thm}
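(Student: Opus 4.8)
The plan is to exhibit an explicit isomorphism $S\#G \to \End_R(T)$ where $T:=\bigoplus_{\rho\in\Irr G}((S\otimes\rho)^G)^{\oplus\dim_{\mathbb{C}}\rho}$, by first reinterpreting $T$ as the $G$-invariants of the bimodule $S$ itself. The key observation is that $S\otimes_{\mathbb{C}}\mathbb{C}G$ decomposes, as a left $S$-module with $G$-action, into a direct sum of copies of the $(S\otimes\rho)$'s with multiplicities $\dim_{\mathbb{C}}\rho$; equivalently, $S\#G$ carries a natural $G$-action (conjugation, or rather the action permuting a chosen basis suitably), and one checks that $(S\#G)^G$, viewed appropriately, is Morita-equivalent data packaging the module $T$. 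The cleanest route, which I would take, is: first establish that $S$ is a module over $S\#G$ (with $S$ acting by multiplication and $G$ acting via its given action on $S$), and that $\End_{S\#G}(S)\cong R$. Indeed an $S\#G$-endomorphism of $S$ is in particular an $S$-module map $S\to S$, hence multiplication by some $s\in S$, and $G$-equivariance forces $s\in S^G=R$; conversely every element of $R$ gives such an endomorphism.

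Next I would invoke the general principle that if $\Lambda$ is a ring and $P$ a finitely generated projective $\Lambda$-module which is a progenerator, then $\Lambda\cong\End_{\End_\Lambda(P)}(P)$ (double centralizer / Morita). Here I take $\Lambda:=S\#G$ and $P:=S$. Two things must be verified: that $S$ is a finitely generated projective $S\#G$-module, and that it is a generator. Projectivity follows because $S\#G$ is a free $S$-module of rank $|G|$ and, since $G\le\SL(n,\mathbb{C})$ acts on $S=\mathbb{C}[[x_1,\dots,x_n]]$ with $R=S^G\hookrightarrow S$ a splitting of $R$-modules (as $R$ is a direct summand of $S$ via the Reynolds operator $\frac{1}{|G|}\sum_{g}g$), one can show $S$ is a direct summand of $S\#G$ as a left $S\#G$-module. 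Concretely, the map $S\#G\to S$, $s\otimes g\mapsto s$, is a split surjection of left $S\#G$-modules, the splitting being $s\mapsto \frac{1}{|G|}\sum_{g\in G} s\otimes g$; one checks this is $S\#G$-linear using the relation $(1\otimes h)(s\otimes g)=h(s)\otimes hg$ and that $\sum_g s\otimes hg=\sum_g s\otimes g$. That $S$ generates follows from $S\#G$ being a direct sum of the indecomposable projectives corresponding to the $\rho$'s, each of which appears as a summand of $S\otimes\mathbb{C}G$; alternatively, since $S\#G$ is a noncommutative resolution-type algebra that is module-finite over $R$ and $S$ is a faithful module of full support, one deduces the generator property. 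This is the step I expect to be the main obstacle: proving cleanly that $S$ is a projective generator over $S\#G$, rather than just faithful.

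Granting those facts, Morita theory gives $S\#G\cong\End_{\End_{S\#G}(S)}(S)=\End_R(S)$ as $R$-algebras, where the $R$-algebra structure on $\End_{S\#G}(S)\cong R$ is the evident one. It then remains to identify $\End_R(S)$ with $\End_R(T)$, i.e.\ to show $S\cong T$ as $R$-modules. This is the classical decomposition: using the Reynolds operator, $S\cong S\otimes_{\mathbb{C}}\mathbb{C}=S\otimes_{\mathbb{C}}\big(\bigoplus_{\rho}\rho^{\oplus\dim\rho}\big)^{?}$ — more precisely, $\mathbb{C}G\cong\bigoplus_{\rho\in\Irr G}\rho^{\oplus\dim_{\mathbb{C}}\rho}$ as a $G$-representation, hence $S\cong S\otimes_{\mathbb{C}}\mathbb{C}G$ has no obvious $R$-module meaning; instead one argues that as an $R$-module $S=\bigoplus_{\rho}\Hom_G(\rho, S)\otimes_{\mathbb{C}}\rho$ has $\Hom_G(\rho,S)\cong(S\otimes\rho^*)^G$, and for $G\le\SL$ one has $\rho^*$ ranging over $\Irr G$ as $\rho$ does, with the relevant multiplicity $\dim_{\mathbb{C}}\rho$. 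So $S\cong\bigoplus_{\rho}((S\otimes\rho)^G)^{\oplus\dim_{\mathbb{C}}\rho}=T$ as $R$-modules, completing the proof. Throughout, the example $\frac13(1,2)$ in \ref{Z3 new coord} serves as the sanity check: there all $\dim_{\mathbb{C}}\rho=1$, $\Irr G=\{\rho_0,\rho_1,\rho_2\}$, and $T=R\oplus N_1\oplus N_2$, matching $S=\mathbb{C}[[x,y]]$ decomposed into its three weight spaces, with the quiver of $S\#G$ visibly the one drawn there.
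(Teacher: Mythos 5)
Your reduction to the Morita/double-centralizer theorem breaks at exactly the step you flagged: $S$ is \emph{not} a generator of the category of left $S\#G$-modules, and this cannot be repaired. Since $S\#G/\mathfrak{m}(S\#G)\cong\mathbb{C}G$ is semisimple, the indecomposable projective left $S\#G$-modules are the $S\otimes\rho$ for $\rho\in\Irr G$, and under the evident identification $S\cong S\otimes\rho_{\rm triv}$ is the single one attached to the trivial representation; its additive closure misses all the other indecomposable projectives, so $S\#G$ is not a direct summand of any $S^{\oplus m}$ and the trace ideal of $S$ is proper. (Your remark that ``each of the $\rho$'s appears as a summand of $S\otimes\mathbb{C}G$'' concerns $S\#G$ itself, not the module $S$; and faithfulness does not imply generation.) There is also a structural obstruction: if $S$ were a progenerator then $S\#G$ and $\End_{S\#G}(S)\cong R$ would be Morita equivalent, which is impossible since $\gl S\#G\leq n$ while $\gl R=\infty$ whenever $R$ is singular.

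A symptom of the gap is that your argument never uses $G\leq\SL(n,\mathbb{C})$, whereas the statement fails without some such hypothesis: for $G=\langle -1\rangle$ acting on $S=\mathbb{C}[[x]]$ one has $R=\mathbb{C}[[x^2]]$, $S$ is $R$-free of rank $2$, so $\End_R(S)\cong M_2(R)$, and the natural map $f\otimes g\mapsto (s\mapsto f\,g(s))$ is injective but not surjective (the endomorphism sending $x\mapsto 1$ and $1\mapsto 0$ would need $f=\tfrac{1}{2x}\notin S$). The standard proof attacks this natural map $\phi\colon S\#G\to\End_R(S)$ directly: $\phi$ is injective by linear independence of automorphisms; both sides are reflexive $R$-modules (both have depth $\geq 2$ over the normal domain $R$); and $\phi$ is an isomorphism at every height-one prime of $R$ because $G\leq\SL(n,\mathbb{C})$ contains no pseudo-reflections, so the action is free in codimension one, where the extension is Galois and the skew group ring is classically the full endomorphism ring. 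A morphism of reflexive modules over a normal domain which is an isomorphism in codimension one is an isomorphism. Your remaining steps --- $\End_{S\#G}(S)\cong R$, projectivity of $S$ over $S\#G$ via the averaging splitting, and the isotypic decomposition $S\cong\bigoplus_{\rho}((S\otimes\rho)^G)^{\oplus\dim_\mathbb{C}\rho}$ identifying $\End_R(S)$ with $\End_R(T)$ --- are correct and do appear in the standard argument. (For what it is worth, the paper does not prove \ref{Auslander skew} at all, referring instead to \cite{EG}, \cite{Martin}, \cite{IT} and \cite{Y}.)
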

We remark that the theorem also holds if $G$ is a subgroup of $\GL(n,\mathbb{C})$ which contains no complex reflections (in the sense of Bellamy's lectures \ref{complex reflection}) except the identity.

By \ref{Auslander skew}, in the running example \ref{Z3 new coord} we have an isomorphism 
\[
\End_R(R\oplus (a,c)\oplus (a,c^2))\cong \mathbb{C}[[x,y]]\#\tfrac{1}{3}(1,2).
\]
Thus, via \ref{thm Auslander}, to show that $\{ R, (a,c), (a,c^2)\}$ are all the indecomposable CM $R$-modules it suffices to prove that $\gl\mathbb{C}[[x,y]]\#\frac{1}{3}(1,2)\leq 2$.

Now if $M,N\in \mod S\# G$ then $G$ acts on $\Hom_S(M,N)$ by $(gf)(m):=g\cdot f(g^{-1}m)$ for all $g\in G$, $f\in\Hom_S(M,N)$ and $m\in M$. It is easy to check that 
\[
\Hom_{S\# G}(M,N)=\Hom_S(M,N)^G.
\]
Further, since taking $G$-invariants is exact (since $G$ is finite, and we are working over $\mathbb{C}$), this induces a functorial isomorphism
\[
\Ext^i_{S\# G}(M,N)=\Ext^i_S(M,N)^G
\]
for all $i\geq 0$. In particular, $\gl S\# G\le \gl S$ holds, and so in our setting we have $\gl \mathbb{C}[[x,y]]\#\frac{1}{3}(1,2)\leq \gl\mathbb{C}[[x,y]]=2$, as required.

\begin{remark}\label{gl dim equal}
The above can be strengthened to show that $\gl S\# G=\gl S$.
\end{remark}

\medskip
\medskip
\noindent
{\bf Credits:} The material in this section is now quite classical.  The ideas around \ref{thm Auslander} were originally developed for representation dimension of Artin algebras \cite{A71}, but they came across to CM modules following Auslander's version of the McKay correspondence \cite{Aus3}.  These ideas were pursued later by Iyama in his higher dimensional AR theory \cite{IyamaAR}, who first observed the link to NCCRs.  See also the paper by Leuschke \cite{Leuschke}.  Fact \ref{facts1and2}(\ref{F1}) is known as `projectivization', see e.g.\ \cite[II.2.1]{ARS}, and Fact \ref{facts1and2}(\ref{F2}) can be found in most commutative algebra textbooks.  Skew group rings are also a classical topic.  There are now many proofs of \ref{Auslander skew}, see for example \cite{EG}, \cite{Martin} and \cite{IT}.  Auslander's original proof is outlined in \cite{Y}.

\subsection{Exercises}

\begin{ex}\label{Ex1.12}
Let $R$ be a commutative ring and let $M$ be an $R$-module.  Define
\[
\End_{R}(M):=\{f:M\to M\mid f\t{ is an $R$-module homomorphism}  \}.
\]
\begin{enumerate}
\item Verify that $\End_{R}(M)$ is indeed a ring, and has the structure of an $R$-module.
\item Give an example of $R$ and $M$ for which $\End_{R}(M)$ is a commutative ring, and give an example for which $\End_{R}(M)$ is noncommutative.  Roughly speaking, given an $R$ and $M$ how often is the resulting endomorphism ring $\End_{R}(M)$ to be commutative?
\item We say that $M$ is a \emph{simple} $R$-module if the only submodules of $M$ are $\{ 0\}$ and $M$.  Prove that if $R$ is any ring and $M$ is a simple $R$-module then $\End_{R}(M)$ is a division ring.
\end{enumerate}
\end{ex}

\begin{ex}\label{Ex1.13}
As in \S\ref{L1}, consider the group 
\[
\frac{1}{r}(1,a):=\left\langle g:=\begin{pmatrix} \varepsilon_r &0\\ 0&\varepsilon_r^a  \end{pmatrix}\right\rangle
\]
where $\varepsilon_r$ is a primitive $r^{\rm th}$ root of unity. We assume that $r$ and $a$ are coprime, and denote the representations of $G$ by $\rho_0,\hdots,\rho_{r-1}$.
\begin{enumerate}
\item Show that $S_i:=(\mathbb{C}[[x,y]]\otimes \rho_i)^G\cong\{ f\in\mathbb{C}[[x,y]]\mid g\cdot f=\varepsilon_r^{i} f  \}$.  (The exact superscript on $\varepsilon$ will depend on conventions).
\item For $a=r-1$ (i.e.\ the group $G$ is inside $\SL(2,\mathbb{C})$), 
\begin{enumerate}
\item Determine $R=S_0$, and find generators for the $R$-modules $S_i$.  
\item Hence of otherwise, determine the quiver of $\End_R(\oplus_{i=0}^{r-1}S_i)$. 
\end{enumerate}
\item (This will be helpful for counterexamples later)  Consider $G=\frac{1}{3}(1,1)$ and $\frac{1}{5}(1,2)$.  For both of these examples,
\begin{enumerate}
\item Determine $R=S_0$, and find generators for the $R$-modules $S_i$. This should be quite different to (2)(a).
\item Hence or otherwise, determine the quiver of $\End_R(\oplus_{i=0}^{r-1}S_i)$.
\item Consider only the modules in (3)(a) that have two generators.  Sum them together, along with $R$.  Determine the quiver of the resulting endomorphism ring.
\end{enumerate}
\end{enumerate}
\end{ex}

\noindent{\bf Computer Exercises:}

\begin{ex}\label{Ex1.14}
(When is a ring CM?).  Consider the ring $R:=k[[a,b,c]]/(ab-c^3)$ from \S\ref{L1}, where $k$ has characteristic zero.  We code this into Singular as
\[
\begin{array}{l}
\tt{ >LIB``homolog.lib";}\\
\tt{>LIB``sing.lib";}\\
\tt{>ring\quad S=0,(a,b,c),ds;}
\end{array}
\]
The first two commands loads libraries that we will use.  The last command defines the power series rings $S:=k[[a,b,c]]$ (this is the $\tt{ds}$; use $\tt{dp}$ for the polynomial ring) in the variables $\tt{a,b,c}$.  Now
\[
\begin{array}{l}
\tt{ >ideal\quad i=ab-c3;}\\
\tt{>dim\_slocus(std(i));}
\end{array}
\]
The first command specifies the ideal $\tt{i}$ (for more than one generator, separate with commas e.g.\ $\tt{ >ideal\quad i=ab-c3, a4-b2;}$).  The second command asks for the dimensional of the singular locus of the variety cut out by the ideal $\tt{i}$.  Here the answer given is zero, which means it is an isolated singularity.
\[
\begin{array}{l}
\tt{ >qring\quad R=std(i);}
\end{array}
\]
This specifies our ring $R$ to be the factor $S/I$.  We now define the free rank one $R$-module $F:=R_R$ 
\[
\begin{array}{l}
\tt{ >module \quad F=[0];}
\end{array}
\]
and ask whether it is CM via 
\[
\begin{array}{l}
\tt{ >depth(F);}\\
\tt{ >dim(F);}
\end{array}
\]
If these two numbers agree, then the ring is CM. Using a similar procedure, calculate whether the following are CM rings.  
\begin{enumerate}
\item (Whitney umbrella) $\mathbb{C}[[u,v,x]]/(uv^2-x^2)$.
\item The ring of invariants of $\frac{1}{4}(1,1)$, i.e. $\mathbb{C}[[x^4,x^3y,x^2y^2,xy^3,y^4]]$.  This is isomorphic to $\mathbb{C}[[a,b,c,d,e]]$ factored by the $2\times 2$ minors of
\[
\begin{pmatrix}a&b&c&d\\ b&c&d&e  \end{pmatrix}
\]
\item $\mathbb{C}[[x^4,x^3y,xy^3,y^4]]$.  This is isomorphic to $\mathbb{C}[[a,b,d,e]]$ factored by the $2\times 2$ minors of
\[
\begin{pmatrix}a&b^2&be&d\\ b&ad&d^2&e  \end{pmatrix}
\]
\item $\mathbb{C}[[u,v,x,y]]/(uv-f(x,y))$ where $f(x,y)\in\mathbb{C}[[x,y]]$.
\item Try experimenting with other commutative rings.  Roughly, how often are they CM?
\end{enumerate}
\end{ex}

\begin{ex}\label{Ex1.15}
(When is a module CM?)  The procedure to determine whether a module is CM is similar to the above. Singular encodes modules as factors of free modules, so for example the ideal $M:=(a,c)$ in the ring $R=k[[a,b,c]]/(ab-c^3)$, i.e.\
\[
R^2 \xrightarrow{{c\ \ -b\choose -a\ \ c^2}}
R^2 \xrightarrow{(a\ \ c)} (a,c) \to 0
\]
is coded using the columns of the matrix as
\[
\begin{array}{l}
\tt{ > module\quad M = [c, -a], [-b, c2];} 
\end{array}
\]
Alternatively, to automatically work out the relations between $a$ and $c$, code
\[
\begin{array}{l}
\tt{ > module\quad Na = [a], [c];}\\ 
\tt{ > module\quad N =syz(Na);}\\
\end{array}
\]
since the first line codes the factor $R/(a,b)$, and the second takes the kernel of the natural map $R\to R/(a,b)$, hence giving $(a,b)$.  Now endomorphism rings are also easy to code, for example
\[
\begin{array}{l}
\tt{ > module\quad E=Hom(N,N);}\\ 
\end{array}
\]
The procedure for checking the depth and dimension of a module is exactly the same as in the previous example, namely
\[
\begin{array}{l}
\tt{ > depth(N);}\\
\tt{ > dim(N);}\\
\tt{ > depth(E);}\\
\tt{ > dim(E);} 
\end{array}
\]
\begin{enumerate}
\item $E_7$ surface singularity $\mathbb{C}[[x,y,z]]/ (x^3+xy^3+z^2)$.  Determine whether the following are CM modules, and whether their endomorphism rings are CM.
\begin{enumerate}
\item The quotient field $k=R/\m$, i.e.
\[
R^3 \xrightarrow{({x\ \ y \ \ z})}
R \to k \to 0
\]
\item The module $\Omega k$.  
\item The module $\Omega^2 k$.
\item The module given by 
\[
R^4 \xrightarrow{\left(\begin{smallmatrix}-z&y^2&0&x\\ xy& z& -x^2&0\\ 0&-x&-z&y\\ x^2&0&xy^2&z  \end{smallmatrix}\right)}
R^4 \to M \to 0
\]
\end{enumerate}
\item The ring $\mathbb{C}[[u,v,x,y]]/(uv-xy)$.  Determine whether the following are CM modules, and whether their endomorphism rings are CM.
\begin{enumerate}
\item The quotient field $k=R/\m$, i.e.
\[
R \xrightarrow{({u\ \ v \ \ x\ \ y})}
R \to k \to 0
\]
\item The module $\Omega k$.  
\item The module $\Omega^2 k$.
\item The module $\Omega^3 k$.
\item The modules $(u,x)$, $(u,y)$, and $(u^2,ux,x^2)$.
\end{enumerate}
\item As (2), but with $\mathbb{C}[[u,v,x,y]]/(uv-x^2y)$
\end{enumerate}
Note that provided $R$ is CM of dimension $d$, $\Omega^dX$ is CM for all $X\in\mod R$, so Singular can be used to produce many CM modules.  If $d\geq 3$, if $Y\in\CM R$ then it is quite rare that $\End_R(Y)\in\CM R$.
\end{ex}

\begin{ex}\label{Ex1.16}
(When is a ring Gorenstein?) If $(R,\m)$ is local of dimension $d$ and $k=R/\m$, then $\Ext^{d+1}(k,R)=0$ implies that $R$ is Gorenstein.  This can be coded using the techniques from above.  For each ring in \ref{Ex1.14} and \ref{Ex1.15} above, check whether it is Gorenstein.  Try also some other commutative rings.  Roughly, how many often are they Gorenstein?
\end{ex}

\newpage
\section{NCCRs and Uniqueness Issues}\label{L2}

In the last section we started with a ring $R:=\mathbb{C}[[a,b,c]]/(ab-c^3)$, and guessed a CM module $M:=R\oplus (a,c)\oplus (a,c^2)$ such that $\gl\End_R(M)= 2$.  To go further requires  more technology.  The following is the homological definition of depth, and the usual definition of a CM module.

\begin{defin}\label{depth definition}
If $(R,\m)$ is a local ring and $M\in\mod R$, we define the {\it depth} of $M$ to be
\[
\depth_RM:={\rm min}\{ i\geq 0\mid \Ext^i_R(R/\m,M)\neq 0\}.
\]
For $M\in\mod R$ it is always true that $\depth M\leq\dim M\leq\dim R\leq\dim_{R/\m}\m/\m^2<\infty$.  We say that $M$ is a (maximal) CM module if $\depth M=\dim R$, and in this case we write $M\in\CM R$.   We say that $R$ is a CM ring if $R_R\in\CM R$, and we say that $R$ is Gorenstein if it is CM and further $\id R<\infty$.
\end{defin}

The definition is stated to make it clear that Gorenstein rings are a special class of CM rings. It turns out that in fact $\id R<\infty$ implies that $R$ is CM, so the above definition can be simplified.  When $R$ is not necessarily local, we define $M\in\mod R$ to be CM by reducing to the local case, namely $M$ is defined to be CM if  $M_\m\in\CM R_\m$ for all $\m\in\Max R$.  

To show that \ref{depth definition} is equivalent to the temporary definition from the last section (at least in the setting there) will require the following easy lemma, which will turn out to be one of our main tools.

\begin{lemma}\label{depth lemma}
(The depth lemma)  Suppose that $(R,\m)$ is a local ring and let $0\to  A\to B\to C\to 0$ be a short exact sequence of finitely generated $R$-modules.  Then\\
(1) If $\depth B>\depth C$ then $\depth A=\depth C+1$.\\
(1) $\depth A\geq {\rm min}\{ \depth B, \depth C\}$.
\end{lemma}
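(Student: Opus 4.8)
The plan is to deduce both parts from the long exact sequence in $\Ext^\bullet_R(R/\m,-)$. Applying $\Hom_R(R/\m,-)$ to $0\to A\to B\to C\to 0$ yields
\[
\cdots\to\Ext^{i-1}_R(R/\m,C)\to\Ext^i_R(R/\m,A)\to\Ext^i_R(R/\m,B)\to\Ext^i_R(R/\m,C)\to\cdots,
\]
and, since $\depth M$ is by definition the least $i$ with $\Ext^i_R(R/\m,M)\neq 0$ (with the convention $\depth 0=\infty$), the whole argument reduces to tracking where the terms of this sequence are forced to vanish.

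First I would establish the sharp lower bound $\depth A\ge\min\{\depth B,\depth C+1\}$; this simultaneously yields the stated inequality $\depth A\ge\min\{\depth B,\depth C\}$ and the lower half of the equality in the first assertion. Set $m:=\min\{\depth B,\depth C+1\}$ and let $i<m$. Then $i<\depth B$ forces $\Ext^i_R(R/\m,B)=0$, while $i<\depth C+1$ means $i\le\depth C$, so $i-1<\depth C$ and $\Ext^{i-1}_R(R/\m,C)=0$. Exactness then sandwiches $\Ext^i_R(R/\m,A)$ between two zero modules, so $\Ext^i_R(R/\m,A)=0$ for all $i<m$, i.e.\ $\depth A\ge m$.

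For the remaining content of the first assertion, assume $\depth B>\depth C$ and write $c:=\depth C$ (a finite integer, as the hypothesis forces $C\neq 0$). The lower bound just proved reads $\depth A\ge\min\{\depth B,c+1\}=c+1$. For the matching upper bound, I would look at the segment
\[
\Ext^c_R(R/\m,B)\to\Ext^c_R(R/\m,C)\to\Ext^{c+1}_R(R/\m,A)
\]
of the long exact sequence: the left term vanishes because $c<\depth B$, so the connecting map is injective, and $\Ext^c_R(R/\m,C)\neq 0$ by the definition of $c$, whence $\Ext^{c+1}_R(R/\m,A)\neq 0$, i.e.\ $\depth A\le c+1$. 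Combining the two bounds gives $\depth A=c+1=\depth C+1$.

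I do not expect any genuinely hard step: the proof is pure bookkeeping on a single long exact sequence. The only points that need a little care are the edge cases — fixing $\depth 0=\infty$ at the outset so the inequalities remain valid when one of $A,B,C$ is zero, and observing that the hypothesis $\depth B>\depth C$ already excludes $C=0$, which is exactly what legitimizes the non-vanishing of $\Ext^{\depth C}_R(R/\m,C)$ used in the last step.
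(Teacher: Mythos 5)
Your proof is correct and is exactly the argument the paper intends: its one-line proof ("apply $\Hom_R(R/\m,-)$ and use the definition of depth on the resulting long exact sequence") is precisely what you carry out, with the vanishing bookkeeping and the edge cases made explicit. No issues.
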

\begin{proof}
This just follows by applying $\Hom_R(R/\m,-)$ and applying the definition of depth to the resulting long exact sequence.
\end{proof}

\begin{lemma}\label{CM defin same}
Suppose that $R$ is a local Gorenstein (or local normal CM) ring of dimension 2.  Then $\CM R=\{ \Omega^2X\mid X\in\mod R\}$.
\end{lemma}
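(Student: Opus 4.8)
The plan is to prove the two inclusions $\CM R \subseteq \{\Omega^2 X : X \in \mod R\}$ and $\{\Omega^2 X : X \in \mod R\} \supseteq \CM R$ separately, using the depth lemma (\ref{depth lemma}) together with the fact that $R$ is Gorenstein (or normal CM) of dimension $2$, so that $\depth R = 2$ and CM modules are reflexive (\ref{facts1and2}(\ref{F2})).

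For the inclusion $\{\Omega^2 X : X \in \mod R\} \subseteq \CM R$, take any $X \in \mod R$ and form a presentation $R^{a} \to M \to X \to 0$, so that $\Omega X$ is defined by the short exact sequence $0 \to \Omega X \to R^{a} \to X \to 0$. Since $\depth R^{a} = 2$ and $\depth X \geq 0$, the depth lemma gives $\depth \Omega X \geq \min\{2, \depth X + 1\} \geq 1$. Now repeat: writing $0 \to \Omega^2 X \to R^{b} \to \Omega X \to 0$ and applying the depth lemma again, we get $\depth \Omega^2 X \geq \min\{2, \depth \Omega X + 1\} = 2$ (using $\depth \Omega X \geq 1$). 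Since depth is bounded above by $\dim R = 2$, we conclude $\depth \Omega^2 X = 2$, i.e.\ $\Omega^2 X \in \CM R$.

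For the reverse inclusion $\CM R \subseteq \{\Omega^2 X : X \in \mod R\}$, let $Y \in \CM R$. The idea is essentially the one already used in the proof of \ref{thm Auslander}: dualize a presentation. Take a presentation $R^{b} \to R^{a} \to Y^{*} \to 0$ of the dual module $Y^{*} = \Hom_R(Y, R)$, and apply $(-)^{*}$. Left-exactness of $\Hom_R(-, R)$ gives an exact sequence $0 \to Y^{**} \to R^{a} \to R^{b}$, and since $Y$ is reflexive (\ref{facts1and2}(\ref{F2})) this reads $0 \to Y \to R^{a} \to R^{b}$. Letting $Z := \Cok(R^{a} \to R^{b})$ — or rather letting $W$ be the image of $R^{a} \to R^{b}$ and $Z := \Cok(R^{b} \to \text{(target of the next free module)})$ — we can splice this into $0 \to Y \to R^{a} \to R^{b} \to Z \to 0$ for a suitable finitely generated $Z$, exhibiting $Y = \Omega^2 Z$.

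The step requiring the most care is making the reverse inclusion airtight: one must check that the exact sequence $0 \to Y \to R^{a} \to R^{b}$ really does extend to $R^{b} \to Z \to 0$ with $Y$ appearing as the \emph{second} syzygy of $Z$ — i.e.\ that the map $R^{a} \to R^{b}$ has image equal to $\Omega(\Cok)$ and that $Y$ is the kernel, not merely a submodule, of a minimal-type presentation. This is where reflexivity of $Y$ (Fact \ref{facts1and2}(\ref{F2})) and the Gorenstein/normal CM hypothesis are genuinely used; without them $Y^{**}$ need not be $Y$ and the argument collapses. The depth-lemma half, by contrast, is purely formal and works in any dimension with the evident modifications.
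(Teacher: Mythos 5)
Your proof is correct and follows essentially the same route as the paper: the inclusion $\CM R\subseteq\{\Omega^2X\}$ by dualizing a free presentation of $Y^*$ and invoking reflexivity (Fact \ref{facts1and2}(\ref{F2})), and the reverse inclusion by two applications of the depth lemma (the paper merely spells out your inequality $\depth\Omega^2X\geq\min\{2,\depth\Omega X+1\}$ as a three-way case split on the depth of the final cokernel). Your closing worry about splicing is resolved simply by taking $Z:=\Cok(R^a\to R^b)$, so that $0\to Y\to R^a\to R^b\to Z\to 0$ exhibits $Y$ as a second syzygy of $Z$ with respect to that presentation; the Gorenstein/normal CM hypothesis is used only to know $Y\cong Y^{**}$, not in the splicing itself.
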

\begin{proof}
($\subseteq$) Let $X\in\CM R$.  By fact \ref{facts1and2}(\ref{F2}), $X$ is reflexive.  Take a projective resolution $R^b\to R^a\to X^*\to 0$, then dualizing via $(-)^*=\Hom_R(-,R)$ and using the fact that $X$ is reflexive gives an exact sequence $0\to X\to R^a\to R^b$.  This shows that $X$ is a second syzygy.\\
($\supseteq$) If $X$ is a second syzygy, we have short exact sequences
\begin{eqnarray}
0\to X\to R^a\to C\to 0\label{s1}
\end{eqnarray}
\begin{eqnarray}
0\to C\to R^b\to D\to 0\label{s2}
\end{eqnarray}
We know that $\depth R=2$, and $0\leq \depth D\leq \dim R=2$.  We go through each of the three cases:
\begin{itemize}
\item If $\depth D=0$, then the depth lemma applied to (\ref{s2}) shows that $\depth C=1$.  The depth lemma applied to (\ref{s1}) then shows that $\depth X=2$, so $X\in\CM R$.
\item If $\depth D=1$, then the depth lemma applied to (\ref{s2}) shows that $\depth C=2$.  The depth lemma applied to (\ref{s1}) then shows that $\depth X=2$, so $X\in\CM R$.
\item If $\depth D=2$, then the depth lemma applied to (\ref{s2}) shows that $\depth C=2$.  The depth lemma applied to (\ref{s1}) then shows that $\depth X=2$, so $X\in\CM R$.
\end{itemize} 
In all cases, we deduce that $X\in\CM R$.
\end{proof}

\begin{lemma}
In the running example (\ref{quiver of Z3}) from the last section, namely $R=\mathbb{C}[[a,b,c]]/(ab-c^3)$ and $M:=R\oplus (a,c)\oplus (a,c^2)$,  we have $\End_R(M)\in\CM R$.
\end{lemma}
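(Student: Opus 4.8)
The plan is to show $\End_R(M) \in \CM R$ by exploiting the explicit structure uncovered in Section~\ref{L1}, namely the isomorphism $\End_R(M) \cong S \# G$ where $S = \mathbb{C}[[x,y]]$ and $G = \frac{1}{3}(1,2)$, and the fact proved there that $\gl \End_R(M) \leq 2$. Since $R$ is a complete local Gorenstein ring of dimension $2$, to check that a finitely generated module $N$ lies in $\CM R$ it suffices to check $\depth_R N = 2$, i.e.\ that $N$ has depth $2$ as an $R$-module. So the whole question is whether $\depth_R \End_R(M) = 2$.

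First I would observe that $\depth_R \End_R(M) \geq \min\{\depth_R R, \text{stuff}\}$ is not quite immediate, so instead I would use the more robust route: $\End_R(M) \cong \Hom_R(M,M)$, and $M$ is a CM $R$-module (each of $R$, $(a,c)$, $(a,c^2)$ is CM by Example~\ref{CM of Z3}), so $M \in \CM R$. The key general fact to invoke is that for a local CM ring $R$ of dimension $d$ and $X \in \CM R$, the module $\Hom_R(X, Y)$ is CM whenever $Y \in \CM R$ \emph{provided} $R$ has dimension $\leq 2$; more precisely, for dimension two one has $\depth_R \Hom_R(X,Y) \geq 2$ for any $X \in \mod R$ and $Y \in \CM R$, because $\Hom_R(X,Y)$ is a second syzygy (take a presentation $R^b \to R^a \to X \to 0$, apply $\Hom_R(-,Y)$ to get $0 \to \Hom_R(X,Y) \to Y^a \to Y^b$, and then conclude with the depth lemma exactly as in the proof of Lemma~\ref{CM defin same}, using $\depth_R Y = 2$). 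Applying this with $X = Y = M$ gives $\depth_R \End_R(M) \geq 2$, and since $\depth_R \End_R(M) \leq \dim R = 2$, we get equality, hence $\End_R(M) \in \CM R$.

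So the concrete steps are: (i) note $R$ is Gorenstein of dimension $2$, so $\CM R = \{N \in \mod R : \depth_R N = 2\}$; (ii) recall $M \in \CM R$ from Example~\ref{CM of Z3}; (iii) choose a finite presentation $R^b \xrightarrow{\ f\ } R^a \to M \to 0$ and apply the left-exact functor $\Hom_R(-, M)$ to obtain an exact sequence $0 \to \End_R(M) \to M^a \xrightarrow{\ f^*\ } M^b$; (iv) split this into $0 \to \End_R(M) \to M^a \to C \to 0$ with $C := \Im(f^*) \subseteq M^b$, so $C$ is a submodule of the CM module $M^b$ and hence has $\depth_R C \geq 1$ by the depth lemma applied to $0 \to C \to M^b \to M^b/C \to 0$ (using $\depth_R M^b = 2$ and $\depth_R(M^b/C) \geq 0$, the depth lemma gives $\depth_R C \geq \min\{2, 0\}$ is too weak, so instead note $M^b/C$ has depth $\geq 0$ and if it is $0$ then $\depth_R C \geq 1$, while if it is $\geq 1$ then $\depth_R C \geq 1$ as well — in all cases $\depth_R C \geq 1$); (v) apply the depth lemma to $0 \to \End_R(M) \to M^a \to C \to 0$: since $\depth_R M^a = 2 > \depth_R C$ in the worst case $\depth_R C = 1$, part (1) of the depth lemma gives $\depth_R \End_R(M) = \depth_R C + 1 = 2$, and if $\depth_R C = 2$ then part (2) gives $\depth_R \End_R(M) \geq 2$; (vi) combine with $\depth_R \End_R(M) \leq \dim R = 2$ to conclude $\End_R(M) \in \CM R$.

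The main obstacle — really the only subtle point — is the bookkeeping in steps (iv) and (v): one must be careful that $\Hom_R(-, M)$ applied to the presentation genuinely realizes $\End_R(M)$ as a second syzygy \emph{of modules in $\CM R$} rather than of free modules, and that the depth estimates chase through correctly in each of the possible depth values of the auxiliary cokernels. Everything else is a direct application of the depth lemma (Lemma~\ref{depth lemma}) and Fact~\ref{facts1and2}(\ref{F2}), in the same spirit as the proof of Lemma~\ref{CM defin same}. An alternative, slicker finish would be to cite the isomorphism $\End_R(M) \cong S \# G$ and observe that $S \# G$ is a free $R$-module (it is $S^{\oplus |G|}$ as an $R$-module up to the regular representation decomposition, and $S$ is free over $R$ of rank $|G|$ since $S/R$ is... ) — but this requires knowing $S$ is CM (even free) over $R$, which in dimension two is true but is essentially the same input, so I would present the syzygy argument as the primary proof.
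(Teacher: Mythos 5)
Your proof is correct and takes essentially the same route as the paper: apply $\Hom_R(-,M)$ to a presentation $R^b\to R^a\to M\to 0$ to realize $\End_R(M)$ as the kernel of a map $M^a\to M^b$ between CM modules, then chase depths through the two resulting short exact sequences with the depth lemma exactly as in the proof of Lemma~\ref{CM defin same}. The careful case analysis in your steps (iv)--(v) is precisely the ``repeat the argument'' step the paper leaves implicit.
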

\begin{proof}
Just take a projective resolution $R^s\to R^t\to M\to 0$, and apply $\Hom_R(-,M)$ to obtain an exact sequence $0\to \End_R(M)\to \Hom_R(R,M)^s\to\Hom_R(R,M)^t$.  This is just  
$0\to \End_R(M)\to M^s\to M^t$.  Since $M\in\CM R$, both $M^s$ and $M^t$ have depth 2.  Repeating the argument in the proof of \ref{CM defin same}, using the depth lemma, shows that $\depth\End_R(M)=2=\dim R$.
\end{proof}

\subsection{Definition of NCCRs} The upshot so far is that in our running example $R=\mathbb{C}[[a,b,c]]/(ab-c^3)$ and $M:=R\oplus (a,c)\oplus (a,c^2)$, we have discovered that $\End_R(M)\in\CM R$ and further $\gl\End_R(M)=\dim R$.   

\begin{defin}
Let $R$ be a (equicodimensional normal) CM ring.  A noncommutative crepant resolution (=NCCR) of $R$ is by definition a ring of the form $\End_R(M)$ for some $M\in\refl R$, such that\\
(1) $\End_R(M)\in\CM R$.\\
(2) $\gl\End_R(M)=\dim R$.
\end{defin}

The first important remark is that although the definition is made in the CM setting, to get any relationship with the geometry it turns out to be necessary to require that $R$ is Gorenstein. So, although we can always do algebra in the CM setting, when we turn to geometry we will restrict to Gorenstein rings.

In the definition of NCCR, the first condition $\End_R(M)\in\CM R$ turns out to correspond to the geometric property of crepancy (a map $f\colon X\to Y$ is called crepant if $f^*\omega_X=\omega_Y$), but it is hard to explain this without the derived category, so we postpone explanation until \S\ref{L4}.  The second condition $\gl\End_R(M)=\dim R$ is explained below.

\subsection{Global Dimension, Krull Dimension and Smoothness}\label{gldim section}
As motivation, suppose that $V$ is an irreducible variety and that $\mathbb{C}[V]$ denotes its coordinate ring.  By the work of Auslander--Buchsbaum and Serre in the 1950s, it is known that for $(R,\m)$ a commutative noetherian local ring, $R$ is a regular local ring if and only if $\gl R<\infty$.  In fact,
\[
V \mbox{ is non-singular}\iff \gl \mathbb{C}[V]<\infty\iff \gl\mathbb{C}[V]=\dim \mathbb{C}[V].
\]
Thus as soon as the global dimension is finite, necessarily it is equal to $\dim \mathbb{C}[V]$.  When asking for the noncommutative analogue of smoothness, it is natural to hope that something similar happens.  However, as the exercises should demonstrate, the noncommutative world is not so well behaved.  

\begin{remark}
Suppose that $R$ is a CM ring, and $M\in\CM R$.  Then it is possible that $\gl\End_R(M)<\infty$ without $\gl\End_R(M)=\dim R$ (see Exercise~\ref{Ex2.18}).
\end{remark}

Thus in the noncommutative situation we have to make a choice, either we use $\gl\End_R(M)<\infty$ or $\gl\End_R(M)=\dim R$.  Which to choose?  To motivate, consider the resolution of the cone singularity
\medskip
\[
\begin{tikzpicture} [bend angle=20, looseness=1,scale=1]
\node (p1) at (-3,0) {};
\draw (0,1) ellipse (0.5cm and 0.15cm);
\draw[gray,densely dotted] (0.5,-1) arc (0:180:0.5cm and 0.15cm);
\draw (0.5,-1) arc (0:-180:0.5cm and 0.15cm);  
\draw (-0.5,1) -- (0.5,-1);
\draw (0.5,1) -- (-0.5,-1);
\draw (-2.5,1) ellipse (0.5cm and 0.15cm);
\draw[gray,densely dotted] (-2,-1) arc (0:180:0.5cm and 0.15cm);
\draw (-2,-1) arc (0:-180:0.5cm and 0.15cm);  
\draw [bend left] (-3,1) to node (m2a) {} (-3,-1);
\draw [bend right] (-2,1) to node (m1a){} (-2,-1);
\coordinate (m1) at (m1a);
\coordinate (m2) at (m2a);
\draw[->] (-1.5,0) -- node[above] {} (-0.75,0);
\draw[densely dotted,red] (m1a) arc (0:180:0.3cm and 0.1cm);
\draw[red] (m1a) arc (0:-180:0.3cm and 0.1cm); 
\end{tikzpicture} 
\]
Imagine an ant standing at some point on the cooling tower.  It wouldn't know precisely which point it is at, since each point is indistinguishable from every other point.  Since points correspond (locally) to simple modules, every simple module should thus be expected to behave in the same way.  

Now if we have $\Lambda:=\End_R(M)\in\CM R$, by the depth lemma necessarily any simple $\Lambda$-module $S$ has $\pd_\Lambda S\geq \dim R$.  We don't want erratic behaviour like the projective dimension jumping (as in Exercise~\ref{Ex2.18}), so we choose the $\gl\End_R(M)=\dim R$ definition to ensure homogeneity.

\begin{remark}
When $R$ is Gorenstein, $M\in\refl R$ such that $\End_R(M)\in\CM R$ (which is satisfied in the geometric setting in \S\ref{L4}), by \ref{gl ok} below $\gl\End_R(M)<\infty\iff\gl\End_R(M)=\dim R$.  Thus in the main geometric setting of interest, homogeneity of the projective dimension of the simples is not an extra condition.
\end{remark}

\subsection{NCCRs are Morita Equivalent in Dimension 2}   In the algebraic geometric theory of surfaces, there exists a \emph{minimal} resolution through which all others factor.  This is unique up to isomorphism.  We can naively ask whether the same is true for NCCRs.  It is not, for stupid reasons:
\begin{example}
Consider the ring $R=\mathbb{C}[x,y]$.  Then both $\End_R(R)\cong R$ and $M_2(R)\cong \End_R(R\oplus R)$ are NCCRs of $R$.  They are clearly not isomorphic. 
\end{example}

It is well known (see also the exercises) that $R$ and $M_2(R)$ are Mortia equivalent, meaning that $\mod R\simeq \mod M_2(R)$ as categories.  Thus, even in dimension two, the best we can hope for is that NCCRs are unique up to Morita equivalence.

Recall that if $R$ is a domain with field of fractions $F$, then $a\in F$ is called \emph{integral over $R$} if it is the root of a monic polynomial in $R[X]$.  Clearly we have $R\subseteq \{ a\in F\mid a\textnormal{ is integral over }R\}$.  We say that $R$ is \emph{normal} if equality holds.  The key reason we are going to assume that $R$ is normal is the following fact \ref{facts3and4}(\ref{F3}), which will act as the replacement for our previous fact \ref{facts1and2}(\ref{F1}) (note that \ref{facts1and2}(\ref{F1}) required that $R\in\add M$).

In the following there is a condition on the existence of a canonical module, which is needed for various technical commutative algebra reasons.  In all the geometric situations we will be interested in (or when $R$ is Gorenstein) a canonical module does exist.

\begin{facts}\label{facts3and4} Suppose that $(R,\m)$ is a local CM normal domain of dimension $d$ with a canonical module, and let $M\in\refl R$. 
\begin{enumerate}
\item\label{F3} (Reflexive equivalence) $M$ induces equivalences of categories
\[
\begin{tikzpicture}
\node (a1) at (0,0) {$\refl R$};
\node (a2) at (4,0) {$\refl_R \End_R(M)$};
\node (b1) at (0,-1) {$\add R$};
\node (b2) at (4,-1) {$\proj\End_R(M) $};
\draw[->] (a1) -- node[above] {$\scriptstyle\Hom_R(M,-)$} node [below] {$\scriptstyle\sim$} (a2);
\draw[->] (b1) -- node[above] {$\scriptstyle\Hom_R(M,-)$} node [below] {$\scriptstyle\sim$} (b2);
\draw[right hook->] (b1) to (a1);
\draw[right hook->] (b2) to (a2);
\end{tikzpicture}
\]
where $\refl_R \End_R(M)$ denotes the category of those $\End_R(M)$ modules which are reflexive when considered as $R$-modules.
\item\label{F4} (The Auslander--Buchsbaum Formula) 
\begin{enumerate}
\item If $\Lambda:=\End_R(M)$ is a NCCR, then for all $X\in\mod\Lambda$ we have 
\[
\depth_R X+\pd_\Lambda X=\dim R.
\]  
\item If $R$ is Gorenstein and $\Lambda:=\End_R(M)\in\CM R$, then for all $X\in\mod\Lambda$ with $\pd_\Lambda M<\infty$ we have 
\[
\depth_R X+\pd_\Lambda X=\dim R.
\]  
\end{enumerate}
\end{enumerate}
\end{facts}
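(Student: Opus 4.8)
Both facts are standard structural results for reflexive modules and module-finite algebras over a normal Cohen--Macaulay base, and I would prove each by the same philosophy: reduce the statement to codimension one, where $R$ becomes a regular ring of dimension at most one and everything becomes elementary, and then propagate back using the ``reflexive closure'' principle that a homomorphism of reflexive modules over a normal domain which is an isomorphism at every prime of height $\le 1$ is an isomorphism. For (F4) the extra ingredient is local duality against the canonical module; note that (F3) needs only normality of $R$, whereas (F4) is where the canonical module (and hence that hypothesis) is actually used.

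\emph{Plan for (F3).} Write $\Lambda:=\End_R(M)$. First, over a normal domain $\Hom_R(X,N)$ is reflexive whenever $N$ is, so $\Hom_R(M,-)$ really does take values in $\refl_R\Lambda$. For full faithfulness I would argue that for $N_1,N_2\in\refl R$ the canonical comparison map $\Hom_R(N_1,N_2)\to\Hom_\Lambda\!\bigl(\Hom_R(M,N_1),\Hom_R(M,N_2)\bigr)$ has reflexive source and target, and becomes the Morita comparison isomorphism after localizing at any height-one prime $\p$ (there $R_\p$ is a DVR, $M_\p$ is free, so $\Lambda_\p$ is a full matrix ring over $R_\p$); by the reflexive closure principle it is therefore an isomorphism. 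For essential surjectivity I would take the quasi-inverse $G:=\Hom_\Lambda(M^{*},-)$ (equivalently $Z\mapsto(M\otimes_\Lambda Z)^{**}$) and verify by the same localize-at-height-one argument that the unit $N\to G\Hom_R(M,N)$ and counit $\Hom_R(M,GZ)\to Z$ are isomorphisms on $\refl R$ and on $\refl_R\Lambda$ respectively. The lower square is then just the restriction of the top one, since $\Hom_R(M,R^{\oplus n})=(M^{*})^{\oplus n}$ is free over $\Lambda$ by the reflexive duality $\End_R(M)\cong\End_R(M^{*})^{\mathrm{op}}$, and these are exactly the reflexive $R$-modules whose image under $\Hom_R(M,-)$ is projective.

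\emph{Plan for (F4).} Write $d=\dim R$ and let $\omega_R$ be the canonical module (so $\omega_R=R$ in case (b)). I would use two standard inputs: (i) local duality, which gives $\depth_R X=d-\sup\{\,i:\Ext^i_R(X,\omega_R)\neq 0\,\}$ for every finitely generated $X$; and (ii) the $\Hom$--tensor adjunction along the module-finite map $R\hookrightarrow\Lambda$, which gives $\RHom_R(X,\omega_R)\cong\RHom_\Lambda\!\bigl(X,\RHom_R(\Lambda,\omega_R)\bigr)$ for $X\in\mod\Lambda$. The common hypothesis $\Lambda\in\CM R$ (part of the NCCR condition in (a), assumed in (b)) forces $\Ext^{>0}_R(\Lambda,\omega_R)=0$, so $\RHom_R(\Lambda,\omega_R)$ reduces to the single module $\omega_\Lambda:=\Hom_R(\Lambda,\omega_R)$, whence $\Ext^i_R(X,\omega_R)\cong\Ext^i_\Lambda(X,\omega_\Lambda)$ for all $i$. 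It then remains to check that $\sup\{\,i:\Ext^i_\Lambda(X,\omega_\Lambda)\neq 0\,\}=\pd_\Lambda X$ when $\pd_\Lambda X<\infty$: the inequality $\le$ is clear, and for $\ge$ one takes a minimal projective resolution $P_\bullet\to X$ over $\Lambda$ of length $n=\pd_\Lambda X$ (available since $\Lambda$ is module-finite over $R$; pass to $\widehat R$ if needed, which affects neither $\depth_R$ nor $\pd_\Lambda$) and notes that, by minimality, the map $\Hom_\Lambda(P_{n-1},\omega_\Lambda)\to\Hom_\Lambda(P_n,\omega_\Lambda)$ has radical-small image while $\Hom_\Lambda(P_n,\omega_\Lambda)$ is a nonzero finitely generated module, so Nakayama gives $\Ext^n_\Lambda(X,\omega_\Lambda)\neq 0$. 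Combining the displayed identities yields $\depth_R X+\pd_\Lambda X=d$. In case (a) the hypothesis $\pd_\Lambda X<\infty$ is automatic because an NCCR satisfies $\gl\Lambda=d$; in case (b) it is imposed, and one may additionally note that there $\omega_\Lambda=\Hom_R(\Lambda,R)\cong\Lambda$ as a $\Lambda$-bimodule via the trace pairing (so $\Lambda$ is a symmetric $R$-order), although this refinement is not needed for the formula.

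\emph{Expected obstacle.} The substantive difficulty lies in (F3): each individual verification there is a routine codimension-one computation, but assembling the quasi-inverse $G$ cleanly and tracking the left/right module structures through the unit and counit is the delicate part. By contrast, the seemingly more elementary route to (F4) --- induction on $\pd_\Lambda X$ using only the depth lemma and the fact that projective $\Lambda$-modules lie in $\CM R$ --- stalls at the step of ruling out a non-projective $\Lambda$-module of finite projective dimension that is Cohen--Macaulay over $R$; since handling that point essentially forces one back to the canonical-module duality above, I would build the proof of (F4) on local duality from the outset.
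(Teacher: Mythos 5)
The paper itself gives no proof of Facts~\ref{facts3and4}: they are stated and deferred to Reiten--Van den Bergh \cite{RV89} for (\ref{F3}) and to \cite{IR}, \cite{IW3} for (\ref{F4}), so your proposal has to be judged against those standard arguments. For (\ref{F3}) your strategy --- check everything at height-one primes, where $M_\p$ is free and $\Lambda_\p$ is a matrix algebra over the DVR $R_\p$, then use that a map of reflexives which is an isomorphism in codimension one is an isomorphism --- is exactly the \cite{RV89} proof and is sound. The one false step is your treatment of the bottom row: $\Hom_R(M,R)=M^*$ is \emph{not} projective over $\Lambda=\End_R(M)$ in general. For instance, with $R=\mathbb{C}[[a,b,c]]/(ab-c^2)$ and $M=(a,c)$ one has $\Lambda\cong R$ and $M^*\cong M\notin\proj R$. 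Under the top equivalence $\Hom_R(M,N)$ is projective precisely when $N\in\add M$, so the bottom row should read $\add M\xrightarrow{\sim}\proj\Lambda$ (the printed ``$\add R$'' is a typo: this is exactly how the Fact is invoked in the proof of \ref{NCCR dim 2}, where $\refl_R\End_R(M)=\proj\End_R(M)$ is converted into $\refl R=\add M$). Your attempted justification of the row as printed, via ``$M^*$ is $\Lambda$-free'', is therefore wrong rather than merely cosmetic.

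The substantive gap is in (\ref{F4}), at the claim $\sup\{i:\Ext^i_\Lambda(X,\omega_\Lambda)\neq0\}=\pd_\Lambda X$. Everything up to that point, and your Nakayama argument for that step, uses only $\Lambda\in\CM R$ and $\pd_\Lambda X<\infty$ --- but at that level of generality the formula is false. Take $R=k[[t]]$ and $\Lambda=\left(\begin{smallmatrix}R&R\\0&R\end{smallmatrix}\right)$: this is $R$-free, hence a CM $R$-order, yet the simple $S_1$ at the non-sink vertex has $\pd_\Lambda S_1=2$ and $\depth_RS_1=0$, so $\depth+\pd=2\neq1=\dim R$; consistently $\Ext^2_\Lambda(S_1,\omega_\Lambda)\cong\Ext^2_R(S_1,R)=0$, so the sup computes $d-\depth$ and not $\pd$. (This $\Lambda$ is not an $\End_R(M)$ for reflexive $M$, so the Fact survives --- but your argument would prove it here, so it cannot be correct.) The precise failure is that minimality only gives $\Im(d_n)\subseteq J(\Lambda)P_{n-1}$ with $J(\Lambda)$ the Jacobson radical, which is strictly larger than $\m\Lambda$; hence the image of $\Hom_\Lambda(d_n,\omega_\Lambda)$ lies in $\Hom_\Lambda(P_n,J(\Lambda)\omega_\Lambda)$, and this is a \emph{proper} submodule of $\Hom_\Lambda(P_n,\omega_\Lambda)$ only if some simple quotient of $P_n$ occurs in the top of $\omega_\Lambda$ --- which can fail, since $\omega_\Lambda$ is the relatively injective object of $\CM\Lambda$ and higher $\Ext$s into it see depth, not projective dimension. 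So the symmetric-order property you set aside as an unneeded refinement is in fact the crux: once $\omega_\Lambda\cong\Lambda$ as a one-sided module (in case (b) this follows because symmetric $R$-algebras are preserved under reflexive equivalence, exactly as the paper uses in \ref{CY general}; case (a) needs a separate argument, cf.\ \cite{IW3,IW4}), every nonzero projective maps onto a simple occurring in $\Lambda/J(\Lambda)$ and Nakayama does give $\Ext^n_\Lambda(X,\Lambda)\neq0$. With that repair, your local-duality route is the argument of \cite{IW3}.
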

\noindent
The special case $M=R$ in (\ref{F4})(b), namely $\Lambda:=\End_R(R)\cong R$, gives the classical Auslander--Buchsbaum formula. As a first application, we have:

\begin{lemma}\label{gl ok}
Suppose that $R$ is a local Gorenstein normal domain, $M\in\CM R$ with $\End_R(M)\in\CM R$.  Then $\gl\End_R(M)<\infty$ if and only if $\gl\End_R(M)=\dim R$.
\end{lemma}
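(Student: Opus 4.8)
The plan is to establish the nontrivial implication, namely that $\gl\End_R(M)<\infty$ forces $\gl\End_R(M)=\dim R$; the converse is immediate. Write $\Lambda:=\End_R(M)$ and $d:=\dim R$. The whole argument is a short deduction from the Auslander--Buchsbaum formula \ref{facts3and4}(\ref{F4})(b), whose hypotheses ($R$ Gorenstein, $\Lambda\in\CM R$, and the relevant module having finite projective dimension over $\Lambda$) hold precisely because we are assuming $\gl\Lambda<\infty$.

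First I would record that $\Lambda$ is Noetherian: since $M$ is reflexive it is a finitely generated $R$-module, so $\Lambda=\End_R(M)$ embeds into $\End_R(R^n)=M_n(R)$ for a suitable surjection $R^n\twoheadrightarrow M$, hence is a finitely generated $R$-module and therefore left and right Noetherian as a ring. Consequently $\gl\Lambda$ is the supremum of $\pd_\Lambda X$ taken over $X\in\mod\Lambda$, so it suffices to control projective dimensions of finitely generated left $\Lambda$-modules.

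For the upper bound, let $X\in\mod\Lambda$. Since $\gl\Lambda<\infty$ we have $\pd_\Lambda X<\infty$, so \ref{facts3and4}(\ref{F4})(b) applies and gives $\depth_R X+\pd_\Lambda X=d$; as $\depth_R X\geq 0$ this forces $\pd_\Lambda X\leq d$, whence $\gl\Lambda\leq d$. For the lower bound, pick any simple left $\Lambda$-module $S$ (one exists as $\Lambda\neq 0$). It is finitely generated over $R$, and since the image of $R$ lies in the centre of $\Lambda$, the set $\m S$ is a $\Lambda$-submodule of $S$; simplicity gives $\m S=0$ or $\m S=S$, and the latter is impossible by Nakayama's lemma. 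Hence $\m S=0$ with $S\neq 0$, so $\depth_R S=0$, and \ref{facts3and4}(\ref{F4})(b) now yields $\pd_\Lambda S=d$. Therefore $\gl\Lambda\geq d$, and combining the two bounds gives $\gl\Lambda=d$.

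There is no substantial obstacle here: the content is entirely packaged in the Auslander--Buchsbaum formula \ref{facts3and4}(\ref{F4})(b), and the only point worth stressing is that the hypothesis $\gl\Lambda<\infty$ is used solely to guarantee $\pd_\Lambda X<\infty$ so that the formula may be invoked. This is exactly why finite global dimension cannot be weakened here (cf.\ Exercise~\ref{Ex2.18}).
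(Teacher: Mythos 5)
Your proof is correct and takes essentially the same route as the paper: the entire content in both cases is the Auslander--Buchsbaum formula \ref{facts3and4}(\ref{F4})(b) applied to a depth-zero module with finite projective dimension. The only (harmless) difference is presentational: the paper invokes the fact that $\gl\Lambda$ is computed on $R$-finite-length $\Lambda$-modules, whereas you split the argument into an upper bound over all of $\mod\Lambda$ and a lower bound via a simple module, which makes the deduction slightly more self-contained.
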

\begin{proof}
Global dimension can be computed as the supremum of the $\End_R(M)$-modules that have finite length as $R$-modules.\\
($\Rightarrow$)  Suppose that $\gl\End_R(M)<\infty$.  Each finite length $\End_R(M)$-module has depth zero, and by assumption has finite projective dimension. Hence by Auslander--Buchsbaum, each finite length module has projective dimension equal to $\dim R$, so $\gl\End_R(M)=\dim R$.
\end{proof}
The second application of Auslander--Buchsbaum is our first uniqueness theorem.

\begin{thm}\label{NCCR dim 2}
Let $(R,\m)$ be a local CM normal domain of dimension $2$ with a canonical module.   If $R$ has a NCCR, then all NCCRs of $R$ are Morita equivalent.
\end{thm}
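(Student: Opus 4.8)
The plan is to show that for \emph{any} NCCR $\Lambda=\End_R(M)$ of $R$ the functor $\Hom_R(M,-)$ induces an additive equivalence $\refl R\xrightarrow{\ \sim\ }\proj\Lambda$. Since the source category $\refl R$ depends only on $R$, any two NCCRs $\End_R(M)$ and $\End_R(N)$ will then have equivalent categories of finitely generated projectives, and hence be Morita equivalent.

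First I would record the dimension-two coincidence $\refl R=\CM R$. The inclusion $\CM R\subseteq\refl R$ is Fact~\ref{facts1and2}(\ref{F2}). Conversely, if $X\in\refl R$, then dualizing a projective presentation $R^b\to R^a\to X^{*}\to 0$ and using reflexivity of $X$ exhibits $X$ as a second syzygy, so $X\in\CM R$ by Lemma~\ref{CM defin same} (its proof is precisely the depth lemma applied to $0\to X\to R^a\to C\to 0$ and $0\to C\to R^b\to D\to 0$, forcing $\depth_R X=2=\dim R$).

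The heart of the argument is the claim that, for an NCCR $\Lambda=\End_R(M)$, one has $\refl_R\Lambda=\proj\Lambda$, where $\refl_R\Lambda$ is the category of $\Lambda$-modules that are reflexive over $R$ as in Fact~\ref{facts3and4}(\ref{F3}). Indeed, a projective $\Lambda$-module is a summand of some $\Lambda^n$, and $\Lambda=\End_R(M)\in\CM R\subseteq\refl R$, so it is a summand of a reflexive $R$-module, hence reflexive; this gives $\proj\Lambda\subseteq\refl_R\Lambda$. Conversely, if $X\in\refl_R\Lambda$ then $X\in\refl R=\CM R$, so $\depth_R X=2=\dim R$, and the Auslander--Buchsbaum formula for NCCRs, Fact~\ref{facts3and4}(\ref{F4})(a), yields $\pd_\Lambda X=\dim R-\depth_R X=0$, i.e.\ $X\in\proj\Lambda$. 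Combining this with reflexive equivalence, Fact~\ref{facts3and4}(\ref{F3}), the functor $\Hom_R(M,-)$ restricts to the desired equivalence $\refl R\xrightarrow{\ \sim\ }\refl_R\Lambda=\proj\Lambda$.

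To finish, given two NCCRs $\End_R(M)$ and $\End_R(N)$, composing one equivalence with the inverse of the other gives $\proj\End_R(M)\simeq\refl R\simeq\proj\End_R(N)$ as additive categories, and since two rings are Morita equivalent precisely when their categories of finitely generated projectives are equivalent, we are done. I expect the only real subtlety to be bookkeeping that every module in sight is finitely generated so that Facts~\ref{facts3and4}(\ref{F3}) and~\ref{facts3and4}(\ref{F4}) genuinely apply; the substantive input is the Auslander--Buchsbaum formula, which is exactly what converts the (dimension-two) identity $\refl R=\CM R$ into the projectivity statement $\refl_R\Lambda=\proj\Lambda$. An equivalent, more hands-on route is to verify directly that $\Hom_R(M,N)$ is a projective $\End_R(M)$-module and $\Hom_R(N,M)$ a projective $\End_R(N)$-module, with these bimodules mutually inverse, but the categorical formulation above seems cleanest.
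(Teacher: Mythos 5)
Your proposal is correct and follows essentially the same route as the paper: the depth lemma gives $\depth_R X=2$ for any $X\in\refl_R\End_R(M)$, Auslander--Buchsbaum (Fact~\ref{facts3and4}(\ref{F4})) then forces $\refl_R\End_R(M)=\proj\End_R(M)$, and reflexive equivalence (Fact~\ref{facts3and4}(\ref{F3})) transports this back to $\refl R$. The only cosmetic difference is at the end: the paper phrases the conclusion as $\add M=\add N$ and names the progenerator $\Hom_R(M,N)$, whereas you compose the two equivalences $\proj\End_R(M)\simeq\refl R\simeq\proj\End_R(N)$ --- these are the same argument.
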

\begin{proof}
Let $\End_{R}(M)$ and $\End_R(N)$ be NCCRs.  Consider $X\in\refl_R\End_R(M)$.  We know that $\depth X\geq 2$ by the depth lemma (exactly as in the proof of \ref{CM defin same}).  By Auslander--Buchsbaum (\ref{facts3and4}(\ref{F4})) we conclude that $X$ is a projective $\End_R(M)$-module.  This shows that $\refl_R\End_R(M)=\proj\End_R(M)$. By \ref{facts3and4}(\ref{F3}), this in turn implies that $\refl R=\add M$.

Repeating the argument with $\End_R(N)$ shows that $\refl R=\add N$, so combining we see that $\add M=\add N$.  From here it is standard that $\End_{R}(M)$ and $\End_{R}(N)$ are Morita equivalent, via the progenerator $\Hom_R(M,N)$.
\end{proof}
We remark that all these theorems hold in the non-local setting, provided that we additionally assume that $R$ is equicodimensional (i.e.\ $\dim R_\m=\dim R$ for all $\m\in\Max R$).  This assumption allows us to reduce to the local case without the dimension dropping, and so the global--local arguments work nicely.

\begin{remark}
Theorem~\ref{NCCR dim 2} only gives uniqueness, it does not give existence.  Indeed, NCCRs do not exist for all local CM normal domain of dimension $2$, since as a consequence of \ref{thm Auslander} if such an $R$ admits an NCCR, necessarily it must have finite CM type.  If we work over $\mathbb{C}$, another theorem of Auslander (see \cite[\S11]{Y}) says that the only such $R$ are the two-dimensional quotient singularities.    
\end{remark}

\subsection{NCCRs are Derived Equivalent in Dimension 3}  In dimension three, the situation is more complicated, but can still be controlled.  In algebraic geometry, when passing from surfaces to 3-folds we (often) replace the idea of a minimal resolution by a crepant resolution, and these are definitely not unique up to isomorphism.  However, by a result of Bridgeland, all crepant resolutions of a given $\Spec R$ are unique up to derived equivalence.  Using this as motivation, we thus ask whether all NCCRs for a given $R$ are derived equivalent. 

As a first remark, this is the best that we can hope for.  In contrast to the previous subsection, NCCRs are definitely not unique up to Morita equivalence in dimension three, as the next example demonstrates.

\begin{example}
Consider the ring $R=\mathbb{C}[[u,v,x,y]]/(uv-x^2y^2)$.  In this example, although it might not be immediately obvious, we have that $\End_R(R\oplus (u,x)\oplus (u,xy)\oplus (u,xy^2))$ and $\End_R(R\oplus (u,x)\oplus (u,xy)\oplus (u,x^2y))$ are both NCCRs.  This can be proved in a variety of ways, for example using dimers and toric geometry \cite{Nag}, arguing directly with tilting bundles \cite[\S5]{IW5}, or by commutative algebra and Calabi--Yau reduction \cite{IW6}.  Regardless, the two NCCRs above can be presented as
\[
\begin{array}{ccc}
\begin{array}{c}
{\begin{tikzpicture} [bend angle=45,looseness=1]
\node[name=s,regular polygon, regular polygon sides=4, minimum size=2.5cm] at (0,0) {}; 
\node (C1) at (s.corner 1)  {$\scriptstyle (u,xy)$};
\node (C2) at (s.corner 2)  {$\scriptstyle (u,x)$};
\node (C4) at (s.corner 4)  {$\scriptstyle (u,xy^2)$};
\node (C3) at (s.corner 3)  {$\scriptstyle R$};
\draw[->] (C4) -- node[gap,pos=0.45] {$\scriptstyle \frac{x}{u}$} (C3); 
\draw[->] (C3) -- node[gap] {$\scriptstyle x$} (C2); 
\draw[->] (C2) -- node[gap] {$\scriptstyle y$} (C1); 
\draw[->] (C1) -- node[gap] {$\scriptstyle y$} (C4);
\draw [->,bend right] (C1) to node[above]  {$\scriptstyle inc$} (C2);
\draw [->,bend right] (C2) to node[left]  {$\scriptstyle inc$} (C3);
\draw [->,bend right] (C3) to node[below]  {$\scriptstyle u$} (C4);
\draw [->,bend right] (C4) to node[right]  {$\scriptstyle inc$} (C1);
\node (C1a) at ($(s.corner 1)+(30:7pt)$) {};
\draw[<-]  (C1a) edge [in=15,out=85,loop,looseness=12] node[above] {$\scriptstyle x$} (C1a);
\node (C3a) at ($(s.corner 3)+(-120:4pt)$) {};
\draw[<-]  (C3a) edge [in=-85,out=-155,loop,looseness=12] node[below] {$\scriptstyle y$} (C3a);
\end{tikzpicture}}\end{array}&
\textnormal{and}&
\begin{array}{c}
{\begin{tikzpicture} [bend angle=45,looseness=1]
\node[name=s,regular polygon, regular polygon sides=4, minimum size=2.5cm] at (0,0) {}; 
\node (C1) at (s.corner 1)  {$\scriptstyle (u,xy)$};
\node (C2) at (s.corner 2)  {$\scriptstyle (u,x)$};
\node (C4) at (s.corner 4)  {$\scriptstyle (u,x^2y)$};
\node (C3) at (s.corner 3)  {$\scriptstyle R$};
\draw[->] (C4) -- node[gap,pos=0.45] {$\scriptstyle \frac{y}{u}$} (C3); 
\draw[->] (C3) -- node[gap] {$\scriptstyle x$} (C2); 
\draw[->] (C2) -- node[gap] {$\scriptstyle y$} (C1); 
\draw[->] (C1) -- node[gap] {$\scriptstyle x$} (C4);
\draw [->,bend right] (C1) to node[above]  {$\scriptstyle inc$} (C2);
\draw [->,bend right] (C2) to node[left]  {$\scriptstyle inc$} (C3);
\draw [->,bend right] (C3) to node[below]  {$\scriptstyle u$} (C4);
\draw [->,bend right] (C4) to node[right]  {$\scriptstyle inc$} (C1);
\end{tikzpicture}}\end{array}
\end{array}
\]
respectively, and they are not Morita equivalent (for example, examine the Ext groups of the vertex simples).
\end{example}

As in the previous subsection, the following theorem holds in the more general setting where $R$ is equicodimensional, but the proof is given in the local case for simplicity.

\begin{thm}\label{NCCRs Db}
Suppose that $(R,\m)$ is a normal CM domain with a canonical module, such that $\dim R=3$.  Then all NCCRs of $R$ are derived equivalent.
\end{thm}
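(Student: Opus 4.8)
The plan is to realise the derived equivalence by an explicit tilting bimodule built from the two NCCRs. Write them as $\Lambda := \End_R(M)$ and $\Gamma := \End_R(N)$ with $M, N \in \refl R$; since $R$ is equicodimensional one first localises and assumes $(R,\m)$ local, the local equivalences being glued at the end. Set $T := \Hom_R(M, N)$, viewed as a right $\Lambda$-module --- equivalently, via reflexive equivalence \ref{facts3and4}(\ref{F3}), as the object of $\refl_R\Lambda$ corresponding to $N \in \refl R$ --- and symmetrically $T^{\dagger} := \Hom_R(N, M)$ over $\Gamma$. Because \ref{facts3and4}(\ref{F3}) is fully faithful on all of $\refl R$, one gets for free that $\End_\Lambda(T) \cong \End_R(N) = \Gamma$, and more generally $\Hom_\Lambda(\Hom_R(M,X), \Hom_R(M,Y)) \cong \Hom_R(X,Y)$ for reflexive $X, Y$; this identification is what transports all the computations below between $\mod\Lambda$ and $\refl R$.

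The numerical heart of the argument is short. A reflexive module over a normal CM ring satisfies Serre's $(S_2)$, so $\depth_R T \ge \min\{2,\dim R\} = 2$; since $\Lambda$ is an NCCR, the Auslander--Buchsbaum formula \ref{facts3and4}(\ref{F4})(a) forces $\pd_\Lambda T = \dim R - \depth_R T$, which is $\le 1$, and likewise $\pd_\Gamma T^{\dagger} \le 1$. It then remains to verify the two tilting axioms for $T$ over $\Lambda$: the self-extension vanishing $\Ext^{i}_\Lambda(T,T) = 0$ for $i > 0$ (only $i=1$ is at issue, given $\pd_\Lambda T \le 1$), and the existence of a finite coresolution of $\Lambda$ by $\add_\Lambda T$. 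For the latter one transports, through reflexive equivalence, a length-one projective $\Gamma$-resolution of $T^{\dagger}$ into a two-term complex of reflexive $R$-modules with terms in $\add N$, checks it resolves $M$, and dualises back through $\Hom_R(M,-)$ to obtain the required coresolution of $\Lambda\leftrightarrow M$ by summands of $T\leftrightarrow N$. Once $T$ is a classical tilting $\Lambda$-module with $\End_\Lambda(T)\cong\Gamma$, standard tilting theory (Rickard, Bongartz) yields $\Db(\mod\Lambda) \simeq \Db(\mod\Gamma)$. An equivalent repackaging, avoiding tilting machinery, is to prove directly that the composition maps $T \otimes_\Gamma T^{\dagger} \to \Lambda$ and $T^{\dagger}\otimes_\Lambda T \to \Gamma$ lift to isomorphisms $T \otimes^{\mathbf L}_\Gamma T^{\dagger} \cong \Lambda$ and $T^{\dagger}\otimes^{\mathbf L}_\Lambda T \cong \Gamma$ in the derived categories of bimodules, so that $-\otimes^{\mathbf L}_\Lambda T$ and $-\otimes^{\mathbf L}_\Gamma T^{\dagger}$ are mutually inverse.

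The main obstacle is the passage from ``holds in codimension $\le 1$'' to ``holds''. All the delicate points --- vanishing of $\Ext^1_\Lambda(T,T)$, exactness of the candidate resolution of $\Lambda$, and (in the bimodule formulation) vanishing of the higher $\Tor$ terms --- become trivial after localising at a prime $\p$ of height $\le 1$: there $R_\p$ is regular of dimension $\le 1$, so $M_\p$ and $N_\p$ are free, $\Lambda_\p$ and $\Gamma_\p$ are matrix algebras over $R_\p$, hence Morita trivial, and the relevant $\Hom$, $\Ext$ and $\Tor$ behave as over a field or a DVR. One must then see that the obstruction modules in question are reflexive (or at least have no associated primes of height $\ge 2$), so that vanishing in codimension $\le 1$ upgrades to genuine vanishing --- and this is where $\dim R = 3$ enters essentially, making codimension $2$ the ``last'' codimension and tying the relevant depth estimates back to Auslander--Buchsbaum. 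The bookkeeping for $\Ext^1_\Lambda(T,T)$ is the fussiest part, since $\Hom_R(M,-)$ is only left exact on $\refl R$, so one has to track the connecting maps into $\Ext^1_R(M,-)$ and check that these too die in codimension $\le 1$.
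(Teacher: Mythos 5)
Your overall strategy is the paper's: set $T:=\Hom_R(M,N)$, prove it is a classical tilting $\Lambda:=\End_R(M)$-module with $\End_\Lambda(T)\cong\End_R(N)$ via reflexive equivalence, and conclude by standard tilting theory. The step $\pd_\Lambda T\le 1$ is done exactly as in the paper (depth $\ge 2$ for reflexives plus Auslander--Buchsbaum). But there is a genuine gap in the two remaining steps, and it sits precisely where you localise. You localise at primes of height $\le 1$, where $R_\p$ is regular and everything is free; this only shows that $\Ext^1_\Lambda(T,T)$ is supported in codimension $\ge 2$, which in a $3$-fold can still be a whole curve. Your proposed upgrade --- ``see that the obstruction modules are reflexive, or have no associated primes of height $\ge 2$'' --- is circular as stated: for a module already known to be supported in codimension $\ge 2$, having no associated primes of height $\ge 2$ \emph{is} the vanishing you are trying to prove. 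The actual content of the argument lives at height-$2$ primes, where $R_\p$ is a possibly singular normal CM \emph{surface}: there reflexive $=$ CM (the dimension-two theory of \S\ref{L1}--\S\ref{L2}), so $T_\p\in\CM R_\p$ and Auslander--Buchsbaum over $\Lambda_\p$ forces $T_\p$ to be $\Lambda_\p$-projective. Only this reduces $\Supp\Ext^1_\Lambda(T,T)$ to the closed point, giving finite length and hence depth $0$ if nonzero; the contradiction then comes from the depth lemma applied to the four-term exact sequence
\[
0\to \End_\Lambda(T)\to\Hom_\Lambda(\Lambda^a,T)\to\Hom_\Lambda(\Omega T,T)\to\Ext^1_\Lambda(T,T)\to 0,
\]
using $\End_\Lambda(T)\cong\End_R(N)\in\CM R$ to force $\depth\Ext^1_\Lambda(T,T)>0$. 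None of this appears in your write-up, and ``$\dim R=3$ makes codimension $2$ the last codimension'' misstates where the dimension hypothesis bites: codimension $3$ (the closed point) is where the final contradiction happens, after codimension $2$ has been handled by the surface theory.

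The coresolution $0\to\Lambda\to T_0\to T_1\to 0$ has a similar unaddressed issue: $\Hom_R(M,-)$ is only left exact, so ``dualise back through $\Hom_R(M,-)$'' does not by itself produce an exact sequence ending in $\to 0$. The paper's fix is a duality trick: work with $\Gamma:=\End_R(N^*)$ and $\mathbb{F}=\Hom_R(N^*,-)$, take a projective $\Gamma$-cover of $\mathbb{F}M^*$ whose kernel is CM (depth lemma) hence projective (Auslander--Buchsbaum) hence in $\add N^*$, then use the already-established vanishing $\Ext^1_\Gamma(\mathbb{F}M^*,\mathbb{F}M^*)=0$ to get exactness of the $\Hom_\Gamma(-,\mathbb{F}M^*)$-dual sequence, and finally apply the duality $(-)^*$ on $\refl R$ to land on the required coresolution of $\Lambda$ by $\add T$. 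Without something playing the role of that $\Ext^1_\Gamma$-vanishing and the $(-)^*$-duality, your sketch of this step does not close.
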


The proof has very little to do with derived categories, and can be understood without even knowing the definition, given the knowledge that classical tilting modules induce derived equivalences.  
\begin{defin}\label{tiltingdef}
Let $\Lambda$ be a ring.  Then $T\in\mod \Lambda$ is called a \emph{classical partial tilting module} if $\pd_{\Lambda}T\leq 1$ and $\Ext^{1}_{\Lambda}(T,T)=0$.  If further there exists an exact sequence
\[
0\to \Lambda\to T_{0}\to T_{1}\to 0
\]
with each $T_{i}\in\add T$, we say that $T$ is a \emph{classical tilting module}.
\end{defin}

If $T$ is a classical tilting $\Lambda$-module, it is standard that there is a derived equivalence between $\Lambda$ and $\End_\Lambda(T)$.   Using only this and the facts we have already, we can now prove \ref{NCCRs Db}.
\begin{proof}
Suppose that $\End_{R}(M)$ and $\End_{R}(N)$ are NCCRs of $R$.  Our strategy is to prove that $T:=\Hom_{R}(M,N)$ is a tilting $\Lambda:=\End_{R}(M)$-module.  By the remark above, this then shows that $\End_R(M)$ and $\End_{\End_R(M)}(T)$ are derived equivalent.  Since $\End_{\End_R(M)}(T)\cong \End_R(N)$ by fact \ref{facts3and4}(\ref{F3}), this will then show that $\End_R(M)$ and $\End_R(N)$ are derived equivalent.

\smallskip
\noindent
(1) We first show  $\pd_{\Lambda}T\leq 1$.  This is really just Auslander--Buchsbaum.  Take $R^a\to R^b\to M\to 0$ and apply $\Hom_R(-,N)$ to obtain
\begin{eqnarray}
0\to T\to \Hom_R(R,N)^b\to \Hom_R(R,N)^a\label{T depth}
\end{eqnarray}
Since $\depth N\geq 2$ (since $N$ is reflexive), by the depth lemma applied to (\ref{T depth}) we have $\depth T\geq  2$. Hence by Auslander--Buchsbaum (\ref{facts3and4}(\ref{F4})) $\pd_{\Lambda}T\leq 1$. 

\smallskip 
\noindent
(2) We next show that $\Ext^1_\Lambda(T,T)=0$.  This is really just the depth lemma, and using localization to induct.  For all primes $\p$ with $\hgt\p=2$, $T_{\p}\in\CM R_{\p}$ (since $T_{\p}\in\refl R_{\p}$, but for Gorenstein surfaces $\refl R_\p=\CM R_\p$, as in \S\ref{L1}).  Hence by Auslander-Buchsbaum applied to $\Lambda_\p$, it follows that $T_{\p}$ is a projective $\Lambda_{\p}$-module for all such primes.  This in turn shows that $R$-module $\Ext^{1}_{\Lambda}(T,T)$ is supported only on the maximal ideal, hence has finite length.  In particular, provided that $\Ext^{1}_{\Lambda}(T,T)$ is non-zero, there is an injection $R/\m\hookrightarrow \Ext^{1}_{\Lambda}(T,T)$ and so by the definition of depth, necessarily $\depth\Ext^{1}_{\Lambda}(T,T)=0$.  But on the other hand $\End_{\Lambda}(T)\cong\End_{R}(N)\in\CM R$ (the isomorphism is as above, by \ref{facts3and4}(\ref{F3})) and so the depth lemma applied to 
\[
0\to \Hom_\Lambda(T,T)\cong\End_R(N)\to\Hom_\Lambda(\Lambda^a,T)\to\Hom_\Lambda(\Omega T,T)\to \Ext^1_\Lambda(T,T)\to 0
\]
actually forces $\depth \Ext^{1}_{\Lambda}(T,T)>0$.  This is a contradiction, unless $\Ext^{1}_{\Lambda}(T,T)=0$.

\smallskip
\noindent
(3) We lastly show that there is an exact sequence $0\to \Lambda\to T_{0}\to T_{1}\to 0$ with each $T_{i}\in\add T$.  This  involves a duality trick.  Denote $(-)^*:=\Hom_R(-,R)$, then certainly $\Gamma:=\End_{R}(N^{*})$ is also a NCCR.  

Consider a projective $\Gamma$-module $P$ surjecting as $P\stackrel{\psi}{\twoheadrightarrow}\mathbb{F}M^{*}$, where $\mathbb{F}=\Hom_{R}(N^{*},-)$.  By reflexive equivalence, we know that $P=\mathbb{F}N_0^*$ for some $N_0^*\in\add N^*$, and further $\psi=\mathbb{F}f$ for some $f:N_0^*\to M^*$.  Taking the kernel of $f$, this all means that we have an exact sequence
\[
0\to K\to N_{0}^{*}\stackrel{f}{\to} M^{*}
\]
such that
\begin{eqnarray}
0\to\mathbb{F}K\to \mathbb{F}N_{0}^{*}\to\mathbb{F}M^{*}\to 0\label{rem2}
\end{eqnarray}
is exact.   By the depth lemma $\mathbb{F}K\in\CM R$ and so by Auslander-Buchsbaum $\mathbb{F}K$ is a projective $\Gamma$-module.  Thus $K\in\add N^{*}$ by reflexive equivalence; say $K=N_{1}^{*}$. 

Now $\Ext^{1}_{\Gamma}(\mathbb{F}M^{*},\mathbb{F}M^{*})=0$ by applying the argument in (2) to $\Gamma$.  Thus applying $\Hom_{\Gamma}(-,\mathbb{F}M^{*})$ to (\ref{rem2}) gives us the following commutative diagram
\[
{\SelectTips{cm}{10}
\xy
(0,0)*+{0}="0",
(20,0)*+{\Hom_{\Gamma}(\mathbb{F}M^{*},\mathbb{F}M^{*})}="1",
(55,0)*+{\Hom_{\Gamma}(\mathbb{F}N_{0}^{*},\mathbb{F}M^{*})}="2",
(90,0)*+{\Hom_{\Gamma}(\mathbb{F}N_{1}^{*},\mathbb{F}M^{*})}="3",
(110,0)*+{0}="4",
(0,-10)*+{0}="0a",
(20,-10)*+{\Hom_{R}(M^{*},M^{*})}="1a",
(55,-10)*+{\Hom_{R}(N_{0}^{*},M^{*})}="2a",
(90,-10)*+{\Hom_{R}(N_{1}^{*},M^{*})}="3a",
(110,-10)*+{0}="4a",
\ar"0";"1"
\ar"1";"2"
\ar"2";"3"
\ar"3";"4"
\ar"0a";"1a"
\ar"1a";"2a"
\ar"2a";"3a"
\ar@{=}"1";"1a"
\ar@{=}"2";"2a"
\ar@{=}"3";"3a"
\ar"3a";"4a"
\endxy}
\]
where the top row is exact.  Hence the bottom row is exact.  Since $(-)^{*}:\refl R\to \refl R$ is a duality, this means that
\[
0\to\Hom_{R}(M,M)\to\Hom_{R}(M,N_{0})\to\Hom_{R}(M,N_{1})\to 0
\]
is exact.  But this is simply 
\[
0\to\Lambda\to T_{0}\to T_{1}\to 0
\]
with each $T_{i}\in \add T$.  Hence $T$ is a tilting $\Lambda$-module.
\end{proof}

\medskip
\medskip
\noindent
{\bf Credits:} The material on depth and maximal CM modules is well-known and can be found in most commutative algebra books.  The definition of an NCCR is due to Van den Bergh \cite{VdBNCCR}, modelled both on the skew group ring and also on his interpretation \cite{VdB1d} of the flops paper of Bridgeland  \cite{Bridgeland}.  The idea that finite global dimension is not enough and we need homogeneity appears in the `homologically homogeneous' rings of Brown--Hajarnavis \cite{BrownHar}, and also as `non-singular orders' in the language of Auslander \cite{Aus78,Aus2}.

Reflexive equivalence is also well--known, it appears in Reiten--Van den Bergh \cite{RV89}.  The Auslander--Buchsbaum formula in the commutative setting is much more general than the version presented here, and first appeared in \cite{AB}.  The proof can be found in most commutative algebra or homological algebra books.  The noncommutative version of the Auslander--Buchsbaum formula for NCCRs in \ref{facts3and4}(\ref{F4})(a) was first established by Iyama--Reiten \cite{IR}, whereas the version presented in \ref{facts3and4}(\ref{F4})(b), which  is valid in the infinite global dimension case, is taken from \cite{IW3}.  The `correct' setting is that of a Gorenstein $R$-order.

Lemma~\ref{gl ok} appears in \cite{VdBNCCR}, but it is also explained by Iyama--Reiten \cite{IR} and Dao--Huneke \cite{DH}.  The fact that NCCRs are unique in dimension two was well--known to experts, but is only written down in \cite{IW4}.  The fact that NCCRs are all derived equivalent in dimension three when the base ring $R$ is Gorenstein is due to Iyama--Reiten \cite{IR}, but presented here is the simplified proof from \cite{IW4} since it holds in the more general CM setting.

\subsection{Exercises}

\begin{ex}\label{Ex2.16}
(Common examples and counterexamples for surfaces). Consider the following quivers with relations $(Q,R)$
\[
\begin{array}{cccc}
\begin{array}{c}
\begin{tikzpicture} [bend angle=45, looseness=1]
\node (C1) at (0,0) [vertex] {};
\node (C2) at (1.5,0)  [vertex] {};
\draw [->,bend left] (C1) to node[gap]  {$\scriptstyle a$} (C2);
\draw [->,bend left=20,looseness=1] (C1) to node[gap]  {$\scriptstyle b$} (C2);
\draw [->,bend left=20,looseness=1] (C2) to node[gap]  {$\scriptstyle t$} (C1);
\end{tikzpicture}
\end{array}
&
\begin{array}{c}
\begin{tikzpicture} [bend angle=45, looseness=1]
\node (C1) at (0,0) [vertex] {};
\node (C2) at (1.5,0)  [vertex] {};
\draw [->,bend left] (C1) to node[gap]  {$\scriptstyle a$} (C2);
\draw [->,bend left=20,looseness=1] (C1) to node[gap]  {$\scriptstyle b$} (C2);
\draw [->,bend left] (C2) to node[gap]  {$\scriptstyle t$} (C1);
\draw [->,bend left=20,looseness=1] (C2) to node[gap]  {$\scriptstyle s$} (C1);
\end{tikzpicture}
\end{array}
&
\begin{array}{c}
\begin{tikzpicture} [bend angle=45, looseness=1]
\node (C1) at (0,0) [vertex] {};
\node (C2) at (1.5,0)  [vertex] {};
\draw [->,bend left] (C1) to node[gap]  {$\scriptstyle a$} (C2);
\draw [->,bend left=20,looseness=1] (C1) to node[gap]  {$\scriptstyle b$} (C2);
\draw [->,bend left] (C2) to node[gap]  {$\scriptstyle t$} (C1);
\draw [->,bend left=20,looseness=1] (C2) to node[gap]  {$\scriptstyle s$} (C1);
\end{tikzpicture}
\end{array}
&
\begin{array}{c}
\begin{tikzpicture} [bend angle=45, looseness=1]
\node (C1) at (0,0) [vertex] {};
\node (C2) at (1.5,0)  [vertex] {};
\draw [->,bend left] (C1) to node[gap]  {$\scriptstyle a$} (C2);
\draw [->,bend left=20,looseness=1] (C1) to node[gap]  {$\scriptstyle b$} (C2);
\draw [->,bend left] (C2) to node[gap]  {$\scriptstyle t$} (C1);
\draw [->,bend left=20,looseness=1] (C2) to node[gap]  {$\scriptstyle s$} (C1);
\draw [->,bend left=60,looseness=1.3] (C2) to node[gap]  {$\scriptstyle u$} (C1);
\end{tikzpicture}
\end{array}
\\
atb=bta
&
at=bs
&
asb=bsa
&
at=bs
\\
&
ta=sb
&
sbt=tbs
&
au=bt
\\
&
&
at=(bs)^2
&
ta=sb
\\
&
&
ta=(sb)^2
&
ua=tb
\end{array}
\]
For each $\Lambda:=kQ/R$, 
\begin{enumerate}
\item Determine the centre $Z(\Lambda)$. Is it CM?  Is it Gorenstein? Is it smooth?
\item Is $\Lambda\cong\End_{Z(\Lambda)}(M)$ for some $M\in Z(\Lambda)$?  If so, is $M\in\CM Z(\Lambda)$?
\item Is $\Lambda\in\CM Z(\Lambda)$?
\item What are the projective dimension of the vertex simples?
\item In the situation when the vertex simples have infinite projective dimension, is there anything remarkable about their projective resolutions?
\item Using (2) and (4), compute $\gl\Lambda$.
\item Using (2) and (6), which $\Lambda$ are NCCRs over $Z(\Lambda)$?
\item (harder) To which spaces are the $\Lambda$ derived equivalent? (aside:\ does this explain (5)?)
\end{enumerate}
\end{ex}

\begin{ex}\label{Ex2.17}
(Example of a non-local NCCR in dimension two). Consider $(Q,R)$ given by
\[
\begin{array}{cc}
\begin{array}{c}
{\begin{tikzpicture} [bend angle=45,looseness=1]
\node[name=s,regular polygon, regular polygon sides=4, minimum size=2cm] at (0,0) {}; 
\node (C1) at (s.corner 1) [vertex] {};
\node (C2) at (s.corner 2) [vertex] {};
\node (C4) at (s.corner 4) [vertex] {};
\node (C3) at (s.corner 3) [vertex] {};
\draw[->] (C4) -- node[gap] {$\scriptstyle c_4$} (C3); 
\draw[->] (C3) -- node[gap] {$\scriptstyle c_1$} (C2); 
\draw[->] (C2) -- node[gap] {$\scriptstyle c_2$} (C1); 
\draw[->] (C1) -- node[gap] {$\scriptstyle c_3$} (C4);
\draw [->,bend right] (C1) to node[above]  {$\scriptstyle a_2$} (C2);
\draw [->,bend right] (C2) to node[left]  {$\scriptstyle a_1$} (C3);
\draw [->,bend right] (C3) to node[below]  {$\scriptstyle a_4$} (C4);
\draw [->,bend right] (C4) to node[right]  {$\scriptstyle a_3$} (C1);
\end{tikzpicture}}
\end{array}&
\begin{array}{l}
c_2a_2-a_1c_1=1\\
c_3a_3-a_2c_2=0\\
c_4a_4-a_3c_3=-1\\
c_1a_1-a_4c_4=0
\end{array}
\end{array}
\]
Denote $\Lambda:=kQ/R$ and let $Z:=\mathbb{C}[u,v,x]/(uv-x^2(x-1)^2)$.  Note that $Z$ has only two singular points, which locally are just the $\frac{1}{2}(1,1)$ surface singularity.
\begin{enumerate}
\item Show that $kQ/R\cong \End_Z(Z\oplus (u,x-1)\oplus (u,x(x-1))\oplus (u,x^2(x-1)) )$.
\item Using (1), deduce that $\Lambda$ is a NCCR.
\item Find some algebras that are Morita equivalent to $\Lambda$. 
\end{enumerate}
\end{ex}

\begin{ex}\label{Ex2.18}
(Left and right modules can matter) Consider the algebra $\Lambda$ given by
\[
\begin{array}{cc}
\begin{array}{c}
\begin{tikzpicture} [bend angle=45, looseness=1.2]
\node[name=s,regular polygon, regular polygon sides=3, minimum size=2cm] at (0,0) {}; 
\node (C1) at (s.corner 1) [vertex] {};
\node (C2) at (s.corner 2) [vertex] {};
\node (C3) at (s.corner 3) [vertex] {};
\node (c1) at ($(C1) + (90:5pt)$) {$\scriptstyle 2$};
\node (c2) at ($(C2) + (-150:5pt)$) {$\scriptstyle 1$};
\node (c3) at ($(C3) + (-50:5pt)$) {$\scriptstyle 3$};
\draw[->] (C3) -- node[gap, below=0pt] {$\scriptstyle c_3$} (C2);
\draw[->] (C2) -- node[gap] {$\scriptstyle c_1$} (C1);
\draw[->] (C1) -- node[gap] {$\scriptstyle c_2$} (C3);
\draw [->,bend right] (C1) to node[gap] {$\scriptstyle a_1$} (C2);
\draw [->,bend right] (C2) to node[gap] {$\scriptstyle a_3$} (C3);
\draw [->,bend right] (C3) to node[gap] {$\scriptstyle a_2$} (C1);
\draw[->] (C3)++(150:5pt) -- node[gap, above=0pt] {$\scriptstyle k_1$} ($(C2) + (30:5pt)$);
\end{tikzpicture} 
\end{array}
& 
\begin{array}{ll}
c_1a_1=a_3k_1 & c_1c_2k_1=a_3c_3\\
c_2a_2=a_1c_1 &k_1c_1c_2=c_3a_3\\
k_1a_3=a_2c_2&
\end{array}
\end{array}
\]
This should be familiar from Exercise~\ref{Ex1.13}.  
Denote the vertex simples by $S_1$, $S_2$ and $S_3$.
\begin{enumerate}
\item Show that as left $\Lambda$-modules $\pd S_1=\pd S_2=2$ whilst $\pd S_3=3$.
\item Show that as right $\Lambda$-modules $\pd S_1=3$ whilst $\pd S_2=\pd S_3=2$.
\item In this example $\Lambda\cong \End_{Z(\Lambda)}(M)$ where $Z(\Lambda)$ is a CM ring, and $M\in\CM Z(\Lambda)$.  Why does this not contradict Auslander--Buchsbaum?
\end{enumerate}
\end{ex}

\begin{ex}\label{Ex2.19}
(Common examples for 3-folds). Consider the following quivers with relations $(Q,R)$
\[
\begin{array}{cccc}
&\\
\begin{array}{c}
\begin{tikzpicture} [bend angle=45, looseness=1]
\node (C1) at (0,0) [vertex] {};
\node (C2) at (1.5,0)  [vertex] {};
\draw [->,bend left] (C1) to node[gap]  {$\scriptstyle a$} (C2);
\draw [->,bend left=20,looseness=1] (C1) to node[gap]  {$\scriptstyle b$} (C2);
\draw [->,bend left] (C2) to node[gap]  {$\scriptstyle t$} (C1);
\draw [->,bend left=20,looseness=1] (C2) to node[gap]  {$\scriptstyle s$} (C1);
\end{tikzpicture}
\end{array}
& 
\begin{array}{c}
\begin{tikzpicture} [bend angle=45, looseness=1]
\node (C1) at (0,0) [vertex] {};
\node (C2) at (1.5,0)  [vertex] {};
\node (C1a) at (-0.1,0)  {};
\node (C2a) at (1.6,0) {};
\draw [->,bend left] (C1) to node[gap]  {$\scriptstyle a$} (C2);
\draw [->,bend left=20,looseness=1] (C1) to node[gap]  {$\scriptstyle b$} (C2);
\draw [->,bend left] (C2) to node[gap]  {$\scriptstyle t$} (C1);
\draw [->,bend left=20,looseness=1] (C2) to node[gap]  {$\scriptstyle s$} (C1);
\draw[<-]  (C1a) edge [in=140,out=-140,loop,looseness=12] node[left] {$\scriptstyle v$} (C1a);
\draw[->]  (C2a) edge [in=40,out=-40,loop,looseness=12] node[right] {$\scriptstyle w$} (C2a);
\end{tikzpicture}
\end{array}
&
\begin{array}{c}
\begin{tikzpicture} [bend angle=45, looseness=1]
\node (C1) at (0,0) [vertex] {};
\node (C2) at (1.5,0)  [vertex] {};
\node (C1a) at (-0.1,0)  {};
\node (C2a) at (1.6,0) {};
\draw [->,bend left] (C1) to node[gap]  {$\scriptstyle a$} (C2);
\draw [->,bend left=20,looseness=1] (C1) to node[gap]  {$\scriptstyle b$} (C2);
\draw [->,bend left] (C2) to node[gap]  {$\scriptstyle t$} (C1);
\draw [->,bend left=20,looseness=1] (C2) to node[gap]  {$\scriptstyle s$} (C1);
\draw[<-]  (C1a) edge [in=140,out=-140,loop,looseness=12] node[left] {$\scriptstyle v$} (C1a);
\draw[->]  (C2a) edge [in=40,out=-40,loop,looseness=12] node[right] {$\scriptstyle w$} (C2a);
\end{tikzpicture}
\end{array}
&
\begin{array}{c}
\begin{tikzpicture} [bend angle=20, looseness=1]
\node[name=s,regular polygon, regular polygon sides=3, minimum size=2cm] at (0,0) {}; 
\node (C1) at (s.corner 1) [vertex] {};
\node (C2) at (s.corner 2) [vertex] {};
\node (C3) at (s.corner 3) [vertex] {};

\draw[->, bend right] (C3)++(160:5pt) to node[gap] {$\scriptstyle x_3$} ($(C2) + (20:5pt)$);
\draw[->, bend left] (C3)++(-160:5pt) to node[gap] {$\scriptstyle z_3$} ($(C2) + (-20:5pt)$);
\draw[->, bend left] (C2)++(75:5pt) to node[gap, pos=0.55] {$\scriptstyle x_1$} ($(C1) + (-145:4pt)$);
\draw[->, bend right] (C2)++(40:4.5pt) to node[gap, pos=0.45] {$\scriptstyle z_1$} ($(C1) + (-105:5pt)$);
\draw[->, bend left] (C1)++(-40:5pt) to node[gap, above=0pt] {$\scriptstyle x_2$} ($(C3) + (100:5pt)$);
\draw[->, bend right] (C1)++(-75:5pt) to node[gap, below=0pt] {$\scriptstyle z_2$} ($(C3) + (145:5pt)$);
\draw[->] (C3) -- node[gap] {$\scriptstyle y_3$} (C2);
\draw[->] (C2) -- node[gap] {$\scriptstyle y_1$} (C1);
\draw[->] (C1) -- node[gap] {$\scriptstyle y_2$} (C3);
\end{tikzpicture} 
\end{array}
\\
asb=bsa
&
va=aw
&
va=aw
&

\\
atb=bta
&
vb=bw
&
vb=bw
&
x_iy_{i+1}=y_ix_{i+1}
\\
sat=tas
&
ws=sv
&
ws=sv
&
x_iz_{i+1}=z_ix_{i+1}
\\
sbt=tbs
&
wt=tv
&
wt=tv
&
y_iz_{i+1}=z_iy_{i+1}
\\
&
at=bs
&
v^2=at-bs
&
\\
&
ta=sb
&
w^2=ta-sb
&\\
&
\end{array}
\]
where in the last example the relations are taken over all $1\leq i\leq 3$, with the subscripts taken mod 3 if necessary.  For each $\Lambda=kQ/R$, 
\begin{enumerate}
\item Check that the relations can be packaged as a superpotential.
\item  Determine the centre $Z(\Lambda)$.  It should be a three-dimensional Gorenstein ring (if necessary, check using Singular).
\item (harder) Show that $\Lambda$ is an NCCR over $Z(\Lambda)$.
\item (much harder) To which space(s) are the $\Lambda$ derived equivalent? (aside:\ the first three examples capture a certain geometric phenomenon regarding curves in 3-folds.  Which phenomenon?) 
\end{enumerate}
\end{ex}

\noindent {\bf Computer Exercises:}
\begin{ex}\label{Ex2.20}
(CM via Ext groups). Let $M,N\in\CM R$, where $R$ is a CM ring. This exercise will explore whether the property $\Hom_R(M,N)\in\CM R$ can be characterized in terms of Ext groups.  To calculate the depth of a $\Ext^1_R(M,N)$ and $\Ext^2_R(M,N)$, use 
\[
\begin{array}{l}
\tt{ > depth(Ext(1,M,M));} \\
\tt{ > depth(Ext(2,M,N));} 
\end{array}
\]
\begin{enumerate}
\item  We restrict to $\dim R=3$ with the assumption that $R$ is CM, with an isolated singularity. 
\begin{enumerate}
\item Let $R=\mathbb{C}[[u,v,x,y]]/(uv-xy)$. Consider the modules $R$, $(u,x)$, $(u,y)$, $R\oplus (u,x)$, $R\oplus (u,y)$ and $R\oplus (u,x)\oplus (u,y)$.  Check they are all CM.  For each $M$, compute whether $\End_R(M)\in\CM R$, and compute $\Ext^1_R(M,M)$.
\item Let $R=\mathbb{C}[[u,v,x,y]]/(uv-(x+y)(x+2y)(x+3y))$. To ease notation set $f_a=x+ay$.  Consider the modules $R$, $(u,f_1)$, $(u,f_2)$, $(u,f_2)$, $(u,f_1f_2)$, $(u,f_1f_3)$, $(u,f_2f_3)$ and all direct sum combinations. All are CM. For each $M$, compute whether $\End_R(M)\in\CM R$, and compute $\Ext^1_R(M,M)$.
\end{enumerate}
\item  Now $\dim R=3$ with $R$ CM, but the singular locus is no longer isolated (for example check using Singular). 
\begin{enumerate}
\item Let $R=\mathbb{C}[[u,v,x,y]]/(uv-x^2y)$.  Consider the modules $R$, $(u,x)$, $(u,y)$, $(u,x^2)$, $(u,xy)$ and all direct sum combinations.  All are CM.  For each $M$, compute whether $\End_R(M)\in\CM R$, and compute $\Ext^1_R(M,M)$.
\end{enumerate}
\item Is there a pattern from (1) and (2)?  Prove this relationship --- it should really only involve the depth lemma.
\end{enumerate}
\end{ex}

\begin{ex}\label{Ex2.21}
(How to determine invariant rings).  Computing invariant rings is generally quite a grim task. This exercise will show how to get a computer to (i) compute the generators of the invariant ring (ii) compute all the relations between them.
\begin{enumerate}
\item Consider $G=\left\langle g:=\left( \begin{smallmatrix} -1 &0\\ 0&-1  \end{smallmatrix}\right)\right\rangle$.  The invariants are calculated using the code
\[
\begin{array}{l}
\tt{>LIB``finvar.lib";}\\
\tt{>ring\quad S=complex,(x,y),dp;}\\
\tt{>matrix A[2][2]=-1,0,0,-1;}\\
\tt{>list\quad L=group\_reynolds(A);}\\
\tt{>matrix\quad T=invariant\_algebra\_reynolds(L[1],1);}\\
\tt{>print(T);}
\end{array}
\]
The output should be $y^2,xy,x^2$, which we know generate.  To calculate this in terms of generators and relations, we code
\[
\begin{array}{l}
\tt{>string\quad newring=``E";}\\
\tt{>orbit\_variety(T,newring);}\\
\tt{>print(G);}\\
\tt{>basering;}
\end{array}
\]
The output is $\tt{y(2)^\wedge 2-y(1)*y(3)}$, which is the equation we already know.  Using this presentation, we can now plug it into Exercises~\ref{Ex1.14} and \ref{Ex1.16} and ask whether the invariant ring is CM, or even Gorenstein.
\item The next group is
\[
BD_8:=\left\langle \begin{pmatrix} i&0\\0 &-i \end{pmatrix},\begin{pmatrix} 0&1\\-1&0  \end{pmatrix}\right\rangle.
\]
To code two generators requires
\[
\begin{array}{l}
\tt{>LIB``finvar.lib";}\\
\tt{>ring\quad S=complex,(x,y),dp;}\\
\tt{>matrix\quad A[2][2]=i,0,0,-i;}\\
\tt{>matrix\quad B[2][2]=0,1,1,0;}\\
\tt{>list\quad L=group\_reynolds(A,B);}\\
\tt{>matrix\quad T=invariant\_algebra\_reynolds(L[1],1);}\\
\tt{>print(T);}
\end{array}
\]
By using the same method as in (1), find the generators and relations.  Try to prove this without using the computer.
\item Try the same question with the following groups.  
\begin{enumerate}
\item $\frac{1}{4}(1,1):=\left\langle \begin{pmatrix} i&0\\0 &i \end{pmatrix} \right\rangle$
\item $\frac{1}{4}(1,1,2):=\left\langle \begin{pmatrix} i&0&0\\0 &i&0\\0&0&-1 \end{pmatrix} \right\rangle$
\item $\mathbb{Z}_2\times\mathbb{Z}_2:=\left\langle \begin{pmatrix} -1&0&0\\0 &-1&0\\0&0&1 \end{pmatrix}, \begin{pmatrix} 1&0&0\\0 &-1&0\\0&0&-1 \end{pmatrix} \right\rangle$
\end{enumerate}
Is there a pattern as to when the invariant ring is Gorenstein, and when it is CM?
\end{enumerate}
\end{ex}

\newpage
\section{From Algebra to Geometry: Quiver GIT}\label{L3}
Section \ref{L1} and \ref{L2} contains only algebra, and uses geometry only to motivate some of the results.  In this section the process begins to reverse, and we will begin extracting geometry (and obtain geometric theorems) starting from NCCRs.

The setup is that $Z$ is a Gorenstein normal domain, and we continue our original motivation of trying to resolve $\Spec Z$ via the picture
\[
\begin{array}{c}
\begin{tikzpicture}
\node (1) at (0,2) {$\mathcal{M}$};
\node (1b) at (0,0) {$\Spec Z$};
\node (2) at (2,1) {$A$};
\draw[->,decorate, 
decoration={snake,amplitude=.4mm,segment length=2mm,post length=1mm}] (1b) to (2);
\draw[->,decorate, 
decoration={snake,amplitude=.4mm,segment length=2mm,post length=1mm}] (2) to (1);
\draw[->] (1) to (1b);  
\end{tikzpicture}
\end{array}
\]
The first arrow is the process of associating to $Z$ an NCCR $A:=\End_Z(M)$. We have changed notation to $Z$ (from $R$), since it is always the centre of $A$.  We remark that NCCRs do not exist in general, even for easy examples like $\mathbb{C}[[u,v,x,y]]/(uv-x(x^2+y^7))$.  The content of this section is to explain the second arrow in the above diagram, i.e.\ how to extract the geometry from the noncommutative ring $A$.
 
To simplify the exposition, although it is not strictly necessary, we will assume that we have written $A$ as a quiver with relations $A=kQ/R$ (see \S\ref{QuiverAppendix} for a brief overview).  We are going to define various moduli spaces of finite dimensional representations, and to do this requires geometric invariant theory (GIT).  

For a fixed dimension vector $\alpha$ we may consider all representations of $A=kQ/R$ with dimension vector $\alpha$, namely
\[
\mathcal{R}:=\Rep(A,\alpha)=\{ \t{representations of }A\,\, \t{of dimension } \alpha \}.
\]
This is an affine variety, so denote the co-ordinate ring by $k[\mathcal{R}]$.  The variety, and hence the co-ordinate ring, carries a natural action of $G:=\prod_{i\in Q_0} \t{GL}(\alpha_i,k)$, where $Q_0$ denotes the set of vertices and $\t{GL}(\alpha_i,k)$ denotes the group of invertible $\alpha_i\times\alpha_i$ matrices with entries in $k$.  The action is via conjugation; $g$ acts on an arrow $a$ as $g\cdot a=g_{t(a)}^{-1}ag_{h(a)}$.  It is actually an action by $\t{PGL}$, since the diagonal one-parameter subgroup $\Delta=\{ (\lambda 1,\cdots,\lambda 1):\lambda\in k^{*} \}$ acts trivially.  By linear algebra the \emph{isomorphism classes} of representations of $A=kQ/R$ are in natural one-to-one correspondence with the orbits of this action.  

To understand the space of isomorphism classes is normally an impossible problem (for example it might have wild quiver type), so we want to throw away some representations and take what is known as a GIT quotient.  The key point is that to make a GIT quotient requires an addition piece of data in the form of a character $\chi$ of $G$.  

The characters $\chi$ of $G=\prod_{i\in Q_0} \t{GL}(\alpha_i,k)$ are known to be just the powers of the determinants
\[
\chi(g)=\prod_{i\in Q_0}\det(g_i)^{\theta_i}
\]
for some collection of integers $\theta_i\in \mathbb{Z}^{Q_0}$. Since such a $\chi$ determines and is determined by the $\theta_i$, we usually denote $\chi$ by $\chi_{\theta}$.  We now consider the map
\[
\begin{array}{rcl}
\theta:\tt{fdmod}A&\rightarrow& \mathbb{Z}\\
 M&\mapsto&  \sum_{i\in Q_0}\theta_i\,\t{dim}M_i ,
\end{array}
\]
which is additive on short exact sequences and so induces a map $K_0(\tt{fdmod}A)\rightarrow \mathbb{Z}$.  

We assume that our character satisfies $\chi_{\theta}(\Delta)=\{ 1\}$ (for experts --- this is needed to use Mumford's numerical criterion in \cite[2.5]{King}).  It not too hard to see that this condition translates into $\sum_{i\in Q_0}\theta_i\alpha_i=0$ and so for these $\chi_{\theta}$, $\theta(M)=0$ whenever $M$ has dimension vector $\alpha$.

We arrive at the key definition \cite[1.1]{King}.
\begin{defin}\label{abelianstab}
Let $\mathcal{A}$ be an abelian category, and $\theta:K_0(\mathcal{A})\rightarrow \mathbb{Z}$ an additive function.  We call $\theta$ a character of $\mathcal{A}$.  An object $M\in\mathcal{A}$ is called $\theta$-semistable if $\theta(M)=0$ and every subobject $M^\prime\subseteq M$ satisfies $\theta(M^\prime)\geq 0$.  Such an object $M$ is called $\theta$-stable if the only subobjects $M^\prime$ with $\theta(M^\prime)=0$ are $M$ and $0$.  We call $\theta$ generic if every $M$ which is $\theta$-semistable is actually $\theta$-stable.
\end{defin}
For $A=kQ/R$ as before, we are interested in the above definition for the case $\mathcal{A}=\tt{fdmod}A$.  We shall see how this works in practice in the next section. The reason King gave the above definition is that it is equivalent to the other notion of stability from GIT, which we now describe.

As stated above, $\mathcal{R}$ is an affine variety with an action of a linearly reductive group $G=\prod_{i\in Q_0} \t{GL}(\alpha_i,k)$.  Since $G$ is reductive, we have a quotient
\[
\mathcal{R}\rightarrow \mathcal{R}{/\!\!/}G=\Spec k[\mathcal{R}]^G
\]  
which is dual to the inclusion $k[\mathcal{R}]^G\rightarrow k[\mathcal{R}]$.  The reductiveness of the group ensures that $k[\mathcal{R}]^G$ is a finitely generated $k$-algebra, and so $\Spec k[\mathcal{R}]^G$ is a variety, not just a scheme.  

To make a GIT quotient we have to add to this picture the extra data of $\chi$, some character of $G$.
\begin{defin}
$f\in k[\mathcal{R}]$ is a semi-invariant of weight $\chi$ if $f(g\cdot x)=\chi(g)f(x)$ for all $g\in G$ and all $x\in \mathcal{R}$.  We write the set of such $f$ as $\mathcal{R}^{G,\chi}$.  We define
\[
\mathcal{R}{/\!\!/}_{\!\!\chi} G:=\t{Proj}\left(\bigoplus_{n\geq 0} k[\mathcal{R}]^{G,\chi^n} \right)
\]
\end{defin}
\begin{defin}
(1)  $x\in\mathcal{R}$ is called $\chi$-semistable (in the sense of GIT) if there exists some semi-invariant $f$ of weight $\chi^n$ with $n>0$ such that $f(x)\neq 0$, otherwise $x\in\mathcal{R}$ is called unstable.\\
(2) $x\in\mathcal{R}$ is called $\chi$-stable (in the sense of GIT) if it is $\chi$-semistable, the $G$ orbit containing $x$ is closed in $\mathcal{R}^{ss}$ and further the stabilizer of $x$ is finite.
\end{defin}
The set of semistable points $\mathcal{R}^{ss}$ forms an open subset of $\mathcal{R}$; in fact we have a morphism
\[
q:\mathcal{R}^{ss}\rightarrow\mathcal{R}{/\!\!/}_{\!\!\chi} G
\]
which is a good quotient.  In fact $q$ is a geometric quotient on the stable locus $\mathcal{R}^{s}$, meaning that $\mathcal{R}^s{/\!\!/}_{\!\!\chi} G$ really is an orbit space. 

The point in the above discussion is the following result, which says that the two notions coincide.
\begin{thm} (King)
Let $M\in\t{Rep}(A,\alpha)=\mathcal{R}$, choose $\theta$ as in Definition~\ref{abelianstab}.  Then $M$ is $\theta$-semistable (in the categorical sense of Definition~\ref{abelianstab}) if and only if $M$ is $\chi_{\theta}$-semistable (in the sense of GIT).  The same holds replacing semistability with stability.  If $\theta$ is generic, then $\mathcal{R}{/\!\!/}_{\!\!\chi} G$ parametrizes the $\theta$-stable modules up to isomorphism.
\end{thm}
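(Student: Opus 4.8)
The plan is to derive the equivalence of the two stability notions from Mumford's numerical (Hilbert--Mumford) criterion, following King \cite{King}. Recall that this criterion describes GIT (semi)stability of a point $x\in\mathcal{R}$ for the linearisation given by $\chi_\theta$ in terms of one-parameter subgroups: with the standard sign conventions, $x$ is $\chi_\theta$-semistable precisely when $\langle\chi_\theta,\lambda\rangle\geq 0$ for every one-parameter subgroup $\lambda\colon k^{*}\to G$ such that $\lim_{t\to 0}\lambda(t)\cdot x$ exists in $\mathcal{R}$, and $\chi_\theta$-stable when moreover the inequality is strict whenever $\lambda$ is not contained in the stabiliser of $x$ (modulo the trivially acting $\Delta$). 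So the task reduces to a representation-theoretic reading of one-parameter subgroups of $G=\prod_{i\in Q_0}\mathrm{GL}(\alpha_i,k)$ and of the pairing $\langle\chi_\theta,-\rangle$.

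First I would set up the dictionary between one-parameter subgroups and filtrations. Up to conjugation, a one-parameter subgroup $\lambda$ of $G$ amounts to a $\mathbb{Z}$-grading $M_i=\bigoplus_{n\in\mathbb{Z}}M_i^{(n)}$ of the vector space at each vertex; put $M_i^{\geq n}:=\bigoplus_{m\geq n}M_i^{(m)}$. The condition that $\lim_{t\to 0}\lambda(t)\cdot x$ exists translates, after unwinding the conjugation action on arrows, into the statement that every arrow of $x$ sends $M_i^{\geq n}$ into $M_j^{\geq n}$, i.e. that each $M^{\geq n}=(M_i^{\geq n})_{i}$ is a subrepresentation of $M:=x$. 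Thus such a $\lambda$ is exactly a finite descending $\mathbb{Z}$-filtration of $M$ by subobjects together with a choice of splitting, and conversely any filtration of $M$ by subobjects, once split, yields such a $\lambda$.

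Next I would compute the Mumford weight. Reading off the power of $t$ in $\chi_\theta(\lambda(t))=\prod_i\det(\lambda(t)_i)^{\theta_i}$ gives $\langle\chi_\theta,\lambda\rangle=\sum_i\theta_i\sum_n n\dim M_i^{(n)}$, and a short Abel summation, using $\theta(M)=0$ (which holds because $\sum_i\theta_i\alpha_i=0$ and $M$ has dimension vector $\alpha$), rewrites this as
\[
\langle\chi_\theta,\lambda\rangle=\sum_{n\in\mathbb{Z}}\theta\bigl(M^{\geq n}\bigr),
\]
a finite sum since $M^{\geq n}=M$ for $n\ll 0$ and $M^{\geq n}=0$ for $n\gg 0$. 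With this in hand both implications are immediate. If $M$ is $\theta$-semistable in the categorical sense, then each $\theta(M^{\geq n})\geq 0$, so $\langle\chi_\theta,\lambda\rangle\geq 0$ for all admissible $\lambda$ and $x$ is $\chi_\theta$-semistable. Conversely, given any subobject $M'\subseteq M$, choose a splitting $M_i=M_i'\oplus C_i$ and let $\lambda(t)$ act by $t$ on $M_i'$ and trivially on $C_i$; this is an admissible one-parameter subgroup with weight $\langle\chi_\theta,\lambda\rangle=\theta(M')$, so $\chi_\theta$-semistability of $x$ forces $\theta(M')\geq 0$, while $\theta(M)=0$ is automatic, so $M$ is $\theta$-semistable. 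The stable case is the same with strict inequalities: a proper nonzero subobject with $\theta(M')=0$ produces an admissible $\lambda$ of weight $0$ which is not in the stabiliser of $x$ modulo $\Delta$, contradicting $\chi_\theta$-stability; conversely $\theta$-stability rules out all such $\lambda$, which together with finiteness of the stabiliser gives $\chi_\theta$-stability. Finally, if $\theta$ is generic then $\mathcal{R}^{ss}=\mathcal{R}^{s}$, so $q\colon\mathcal{R}^{s}\to\mathcal{R}{/\!\!/}_{\!\!\chi}G$ is a geometric quotient, and since the $G$-orbits in $\mathcal{R}$ are exactly the isomorphism classes of representations of $A$, the quotient parametrises the $\theta$-stable modules up to isomorphism.

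I expect the main obstacle to be the stable case, in particular matching the GIT definition of stability (closed $G$-orbit in $\mathcal{R}^{ss}$ and finite stabiliser) with the categorical condition ``no proper nonzero subobject of $\theta$-degree zero'': one has to track the trivially acting subgroup $\Delta$ carefully when applying the numerical criterion, and the ``closed orbit'' reformulation really rests on the Jordan--H\"older theory of the abelian category of $\theta$-semistable modules (closed orbits corresponding to polystable representations). By comparison, the filtration dictionary and the weight computation are routine once the conventions (signs of the grading, direction of the arrows) are pinned down.
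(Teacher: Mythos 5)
Your argument is correct and is precisely the proof the paper defers to: the notes state this result without proof, citing King \cite{King}, and your route (Mumford's numerical criterion, the dictionary between one-parameter subgroups with existing limits and split filtrations by subrepresentations, and the weight computation $\langle\chi_\theta,\lambda\rangle=\sum_n\theta(M^{\geq n})$ using $\theta(M)=0$) is exactly King's Propositions 2.5 and 3.1. The only point to pin down, as you note, is that in the stable case the numerical criterion is phrased as ``the only admissible $\lambda$ of weight zero lie in $\Delta$,'' which for stable points coincides with your stabiliser formulation.
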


Thus we use the machinery from the GIT side to define for quivers the following:
\begin{defin}
For $A=kQ/R$ choose dimension vector $\alpha$ and character $\theta$ satisfying $\sum_{i\in Q_0}\alpha_i\theta_i=0$.  Denote $\t{Rep}(A,\alpha)=\mathcal{R}$ and  $G=\prod_{i\in Q_0} \t{GL}(\alpha_i,k)$.  We define 
\[
\c{M}^{ss}_{\theta}(A,\alpha):=\mathcal{R}{/\!\!/}_{\!\!\chi_\theta} G:=\t{Proj}\left(\bigoplus_{n\geq 0} k[\mathcal{R}]^{G,\chi^n} \right)
\]
and call it the moduli space of $\theta$-semistable representations of dimension vector $\alpha$.
 \end{defin} 
 This is by definition projective over the ordinary quotient $\mathcal{R}{/\!\!/} G=\Spec k[\mathcal{R}]^{G}$. Hence for example if $k[\mathcal{R}]^{G}=k$ then $\c{M}^{ss}_{\theta}(A,\alpha)$ is a projective variety, but in our setting this will not be the case.
 
\begin{remark}\label{remark warning for later}
If $\Spec Z$ is a singularity that we would like to resolve, ideally we would like the zeroth piece $k[\mathcal{R}]^G$ to be $Z$, since then the moduli space is projective over $\Spec Z$.  However, even in cases where we use NCCRs to resolve singularities, $k[\mathcal{R}]^G$ might not be $Z$ (see Exercises 3).
\end{remark}

Note that $\c{M}^{ss}_{\theta}(A,\alpha)$ may be empty, and in fact it is often very difficult to determine whether this is true or not.  In our NCCR setting, this won't be a problem since by choice of dimension vector later it will always contain the Azumaya locus.  Even when $\c{M}^{ss}_{\theta}(A,\alpha)$ is not empty, computing it explicitly can often be hard.

We remark that $\c{M}^{ss}_{\theta}(A,\alpha)$ is a moduli space in the strict sense that it represents a functor.  This functorial viewpoint is very important, but in these notes we gloss over it, and the other related technical issues.

\subsection{Examples}
We want to input an NCCR, and by studying the moduli we hope to output some crepant resolution.  The problem is that so far we have no evidence that this is going to work (!), so in this section we will explicitly compute some moduli spaces to check that this strategy is not entirely unreasonable.  

We assume that we have already presented our NCCR as $A=kQ/R$.  We follow the exposition from King:
\begin{quotation}
``To specify such a moduli space we must give a dimension vector $\alpha$ and a weight vector (or `character') $\theta$ satisfying $\sum_{i\in Q_0}\theta_i\alpha_i=0$.  The moduli space of $\theta$-stable $A$-modules of dimension vector $\alpha$ is then the parameter space for those $A$-modules which have no proper submodules with any dimension vector $\beta$ for which $\sum_{i\in Q_0}\theta_i\beta_i\leq 0$.''
\end{quotation}

Even although there are two choices needed to create a moduli space, namely $\alpha$ and $\theta$, for NCCRs the $\alpha$ is given naturally by the ranks of the reflexive modules that we have summed together to create the NCCR (see \S\ref{last section} later). 

Before computing examples with NCCRs, we begin with something easier.
\begin{example}\label{P1 GIT}
\t{Consider the quiver}
\[
\begin{array}{c}
\begin{tikzpicture} 
\node (C1) at (0,0) [vertex] {};
\node (C2) at (1,0) [vertex] {};
\draw [->] ($(C1)+(30:4pt)$) -- ($(C2)+(150:4pt)$);
\draw [->] ($(C1)+(-30:4pt)$) -- ($(C2)+(210:4pt)$);
\end{tikzpicture}
\end{array}
\]
with no relations.  Choose $\alpha=(1,1)$ and $\theta=(-1,1)$.  With these choices, since $\sum \theta_i\alpha_i=0$ we can form the moduli space.  Now a representation of dimension vector $\alpha=(1,1)$ is $\theta$-semistable by definition if $\theta(M^\prime)\geq 0$ for all subobjects $M^\prime$.  But the only possible subobjects in this example are of dimension vector $(0,0)$,  $(0,1)$ and $(1,0)$, and $\theta$ is $\geq 0$ on all but the last (in fact its easy to see that $\theta$ is generic in this example).  Thus a representation of dimension vector $(1,1)$ is $\theta$-semistable if and only if it has no submodules of dimension vector $(1,0)$.    Now take an arbitrary representation $M$ of dimension vector $(1,1)$  
\[
\begin{array}{c}
\begin{tikzpicture} 
\node at (-0.74,0) {$M=$};
\node (C1) at (0,0)  {$\mathbb{C}$};
\node (C2) at (1.1,0)  {$\mathbb{C}$};
\draw [->] ($(C1)+(25:6pt)$) -- node [gap] {$\scriptstyle a$} ($(C2)+(155:6pt)$);
\draw [->] ($(C1)+(-25:6pt)$) -- node [gap] {$\scriptstyle b$} ($(C2)+(205:6pt)$);
\end{tikzpicture}
\end{array}
\]
Notice that $M$ has a submodule of dimension vector $(1,0)$ if and only if $a=b=0$, since the diagram
\[
\begin{array}{c}
\begin{tikzpicture} 
\node (C1) at (0,0)  {$\mathbb{C}$};
\node (C2) at (1.1,0)  {$\mathbb{C}$};
\draw [->] ($(C1)+(25:6pt)$) -- node [gap] {$\scriptstyle a$} ($(C2)+(155:6pt)$);
\draw [->] ($(C1)+(-25:6pt)$) -- node [gap] {$\scriptstyle b$} ($(C2)+(205:6pt)$);
\node (D1) at (0,-1)  {$\mathbb{C}$};
\node (D2) at (1.1,-1)  {$0$};
\draw [->] ($(D1)+(25:6pt)$) -- node [gap] {$\scriptstyle 0$} ($(D2)+(155:6pt)$);
\draw [->] ($(D1)+(-25:6pt)$) -- node [gap] {$\scriptstyle 0$} ($(D2)+(205:6pt)$);
\draw[->] (D1) -- node[left] {$\scriptstyle \cong$} (C1);
\draw[->] (D2) -- node[right] {$\scriptstyle 0$} (C2);
\end{tikzpicture}
\end{array}
\]
must commute.   Thus by our choice of stability $\theta$,
\begin{center}
$M$ is $\theta$-semistable $\iff$ $M$ has no submodule of dim vector $(1,0)$ $\iff$ $a\neq 0$ or $b\neq 0$.
\end{center}
and so we see that the semistable objects parametrize $\mathbb{P}^1$ via the ratio $(a:b)$, so the moduli space is just $\mathbb{P}^1$.  Another way to see this: we have two open sets, one corresponding to $a\neq 0$ and the other to $b\neq 0$.  After changing basis we can set them to be the identity, and so we have
\[
\begin{array}{cc}
U_0=\{\begin{array}{c}
\begin{tikzpicture} 
\node (C1) at (0,0)  {$\mathbb{C}$};
\node (C2) at (1.1,0)  {$\mathbb{C}$};
\draw [->] ($(C1)+(25:6pt)$) -- node [gap] {$\scriptstyle 1$} ($(C2)+(155:6pt)$);
\draw [->] ($(C1)+(-25:6pt)$) -- node [gap] {$\scriptstyle b$} ($(C2)+(205:6pt)$);
\end{tikzpicture}
\end{array}\mid b\in\mathbb{C}\}&U_1=\{
\begin{array}{c}
\begin{tikzpicture} 
\node (C1) at (0,0)  {$\mathbb{C}$};
\node (C2) at (1.1,0)  {$\mathbb{C}$};
\draw [->] ($(C1)+(25:6pt)$) -- node [gap] {$\scriptstyle a$} ($(C2)+(155:6pt)$);
\draw [->] ($(C1)+(-25:6pt)$) -- node [gap] {$\scriptstyle 1$} ($(C2)+(205:6pt)$);
\end{tikzpicture}
\end{array}\mid a\in\mathbb{C}\}
\end{array}
\]
Now the gluing is given by, whenever $U_0\ni b\neq 0$
\[
U_0\ni b=\begin{array}{c}
\begin{tikzpicture} 
\node (C1) at (0,0)  {$\mathbb{C}$};
\node (C2) at (1.1,0)  {$\mathbb{C}$};
\draw [->] ($(C1)+(25:6pt)$) -- node [gap] {$\scriptstyle 1$} ($(C2)+(155:6pt)$);
\draw [->] ($(C1)+(-25:6pt)$) -- node [gap] {$\scriptstyle b$} ($(C2)+(205:6pt)$);
\end{tikzpicture}
\end{array}
\cong
\begin{array}{c}
\begin{tikzpicture} 
\node (C1) at (0,0)  {$\mathbb{C}$};
\node (C2) at (1.1,0)  {$\mathbb{C}$};
\draw [->] ($(C1)+(25:6pt)$) -- node [gap] {$\scriptstyle b^{-1}$} ($(C2)+(155:6pt)$);
\draw [->] ($(C1)+(-25:6pt)$) -- node [gap] {$\scriptstyle 1$} ($(C2)+(205:6pt)$);
\end{tikzpicture}
\end{array}=b^{-1}\in U_1
\]
which is evidently just $\mathbb{P}^1$.
\end{example}

Although the following is not an NCCR, it has already appeared on the example sheets (Common Examples 1).  Whenever resolving singularities, it is traditional to begin by blowing up the origin of $\mathbb{C}^2$.
\begin{example}\label{blowup GIT}
\t{Consider the quiver with relations
\[
\begin{array}{cc}
\begin{array}{c}
\begin{tikzpicture} [bend angle=45, looseness=1]
\node (C1) at (0,0) [vertex] {};
\node (C2) at (1.5,0)  [vertex] {};
\draw [->,bend left] (C1) to node[gap]  {$\scriptstyle a$} (C2);
\draw [->,bend left=20,looseness=1] (C1) to node[gap]  {$\scriptstyle b$} (C2);
\draw [->,bend left=20,looseness=1] (C2) to node[gap]  {$\scriptstyle t$} (C1);
\end{tikzpicture}
\end{array} &
atb=bta
\end{array}
\]
and again choose dimension vector $(1,1)$ and stability $\theta_0=(-1,1)$.  Exactly as above if
\[
M=\begin{array}{c}
\begin{tikzpicture} [bend angle=40, looseness=1]
\node (C1) at (0,0)  {$\mathbb{C}$};
\node (C2) at (1.5,0)  {$\mathbb{C}$};
\draw [->,bend left] (C1) to node[gap]  {$\scriptstyle a$} (C2);
\draw [->,bend left=20,looseness=1] (C1) to node[gap]  {$\scriptstyle b$} (C2);
\draw [->,bend left=20,looseness=1] (C2) to node[gap]  {$\scriptstyle t$} (C1);
\end{tikzpicture}
\end{array}
\]
then 
\begin{center}
$M$ is $\theta$-semistable $\iff$ $M$ has no submodule of dim vector $(1,0)$ $\iff$ $a\neq 0$ or $b\neq 0$.
\end{center}
For the first open set in the moduli $U_0$ (when $a\neq 0$): after changing basis so that $a=1$ we see that the open set is parameterized by the two scalars $b$ and $t$ subject to the single relation (substituting $a=1$ into the quiver relations) $tb=bt$.  But this always holds, thus the open set $U_0$ is just $\mathbb{C}^2$ with co-ordinates $b,t$.  We write this as $\mathbb{C}^2_{b,t}$.  Similarly for the other open set:
\[
\begin{array}{cc}
\begin{array}{c}
\begin{tikzpicture} [bend angle=40, looseness=1]
\node (C1) at (0,0)  {$\mathbb{C}$};
\node (C2) at (1.5,0)  {$\mathbb{C}$};
\draw [->,bend left] (C1) to node[gap]  {$\scriptstyle 1$} (C2);
\draw [->,bend left=20,looseness=1] (C1) to node[gap]  {$\scriptstyle b$} (C2);
\draw [->,bend left=20,looseness=1] (C2) to node[gap]  {$\scriptstyle t$} (C1);
\end{tikzpicture}
\end{array}
&
\begin{array}{c}
\begin{tikzpicture} [bend angle=40, looseness=1]
\node (C1) at (0,0)  {$\mathbb{C}$};
\node (C2) at (1.5,0)  {$\mathbb{C}$};
\draw [->,bend left] (C1) to node[gap]  {$\scriptstyle a$} (C2);
\draw [->,bend left=20,looseness=1] (C1) to node[gap]  {$\scriptstyle 1$} (C2);
\draw [->,bend left=20,looseness=1] (C2) to node[gap]  {$\scriptstyle t$} (C1);
\end{tikzpicture}
\end{array}
\\
&\\
U_0=\mathbb{C}^2_{b,t}&U_1=\mathbb{C}^2_{a,t}. 
\end{array}
\]
Now the gluing is given by, whenever $b\neq 0$
\[
U_0\ni(b,t)=\begin{array}{c}
\begin{tikzpicture} [bend angle=40, looseness=1]
\node (C1) at (0,0)  {$\mathbb{C}$};
\node (C2) at (1.5,0)  {$\mathbb{C}$};
\draw [->,bend left] (C1) to node[gap]  {$\scriptstyle 1$} (C2);
\draw [->,bend left=20,looseness=1] (C1) to node[gap]  {$\scriptstyle b$} (C2);
\draw [->,bend left=20,looseness=1] (C2) to node[gap]  {$\scriptstyle t$} (C1);
\end{tikzpicture}
\end{array}
\cong\begin{array}{c}
\begin{tikzpicture} [bend angle=40, looseness=1]
\node (C1) at (0,0)  {$\mathbb{C}$};
\node (C2) at (1.5,0)  {$\mathbb{C}$};
\draw [->,bend left] (C1) to node[gap]  {$\scriptstyle b^{-1}$} (C2);
\draw [->,bend left=20,looseness=1] (C1) to node[gap]  {$\scriptstyle 1$} (C2);
\draw [->,bend left=20,looseness=1] (C2) to node[gap]  {$\scriptstyle bt$} (C1);
\end{tikzpicture}
\end{array}
=(b^{-1},bt)\in U_1
\]
and so we see that this is just the blowup of the origin of $\mathbb{C}^2$.}
\end{example}

With the two above examples in hand, we can now compute an example where an NCCR gives a crepant resolution. There are many more examples in the exercises.  We begin in dimension two.

\begin{example}\label{Z3 GIT}
We return to the running example from \S\ref{L1} and \S\ref{L2}, namely $R:=\mathbb{C}[a,b,c]/(ab-c^3)$.  We now know that $\End_R(R\oplus (a,c)\oplus (a,c^2))$ is a NCCR, and we can believe (from \ref{quiver of Z3}) that it is presented as
\[
\begin{array}{cc}
&\\
\begin{array}{c}
\begin{tikzpicture} [bend angle=45, looseness=1.2]
\node[name=s,regular polygon, regular polygon sides=3, minimum size=2cm] at (0,0) {}; 
\node (C1) at (s.corner 1)  [vertex] {};
\node (C2) at (s.corner 2) [vertex] {};
\node (C3) at (s.corner 3) [vertex] {};
\draw[->] (C2) -- node  [gap] {$\scriptstyle c_1$} (C1); 
\draw[->] (C1) -- node  [gap] {$\scriptstyle c_2$} (C3);
\draw[->] (C3) -- node  [gap] {$\scriptstyle c_3$} (C2); 
\draw [->,bend right] (C1) to node [left] {$\scriptstyle a_1$} (C2);
\draw [->,bend right] (C2) to node [below] {$\scriptstyle a_3$} (C3);
\draw [->,bend right] (C3) to node [right] {$\scriptstyle a_2$} (C1);
\end{tikzpicture}
\end{array}
&
\begin{array}{c}
c_1a_1=a_3c_3\\
c_2a_2=a_1c_1\\
c_3a_3=a_2c_2
\end{array}
\end{array}
\]
We consider the dimension vector $(1,1,1)$ (corresponding to the ranks of the CM modules that we have summed), and stability $(-2,1,1)$, where the $-2$ sits at the bottom left vertex.  As above, for a module to be $\theta$-stable corresponds to there being, for every vertex $i$, a non-zero path from the bottom left vertex  to $i$.  Thus we have three open sets, corresponding to the following pictures 
\[
\begin{array}{ccc}
\begin{array}{c}
\begin{tikzpicture} [bend angle=45, looseness=1.2]
\node[name=s,regular polygon, regular polygon sides=3, minimum size=1.5cm] at (0,0) {}; 
\node (C1) at (s.corner 1)  [vertex] {};
\node (C2) at (s.corner 2) [vertex] {};
\node (C3) at (s.corner 3) [vertex] {};
\draw[->] (C2) -- node  [gap] {$\scriptstyle 1$} (C1); 
\draw[->] (C1) -- node  [gap] {$\scriptstyle 1$} (C3);
\draw[->] (C3) --   (C2); 
\draw [->,bend right] (C1) to  (C2);
\draw [->,bend right] (C2) to  (C3);
\draw [->,bend right] (C3) to  (C1);
\end{tikzpicture}
\end{array}
&
\begin{array}{c}
\begin{tikzpicture} [bend angle=45, looseness=1.2]
\node[name=s,regular polygon, regular polygon sides=3, minimum size=1.5cm] at (0,0) {}; 
\node (C1) at (s.corner 1)  [vertex] {};
\node (C2) at (s.corner 2) [vertex] {};
\node (C3) at (s.corner 3) [vertex] {};
\draw[->] (C2) -- node  [gap] {$\scriptstyle 1$} (C1); 
\draw[->] (C1) --   (C3);
\draw[->] (C3) --   (C2); 
\draw [->,bend right] (C1) to  (C2);
\draw [->,bend right] (C2) to node [below] {$\scriptstyle 1$} (C3);
\draw [->,bend right] (C3) to  (C1);
\end{tikzpicture}
\end{array}
&
\begin{array}{c}
\begin{tikzpicture} [bend angle=45, looseness=1.2]
\node[name=s,regular polygon, regular polygon sides=3, minimum size=1.5cm] at (0,0) {}; 
\node (C1) at (s.corner 1)  [vertex] {};
\node (C2) at (s.corner 2) [vertex] {};
\node (C3) at (s.corner 3) [vertex] {};
\draw[->] (C2) --    (C1); 
\draw[->] (C1) --   (C3);
\draw[->] (C3) --    (C2); 
\draw [->,bend right] (C1) to  (C2);
\draw [->,bend right] (C2) to node [below] {$\scriptstyle 1$} (C3);
\draw [->,bend right] (C3) to node [right] {$\scriptstyle 1$} (C1);
\end{tikzpicture}
\end{array}
\end{array}
\]
Accounting for the relations, these open sets are parameterized by
\[
\begin{array}{ccc}
\begin{array}{c}
\begin{tikzpicture} [bend angle=45, looseness=1.2]
\node[name=s,regular polygon, regular polygon sides=3, minimum size=1.5cm] at (0,0) {}; 
\node (C1) at (s.corner 1)  [vertex] {};
\node (C2) at (s.corner 2) [vertex] {};
\node (C3) at (s.corner 3) [vertex] {};
\draw[->] (C2) -- node  [gap] {$\scriptstyle 1$} (C1); 
\draw[->] (C1) -- node  [gap] {$\scriptstyle 1$} (C3);
\draw[->] (C3) -- node  [gap] {$\scriptstyle a$} (C2); 
\draw [->,bend right] (C1) to node [left] {$\scriptstyle ab$} (C2);
\draw [->,bend right] (C2) to node [below] {$\scriptstyle b$} (C3);
\draw [->,bend right] (C3) to node [right] {$\scriptstyle ab$} (C1);
\end{tikzpicture}
\end{array}
&
\begin{array}{c}
\begin{tikzpicture} [bend angle=45, looseness=1.2]
\node[name=s,regular polygon, regular polygon sides=3, minimum size=1.5cm] at (0,0) {}; 
\node (C1) at (s.corner 1)  [vertex] {};
\node (C2) at (s.corner 2) [vertex] {};
\node (C3) at (s.corner 3) [vertex] {};
\draw[->] (C2) -- node  [gap] {$\scriptstyle 1$} (C1); 
\draw[->] (C1) -- node  [gap] {$\scriptstyle c$} (C3);
\draw[->] (C3) -- node  [gap] {$\scriptstyle cd$} (C2); 
\draw [->,bend right] (C1) to node [left] {$\scriptstyle cd$} (C2);
\draw [->,bend right] (C2) to node [below] {$\scriptstyle 1$} (C3);
\draw [->,bend right] (C3) to node [right] {$\scriptstyle d$} (C1);
\end{tikzpicture}
\end{array}
&
\begin{array}{c}
\begin{tikzpicture} [bend angle=45, looseness=1.2]
\node[name=s,regular polygon, regular polygon sides=3, minimum size=1.5cm] at (0,0) {}; 
\node (C1) at (s.corner 1)  [vertex] {};
\node (C2) at (s.corner 2) [vertex] {};
\node (C3) at (s.corner 3) [vertex] {};
\draw[->] (C2) -- node  [gap] {$\scriptstyle e$} (C1); 
\draw[->] (C1) -- node  [gap] {$\scriptstyle ef$} (C3);
\draw[->] (C3) -- node  [gap] {$\scriptstyle ef$} (C2); 
\draw [->,bend right] (C1) to node [left] {$\scriptstyle f$} (C2);
\draw [->,bend right] (C2) to node [below] {$\scriptstyle 1$} (C3);
\draw [->,bend right] (C3) to node [right] {$\scriptstyle 1$} (C1);
\end{tikzpicture}
\end{array}
\end{array}
\]
The first, $U_1$, is just affine space $\mathbb{A}^2$ with coordinates $a,b$ (written $\mathbb{A}_{a,b}^2$), whilst the second and third are  $U_2=\mathbb{A}_{c,d}^2$ and $U_3=\mathbb{A}_{e,f}^2$ respectively.  We immediately see that our moduli space is smooth, since it is covered by three affine opens, each of which is smooth.  

Now we ask how these open sets glue.  Visually, it is clear that $U_1$ and $U_2$ glue if and only if the arrow $b$ in $U_1$ is not equal to zero (in which case we can base change to make it the identity, and hence land in $U_2$).  Similarly $U_2$ and $U_3$ glue if and only if the arrow $d$ in $U_2$ is nonzero.  In principle there could also be a glue between $U_1$ and $U_3$, but for these to glue certainly the arrows $b$ and $ab$ in $U_1$ must be nonzero, hence the glue between $U_1$ and $U_3$ is already covered by the previous two glues.  Explicitly, the two glues are
\[
\mathbb{A}^2_{a,b}\ni(a,b)=
\begin{array}{c}
\begin{tikzpicture} [bend angle=45, looseness=1.2]
\node[name=s,regular polygon, regular polygon sides=3, minimum size=1.5cm] at (0,0) {}; 
\node (C1) at (s.corner 1)  [vertex] {};
\node (C2) at (s.corner 2) [vertex] {};
\node (C3) at (s.corner 3) [vertex] {};
\draw[->] (C2) -- node  [gap] {$\scriptstyle 1$} (C1); 
\draw[->] (C1) -- node  [gap] {$\scriptstyle 1$} (C3);
\draw[->] (C3) -- node  [gap] {$\scriptstyle a$} (C2); 
\draw [->,bend right] (C1) to node [left] {$\scriptstyle ab$} (C2);
\draw [->,bend right] (C2) to node [below] {$\scriptstyle b$} (C3);
\draw [->,bend right] (C3) to node [right] {$\scriptstyle ab$} (C1);
\end{tikzpicture}
\end{array}
\cong
\begin{array}{c}
\begin{tikzpicture} [bend angle=45, looseness=1.2]
\node[name=s,regular polygon, regular polygon sides=3, minimum size=1.5cm] at (0,0) {}; 
\node (C1) at (s.corner 1)  [vertex] {};
\node (C2) at (s.corner 2) [vertex] {};
\node (C3) at (s.corner 3) [vertex] {};
\draw[->] (C2) -- node  [gap] {$\scriptstyle 1$} (C1); 
\draw[->] (C1) -- node  [gap] {$\scriptstyle b^{-1}$} (C3);
\draw[->] (C3) -- node  [gap] {$\scriptstyle ab$} (C2); 
\draw [->,bend right] (C1) to node [left] {$\scriptstyle ab$} (C2);
\draw [->,bend right] (C2) to node [below] {$\scriptstyle 1$} (C3);
\draw [->,bend right] (C3) to node [right] {$\scriptstyle ab^2$} (C1);
\end{tikzpicture}
\end{array}
=(b^{-1},ab^2)
\in \mathbb{A}^2_{c,d}
\]
and
\[
\mathbb{A}^2_{c,d}\ni(c,d)=
\begin{array}{c}
\begin{tikzpicture} [bend angle=45, looseness=1.2]
\node[name=s,regular polygon, regular polygon sides=3, minimum size=1.5cm] at (0,0) {}; 
\node (C1) at (s.corner 1)  [vertex] {};
\node (C2) at (s.corner 2) [vertex] {};
\node (C3) at (s.corner 3) [vertex] {};
\draw[->] (C2) -- node  [gap] {$\scriptstyle 1$} (C1); 
\draw[->] (C1) -- node  [gap] {$\scriptstyle c$} (C3);
\draw[->] (C3) -- node  [gap] {$\scriptstyle cd$} (C2); 
\draw [->,bend right] (C1) to node [left] {$\scriptstyle cd$} (C2);
\draw [->,bend right] (C2) to node [below] {$\scriptstyle 1$} (C3);
\draw [->,bend right] (C3) to node [right] {$\scriptstyle d$} (C1);
\end{tikzpicture}
\end{array}
\cong
\begin{array}{c}
\begin{tikzpicture} [bend angle=45, looseness=1.2]
\node[name=s,regular polygon, regular polygon sides=3, minimum size=1.5cm] at (0,0) {}; 
\node (C1) at (s.corner 1)  [vertex] {};
\node (C2) at (s.corner 2) [vertex] {};
\node (C3) at (s.corner 3) [vertex] {};
\draw[->] (C2) -- node  [gap] {$\scriptstyle d^{-1}$} (C1); 
\draw[->] (C1) -- node  [gap] {$\scriptstyle cd$} (C3);
\draw[->] (C3) -- node  [gap] {$\scriptstyle cd$} (C2); 
\draw [->,bend right] (C1) to node [left] {$\scriptstyle cd^2$} (C2);
\draw [->,bend right] (C2) to node [below] {$\scriptstyle 1$} (C3);
\draw [->,bend right] (C3) to node [right] {$\scriptstyle 1$} (C1);
\end{tikzpicture}
\end{array}
=(d^{-1},cd^2)
\in \mathbb{A}^2_{e,f}
\]
The upshot is that the moduli space looks (very roughly) like the following:
\[
\begin{array}{c}
\\
\begin{tikzpicture}[decoration={markings,mark=at position 0.6cm with {\arrow[red]{stealth};}, mark=at position -0.5cm with {\arrowreversed[red]{stealth};}} ]   
\draw[red,postaction={decorate}] (-0.1,0,0.225) to [bend left=25] node[above, pos=0.15] {$\scriptstyle d$} node[above, pos=0.85] {$\scriptstyle e$} node[pos=0.48, above] {$\scriptstyle \mathbb{P}^1$} (3,0,0);
\draw[red,postaction={decorate}] (-3,0,0) to [bend left=25] node[above, pos=0.15] {$\scriptstyle b$} node[above, pos=0.85] {$\scriptstyle c$} node[pos=0.5, above] {$\scriptstyle \mathbb{P}^1$} (0.3,0,0.25);
\filldraw [black] (0,0,0) circle (0.5pt);
\filldraw [black] (2.9,0.045,0) circle (0.5pt);
\filldraw [black] (-2.9,0.045,0) circle (0.5pt);
\draw[densely dotted,black] (-2.5,0) ellipse (2 and 1);
\draw[densely dotted,black] (0,0) ellipse (2 and 1);
\draw[densely dotted,black] (2.5,0) ellipse (2 and 1);
\draw[->] (-2.7,0,0.3) to [bend right=10] node[right, pos=0.9] {$\scriptstyle a$} (-3.3,0.5,0);
\draw[->] (2.92,0,0.3) to [bend left=10] node[left, pos=0.9] {$\scriptstyle f$} (3.3,0.5,0);
\end{tikzpicture} 
\end{array}
\]
where the black dots correspond to the origins of the co-ordinate charts, and the red lines glue to give two $\mathbb{P}^1$s.  

It is instructive to see explicitly why the two red $\mathbb{P}^1$s form the exceptional divisor.  By the nature of the Proj construction, there is a natural map from the moduli space to $\Spec \mathbb{C}[\mathcal{R}]^G$.  What is this map?  First notice that  $\mathbb{C}[\mathcal{R}]$ is, by definition, the commutative ring $\mathbb{C}[c_1,c_2,c_3,a_1,a_2,a_3]/(c_1a_1=c_2a_2=c_3a_3)$ where we have just taken the arrows and relations and made everything commute.  Now since the dimension vector is $\alpha=(1,1,1)$, the group $G$ is precisely $\GL(1,\mathbb{C})\times \GL(1,\mathbb{C})\times \GL(1,\mathbb{C})=\mathbb{C}^*\times\mathbb{C}^*\times\mathbb{C}^*$.  The action is by base change $g\cdot a=g_{t(a)}^{-1}ag_{h(a)}$, which means
\begin{eqnarray*}
(\lambda_1,\lambda_2,\lambda_3)\cdot c_1:=\lambda_1^{-1}c_1\lambda_2\\
(\lambda_1,\lambda_2,\lambda_3)\cdot c_2:=\lambda_2^{-1}c_2\lambda_3\\
(\lambda_1,\lambda_2,\lambda_3)\cdot c_3:=\lambda_3^{-1}c_3\lambda_1\\
(\lambda_1,\lambda_2,\lambda_3)\cdot a_1:=\lambda_2^{-1}a_1\lambda_1\\
(\lambda_1,\lambda_2,\lambda_3)\cdot a_2:=\lambda_3^{-1}a_2\lambda_2\\
(\lambda_1,\lambda_2,\lambda_3)\cdot a_3:=\lambda_1^{-1}a_3\lambda_3
\end{eqnarray*}
For a product to be invariant under this action the $\lambda$'s must cancel, and visually these correspond to cycles in the quiver.  Here, the invariants are generated by $A:=c_1c_2c_3$, $B:=a_1a_2a_3$ and $C:=c_1a_1=c_2a_2=c_3a_3$.  Note that $AB=C^3$, so $\mathbb{C}[\mathcal{R}]^G\cong\mathbb{C}[A,B,C]/(AB-C^3)$, which is our base singularity.

Using this information, the natural map takes a stable representation to the point $(c_1c_2c_3,a_1a_2a_3,c_1a_1)$ of $\Spec\mathbb{C}[\mathcal{R}]^G$, which in the case of the three open sets gives
\[
\begin{array}{ccc}
\begin{array}{c}
\begin{tikzpicture} [bend angle=45, looseness=1.2]
\node[name=s,regular polygon, regular polygon sides=3, minimum size=1.5cm] at (0,0) {}; 
\node (C1) at (s.corner 1)  [vertex] {};
\node (C2) at (s.corner 2) [vertex] {};
\node (C3) at (s.corner 3) [vertex] {};
\draw[->] (C2) -- node  [gap] {$\scriptstyle 1$} (C1); 
\draw[->] (C1) -- node  [gap] {$\scriptstyle 1$} (C3);
\draw[->] (C3) -- node  [gap] {$\scriptstyle a$} (C2); 
\draw [->,bend right] (C1) to node [left] {$\scriptstyle ab$} (C2);
\draw [->,bend right] (C2) to node [below] {$\scriptstyle b$} (C3);
\draw [->,bend right] (C3) to node [right] {$\scriptstyle ab$} (C1);
\draw[|->] ($(s.side 2)+(0,-0.9)$) -- ($(s.side 2)+(0,-1.5)$);
\node (a) at ($(s.side 2)+(0,-1.5)$) {};
\end{tikzpicture}
\end{array}
&
\begin{array}{c}
\begin{tikzpicture} [bend angle=45, looseness=1.2]
\node[name=s,regular polygon, regular polygon sides=3, minimum size=1.5cm] at (0,0) {}; 
\node (C1) at (s.corner 1)  [vertex] {};
\node (C2) at (s.corner 2) [vertex] {};
\node (C3) at (s.corner 3) [vertex] {};
\draw[->] (C2) -- node  [gap] {$\scriptstyle 1$} (C1); 
\draw[->] (C1) -- node  [gap] {$\scriptstyle c$} (C3);
\draw[->] (C3) -- node  [gap] {$\scriptstyle cd$} (C2); 
\draw [->,bend right] (C1) to node [left] {$\scriptstyle cd$} (C2);
\draw [->,bend right] (C2) to node [below] {$\scriptstyle 1$} (C3);
\draw [->,bend right] (C3) to node [right] {$\scriptstyle d$} (C1);
\draw[|->] ($(s.side 2)+(0,-0.9)$) -- ($(s.side 2)+(0,-1.5)$);
\node (a) at ($(s.side 2)+(0,-1.5)$) {};
\end{tikzpicture}
\end{array}
&
\begin{array}{c}
\begin{tikzpicture} [bend angle=45, looseness=1.2]
\node[name=s,regular polygon, regular polygon sides=3, minimum size=1.5cm] at (0,0) {}; 
\node (C1) at (s.corner 1)  [vertex] {};
\node (C2) at (s.corner 2) [vertex] {};
\node (C3) at (s.corner 3) [vertex] {};
\draw[->] (C2) -- node  [gap] {$\scriptstyle e$} (C1); 
\draw[->] (C1) -- node  [gap] {$\scriptstyle ef$} (C3);
\draw[->] (C3) -- node  [gap] {$\scriptstyle ef$} (C2); 
\draw [->,bend right] (C1) to node [left] {$\scriptstyle f$} (C2);
\draw [->,bend right] (C2) to node [below] {$\scriptstyle 1$} (C3);
\draw [->,bend right] (C3) to node [right] {$\scriptstyle 1$} (C1);
\draw[|->] ($(s.side 2)+(0,-0.9)$) -- ($(s.side 2)+(0,-1.5)$);
\node (a) at ($(s.side 2)+(0,-1.5)$) {};
\end{tikzpicture}
\end{array}\\
(a,a^2b^3,ab) &(c^2d,cd^2,cd)&(e^3f^2,f,ef)
\end{array}
\]
Now we look above the singular point $(0,0,0)$, and in $U_1$ we see $\{ (0,b)\mid b\in\mathbb{C}\}$, in $U_2$ we see $\{ (c,d)\mid cd=0\}$, and in $U_3$ we see $\{ (e,0)\mid e\in\mathbb{C}\}$.   Thus, the red lines in the rough picture constitute the exceptional locus, and they are the union of two $\mathbb{P}^1$s.  The space is smooth, and is in fact the minimal resolution of our original motivating singularity.
\end{example}

\begin{remark}\label{star gen}
There is a pattern evident in the previous examples.  If we consider dimension vector $(1,1,\hdots,1)$ and stability condition $(-n,1,\hdots,1)$, where the $-n$ corresponds to a vertex $\star$, then a module $M$ of dimension vector $(1,1,\hdots,1)$ is $\theta$-stable if and only if for every vertex in the quiver representation of $M$, there is a non-zero path from $\star$ to that vertex. This is quite a pleasant combinatorial exercise (see Exercise~\ref{Ex3.16}).
\end{remark}

\begin{example}\label{SSP text} (The suspended pinch point). 
This example illustrates the phenomenon of non-isomorphic crepant resolutions in dimension three.  The example to consider is $R:=\mathbb{C}[u,v,x,y]/(uv-x^2y)$.  This has a 1-dimensional singular locus, namely the $y$-axis $u=v=x=0$.  

Here $\End_R(R\oplus (u,x)\oplus (u,x^2))$ is a NCCR, and in fact it can be presented as 
\[
\begin{array}{cc}
\begin{array}{c}
\begin{tikzpicture} [bend angle=45, looseness=1.2]
\node[name=s,regular polygon, regular polygon sides=3, minimum size=2cm] at (0,0) {}; 
\node (C1) at (s.corner 1) [vertex] {};
\node (C2) at (s.corner 2) [cvertex] {};
\node (C3) at (s.corner 3) [vertex] {};
\draw[->] (C2) -- node  [gap] {$\scriptstyle c_1$} (C1); 
\draw[->] (C1) -- node  [gap] {$\scriptstyle c_2$} (C3);
\draw[->] (C3) -- node  [gap] {$\scriptstyle c_3$} (C2); 
\draw [->,bend right] (C1) to node [left] {$\scriptstyle a_1$} (C2);
\draw [->,bend right] (C2) to node [below] {$\scriptstyle a_3$} (C3);
\draw [->,bend right] (C3) to node [right] {$\scriptstyle a_2$} (C1);
\node (C1a) at ($(s.corner 1)+(90:2pt)$) {};
\draw[<-]  (C1a) edge [in=55,out=125,loop,looseness=10] node[above] {$\scriptstyle z$} (C1a);
\end{tikzpicture}
\end{array} &
\begin{array}{l}
zc_2=c_2c_3a_3\\
za_1=a_1a_3c_3\\
c_1z=a_3c_3c_1\\
a_2z=c_3a_3a_2\\
c_2a_2=a_1c_1\\
a_3a_2c_2=c_1a_1a_3\\
c_3c_1a_1=a_2c_2c_3
\end{array}
\end{array}
\]
Since this is a NCCR, we pick the dimension vector corresponding to the ranks of the CM modules, which in this case is $(1,1,1)$.  By a similar calculation as in \ref{Z3 GIT} (but bearing in mind \ref{star gen}), computing the moduli space for the stability $(-2,1,1)$ then looking above the $y$-axis (=the singular locus) we see 
\[
\begin{array}{c}
\\
\begin{tikzpicture} [bend angle=25, looseness=1,transform shape, rotate=-10]
\fill[fill=blue!50] (0,0,1) -- (0,0,-1) to [bend left=25] (2,0,-1) -- (2,0,1) to [bend right=25] (0,0,1);
\foreach \y in {0.1,0.2,...,1}{ 
\draw[very thin,blue!10] (0,0,\y) to [bend left=25] (2,0,\y);
\draw[very thin,blue!10] (0,0,-\y) to [bend left=25] (2,0,-\y);}
\draw[red] (0,0,0) to [bend left=25] (2,0,0);
\draw[red] (-1.8,0,0) to [bend left=25] (0.2,0,0);
\end{tikzpicture}
\end{array}
\]
where the red lines indicate the two $\mathbb{P}^1$s that are above the origin.  However, if taking the stability $(-1,2,-1)$ (where the $2$ is on the top vertex) and looking above the $y$-axis gives the picture
\[
\begin{array}{c}
\\
\begin{array}{c}
\begin{tikzpicture} [bend angle=25, looseness=1,transform shape, rotate=-10]
\fill[fill=blue!50] (0,0,1) -- (0,0,-1) to [bend left=25] (2,0,-1) -- (2,0,1) to [bend right=25] (0,0,1);
\foreach \y in {0,0.1,0.2,...,1}{ 
\draw[very thin,blue!10] (0,0,\y) to [bend left=25] (2,0,\y);
\draw[very thin,blue!10] (0,0,-\y) to [bend left=25] (2,0,-\y);}
\draw[red] (0,0,0) to [bend left=25] (1,0,-0.3);
\draw[red] (0.8,0,-0.3) to [bend left=25] (2,0,0);
\end{tikzpicture}
\end{array}
\end{array}
\]
This is recommended as an instructive exercise in quiver GIT (Exercise~\ref{Ex3.14} is similar). The left hand curve in our original picture has been flopped into the exceptional blue surface.  These two moduli spaces are both smooth (in fact, they are crepant resolutions of $\Spec R$), but they are not isomorphic.
\end{example}

In the language of toric geometry, the two crepant resolutions above correspond to the following pictures
\[
\begin{array}{ccc}
\begin{array}{c}
\begin{tikzpicture} 
\draw[densely dotted] (0,-2.5,0) -- (0,0,0);
\fill[color=white] (0,0,1.5) -- (1,0,1.5) -- (0,-2.5,0) -- (0,0,1.5);
\draw[densely dotted] (0,-2.5,0) -- (0,0,1.5);
\fill[color=white] (0,0,3) -- (1,0,1.5) -- (0,-2.5,0) -- (0,0,3);
\draw[red] (0,0,1.5) -- (1,0,1.5);
\draw[red] (0,0,0) -- (1,0,1.5);
\filldraw [black] (0,0,0) circle (1.5pt);
\filldraw [black] (0,0,1.5) circle (1.5pt);
\filldraw [black] (0,0,3) circle (1.5pt);
\filldraw [black] (1,0,0) circle (1.5pt);
\filldraw [black] (1,0,1.5) circle (1.5pt);
\draw (0,0,0) -- (0,0,3);
\draw (0,0,3) -- (1,0,1.5);
\draw (1,0,1.5) -- (1,0,0);
\draw (1,0,0) -- (0,0,0);
\draw[densely dotted] (0,-2.5,0) -- (1,0,0);
\draw[densely dotted] (0,-2.5,0) -- (0,0,3);
\draw[densely dotted] (0,-2.5,0) -- (1,0,1.5);
\end{tikzpicture} 
\end{array}
&\qquad\qquad&
\begin{array}{c}
\begin{tikzpicture} 
\draw[densely dotted] (0,-2.5,0) -- (0,0,0);
\fill[color=white] (0,0,1.5) -- (1,0,0) -- (0,-2.5,0) -- (0,0,1.5);
\draw[densely dotted] (0,-2.5,0) -- (0,0,1.5);
\fill[color=white] (0,0,3) -- (1,0,1.5) -- (0,-2.5,0) -- (0,0,3);
\draw[red] (0,0,1.5) -- (1,0,1.5);
\draw[red] (0,0,1.5) -- (1,0,0);
\filldraw [black] (0,0,0) circle (1.5pt);
\filldraw [black] (0,0,1.5) circle (1.5pt);
\filldraw [black] (0,0,3) circle (1.5pt);
\filldraw [black] (1,0,0) circle (1.5pt);
\filldraw [black] (1,0,1.5) circle (1.5pt);
\draw (0,0,0) -- (0,0,3);
\draw (0,0,3) -- (1,0,1.5);
\draw (1,0,1.5) -- (1,0,0);
\draw (1,0,0) -- (0,0,0);
\draw[densely dotted] (0,-2.5,0) -- (1,0,0);
\draw[densely dotted] (0,-2.5,0) -- (0,0,3);
\draw[densely dotted] (0,-2.5,0) -- (1,0,1.5);

\end{tikzpicture} 
\end{array}\\
&&
\end{array}
\]
Since in toric geometry crepant resolutions correspond to subdividing the cone, there is one further crepant resolution.  This too can be obtained using quiver GIT (in a similar way to Exercise~\ref{Ex3.14}).

\medskip
\medskip
\noindent
{\bf Credits:} All the material on quiver GIT is based on the original paper of King \cite{King}, and also on the lecture course he gave in Bath in 2006.  Examples~\ref{P1 GIT} and \ref{blowup GIT} were two of the motivating examples for the theory. The example of the $\mathbb{Z}_3$ singularity has been studied by so many people that it is hard to properly credit each; certainly it appears in the work of Kronheimer \cite{Kronheimer} and Cassens--Slodowy \cite{CS}, and is used (and expanded) in the work of Craw--Ishii \cite{CI} and many others.  The fact that $\mathbb{C}[\c{R}]^G=\mathbb{C}[a,b,c]/(ab-c^3)$ in \ref{Z3 GIT} is a general theorem for Kleinian singularities, but also appears in this special case in the lectures of Le Bruyn \cite{leB}.  The suspended pinch point is the name given by physicists to the singularity $uv=x^2y$.  The geometry is toric and so well--known.   The NCCRs in this case can be found in either Van den Bergh \cite[\S 8]{VdBNCCR}, Nagao \cite[\S1.4]{Nag}, or \cite{IW5}, but also in many papers by various physicists.

\subsection{Exercises}

\begin{ex}\label{Ex3.12}
Consider the examples in Exercise~\ref{Ex2.16}. 
For each, consider the dimension vector $(1,1)$. There are essentially only two generic stability conditions, namely $(-1,1)$ and $(1,-1)$.  For each of the above examples, compute the spaces given by these two stabilities.  Each of the examples should illustrate a different phenomenon.
\end{ex}

\begin{ex}\label{Ex3.13}
Consider some of the examples in Exercise~\ref{Ex2.19}, namely
\[
\begin{array}{ccc}
&\\
\begin{array}{c}
\begin{tikzpicture} [bend angle=45, looseness=1]
\node (C1) at (0,0) [vertex] {};
\node (C2) at (1.5,0)  [vertex] {};
\draw [->,bend left] (C1) to node[gap]  {$\scriptstyle a$} (C2);
\draw [->,bend left=20,looseness=1] (C1) to node[gap]  {$\scriptstyle b$} (C2);
\draw [->,bend left] (C2) to node[gap]  {$\scriptstyle t$} (C1);
\draw [->,bend left=20,looseness=1] (C2) to node[gap]  {$\scriptstyle s$} (C1);
\end{tikzpicture}
\end{array}
& 
\begin{array}{c}
\begin{tikzpicture} [bend angle=45, looseness=1]
\node (C1) at (0,0) [vertex] {};
\node (C2) at (1.5,0)  [vertex] {};
\node (C1a) at (-0.1,0)  {};
\node (C2a) at (1.6,0) {};
\draw [->,bend left] (C1) to node[gap]  {$\scriptstyle a$} (C2);
\draw [->,bend left=20,looseness=1] (C1) to node[gap]  {$\scriptstyle b$} (C2);
\draw [->,bend left] (C2) to node[gap]  {$\scriptstyle t$} (C1);
\draw [->,bend left=20,looseness=1] (C2) to node[gap]  {$\scriptstyle s$} (C1);
\draw[<-]  (C1a) edge [in=140,out=-140,loop,looseness=12] node[left] {$\scriptstyle v$} (C1a);
\draw[->]  (C2a) edge [in=40,out=-40,loop,looseness=12] node[right] {$\scriptstyle w$} (C2a);
\end{tikzpicture}
\end{array}
&
\begin{array}{c}
\begin{tikzpicture} [bend angle=45, looseness=1]
\node (C1) at (0,0) [vertex] {};
\node (C2) at (1.5,0)  [vertex] {};
\node (C1a) at (-0.1,0)  {};
\node (C2a) at (1.6,0) {};
\draw [->,bend left] (C1) to node[gap]  {$\scriptstyle a$} (C2);
\draw [->,bend left=20,looseness=1] (C1) to node[gap]  {$\scriptstyle b$} (C2);
\draw [->,bend left] (C2) to node[gap]  {$\scriptstyle t$} (C1);
\draw [->,bend left=20,looseness=1] (C2) to node[gap]  {$\scriptstyle s$} (C1);
\draw[<-]  (C1a) edge [in=140,out=-140,loop,looseness=12] node[left] {$\scriptstyle v$} (C1a);
\draw[->]  (C2a) edge [in=40,out=-40,loop,looseness=12] node[right] {$\scriptstyle w$} (C2a);
\end{tikzpicture}
\end{array}
\\
asb=bsa
&
va=aw
&
va=aw
\\
atb=bta
&
vb=bw
&
vb=bw
\\
sat=tas
&
ws=sv
&
ws=sv
\\
sbt=tbs
&
wt=tv
&
wt=tv
\\
&
at=bs
&
v^2=at-bs
\\
&
ta=sb
&
w^2=ta-sb\\
&
\end{array}
\]
In each of the first three examples, compute the spaces given by the two generic stabilities for the dimension vector $(1,1)$.  Are the spaces isomorphic?  Are you sure? 
\end{ex}

\begin{ex}\label{Ex3.14}
Consider the ring $R:=\mathbb{C}[u,v,x,y]/uv=x^2y$.  In this example $\End_R(R\oplus (u,x)\oplus (u,xy))$ is a NCCR, and in fact it can be presented as 
\[
\begin{array}{cc}
&\\
\begin{array}{c}
\begin{tikzpicture} [bend angle=45, looseness=1.2]
\node[name=s,regular polygon, regular polygon sides=3, minimum size=2cm] at (0,0) {}; 
\node (C1) at (s.corner 1) [vertex] {};
\node (C2) at (s.corner 2) [vertex] {};
\node (C3) at (s.corner 3) [vertex] {};
\draw[->] (C2) -- node  [gap] {$\scriptstyle c_1$} (C1); 
\draw[->] (C1) -- node  [gap] {$\scriptstyle c_2$} (C3);
\draw[->] (C3) -- node  [gap] {$\scriptstyle c_3$} (C2); 
\draw [->,bend right] (C1) to node [left] {$\scriptstyle a_1$} (C2);
\draw [->,bend right] (C2) to node [below] {$\scriptstyle a_3$} (C3);
\draw [->,bend right] (C3) to node [right] {$\scriptstyle a_2$} (C1);
\node (C2a) at ($(s.corner 2)+(-115:2pt)$) {};
\draw[<-]  (C2a) edge [in=180,out=260,loop,looseness=15] node[below left] {$\scriptstyle y$} (C2a);
\end{tikzpicture}
\end{array} &
\begin{array}{l}
yc_1=c_1c_2a_2\\
ya_3=a_3a_2c_2\\
c_3y=a_2c_2c_3\\
a_1y=c_2a_2a_1\\
c_1a_1=a_3c_3\\
a_2a_1c_1=c_3a_3a_2\\
c_2c_3a_3=a_1c_1c_2
\end{array}
\end{array}
\]
(the relations can in fact be packaged as a superpotential).
Consider the dimension vector $(1,1,1)$.  For this example there are essentially six generic stability conditions.  Compute each.  How many non-isomorphic crepant resolutions are obtained?
\end{ex}

\begin{ex}\label{Ex3.15}
(Shows that $\Rep{/\!\!/}\GL$ might not be what you want) Consider $(Q,R)$ given by
\[
\begin{array}{cc} 
\begin{array}{c}
\begin{tikzpicture} [bend angle=45, looseness=1]
\node (C1) at (0,0) [vertex] {};
\node (C2) at (1.5,0)  [vertex] {};
\node (C1a) at (-0.1,0)  {};
\node (C2a) at (1.6,0) {};
\draw [->,bend left] (C1) to node[gap]  {$\scriptstyle a$} (C2);
\draw [->,bend left=20,looseness=1] (C1) to node[gap]  {$\scriptstyle b$} (C2);
\draw [->,bend left] (C2) to node[gap]  {$\scriptstyle t$} (C1);
\draw [->,bend left=20,looseness=1] (C2) to node[gap]  {$\scriptstyle s$} (C1);
\draw[<-]  (C1a) edge [in=140,out=-140,loop,looseness=12] node[left] {$\scriptstyle v$} (C1a);
\draw[->]  (C2a) edge [in=40,out=-40,loop,looseness=12] node[right] {$\scriptstyle w$} (C2a);
\end{tikzpicture}
\end{array}
&
{\scriptstyle\begin{array}{l}
va=aw\\
vb=bw\\
ws=sv\\
wt=tv\\
at=bs\\
ta=sb
\end{array}}
\end{array}
\]
and set $\Lambda:=kQ/R$.  
We know from Exercise~\ref{Ex2.19} that this is a NCCR over $Z(\Lambda)$.  Show that $\Rep(kQ,(1,1)){/\!\!/}\GL$ is not isomorphic to $Z(\Lambda)$.
\end{ex}

\begin{ex}\label{Ex3.16} Prove \ref{star gen}.
\end{ex}

\newpage

\section{Into Derived Categories}\label{L4}

We retain the setup that $R$ is a commutative noetherian Gorenstein normal domain, and assume that $\Lambda=\End_R(M)$ is a NCCR.  In this section, we begin to relate this homologically to geometric crepant resolutions $Y\to\Spec R$.

\subsection{Derived Categories: Motivation and Definition} Let $\mathcal{A}$ and $\mathcal{B}$ denote abelian categories.  In our case $\mathcal{A}=\mod \Lambda$ and $\mathcal{B}=\coh Y$.  It is very unlikely that $\mathcal{A}\simeq\mathcal{B}$ since usually $\coh Y$ does not have enough projectives, whereas $\mod A$ always does.  But we still want to homologically relate $Y$ and $\Lambda$.  The derived category $\D(\mathcal{A})$ solves this issue since it carries many of the invariants that we care about, whilst at the same time allowing the flexibility of $\D(\mathcal{A})\simeq \D(\mathcal{B})$ even when $\mathcal{A}\ncong\mathcal{B}$.

Now to create the derived category, we observe that we can take a projective resolution of $M\in\mod\Lambda$ and view it as a commutative diagram 
\[
\begin{tikzpicture}
\node (Z) at (-1.5,0) {$\hdots$};
\node (A) at (0,0) {$P_2$};
\node (B) at (1.5,0) {$P_1$};
\node (C) at (3,0) {$P_0$};
\node (D) at (4.5,0) {$0$};
\node (E) at (6,0) {$\hdots$};
\node (Z1) at (-1.5,-1.5) {$\hdots$};
\node (A1) at (0,-1.5) {$0$};
\node (B1) at (1.5,-1.5) {$0$};
\node (C1) at (3,-1.5) {$M$};
\node (D1) at (4.5,-1.5) {$0$};
\node (E1) at (6,-1.5) {$\hdots$};
\draw[->] (Z) -- (A);
\draw[->] (A) -- (B);
\draw[->] (B) -- (C);
\draw[->] (C) -- (D);
\draw[->] (D) -- (E);
\draw[->] (Z1) -- (A1);
\draw[->] (A1) -- (B1);
\draw[->] (B1) -- (C1);
\draw[->] (C1) -- (D1);
\draw[->] (D1) -- (E1);
\draw[->] (A) -- (A1);
\draw[->] (B) -- (B1);
\draw[->] (C) -- (C1);
\draw[->] (D) -- (D1);
\end{tikzpicture}
\]
We write this $P_\bullet\stackrel{f}{\to}M$.  This map has the property that homology $\H^i(f):\H^i(P_\bullet)\to\H^i(M)$ is an isomorphism for all $i\in\mathbb{Z}$.   

\begin{defin}
For any abelian category $\mathcal{A}$, we define the category of chain complexes, denoted $\C(\mathcal{A})$, as follows.  Objects are chain complexes, i.e.\
\[
\begin{tikzpicture}
\node (Z) at (-1.5,0) {$\hdots$};
\node (A) at (0,0) {$C_{-1}$};
\node (B) at (1.5,0) {$C_0$};
\node (C) at (3,0) {$C_1$};
\node (D) at (4.5,0) {$\hdots$};
\draw[->] (Z) -- node[above] {$\scriptstyle d_{-1}$} (A);
\draw[->] (A) -- node[above] {$\scriptstyle d_{0}$} (B);
\draw[->] (B) -- node[above] {$\scriptstyle d_{1}$} (C);
\draw[->] (C) -- node[above] {$\scriptstyle d_{2}$} (D);
\end{tikzpicture}
\]
with each $C_i\in\mathcal{A}$, such that $d_id_{i+1}=0$ for all $i\in\mathbb{Z}$, and the morphisms $C_\bullet\to D_\bullet$ are collections of morphisms in $\mathcal{A}$ such that 
\[
\begin{tikzpicture}
\node (Z) at (-1.5,0) {$\hdots$};
\node (A) at (0,0) {$C_{-1}$};
\node (B) at (1.5,0) {$C_0$};
\node (C) at (3,0) {$C_1$};
\node (D) at (4.5,0) {$\hdots$};
\node (Z1) at (-1.5,-1.5) {$\hdots$};
\node (A1) at (0,-1.5) {$D_{-1}$};
\node (B1) at (1.5,-1.5) {$D_0$};
\node (C1) at (3,-1.5) {$D_1$};
\node (D1) at (4.5,-1.5) {$\hdots$};
\draw[->] (Z) -- (A);
\draw[->] (A) -- (B);
\draw[->] (B) -- (C);
\draw[->] (C) -- (D);
\draw[->] (Z1) -- (A1);
\draw[->] (A1) -- (B1);
\draw[->] (B1) -- (C1);
\draw[->] (C1) -- (D1);
\draw[->] (A) -- (A1);
\draw[->] (B) -- (B1);
\draw[->] (C) -- (C1);
\end{tikzpicture}
\] 
commutes.  A map of chain complexes $f:C_\bullet\to D_\bullet$ is called a quasi-isomorphism (=qis) if homology $\H^i(f):\H^i(C_\bullet)\to\H^i(D_\bullet)$ is an isomorphism for all $i\in\mathbb{Z}$.  The derived category $\D(\mathcal{A})$, is defined to be $\C(\mathcal{A})[\{qis\}^{-1}]$, where we just formally invert all quasi-isomorphisms. The bounded derived category $\Db(\mathcal{A})$ is defined to be the full subcategory of $\D(\mathcal{A})$ consisting of complexes isomorphic (in the derived category) to bounded complexes $\hdots\to 0\to C_i\to C_{i+1}\to\hdots\to C_j\to 0\to\hdots$.
\end{defin}

Thus in the derived category we just formally identify $M$ and its projective resolution.  Now much of what we do on the abelian category level is very formal --- the building blocks of homological algebra are short exact sequences, and we have constructions like kernels and cokernels.  Often for many proofs (e.g.\ in \S\ref{L1} and \S\ref{L2}) we just need the fact that these constructions exist, rather than precise knowledge of the form they take. 

When passing from abelian categories to derived categories, the building blocks are no longer short exact sequences, instead these are replaced by a weaker notion of \emph{triangles}.  As in the abelian setting, many constructions and proofs follow formally from the properties of triangles. The derived category is an example of a triangulated category, which is defined as follows.
\begin{defin} A triangulated category is an additive
category $\c{C}$ together with an additive autoequivalence 
$[1]:\c{C}\to \c{C}$ and a class of sequences
\[
X\to Y\to Z\to X[1]
\]
called triangles, satisfying the following:\\
T1(a). Every sequence $X^\prime\to Y^\prime\to Z^\prime\to X^\prime[1]$ isomorphic to a triangle is itself a triangle.\\
T1(b). For every object $X\in\c{C}$, $0\to X\xrightarrow{id} X\to 0[1]$ is a triangle.\\
T1(c). Every map $f\colon X\to Y$ can completed to a triangle 
\[
X\xrightarrow{f}Y\to Z\to X[1].
\]
T2. (Rotation) We have 
\[
X\xrightarrow{f}Y\xrightarrow{g} Z\xrightarrow{h} X[1]
\textnormal{ is a triangle} \iff Y\xrightarrow{g} Z\xrightarrow{h} X[1]\xrightarrow{-f[1]}Y[1]\textnormal{ is a triangle}.
\]
T3. Given a commutative diagram
\[
\begin{tikzpicture}
\node (A) at (0,0) {$X$};
\node (B) at (1.5,0) {$Y$};
\node (C) at (3,0) {$Z$};
\node (D) at (4.5,0) {$X[1]$};
\node (A1) at (0,-1.5) {$X^\prime$};
\node (B1) at (1.5,-1.5) {$Y^\prime$};
\node (C1) at (3,-1.5) {$Z^\prime$};
\node (D1) at (4.5,-1.5) {$X^\prime[1]$};
\draw[->] (A) -- (B);
\draw[->] (B) -- (C);
\draw[->] (C) -- (D);
\draw[->] (A1) -- (B1);
\draw[->] (B1) -- (C1);
\draw[->] (C1) -- (D1);
\draw[->] (A) -- (A1);
\draw[->] (B) -- (B1);
\draw[->] (D) -- (D1);
\end{tikzpicture}
\]
where the two rows are triangles, then there exists $Z\to Z^\prime$ such that the whole diagram commutes.\\
T4. (Octahedral Axiom). Given
\[
\begin{tikzpicture}
\node (A) at (0,0) {$A$};
\node (B) at (1.5,0) {$B$};
\node (C) at (4,0) {$C$};
\node (A1) at (6,0) {$A[1]$};
\node (A1a) at (5,-2) {$A[1]$};
\node (B1) at (3.75,-3.5) {$B[1]$};
\node (C1) at (2.25,-4.5) {$C[1]$};
\node (D) at (intersection of B--B1 and A--A1a) {$D$};
\node (E) at (intersection of C--C1 and A--A1a) {$E$};
\node (F) at (intersection of C--C1 and B--B1) {$F$};
\draw[->] (A) -- node[above] {$\scriptstyle a$} (B);
\draw[->] (B) -- node[above] {$\scriptstyle b$} (C);
\draw[->] (C) -- node[above] {$\scriptstyle c$} (A1);
\draw[->] (B) -- node[right,pos=0.2] {$\scriptstyle d$} (D);
\draw[->] (D) -- node[above] {$\scriptstyle e$} (E);
\draw[->] (E) -- node[above] {$\scriptstyle f$} (A1a);
\draw[->] (A) -- node[below] {$\scriptstyle ad$} (D);
\draw[->] (D) -- node[right] {$\scriptstyle g$} (F);
\draw[->] (F) -- node[right, pos=0.2] {$\scriptstyle h$} (B1);
\draw[double distance=1.5pt] (A1) -- (A1a);
\draw[->] (A1a) -- node[right] {$\scriptstyle a[1]$} (B1);
\draw[->] (B1) -- node[below,pos=0.2] {$\scriptstyle b[1]$} (C1);
\end{tikzpicture}
\]
where $(a,b,c)$, $(d,g,h)$ and $(ad,e,f)$ are triangles, there exists morphisms such that $C\to E\to F\to C[1]$ is a triangle, and the whole diagram commutes.
\end{defin}

The only fact needed for now is that short exact sequences of complexes give triangles in the derived category.

\subsection{Tilting}
We now return to our setup.  We are interested in possible equivalences between $\Db(\coh Y)$ and $\Db(\mod \Lambda)$.  How to achieve this?    We first note that there are two nice subcategories of $\Db(\coh Y)$ and $\Db(\mod \Lambda)$.

\begin{defin}
We define $\Perf(Y)\subseteq \Db(\coh Y)$ to be all those complexes that are (locally) quasi--isomorphic to bounded complexes consisting of vector bundles of finite rank.  We denote $\Kb(\proj\Lambda)\subseteq \Db(\mod\Lambda)$ to be all those complexes isomorphic to bounded complexes of finitely generated projective $\Lambda$-modules.
\end{defin}

From now on, to simplify matters we will always assume that our schemes are quasi--projective over a commutative noetherian ring of finite type over $\mathbb{C}$, since in our NCCR quiver GIT setup, this will always be true.  We could get by with less, but the details become more technical. 

Under these assumptions, $\Perf(Y)$ can be described as all those complexes that are  isomorphic (in the derived category) to bounded complexes consisting of vector bundles of finite rank \cite[1.6, 1.7]{Orlov1}.  Furthermore, any equivalence between $\Db(\coh Y)$ and $\Db(\mod\Lambda)$ must restrict to an equivalence between $\Perf(Y)$ and $\Kb(\proj\Lambda)$, since both can be characterized intrinsically as the homologically finite complexes.

Now the point is that $\Kb(\proj \Lambda)$ has a very special object ${}_\Lambda\Lambda$, considered as a complex in degree zero.  For $\Db(\coh Y)\simeq\Db(\mod \Lambda)$ we need $\Perf(Y)\simeq\Kb(\proj \Lambda)$, so we need $\Perf(Y)$ to contain an object that behaves in the same way as ${}_\Lambda\Lambda$ does.  But what properties does ${}_\Lambda\Lambda$ have?

The first property is Hom-vanishing in the derived category.
\begin{fact}\label{A Ext}
If $M$ and $N$ are $\Lambda$-modules, thought of as complexes in degree zero, we have
\[
\Hom_{\Db(\mod \Lambda)}(M,N[i])\cong \Ext^i_\Lambda(M,N)
\]
for all $i\in\mathbb{Z}$.  In particular $\Hom_{\Db(\mod \Lambda)}({}_\Lambda\Lambda,{}_\Lambda\Lambda[i])=0$ for all $i\neq 0$.
\end{fact}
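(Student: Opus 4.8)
The plan is to reduce the computation of morphisms in $\Db(\mod\Lambda)$ to a computation in the homotopy category, where the answer is immediate. Since $\Lambda$ is noetherian, any $M\in\mod\Lambda$ admits a projective resolution, i.e.\ a quasi-isomorphism $P_\bullet\xrightarrow{\ \sim\ }M$ from a bounded-above complex $P_\bullet$ of finitely generated projective $\Lambda$-modules, so that $M\cong P_\bullet$ in $\Db(\mod\Lambda)$. Thus it suffices to identify $\Hom_{\Db(\mod\Lambda)}(P_\bullet,N[i])$ with $\Ext^i_\Lambda(M,N)$. The crucial input is the standard fact that for a bounded-above complex of projectives $P_\bullet$ and \emph{any} complex $X_\bullet$, the canonical map
\[
\Hom_{\mathsf{K}(\mod\Lambda)}(P_\bullet,X_\bullet)\longrightarrow\Hom_{\Db(\mod\Lambda)}(P_\bullet,X_\bullet)
\]
from the homotopy category is an isomorphism. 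I would prove this using the description of morphisms in $\Db(\mod\Lambda)$ as roofs $P_\bullet\xleftarrow{\ s\ }Z_\bullet\xrightarrow{\ f\ }X_\bullet$ with $s$ a quasi-isomorphism, together with the lifting lemma: a quasi-isomorphism $s\colon Z_\bullet\to P_\bullet$ with $P_\bullet$ bounded above and projective in each degree admits a homotopy inverse (proved by induction up the degrees, exactly as in the comparison theorem for projective resolutions, using projectivity of the $P_n$ and acyclicity of the cone of $s$). This collapses every roof to an honest chain map and identifies homotopic chain maps, giving the displayed isomorphism.

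Granting this, I would compute directly. Writing $P_\bullet=(\cdots\to P_{n+1}\xrightarrow{d_{n+1}}P_n\to\cdots)$, a chain map $P_\bullet\to N[i]$ is precisely a morphism $\varphi\colon P_i\to N$ with $\varphi d_{i+1}=0$, and two such are chain homotopic exactly when they differ by $\psi d_i$ for some $\psi\colon P_{i-1}\to N$. Hence
\[
\Hom_{\mathsf{K}(\mod\Lambda)}(P_\bullet,N[i])=\H^i\big(\Hom_\Lambda(P_\bullet,N)\big)=\Ext^i_\Lambda(M,N),
\]
the last equality being the definition of $\Ext$ computed from the resolution $P_\bullet$. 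Combining this with the previous paragraph proves the first assertion. For the final statement, take $M=N={}_\Lambda\Lambda$; then $\Lambda$ is its own projective resolution, so $\Ext^i_\Lambda(\Lambda,\Lambda)=0$ for all $i\neq 0$, whence $\Hom_{\Db(\mod\Lambda)}({}_\Lambda\Lambda,{}_\Lambda\Lambda[i])=0$ for $i\neq 0$.

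The only genuine point is the middle step, namely that $\Hom$ out of a bounded-above complex of projectives does not change on passing from the homotopy category to the derived category; everything else is routine bookkeeping with chain maps and homotopies. As this is entirely standard, in these notes I would either cite it or merely include the short lifting-lemma argument sketched above.
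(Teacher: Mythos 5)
The paper states this Fact without proof, deferring to standard references on derived categories, so there is no in-text argument to compare against; your proof is the standard one and is correct. Two small points of precision. First, the projective resolution $P_\bullet$ is in general only bounded above, not bounded, but by the paper's own definition $\Db(\mod \Lambda)$ is a \emph{full} subcategory of $\D(\mod\Lambda)$ and contains $P_\bullet$ (being isomorphic there to the bounded complex $M$), so replacing $M$ by $P_\bullet$ and computing the Hom space is legitimate. Second, your lifting lemma produces a homotopy \emph{section} $t$ of the quasi-isomorphism $s\colon Z_\bullet\to P_\bullet$, i.e.\ $st\simeq \mathrm{id}_{P_\bullet}$, rather than a two-sided homotopy inverse (for that one would also need $Z_\bullet$ to be a bounded-above complex of projectives); but a one-sided section is exactly what is needed, both to collapse the roof $(s,f)$ to the honest chain map $ft$ and to check injectivity, since a chain map $g$ with $gs\simeq 0$ satisfies $g\simeq gst\simeq 0$. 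The remaining identification of $\H^i(\Hom_\Lambda(P_\bullet,N))$ with $\Ext^i_\Lambda(M,N)$, and the specialization $M=N={}_\Lambda\Lambda$, are exactly as they should be.
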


Next, we have to develop some language to say that $\Kb(\proj \Lambda)$ is `built' from ${}_\Lambda\Lambda$. 

\begin{defin}
Let $\c{C}$ be a triangulated category.  A full subcategory $\c{D}$ is called a triangulated subcategory if (a) $0\in\c{D}$ (b) $\c{D}$ is closed under finite sums (c) $\c{D}$ is closed under shifts (d) (2 out of 3 property) If $X\to Y\to Z\to X[1]$ is a triangle in $\c{C}$, then if any two of $\{ X,Y,Z\}$ is in $\c{D}$, then so is the third.  If further $\c{D}$ is closed under direct summands (i.e.\ $X\oplus Y\in\c{D}$ implies that $X,Y\in\c{D}$), then we say that $\c{D}$ is thick.
\end{defin}

\begin{notation}
Let $\c{C}$ be a triangulated category, $M\in\c{C}$.  We denote by $\thick(M)$ the smallest full thick triangulated subcategory containing $M$.
\end{notation}

Using this, the second property that ${}_\Lambda\Lambda$ possesses is \emph{generation}.
\begin{example}\label{A gens}
Consider ${}_\Lambda\Lambda\in\Db(\mod \Lambda)$, considered as a complex in degree zero.  We claim that $\thick({}_\Lambda\Lambda)=\Kb(\proj\Lambda)$.  Since $\thick({}_\Lambda\Lambda)$ is closed under finite sums, it contains all finitely generated free $\Lambda$-modules, and further since it is closed under summands it contains all projective $\Lambda$-modules.  It is closed under shifts, so it contains $P[i]$ for all finitely generated projectives $P$ and all $i\in\mathbb{Z}$.  Now consider a 2-term complex 
\[
\begin{tikzpicture}
\node (Z) at (-1.5,0) {$\hdots$};
\node (A) at (0,0) {$0$};
\node (B) at (1.5,0) {$P_1$};
\node (C) at (3,0) {$P_0$};
\node (D) at (4.5,0) {$0$};
\node (E) at (6,0) {$\hdots$};
\draw[->] (Z) -- (A);
\draw[->] (A) -- (B);
\draw[->] (B) -- (C);
\draw[->] (C) -- (D);
\draw[->] (D) -- (E);
\end{tikzpicture}
\]
with $P_0, P_1\in\proj \Lambda$.  We have a commutative diagram
\[
\begin{tikzpicture}
\node (Z) at (-1.5,0) {$\hdots$};
\node (A) at (0,0) {$0$};
\node (B) at (1.5,0) {$0$};
\node (C) at (3,0) {$P_0$};
\node (D) at (4.5,0) {$0$};
\node (E) at (6,0) {$\hdots$};
\node (Z1) at (-1.5,-1.5) {$\hdots$};
\node (A1) at (0,-1.5) {$0$};
\node (B1) at (1.5,-1.5) {$P_1$};
\node (C1) at (3,-1.5) {$P_0$};
\node (D1) at (4.5,-1.5) {$0$};
\node (E1) at (6,-1.5) {$\hdots$};
\node (Z2) at (-1.5,-3) {$\hdots$};
\node (A2) at (0,-3) {$0$};
\node (B2) at (1.5,-3) {$P_1$};
\node (C2) at (3,-3) {$0$};
\node (D2) at (4.5,-3) {$0$};
\node (E2) at (6,-3) {$\hdots$};
\draw[->] (Z) -- (A);
\draw[->] (A) -- (B);
\draw[->] (B) -- (C);
\draw[->] (C) -- (D);
\draw[->] (D) -- (E);
\draw[->] (Z1) -- (A1);
\draw[->] (A1) -- (B1);
\draw[->] (B1) -- (C1);
\draw[->] (C1) -- (D1);
\draw[->] (D1) -- (E1);
\draw[->] (Z2) -- (A2);
\draw[->] (A2) -- (B2);
\draw[->] (B2) -- (C2);
\draw[->] (C2) -- (D2);
\draw[->] (D2) -- (E2);
\draw[->] (A) -- (A1);
\draw[->] (B) -- (B1);
\draw[->] (C) -- (C1);
\draw[->] (D) -- (D1);
\draw[->] (A1) -- (A2);
\draw[->] (B1) -- (B2);
\draw[->] (C1) -- (C2);
\draw[->] (D1) -- (D2);
\end{tikzpicture}
\]
which is a short exact sequence of complexes.  But short exact sequences of complexes give triangles in the derived category, so since the outer two terms belong to $\thick({}_\Lambda\Lambda)$, so does the middle (using the 2 out of 3 property).  This shows that all 2-term complexes of finitely generated projectives belong to $\thick({}_\Lambda\Lambda)$.  By induction, we have that all bounded complexes of finitely generated projectives belong to $\thick({}_\Lambda\Lambda)$, i.e.\ $\Kb(\proj\Lambda)\subseteq\thick({}_\Lambda\Lambda)$.  But $\Kb(\proj\Lambda)$ is a full thick triangulated subcategory containing ${}_\Lambda\Lambda$, so since $\thick({}_\Lambda\Lambda)$ is the smallest such, we conclude that $\Kb(\proj\Lambda)=\thick({}_\Lambda\Lambda)$.
\end{example}

Thus, combining \ref{A Ext} and \ref{A gens}, a necessary condition for $\Db(\coh Y)\simeq \Db(\mod \Lambda)$ is that there exists a complex $\c{V}\in\Perf(Y)$ for which $\Hom_{\Db(\coh Y)}(\c{V},\c{V}[i])=0$ for all $i\neq 0$, such that $\thick(\c{V})=\Perf(Y)$.  Tilting theory tells us that these properties are in fact sufficient. 

\begin{defin}
We say that $\c{V}\in\Perf(Y)$ is a tilting complex if $\Hom_{\Db(\coh Y)}(\c{V},\c{V}[i])=0$ for all $i\neq 0$, and further $\thick(\c{V})=\Perf(Y)$.  If further $\c{V}$ is a vector bundle (not just a complex), then we say that $\c{V}$ is a tilting bundle.
\end{defin}

The following is stated for the case when $\c{V}$ is a vector bundle (not a complex), since in these notes this is all that is needed.

\begin{thm}\label{VdB Hille}
With our running hypothesis on $Y$ (namely it is quasi--projective over a commutative noetherian ring of finite type over $\mathbb{C}$), assume that $\c{V}$ is a tilting bundle.  Then\\
(1) $\RHom_Y(\c{V},-)$ induces an equivalence between $\Db(\coh Y)$ and $\Db(\mod \End_Y(\c{V}))$.\\
(2) $Y$ is smooth if and only if $\gl\End_Y(\c{V})<\infty$.
\end{thm}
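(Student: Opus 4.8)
The plan is to obtain both parts from the general tilting formalism (Rickard, Keller, Bondal--Van den Bergh, Hille--Van den Bergh), so that the real content is verifying the hypotheses of that formalism in our situation and then reading off the smoothness statement. For (1), set $A:=\End_Y(\c{V})$. Since $\c{V}$ is a vector bundle it lies in $\Perf(Y)$, and by definition of a tilting bundle $\thick(\c{V})=\Perf(Y)$; by Neeman's theorem this is equivalent to $\c{V}$ being a compact generator of $\D(\Qcoh Y)$. The condition $\Hom_{\Db(\coh Y)}(\c{V},\c{V}[i])=0$ for $i\neq 0$ says the derived endomorphism dg algebra $\RHom_Y(\c{V},\c{V})$ has cohomology only in degree zero, hence is formal and quasi-isomorphic to $A$. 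First I would invoke the standard consequence of these two facts: the adjoint pair $\RHom_Y(\c{V},-)$ and $-\otimes^{\mathbf{L}}_A\c{V}$ are mutually inverse equivalences $\D(\Qcoh Y)\simeq\D(\Mod A)$. Restricting to compact objects gives $\Perf(Y)\simeq\Kb(\proj A)$ (compacts go to compacts; $A$ is a compact generator of $\D(\Mod A)$, and its thick closure is $\Kb(\proj A)$ exactly as in Example~\ref{A gens}).

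The remaining point for (1) is to pass to the \emph{bounded} derived categories of \emph{coherent}/\emph{finitely generated} objects. Here I would use that $A$ is module-finite over the noetherian base ring (it is the endomorphism ring of a coherent sheaf on a $Y$ projective over a ring of finite type over $\mathbb{C}$), hence noetherian, together with the fact that $\c{V}$ and $\c{V}^\vee$ are vector bundles while derived global sections of a coherent sheaf on $Y$ have bounded, finitely generated cohomology. These show that $\RHom_Y(\c{V},-)$ carries $\Db(\coh Y)$ into $\Db(\mod A)$ and $-\otimes^{\mathbf{L}}_A\c{V}$ carries $\Db(\mod A)$ into $\Db(\coh Y)$, so the equivalence above restricts as claimed. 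This (co)boundedness and finite-generation bookkeeping is routine but is where the technical care goes.

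For (2), combine the equivalence $\Db(\coh Y)\simeq\Db(\mod A)$ of (1) with two standard equivalences of conditions. On the geometric side, $Y$ is smooth if and only if $\Db(\coh Y)=\Perf(Y)$: a regular scheme has finite locally free resolutions so every bounded coherent complex is perfect, and conversely if every coherent sheaf is perfect then each local ring $\mathcal{O}_{Y,y}$ has finite global dimension, hence is regular by Auslander--Buchsbaum--Serre, and over $\mathbb{C}$ regular means smooth. On the algebraic side, $\gl A<\infty$ if and only if $\Db(\mod A)=\Kb(\proj A)$: finite global dimension yields finite projective resolutions of all finitely generated modules, so all bounded complexes lie in $\Kb(\proj A)$; conversely if $\Db(\mod A)=\Kb(\proj A)$ then every finitely generated module has finite projective dimension, and since $A$ is module-finite over a noetherian ring of finite type over $\mathbb{C}$ this forces $\gl A<\infty$. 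Finally, the equivalence of (1) sends $\Perf(Y)\subseteq\Db(\coh Y)$ onto $\Kb(\proj A)\subseteq\Db(\mod A)$ (both being the subcategories of homologically finite objects, characterized intrinsically), so $\Db(\coh Y)=\Perf(Y)$ holds if and only if $\Db(\mod A)=\Kb(\proj A)$ does; combined with the two equivalences above, this is exactly the assertion that $Y$ is smooth $\Longleftrightarrow$ $\gl A<\infty$.

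The main obstacle is part (1): a genuine proof of the tilting equivalence requires either the dg-algebra machinery of Keller or the explicit adjunction-plus-generation argument of Bondal--Van den Bergh, together with the noetherian bookkeeping above. In a lecture-notes account I would cite this and concentrate the effort on verifying the hypotheses; given (1), part (2) is short and essentially formal.
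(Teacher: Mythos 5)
The paper does not actually prove this theorem: it is quoted verbatim from Hille--Van den Bergh \cite{HVdB} (see the Credits paragraph of \S\ref{L4}), so there is no in-text argument to compare yours against. Your outline is the standard proof of that cited result and is essentially sound: compact generation of $\D(\Qcoh Y)$ via Neeman/Bondal--Van den Bergh, formality of $\RHom_Y(\c{V},\c{V})$ from the Ext-vanishing, the unbounded equivalence $\D(\Qcoh Y)\simeq\D(\Mod A)$, restriction to compacts, and then the noetherian bookkeeping to cut down to $\Db(\coh Y)\simeq\Db(\mod A)$. Part (2) is exactly the argument the paper itself later deploys in Corollary~\ref{NCCRdimd}, namely matching the intrinsic subcategories $\Perf(Y)$ and $\Kb(\proj A)$ of homologically finite objects and invoking Auslander--Buchsbaum--Serre on the geometric side.

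One point deserves more care than you give it: in the converse direction of (2), knowing that every finitely generated $A$-module has finite projective dimension does not formally yield $\gl A<\infty$ without a \emph{uniform} bound on those projective dimensions. This is where the hypothesis that $A$ is module-finite over a commutative noetherian ring of finite Krull dimension genuinely enters (one bounds the finitistic dimension, e.g.\ by localizing and using depth/Auslander--Buchsbaum-type estimates as in \ref{facts3and4}); your phrase ``this forces $\gl A<\infty$'' is true but is the one step that is not purely formal, and a complete write-up should say why. Otherwise the proposal is correct and is the argument the paper is implicitly relying on by citation.
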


In practice, to check the Ext vanishing in the definition of a tilting bundle can be quite mechanical, whereas establishing generation is more of an art.  Below, we will often use the following trick to simplify calculations.

\begin{prop}\label{Neeman trick}
(Neeman's Generation Trick).  Say $Y$ has an ample line bundle $\c{L}$.  Pick $\c{V}\in\Perf(Y)$.  If $(\c{L}^{-1})^{\otimes n}\in\thick(\c{V})$ for all $n\geq 1$, then $\thick(\c{V})=\Perf(Y)$.
\end{prop}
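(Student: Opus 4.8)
The plan is to reduce the statement to the known fact that $\Perf(Y)$ is generated, as a thick triangulated subcategory, by the powers $(\c{L}^{-1})^{\otimes n}$ for $n\geq 1$ (or equivalently, twists $\c{O}_Y(-n)$) when $\c{L}$ is ample; then the hypothesis $(\c{L}^{-1})^{\otimes n}\in\thick(\c{V})$ for all $n\geq 1$ forces $\Perf(Y)\subseteq\thick(\c{V})$, and the reverse inclusion is automatic since $\c{V}\in\Perf(Y)$ and $\Perf(Y)$ is itself thick. So the content is entirely the first reduction.

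First I would recall the precise meaning of the ambient hypotheses: $Y$ is quasi--projective over a commutative noetherian ring of finite type over $\mathbb{C}$, and $\c{L}$ is an ample line bundle on $Y$. Under these hypotheses $\Perf(Y)$ agrees with the full subcategory of $\Db(\coh Y)$ of complexes isomorphic to bounded complexes of vector bundles of finite rank (as already noted in the excerpt, citing \cite[1.6,1.7]{Orlov1}), and $\Perf(Y)$ is a thick triangulated subcategory. The key input is a standard generation statement: the set $\{(\c{L}^{-1})^{\otimes n}\mid n\geq 1\}$ classically generates $\Perf(Y)$, i.e. $\thick\bigl(\bigoplus_{n\geq 1}(\c{L}^{-1})^{\otimes n}\bigr)=\Perf(Y)$. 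This is proved by taking an arbitrary perfect complex $\c{E}$, viewing it (up to quasi-isomorphism) as a bounded complex of vector bundles, and resolving each vector bundle termwise by sums of negative twists $(\c{L}^{-1})^{\otimes n}$ using ampleness (so that $\c{F}\otimes\c{L}^{\otimes n}$ is globally generated for $n\gg 0$, giving surjections $\c{O}_Y^{\,r}\twoheadrightarrow\c{F}\otimes\c{L}^{\otimes n}$, hence $(\c{L}^{-1})^{\otimes n\,r}\twoheadrightarrow\c{F}$). One then assembles these resolutions into a single bounded-above complex of finite sums of negative twists quasi-isomorphic to $\c{E}$, truncates (using that $\c{E}$ is perfect, so has finite Tor-amplitude, and the brutal truncation of a resolution of a perfect complex is again perfect once one goes far enough left), and thereby writes $\c{E}$ via finitely many triangles out of objects built from the $(\c{L}^{-1})^{\otimes n}$. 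I would invoke this as a citable fact (e.g. Neeman, or Bondal--Van den Bergh, or the Thomason--Trobaugh circle of ideas) rather than reprove it, since the excerpt explicitly permits assuming earlier and standard material.

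Granting that, the argument finishes in two lines: by hypothesis each $(\c{L}^{-1})^{\otimes n}\in\thick(\c{V})$, and $\thick(\c{V})$ is closed under shifts, finite sums, cones and direct summands, so $\thick(\c{V})$ contains every object built from the negative twists by such operations; hence $\Perf(Y)=\thick\bigl(\bigoplus_{n\geq 1}(\c{L}^{-1})^{\otimes n}\bigr)\subseteq\thick(\c{V})$. Conversely $\c{V}\in\Perf(Y)$ and $\Perf(Y)$ is thick, so $\thick(\c{V})\subseteq\Perf(Y)$. Therefore $\thick(\c{V})=\Perf(Y)$, as claimed.

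\emph{Expected main obstacle.} The only nontrivial point is the clean statement and invocation of the classical generation fact, and in particular making sure the truncation step is valid --- one needs that a sufficiently far left brutal truncation of a bounded-above complex of negative twists resolving a \emph{perfect} complex is itself perfect (this uses finite Tor-dimension of $\c{E}$ and, implicitly, that $Y$ has an ample family / is nice enough that perfectness is a local-to-global condition). For the purposes of these notes I would simply cite this and not belabour it; no genuinely new ingredient beyond what the hypotheses on $Y$ already supply is needed.
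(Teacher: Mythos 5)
The paper itself gives no proof of this proposition --- it is stated as a black box, with the credits pointing to Neeman \cite{Neeman} and Hille--Van den Bergh \cite{HVdB} --- so there is nothing internal to compare against. Your overall strategy is exactly the one those references use: reduce to the classical fact that $\{(\c{L}^{-1})^{\otimes n}\mid n\geq 1\}$ classically generates $\Perf(Y)$, then observe $\Perf(Y)=\thick\bigl(\{(\c{L}^{-1})^{\otimes n}\}\bigr)\subseteq\thick(\c{V})\subseteq\Perf(Y)$, the last inclusion because $\Perf(Y)$ is thick and contains $\c{V}$. That reduction is correct and complete.

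The one genuine issue is in your sketch of the key input. The resolution-plus-truncation argument as you describe it does not close: after resolving a perfect complex $\c{E}$ termwise by finite sums of negative twists and brutally truncating far enough to the left, the leftmost term of the truncated complex is a syzygy \emph{vector bundle} $K$, not a sum of negative twists. You have then only shown $\c{E}\in\thick\bigl(\{K\}\cup\{(\c{L}^{-1})^{\otimes n}\}\bigr)$, and showing $K\in\thick\bigl(\{(\c{L}^{-1})^{\otimes n}\}\bigr)$ is the same problem again. The honest route --- and the reason the proposition carries Neeman's name --- is via compact generation: $\D(\Qcoh Y)$ is compactly generated with compact objects $\Perf(Y)$, the set $\{(\c{L}^{-1})^{\otimes n}\mid n\geq 1\}$ generates because $\RHom_Y((\c{L}^{-1})^{\otimes n},E)\cong\mathbf{R}\Gamma(Y,E\otimes\c{L}^{\otimes n})$ and ampleness forces $E=0$ if these all vanish, and then Neeman's theorem (that the thick closure of a set of compact generators is the whole subcategory of compact objects) delivers $\thick\bigl(\{(\c{L}^{-1})^{\otimes n}\}\bigr)=\Perf(Y)$. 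Since you explicitly say you would cite the generation fact rather than reprove it, and you correctly flag the truncation step as the delicate point, the proposal is acceptable for notes at this level; just be aware that the elementary argument you sketch is not a proof of the cited fact, and the citation should be to the compact-generation theorem rather than to a bare resolution argument. (Minor notational point: the surjection obtained from global generation of $\c{F}\otimes\c{L}^{\otimes n}$ is from $((\c{L}^{-1})^{\otimes n})^{\oplus r}$, a direct sum of copies, not a tensor power $(\c{L}^{-1})^{\otimes nr}$.)
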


\subsection{Tilting Examples}  We now illustrate tilting in the three examples from the previous section on quiver GIT, namely $\mathbb{P}^1$, the blowup of $\mathbb{A}^2$ at the origin, then our running $\mathbb{Z}_3$ example.  This will explain where the algebras used in \S\ref{L3} arose.

\begin{example}\label{P1 Db}
Consider $\mathbb{P}^1$.  We claim that $\c{V}:=\c{O}_{\mathbb{P}^1}\oplus\c{O}_{\mathbb{P}^1}(1)$ is a tilting bundle.  First, we have $\Ext^i_{\mathbb{P}^1}(\c{V},\c{V})\cong \H^i(\c{V}^{-1}\otimes\c{V})=\H^i(\c{O}_{\mathbb{P}^1})\oplus\H^i(\c{O}_{\mathbb{P}^1}(1))\oplus\H^i(\c{O}_{\mathbb{P}^1}(-1))\oplus\H^i(\c{O}_{\mathbb{P}^1})$, which is zero for all $i>0$ by a \v{C}ech cohomology calculation in Hartshorne \cite[III.5]{H}.  Thus $\Ext^i_{\mathbb{P}^1}(\c{V},\c{V})=0$ for all $i>0$. 

Now we use Neeman's generation trick (\ref{Neeman trick}).  We know that $\c{O}_{\mathbb{P}^1}(1)$ is an ample line bundle on $\mathbb{P}^1$.  Further, we have the Euler short exact sequence
\begin{eqnarray}
0\to\c{O}_{\mathbb{P}^1}(-1)\to\c{O}_{\mathbb{P}^1}^{\oplus 2}\to \c{O}_{\mathbb{P}^1}(1)\to 0, \label{E1}
\end{eqnarray}
which gives a triangle in the derived category.  Since the rightmost two terms both belong to $\thick(\c{V})$, by the 2 out of 3 property we deduce that $\c{O}_{\mathbb{P}^1}(-1)\in\thick(\c{V})$.  Now twisting (\ref{E1}) we obtain another short exact sequence
\begin{eqnarray}
0\to\c{O}_{\mathbb{P}^1}(-2)\to\c{O}_{\mathbb{P}^1}(-1)^{\oplus 2}\to \c{O}_{\mathbb{P}^1}\to 0.
\end{eqnarray}
Again this gives a triangle in the derived category, and since the rightmost two terms both belong to $\thick(\c{V})$, by the 2 out of 3 property we deduce that $\c{O}_{\mathbb{P}^1}(-2)\in\thick(\c{V})$.  Continuing like this we deduce that $\c{O}_{\mathbb{P}^1}(-n)\in\thick(\c{V})$ for all $n\geq 1$, and so $\thick(\c{V})=\Perf(\mathbb{P}^1)$ by \ref{Neeman trick}. 

Thus $\c{V}$ is a tilting bundle, so by \ref{VdB Hille} we deduce that $\Db(\coh \mathbb{P}^1)\simeq \Db(\mod\End_{\mathbb{P}^1}(\c{V}))$. We now identify the endomorphism ring with an algebra with which we are more familiar.  We have
\[
\End_{\mathbb{P}^1}(\c{V})=\End_{\mathbb{P}^1}(\c{O}\oplus\c{O}(1))\cong \begin{pmatrix} \Hom_{\mathbb{P}^1}(\c{O},\c{O})&\Hom_{\mathbb{P}^1}(\c{O},\c{O}(1))\\ \Hom_{\mathbb{P}^1}(\c{O}(1),\c{O})&\Hom_{\mathbb{P}^1}(\c{O}(1),\c{O}(1))\end{pmatrix}
\]
which, again by the \v{C}ech cohomology calculation in Hartshorne, is isomorphic to
\[
\begin{pmatrix} \H^0(\c{O})&\H^0(\c{O}(1))\\ \H^0(\c{O}(-1))&\H^0(\c{O})\end{pmatrix} 
\cong
\begin{pmatrix} \mathbb{C}&\mathbb{C}^2\\ 0&\mathbb{C}\end{pmatrix}\cong 
\begin{array}{c}
\begin{tikzpicture} 
\node (C1) at (0,0) [vertex] {};
\node (C2) at (1,0) [vertex] {};
\draw [->] ($(C1)+(30:4pt)$) -- ($(C2)+(150:4pt)$);
\draw [->] ($(C1)+(-30:4pt)$) -- ($(C2)+(210:4pt)$);
\end{tikzpicture}
\end{array}
\]
Thus  $\Db(\coh \mathbb{P}^1)\simeq \Db(\mod\begin{array}{c}
\begin{tikzpicture} 
\node (C1) at (0,0) [vertex] {};
\node (C2) at (1,0) [vertex] {};
\draw [->] ($(C1)+(30:4pt)$) -- ($(C2)+(150:4pt)$);
\draw [->] ($(C1)+(-30:4pt)$) -- ($(C2)+(210:4pt)$);
\end{tikzpicture}
\end{array})$.
\end{example}

\begin{example}\label{Z3 Db}
Consider now the blowup of $\mathbb{A}^2$ at the origin.  
\[
\begin{array}{c}
\\
\begin{tikzpicture}   
\draw[red] (0,0) to [bend right=25] node[pos=0.48, right] {$\scriptstyle \mathbb{P}^1$} (-0.5,1.5);
\draw[black] (-0.25,0.75) ellipse (1 and 1.5);
\draw[->] (-0.25,-0.9) -- (-0.25,-1.4);
\draw[black] (-0.25,-2) ellipse (0.9 and 0.4);
\filldraw [red] (-0.25,-2) circle (1pt);
\node at (-1.75,-2) {$\mathbb{A}^2$};
\node at (-1.75,1) {$Y$};
\end{tikzpicture} 
\end{array}
\]
We constructed $Y$ explicitly in \ref{blowup GIT}, where we remarked that $Y=\c{O}_{\mathbb{P}^1}(-1)$.  Being the total space of a line bundle over $\mathbb{P}^1$, in this example we have extra information in the form of the diagram
\[
\begin{tikzpicture}
\node (A) at (0,0) {$Y=\c{O}_{\mathbb{P}^1}(-1)$};
\node (B) at (2,0) {$\mathbb{P}^1$};
\node (A1) at (0,-1.5) {$\mathbb{A}^2$};
\draw[->] (A) -- node[above] {$\scriptstyle \pi$} (B);
\draw[->] (A) -- node[left] {$\scriptstyle f$} (A1);
\end{tikzpicture}
\]
Let $\c{V}:=\c{O}_{\mathbb{P}^1}\oplus\c{O}_{\P}(1)$, as in the previous example, and set $\c{W}:=\pi^*(\c{V})$.  We claim that $\c{W}$ is a tilting bundle on $Y$.  To visualize this (we will need to in the next example), we denote $\c{W}=\c{O}_Y\oplus\c{L}_1$ and draw 
\[
\begin{array}{ccc}
\begin{array}{c}
\\
\begin{tikzpicture}[xscale=0.75,yscale=0.75]   
\draw[red] (0,0) to [bend right=25] node[gap] {$\scriptstyle \c{O}$} (-0.5,1.5);
\draw[black] (-0.25,0.75) ellipse (1 and 1.5);
\end{tikzpicture} 
\end{array}
&\quad&
\begin{array}{c}
\\
\begin{tikzpicture}[xscale=0.75,yscale=0.75]
\draw[red] (0,0) to [bend right=25] node[gap] {$\scriptstyle \c{O}(1)$} (-0.5,1.5);
\draw[black] (-0.25,0.75) ellipse (1 and 1.5);
\end{tikzpicture} 
\end{array}\\
\c{O}_Y&&\c{L}_1
\end{array}
\]
Since $\pi_*(\c{O}_Y)\cong \bigoplus_{p\leq 0}\c{O}_{\P}(-1)^{\otimes p}=\bigoplus_{k\geq 0}\c{O}_{\P}(k)$, by the projection formula $\pi_*\pi^*(\c{V})\cong \bigoplus_{k\geq 0}\c{V}\otimes_{\P}\c{O}_{\P}(k)$.  Thus the Ext vanishing condition on $\c{W}$ follows from properties of adjoint functors, namely
\begin{eqnarray*}
\Ext^i_Y(\c{W},\c{W})&=&\Ext^i_Y(\pi^*(\c{V}),\pi^*(\c{V}))\\
&\cong&\Ext^i_{\mathbb{P}^1}(\c{V},\pi_*\pi^*(\c{V}))\\
&\cong&\bigoplus_{k\geq 0}\Ext^i_{\P}(\c{V},\c{V}\otimes_{\P}\c{O}_{\P}(k))
\end{eqnarray*}
which is zero for all $i>0$ again by the \v{C}ech cohomology calculation in Hartshorne \cite[III.5]{H}.  Generation also follows immediately from our previous example, since $\c{L}_1$ is ample and $\pi^*$ is exact on short exact sequences of vector bundles.  Thus $\c{W}=\c{O}_Y\oplus\c{L}_1$ is a tilting bundle, so by \ref{VdB Hille} $Y$ and $\End_Y(\c{O}_Y\oplus \c{L}_1)$ are derived equivalent.   It is an instructive exercise (see Exercise~\ref{Ex4.34}) to show that
\[
\End_Y(\c{O}_Y\oplus \c{L}_1)\cong \begin{array}{cc}
\begin{array}{c}
\begin{tikzpicture} [bend angle=45, looseness=1]
\node (C1) at (0,0) [vertex] {};
\node (C2) at (1.5,0)  [vertex] {};
\draw [->,bend left] (C1) to node[gap]  {$\scriptstyle a$} (C2);
\draw [->,bend left=20,looseness=1] (C1) to node[gap]  {$\scriptstyle b$} (C2);
\draw [->,bend left=20,looseness=1] (C2) to node[gap]  {$\scriptstyle t$} (C1);
\end{tikzpicture}
\end{array} &
atb=bta.
\end{array}
\]
\end{example}

\begin{example}
Consider the running example $R:=\mathbb{C}[a,b,c]/(ab-c^3)$.  We have, by \ref{Z3 GIT}, the following picture
\[
\begin{array}{c}
\\
\begin{tikzpicture}   
\draw[red] (-0.2,0.5) to [bend right=25] node[pos=0.48, right] {$\scriptstyle \mathbb{P}^1$} (-0.5,1.65);
\draw[red] (0,-0.35) to [bend right=25] node[pos=0.48, right] {$\scriptstyle \mathbb{P}^1$} (-0.3,0.8);
\draw[black] (-0.25,0.75) ellipse (1 and 1.5);
\draw[->] (-0.25,-0.9) -- (-0.25,-1.4);
\draw[black] (-0.25,-2) ellipse (0.9 and 0.4);
\filldraw [red] (-0.25,-2) circle (1pt);
\node at (-2.5,-2) {$V(ab-c^3)\subseteq\mathbb{A}^3$};
\node at (-1.75,1) {$Y$};
\end{tikzpicture} 
\end{array}
\]
where the dot downstairs represents the singular point.
We would like $Y$ to be derived equivalent to our original algebra $\End_R(R\oplus (a,c)\oplus (a,c^2))$, so we need to find a tilting bundle on $Y$ with three summands.  Which to choose? We have to construct bundles, and the natural candidates are
\[
\begin{array}{ccccc}
\begin{array}{c}
\\
\begin{tikzpicture}[xscale=0.75,yscale=0.75]   
\draw[red] (-0.2,0.5) to [bend right=25] node[gap] {$\scriptstyle \c{O}$} (-0.5,1.65);
\draw[red] (0,-0.35) to [bend right=25] node[gap, pos=0.4] {$\scriptstyle \c{O}$} (-0.3,0.8);
\draw[black] (-0.25,0.75) ellipse (1 and 1.5);
\end{tikzpicture} 
\end{array}
&\quad&
\begin{array}{c}
\\
\begin{tikzpicture}[xscale=0.75,yscale=0.75]
\draw[red] (-0.2,0.5) to [bend right=25] node[gap] {$\scriptstyle \c{O}(1)$} (-0.5,1.65);
\draw[red] (0,-0.35) to [bend right=25] node[gap,pos=0.4] {$\scriptstyle \c{O}$} (-0.3,0.8);
\draw[black] (-0.25,0.75) ellipse (1 and 1.5);
\end{tikzpicture} 
\end{array}
&\quad&
\begin{array}{c}
\\
\begin{tikzpicture}[xscale=0.75,yscale=0.75]
\draw[red] (-0.2,0.5) to [bend right=25] node[gap] {$\scriptstyle \c{O}$} (-0.5,1.65);
\draw[red] (0,-0.35) to [bend right=25] node[gap,pos=0.4] {$\scriptstyle \c{O}(1)$} (-0.3,0.8);
\draw[black] (-0.25,0.75) ellipse (1 and 1.5);
\end{tikzpicture} 
\end{array}
\\
\c{O}_Y&&\c{L}_1&&\c{L}_2
\end{array}
\]
It turns out, but more technology is needed to prove it, that $\c{W}:=\c{O}_Y\oplus\c{L}_1\oplus\c{L}_2$ is a tilting bundle on $Y$ with $\End_Y(\c{O}_Y\oplus\c{L}_1\oplus\c{L}_2)\cong\End_R(R\oplus (a,c)\oplus (a,c^2))$.
\end{example}

\subsection{Derived Categories and Crepant Resolutions}
In this section we explain the geometric origin of the condition $\End_R(M)\in\CM R$, and also why we only consider rings of the form $\End_R(M)$, answering Q4 from \S\ref{L1}.

The key theorem is the following.

\begin{thm}\label{CMiffcrepant}
Suppose that $f\colon Y\to\Spec R$ is a projective birational map, where $Y$ and $R$ are both normal Gorenstein of dimension $d$.  If $Y$ is derived equivalent to a ring $\Lambda$, then the following are equivalent.\\
\t{(1)} $f$ is crepant (i.e.\ $f^*\omega_R=\omega_Y$).\\
\t{(2)} $\Lambda\in\CM R$.\\
In this case $\Lambda\cong\End_R(M)$ for some $M\in\refl R$. 
\end{thm}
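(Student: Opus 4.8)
The plan is to reduce to the case where the equivalence is induced by a tilting bundle, and then to run everything through Grothendieck duality, expressing both ``$f$ crepant'' and ``$\Lambda\in\CM R$'' as the single statement that a certain autoequivalence is trivial. First I would reduce to $\Lambda=\End_Y(\c{V})$ for a tilting bundle $\c{V}$ on $Y$, with the equivalence $\RHom_Y(\c{V},-)\colon\Db(\coh Y)\xrightarrow{\sim}\Db(\mod\Lambda)$ as in \ref{VdB Hille}. Since $f$ is projective birational with $Y$ and $R$ normal, one has $f_*\c{O}_Y=R$ and $f$ restricts to an isomorphism over a dense open $U\subseteq\Spec R$ with $\codim(\Spec R\setminus U)\geq 2$ (over a height-one prime the base is a DVR, and its preimage is a normal integral scheme, proper and birational over that DVR, hence equal to it). Thus $\Lambda=\Hom_Y(\c{V},\c{V})=f_*\c{E}nd_Y(\c{V})$ is module-finite over $R$, the induced map $R\to Z(\Lambda)$ makes the equivalence $R$-linear, and $\c{E}nd_Y(\c{V})$ carries a trace map to $\c{O}_Y$.

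The engine is Grothendieck duality for $f$. Since $Y$ and $R$ are Gorenstein of the same dimension $d$, we get $f^{!}R\cong\omega_Y$, a line bundle in degree zero, and $f$ is crepant precisely when $\omega_Y\cong\c{O}_Y$. Using $\c{E}nd_Y(\c{V})^{\vee}\cong\c{E}nd_Y(\c{V})$ via the trace pairing, duality produces a natural isomorphism of $\Lambda$-bimodules
\[
\RHom_R(\Lambda,R)\;\cong\;Rf_*\bigl(\c{E}nd_Y(\c{V})\otimes\omega_Y\bigr)\;\cong\;\RHom_Y(\c{V},\c{V}\otimes\omega_Y),
\]
which says that under the equivalence the autoequivalence $-\otimes\omega_Y$ of $\Db(\coh Y)$ goes to $-\otimes^{\mathbf{L}}_{\Lambda}\RHom_R(\Lambda,R)$ on $\Db(\mod\Lambda)$: by Rickard's theorem an $R$-linear autoequivalence of $\Db(\mod\Lambda)$ is derived tensor by a two-sided tilting complex, namely the image of ${}_\Lambda\Lambda$, which here is $\RHom_R(\Lambda,R)$.

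For $(1)\Rightarrow(2)$: if $f$ is crepant then $-\otimes\omega_Y\cong\mathrm{id}$, so $-\otimes^{\mathbf{L}}_\Lambda\RHom_R(\Lambda,R)\cong\mathrm{id}$; evaluating at ${}_\Lambda\Lambda$ gives $\RHom_R(\Lambda,R)\cong\Lambda$, in particular concentrated in degree zero, and since $R$ is Gorenstein this is exactly $\Lambda\in\CM R$. For $(2)\Rightarrow(1)$: if $\Lambda\in\CM R$ then $\RHom_R(\Lambda,R)=\Hom_R(\Lambda,R)$ lies in degree zero, and the trace $\Lambda=f_*\c{E}nd_Y(\c{V})\to f_*\c{O}_Y=R$ induces a bimodule map $\Lambda\to\Hom_R(\Lambda,R)$ which is an isomorphism over $U$ (the trace form on $\End$ of a vector bundle over a regular scheme is nondegenerate) between two reflexive $R$-modules (by \ref{facts1and2}(\ref{F2}) and the depth lemma \ref{depth lemma}), hence an isomorphism; so again $-\otimes^{\mathbf{L}}_\Lambda\RHom_R(\Lambda,R)\cong\mathrm{id}$, therefore $-\otimes\omega_Y\cong\mathrm{id}$ on $\Db(\coh Y)$, and evaluating at $\c{O}_Y$ gives $\omega_Y\cong\c{O}_Y$, i.e.\ $f$ is crepant. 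For the final clause, put $M:=f_*\c{V}$: it is reflexive (torsion-free, agreeing with the bundle $\c{V}$ over $U$, and of depth $\geq2$ at the remaining primes by the depth lemma), and the natural ring homomorphism $\Lambda=f_*\c{E}nd_Y(\c{V})\to\End_R(f_*\c{V})=\End_R(M)$ is an isomorphism over $U$ between reflexive $R$-modules, hence an isomorphism; so $\Lambda\cong\End_R(M)$ with $M\in\refl R$.

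The main obstacle is the first paragraph together with the bimodule refinement in the last step: one must know that the given derived equivalence can be realised by a tilting bundle on $Y$ (so that $\Lambda$ is genuinely $\End$ of a bundle, which is what makes the trace map — and hence the isomorphism $\RHom_R(\Lambda,R)\cong\Lambda$ in the $\CM$ case — available), and one needs the $R$-linearity and Grothendieck duality statements packaged precisely enough to transport $-\otimes\omega_Y$ to $-\otimes^{\mathbf{L}}_\Lambda\RHom_R(\Lambda,R)$ as functors, not merely on a single object. Both are standard but technical; everything downstream is formal manipulation with reflexive modules and the depth lemma.
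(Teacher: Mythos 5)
Your overall mechanism --- identify $\RHom_R(\Lambda,R)$, via Grothendieck duality, with the twist of $\Lambda$ by the (relative) canonical, transport that twist through the equivalence, and read off crepancy versus $\Lambda\in\CM R$ --- is the same one the paper intends (relative Serre functors, \ref{geometry Serre} together with \eqref{3.5IR}, with the last clause via Auslander--Goldman). But two of your steps do not work as written. First, the reduction to a tilting \emph{bundle} is not available: from the hypothesis one only obtains a tilting \emph{complex} $\c{V}\in\Perf(Y)$ with $\End_{\Db(\coh Y)}(\c{V})\cong\Lambda$ (Theorem \ref{VdB Hille} goes in the opposite direction, and it is in general unknown whether a given equivalence is realised by a bundle). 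Everything you hang on bundle-ness must then be reworked: the sheaf $\c{E}nd_Y(\c{V})$ and its trace pairing (the derived trace pairing on $\mathbf{R}\c{H}om(\c{V},\c{V})$ survives, but nondegeneracy over $U$ now needs an argument, e.g.\ that at height-one primes a tilting complex over the DVR $R_\p$ is a shifted free module), and especially the final clause, since $M:=f_*\c{V}$ makes no sense for a complex --- the paper instead deduces $\Lambda\cong\End_R(M)$ from $\Lambda\in\refl R$ plus $\Lambda\otimes_R K\cong M_n(K)$ at the generic point. Relatedly, invoking Rickard to promote the object-level identification to an isomorphism of functors requires $\Lambda$ to be projective over the base, which it is not; the paper's route via uniqueness of Serre functors relative to a fixed canonical avoids this.

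Second, you conflate $\omega_Y$ with $f^!\c{O}_R$. Crepancy means $f^*\omega_R\cong\omega_Y$, equivalently $f^!\c{O}_R\cong\omega_Y\otimes(f^*\omega_R)^{-1}\cong\c{O}_Y$ (see the proof of \ref{geometry Serre}), and \emph{not} $\omega_Y\cong\c{O}_Y$; likewise duality for $\RHom_R(-,R)$ gives $\RHom_R(\Lambda,R)\cong\RHom_Y(\c{V},\c{V}\otimes f^!\c{O}_R)$, so your displayed isomorphism with $\omega_Y$ is false unless $\omega_R$ is trivial (e.g.\ $R$ local). As a result your $(1)\Rightarrow(2)$ starts from the wrong premise (crepant does not give $-\otimes\omega_Y\cong\mathrm{id}$) and your $(2)\Rightarrow(1)$ proves the too-strong statement $\omega_Y\cong\c{O}_Y$. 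Replacing $\omega_Y$ by $f^!\c{O}_R$ throughout repairs the logic and makes your argument coincide with the paper's: crepant $\iff$ the Serre functor $-\otimes f^!\c{O}_R$ relative to the canonical $R$ is trivial $\iff\RHom_R(\Lambda,R)\cong\Lambda$, with the converse direction supplied either by your trace/reflexivity comparison (once the tilting-complex issue above is handled) or, as in \ref{CY general}(1), by first proving $\Lambda\cong\End_R(M)$ and using that such algebras are symmetric over a normal base.
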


It follows that the only possible algebras derived equivalent to crepant partial resolutions of $d$-dimensional Gorenstein singularities have the form $\End_R(M)$, and ultimately this is the reason why we restrict to studying rings of this form.  We outline the main ingredients of the proof below.

\begin{remark}
By \ref{CMiffcrepant} the condition $\End_R(M)\in\CM R$ corresponds precisely to the geometric notion of crepancy, provided that we can actually find some $Y$ which is derived equivalent to $\End_R(M)$.  This explains the first condition in the definition of a NCCR.
\end{remark}

\begin{remark} 
Suppose that $\End_R(M)$ is a NCCR.  We remark that when $d=\dim R\geq 4$ there may be no scheme $Y$ projective birational over $\Spec R$ for which $Y$ is derived equivalent to $\End_R(M)$.  This is because NCCRs can exist even when commutative crepant resolutions do not.  A concrete example is invariants by the group $\frac{1}{2}(1,1,1,1)$.  Also, when $d\geq 4$ it is possible that a crepant resolution $Y$ exists but there is no algebra that is derived equivalent to $Y$.  Thus the correspondence between NCCRs and crepant resolutions breaks down completely, even when $d=4$.  Thus \ref{CMiffcrepant} is usually only used for analogies (or very specific situations) in high dimension.  
\end{remark}

Theorem~\ref{CMiffcrepant} has the following important corollary.

\begin{cor}\label{NCCRdimd}
Suppose that $Y\to\Spec R$ is a projective birational map between $d$-dimensional Gorenstein normal varieties.  If $Y$ is derived equivalent to a ring $\Lambda$, then the following are equivalent.\\
(1) $\Lambda$ is an NCCR.\\
(2) $Y$ is a crepant resolution of $\Spec R$.
\end{cor}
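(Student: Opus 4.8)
The plan is to deduce Corollary~\ref{NCCRdimd} from Theorem~\ref{CMiffcrepant} together with the standard dictionary relating regularity to finite global dimension. The one input beyond \ref{CMiffcrepant} is the observation that, once $Y$ and $\Lambda$ are derived equivalent, $Y$ is smooth if and only if $\gl\Lambda<\infty$; this is precisely what upgrades the statement ``$f$ is crepant'' (which \ref{CMiffcrepant} controls) to ``$f$ is a crepant \emph{resolution}''. I would first record this equivalence and then run both implications through it. For the equivalence itself: a noetherian scheme $Y$ is regular exactly when $\Perf(Y)=\Db(\coh Y)$, and $\gl\Lambda<\infty$ exactly when $\Kb(\proj\Lambda)=\Db(\mod\Lambda)$; since any triangle equivalence $\Db(\coh Y)\simeq\Db(\mod\Lambda)$ carries $\Perf(Y)$ onto $\Kb(\proj\Lambda)$ (both being intrinsically the subcategory of homologically finite complexes, as recalled just before \ref{VdB Hille}), the two finiteness conditions match. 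Note one cannot simply invoke \ref{VdB Hille}(2) here, since the corollary only assumes an abstract derived equivalence rather than one induced by a tilting bundle.

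For $(1)\Rightarrow(2)$: assume $\Lambda$ is an NCCR, so $\Lambda\cong\End_R(M)$ with $M\in\refl R$, $\End_R(M)\in\CM R$, and $\gl\Lambda=\dim R<\infty$. By the implication $(2)\Rightarrow(1)$ of \ref{CMiffcrepant}, $f$ is crepant. By the bridge above, $\gl\Lambda<\infty$ forces $Y$ to be smooth, so $Y\to\Spec R$ is a crepant resolution.

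For $(2)\Rightarrow(1)$: assume $Y$ is a crepant resolution, so $f$ is crepant and $Y$ is smooth. By the implication $(1)\Rightarrow(2)$ of \ref{CMiffcrepant}, together with its concluding clause, $\Lambda\in\CM R$ and $\Lambda\cong\End_R(M)$ for some $M\in\refl R$; this is condition~(1) in the definition of an NCCR. Smoothness of $Y$ and the bridge give $\gl\Lambda<\infty$, and it remains to upgrade this to $\gl\Lambda=\dim R$. Compute global dimension on the finite length $\Lambda$-modules (which have finite length over $R$ since $\Lambda=\End_R(M)$ is module-finite over the noetherian ring $R$): such an $S$ has $\depth_R S=0$ and $\pd_\Lambda S<\infty$, so the noncommutative Auslander--Buchsbaum formula \ref{facts3and4}(\ref{F4})(b) — applicable because $R$ is Gorenstein and $\End_R(M)\in\CM R$ — gives $\pd_\Lambda S=\dim R$. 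Hence $\gl\Lambda=\dim R$, which is condition~(2), so $\Lambda$ is an NCCR. (Alternatively one may cite \ref{gl ok} directly, since its proof uses only $\End_R(M)\in\CM R$.)

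I expect the main obstacle to be stating the ``$Y$ smooth $\iff\gl\Lambda<\infty$'' bridge correctly at the claimed level of generality: one must rely on the intrinsic characterization of perfect complexes as the homologically finite objects and their preservation under an arbitrary equivalence, rather than on the tilting-bundle statement \ref{VdB Hille}(2). The remaining points are bookkeeping — checking that $\Lambda$ is module-finite (hence noetherian) over $R$ so that $\mod\Lambda$ is well behaved, and selecting the version \ref{facts3and4}(\ref{F4})(b) of Auslander--Buchsbaum that does not already presuppose $\Lambda$ is an NCCR, which would make the argument circular.
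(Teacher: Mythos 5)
Your proposal is correct and follows essentially the same route as the paper: both directions pass through the intrinsic identification $\Perf(Y)\simeq\Kb(\proj\Lambda)$ under the derived equivalence, combine it with \ref{CMiffcrepant}, and finish with the noncommutative Auslander--Buchsbaum formula. Your extra care in selecting \ref{facts3and4}(\ref{F4})(b) to avoid circularity and in pinning down the exact equality $\gl\Lambda=\dim R$ via finite length modules is a welcome tightening of details the paper leaves implicit, but it is not a different argument.
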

\begin{proof}
(1)$\Rightarrow$(2)  If $\Lambda$ is an NCCR then $\Lambda$ has finite global dimension.  This means that $\Kb(\proj\Lambda)=\Db(\mod\Lambda)$ and so via the derived equivalence $\Perf(Y)\simeq\Db(\coh Y)$.  Hence $Y$ is smooth.   Further $f$ is crepant by \ref{CMiffcrepant}. \\
(2)$\Rightarrow$(1)  Since $f$ is crepant, $\Lambda\cong\End_R(M)\in\CM R$ by \ref{CMiffcrepant}.  Since $Y$ is smooth $\Perf(Y)\simeq\Db(\coh Y)$ induces $\Kb(\proj\Lambda)=\Db(\mod\Lambda)$ and so every $N\in\mod\Lambda$ has $\pd_\Lambda N<\infty$.  By Auslander--Buchsbaum, necessarily $\pd_\Lambda N\leq \dim R$ and so $\Lambda\cong\End_R(M)$ is a NCCR.
\end{proof}

\begin{example}
To make this a little more concrete, consider our running example $R:=\mathbb{C}[a,b,c]/(ab-c^3)$.  We know from \S\ref{L1} and \S\ref{L2} that $\Lambda:=\End_R(R\oplus (a,c)\oplus (a,c^3))\in\CM R$ is a NCCR, and we constructed, using quiver GIT (\ref{Z3 GIT}), a space $Y$ which we then showed (\ref{Z3 Db}) is derived equivalent to $\Lambda$.  Since $\Lambda\in\CM R$, we can deduce from \ref{CMiffcrepant} that $Y$ is a crepant resolution.  Alternatively, knowing a bit of geometry,  the two exceptional curves calculated in \ref{Z3 GIT} are both ($-2$)-curves, so it follows that $Y\to\Spec R$ is crepant.  Hence we could alternatively use \ref{NCCRdimd} to give another proof that $\Lambda$ is a NCCR.
\end{example}

The main ingredient in the proof of \ref{CMiffcrepant} is Grothendieck duality and relative Serre functors, which we now review.

\subsection{Relative Serre Functors}
 The following is based on \cite[7.2.6]{GinzCY}.

\begin{defin}
Suppose that $Z\to\Spec T$ is a morphism where $T$ is CM ring with a canonical module $C_T$.  We say that a functor $\mathbb{S}\colon \Perf(Z)\to\Perf(Z)$ is a \emph{Serre functor relative to $C_T$} if there are functorial isomorphisms
\[
\RHom_T(\RHom_Z(\c{F},\c{G}),C_T)\cong \RHom_Z(\c{G},\mathbb{S}(\c{F}))
\]
in $\D(\Mod T)$ for all $\c{F}\in\Perf(Z)$, $\c{G}\in\Db(\coh Z)$.  If $\Lambda$ is a module-finite $T$-algebra, we define a Serre functor $\mathbb{S}\colon\Kb(\proj\Lambda)\to\Kb(\proj\Lambda)$ relative to $C_T$ in a similar way.
\end{defin}

The ability to consider different canonicals in the above definition is convenient when comparing the geometry to the algebra.  For example, when $T$ is Gorenstein, there is a geometrically--defined canonical module $\omega_T$, but $T$ itself is also a canonical module.  It turns out that from the NCCR perspective $T$ is the most natural (see \ref{CY general}(1)), whereas the notion of crepancy is defined with respect to $\omega_T$.

\begin{lemma}\label{geometry Serre}
Suppose that $f\colon Z\to\Spec T$ is a projective morphism, where $Z$ and $T$ are both Gorenstein varieties.  Then\\
\t{(1)} $f^!(\c{O}_T)\cong \c{L}[\dim Z-\dim T]$ where $\c{L}$ is some line bundle on $Z$.\\
\t{(2)} $\mathbb{S}_Z:=-\otimes_Z f^!\c{O}_T\colon \Perf(Z)\to\Perf(Z)$ is a Serre functor relative to the canonical $T$.
\end{lemma}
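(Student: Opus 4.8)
The plan is to derive both parts from Grothendieck duality applied to $f$, which is projective and hence proper. Recall that $f$ admits a twisted inverse image functor $f^!$, right adjoint to $Rf_*$, and that since the base $\Spec T$ is affine the duality isomorphism takes the form
\[
\RHom_T(Rf_*\c{A},\c{B})\;\cong\;Rf_*\mathbf{R}\mathcal{H}om_Z(\c{A},f^!\c{B}),
\]
functorially in $\c{A}\in\Db(\coh Z)$ and $\c{B}\in\Db(\mod T)$; here $T$ being Gorenstein is what lets $\RHom_T(-,T)$ compute the dualizing operation over the affine base. Throughout, the expression $\RHom_Z(\c{F},\c{G})$ that appears in the Serre functor axiom --- and must live in $\D(\Mod T)$ --- is to be read as $Rf_*\mathbf{R}\mathcal{H}om_Z(\c{F},\c{G})$.

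\t{(1)} Since $T$ is Gorenstein it has finite injective dimension over itself, so $\c{O}_{\Spec T}$ is a dualizing complex on $\Spec T$. A standard property of $f^!$ is that it sends a dualizing complex on the base to a dualizing complex on the total space, so $f^!\c{O}_T$ is a dualizing complex on $Z$. But $Z$ is a Gorenstein variety, so (being irreducible) every dualizing complex on $Z$ is locally a shift of $\c{O}_Z$, hence globally of the form $\c{L}[n]$ for a single line bundle $\c{L}$ and a single integer $n$. To pin down $n$, restrict to the dense open $Z_0\subseteq Z$ over which $f$ is flat (generic flatness) with Cohen--Macaulay fibres of dimension $\dim Z-\dim T$: there $f^!\c{O}_T$ restricts to the relative canonical sheaf placed in homological degree $\dim Z-\dim T$, and irreducibility of $Z$ forces $n=\dim Z-\dim T$ everywhere.

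\t{(2)} By \t{(1)}, $f^!\c{O}_T\cong\c{L}[n]$ is perfect, so $\mathbb{S}_Z=-\otimes_Z f^!\c{O}_T$ genuinely maps $\Perf(Z)$ to $\Perf(Z)$ --- indeed it is an autoequivalence with quasi-inverse $-\otimes_Z\c{L}^{-1}[-n]$ --- and this is the only use of part \t{(1)}. Fix $\c{F}\in\Perf(Z)$ and $\c{G}\in\Db(\coh Z)$. Being perfect, $\c{F}$ is dualizable, so with $\c{F}^{\vee}:=\mathbf{R}\mathcal{H}om_Z(\c{F},\c{O}_Z)$ we have $\mathbf{R}\mathcal{H}om_Z(\c{F},\c{G})\cong\c{F}^{\vee}\otimes_Z\c{G}$, and by tensor--hom adjunction together with dualizability,
\[
\mathbf{R}\mathcal{H}om_Z(\c{F}^{\vee}\otimes_Z\c{G},\,\c{E})\;\cong\;\mathbf{R}\mathcal{H}om_Z(\c{G},\,\c{F}\otimes_Z\c{E})
\]
for every $\c{E}\in\Db(\coh Z)$. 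Now apply the duality isomorphism above with $\c{A}=\c{F}^{\vee}\otimes_Z\c{G}$ and $\c{B}=\c{O}_T$:
\[
\RHom_T\big(Rf_*(\c{F}^{\vee}\otimes_Z\c{G}),\,T\big)\;\cong\;Rf_*\mathbf{R}\mathcal{H}om_Z(\c{F}^{\vee}\otimes_Z\c{G},\,f^!\c{O}_T).
\]
By the displayed identity with $\c{E}=f^!\c{O}_T$, the right-hand side equals $Rf_*\mathbf{R}\mathcal{H}om_Z(\c{G},\c{F}\otimes_Z f^!\c{O}_T)=\RHom_Z(\c{G},\mathbb{S}_Z(\c{F}))$, while the left-hand side is $\RHom_T(\RHom_Z(\c{F},\c{G}),T)$. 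This is precisely the Serre functor axiom relative to $C_T=T$, and functoriality in $\c{F}$ and $\c{G}$ is inherited from that of the individual steps.

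The genuine content here is the input from Grothendieck duality; the main obstacle is to invoke it in a form valid for a proper morphism that is neither flat nor smooth, between possibly singular Gorenstein varieties, and to be careful enough about normalizations of dualizing complexes that $\c{O}_{\Spec T}$ really is the dualizing complex ``relative to the canonical $T$''. Pinning down the shift $\dim Z-\dim T$ in \t{(1)} is the only other subtlety; the rest is a formal manipulation of adjunctions and the dualizability of perfect complexes, following \cite[7.2.6]{GinzCY}.
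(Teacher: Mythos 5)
Your part (2) is essentially the paper's argument: reduce $\mathbf{R}\c{H}om_Z(\c{F},\c{G})$ to $\c{F}^\vee\otimes_Z\c{G}$ using perfection of $\c{F}$, move $\c{F}$ across the internal Hom, and finish with sheafified Grothendieck duality against $\c{O}_T$; your remark that $\RHom_Z(-,-)$ must be read as $\mathbf{R}f_*\mathbf{R}\c{H}om_Z(-,-)$ to land in $\D(\Mod T)$ is a correct and worthwhile clarification.

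For part (1) you take a genuinely different route. The paper argues concretely: since $\omega_T$ is a line bundle, hence compact, Neeman's compatibility gives $f^!\omega_T\cong f^*\omega_T\otimes_Z f^!\c{O}_T$, and combining with $f^!\omega_T\cong\omega_Z[\dim Z-\dim T]$ one \emph{solves} for $f^!\c{O}_T=(f^*\omega_T)^{-1}\otimes_Z\omega_Z[\dim Z-\dim T]$. You instead invoke the abstract facts that $f^!$ preserves dualizing complexes and that on a connected Gorenstein scheme any dualizing complex is $\c{L}[n]$. Both are valid, but the paper's computation buys the explicit identity $\c{L}\cong(f^*\omega_T)^{-1}\otimes_Z\omega_Z$, which is exactly what links the Serre functor to crepancy ($f$ crepant iff $\c{L}\cong\c{O}_Z$) in Theorem~\ref{CMiffcrepant}; your argument only produces an unidentified line bundle. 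One genuine (if minor) gap in your version: pinning down the shift via generic flatness requires the flat locus to be a \emph{dense open of $Z$} with fibres of dimension $\dim Z-\dim T$, which presupposes that $f$ is dominant. The lemma as stated allows an arbitrary projective morphism (e.g.\ one contracting $Z$ into a proper closed subset of $\Spec T$), where your $Z_0$ may be empty; the paper's identity $f^!\omega_T[\dim T]\cong\omega_Z[\dim Z]$ between normalized dualizing complexes handles that case uniformly. Since every application in the paper has $f$ birational this does not affect anything downstream, but you should either assume dominance or replace the generic-flatness step by the normalized-dualizing-complex argument.
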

\begin{proof}
(1) Since $T$ is Gorenstein, $\omega_T$ is a line bundle and thus is a compact object in $\D(\Mod T)$.  Hence by \cite[p227--228]{Neeman} we have $f^!\omega_T=\mathbf{L}f^* \omega_T\otimes^{\bf L}_Z f^!\c{O}_T=f^*\omega_T\otimes_Z f^!\c{O}_T$ and so
\[
\omega_Z\cong f^!\omega_T[-\dim Z+\dim T]\cong f^*\omega_T\otimes_Z f^!\c{O}_T[-\dim Z+\dim T].
\]
Since both $\omega_Y$ and $f^*\omega_T$ are line bundles, 
\[
f^!\c{O}_T=(f^*\omega_T)^{-1}\otimes_Z\omega_Z[\dim Z-\dim T].
\]
(2) Since $f^!\c{O}_T$ is a shift of a bundle by (1), it follows that tensoring gives a functor $-\otimes_Zf^!\c{O}_T\colon \Perf(Z)\to\Perf(Z)$.  The result then follows since
\begin{align*}
\RHom_Z(\c{G},\c{F}\otimes_Z f^!\c{O}_T)
&\cong\RHom_Z(\mathbf{R}\c{H}om_Z(\c{F},\c{G}),f^!\c{O}_T)\\
&\cong\RHom_T(\RHom_Z(\c{F},\c{G}),\c{O}_T)
\end{align*}
for all $\c{F}\in\Perf(Z)$, $\c{G}\in\Db(\coh Z)$, where the last isomorphism is sheafified Grothendieck duality.
\end{proof}

We consider the Serre functor for NCCRs in the next section.

\subsection{Calabi--Yau Categories}
Related to Serre functors are CY categories and algebras.
We retain our setup, namely $R$ denotes an (equicodimensional) Gorenstein normal domain of dimension $d$.  To keep the technicalities to a minimum, in this section we will assume that $R$ is of finite type over an algebraically closed field $k$, but we could get by with much less.  In this section we show that NCCRs are $d$-CY.

\begin{defin}
Suppose that $\c{C}$ is a triangulated category in which the Hom spaces are all $k$-vector spaces.  We say that $\c{C}$ is $d$-CY if there exists a functorial isomorphism
\begin{eqnarray}
\Hom_{\c{C}}(x,y[d])\simeq\Hom_{\c{C}}(y,x)^*\label{CY iso}
\end{eqnarray}
for all $x,y\in\c{C}$, where $(-)^*$ denotes the $k$-dual.
\end{defin}

For a $k$-algebra $\Lambda$, we naively ask whether $\Db(\mod \Lambda)$ is $d$-CY.  It is (almost) never in the strict sense above, since then
\[
\Lambda^*\cong\Hom_\Lambda(\Lambda,\Lambda)^*\cong \Hom_{\Db}(\Lambda,\Lambda)^*\cong \Hom_{\Db}(\Lambda,\Lambda[d])\cong\Ext_\Lambda^d(\Lambda,\Lambda)=0
\]
whenever $d\neq 0$.  Also, note that for $k$-duality to work well requires the Hom spaces to be finite dimensional.  Hence, for an algebra to be CY, we must ask for (\ref{CY iso}) to be true for only certain classes of objects $x$ and $y$.  This is done as follows.

\begin{defin}
Let $\Lambda$ be a module finite $R$-algebra, then for $d\in\mathbb{Z}$ we call $\Lambda$ $d$-Calabi--Yau (=$d$-CY) if there is a functorial isomorphism 
\begin{eqnarray}
\Hom_{\Db(\mod \Lambda)}(x,y[d])\cong \Hom_{\Db(\mod \Lambda)}(y,x)^*\label{CY alg iso}
\end{eqnarray}
for all $x\in\Db(\fl \Lambda)$, $y\in\Db(\mod\Lambda)$, where $\Db(\fl \Lambda)$ denotes all complexes $x$ for which $\dim_k\oplus_{i\in\mathbb{Z}}\H^i(x)<\infty$ .  Similarly we call $\Lambda$ singular $d$-Calabi--Yau (=$d$-sCY) if (\ref{CY alg iso}) holds for all $x\in\Db(\fl\Lambda)$ and $y\in\Kb(\proj \Lambda)$.
\end{defin}

Since $R$ is Gorenstein, and $\Lambda$ is a module-finite $R$-algebra, there is a functor
\[
\mathbb{S}_\Lambda:=\RHom_R(\Lambda,R)\otimes^{\mathbf{L}}_\Lambda-\colon \D^-(\mod\Lambda)\to\D^-(\mod\Lambda).
\]
By \cite[3.5(2)(3)]{IR}, there exists a functorial isomorphism
\begin{eqnarray}
\RHom_\Lambda(a,\mathbb{S}(b))\cong \RHom_R(\RHom_\Lambda(b,a),R)\label{3.5IR}
\end{eqnarray}
in $\D(R)$ for all $a\in\Db(\mod\Lambda)$ and all $b\in\Kb(\proj\Lambda)$.  This is not quite a Serre functor, since we don't yet know whether $\mathbb{S}_\Lambda$ preserves $\Kb(\proj\Lambda)$.

\begin{thm}\label{CY general}
(Iyama--Reiten) Let $R$ an be equicodimensional Gorenstein normal domain over an algebraically closed field $k$, and let $\Lambda$ be an NCCR.   Then\\
\t{(1)} $\mathbb{S}_\Lambda=\mathrm{Id}$, and so $\mathrm{Id}$ is a Serre functor on $\Lambda$ relative to the canonical $R$.\\
\t{(2)} $\Lambda$ is $d$-CY, that is
\[
\Hom_{\Db(\mod \Lambda)}(x,y[d])\cong \Hom_{\Db(\mod \Lambda)}(y,x)^*
\]
for all $x\in\Db(\fl\Lambda)$ and $y\in\Kb(\proj \Lambda)=\Db(\mod\Lambda)$.
\end{thm}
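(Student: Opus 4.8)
The plan is to deduce everything from the single bimodule identity $\RHom_R(\Lambda,R)\cong\Lambda$. Granting this, the functor $\mathbb{S}_\Lambda=\RHom_R(\Lambda,R)\otimes^{\mathbf{L}}_\Lambda(-)$ becomes $\Lambda\otimes^{\mathbf{L}}_\Lambda(-)=\mathrm{Id}$, which in particular preserves $\Kb(\proj\Lambda)$; substituting $\mathbb{S}_\Lambda=\mathrm{Id}$ into the Iyama--Reiten isomorphism (\ref{3.5IR}), with $\c{F}=b\in\Kb(\proj\Lambda)$ and $\c{G}=a\in\Db(\mod\Lambda)$, is exactly the defining property of a Serre functor relative to the canonical module $R$ (recall $R$ is a canonical module for itself since it is Gorenstein). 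This gives part (1).

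To prove $\RHom_R(\Lambda,R)\cong\Lambda$ as $\Lambda$-bimodules, first note that $\Lambda\in\CM R$ together with $R$ Gorenstein forces $\Lambda$ to be a maximal Cohen--Macaulay $R$-module, so $\Ext^{i}_R(\Lambda,R)=0$ for $i>0$ by the standard characterisation of maximal CM modules over a Gorenstein ring; hence $\RHom_R(\Lambda,R)=\Hom_R(\Lambda,R)$ is concentrated in degree zero. Next, both sides are reflexive $R$-modules: $M$ is reflexive by Fact~\ref{facts1and2}(\ref{F2}), so $\Lambda=\End_R(M)\cong\Hom_R(M\otimes_R M^{*},R)$ is a dual and thus reflexive since $R$ is normal, and $\Hom_R(\Lambda,R)$ is a dual for the same reason. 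Now the natural trace pairing $\Lambda\to\Hom_R(\Lambda,R)$ — which is defined globally because $\Hom_R(\Lambda,R)$ is reflexive, so a homomorphism constructed in codimension one extends uniquely — is a bimodule map, and at a height-one prime $\p$ one has $R_\p$ a DVR, $M_\p$ free of rank $r=\operatorname{rank}M$, $\Lambda_\p\cong\mathrm{Mat}_r(R_\p)$, and the trace pairing $(A,B)\mapsto\operatorname{tr}(AB)$ is perfect and $\mathrm{Mat}_r(R_\p)$-bilinear, exhibiting $\mathrm{Mat}_r(R_\p)$ as self-dual as a bimodule. A map of reflexive modules that is an isomorphism in codimension one is an isomorphism, so this yields the required bimodule isomorphism, and (1) follows.

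For part (2), first $\Db(\mod\Lambda)=\Kb(\proj\Lambda)$ because $\gl\Lambda=d<\infty$, this being part of the definition of an NCCR. Fix $x\in\Db(\fl\Lambda)$ and $y\in\Db(\mod\Lambda)=\Kb(\proj\Lambda)$. Applying (\ref{3.5IR}) with $a=x$, $b=y$ and $\mathbb{S}_\Lambda=\mathrm{Id}$ gives a functorial isomorphism
\[
\RHom_\Lambda(x,y)\cong\RHom_R(\RHom_\Lambda(y,x),R)
\]
in $\D(R)$. Since $y$ is perfect over $\Lambda$ and $x$ has finite-length cohomology, $\RHom_\Lambda(y,x)$ lies in $\Db(\fl\Lambda)$, hence has finite-length cohomology over $R$; for such a complex $N$, finite-length local duality over the $d$-dimensional Gorenstein ring $R$ — here using that $R$ is of finite type over the algebraically closed field $k$, so the residue fields at the closed points of the support are $k$ — gives $\RHom_R(N,R)\cong\Hom_k(N,k)[-d]$ functorially. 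Substituting,
\[
\RHom_\Lambda(x,y)\cong\Hom_k\!\big(\RHom_\Lambda(y,x),k\big)[-d],
\]
and taking $\H^{d}$ (using that $\Hom_k(-,k)$ is exact and so commutes with cohomology up to a degree reversal) yields
\[
\Hom_{\Db(\mod\Lambda)}(x,y[d])\cong\Hom_k\!\big(\Hom_{\Db(\mod\Lambda)}(y,x),k\big)=\Hom_{\Db(\mod\Lambda)}(y,x)^{*},
\]
functorially in $x$ and $y$, which is (2).

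The main obstacle is the bimodule identification $\RHom_R(\Lambda,R)\cong\Lambda$: the $R$-module statement is immediate from reflexivity and the reduction to codimension one, but care is needed to see that the identification respects both the left and the right $\Lambda$-actions, which is why one works through the globally defined trace pairing rather than merely asserting that local isomorphisms glue. The remaining inputs are routine: the vanishing $\Ext^{>0}_R(\Lambda,R)=0$ is the usual maximal-CM criterion over a Gorenstein ring, and the transition from the $R$-linear duality in (\ref{3.5IR}) to the $k$-linear duality in the conclusion is precisely finite-length local duality, which is where the Gorenstein and finite-type hypotheses on $R$ are used.
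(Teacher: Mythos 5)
Your proof is correct and follows essentially the same route as the paper: part (1) reduces to the bimodule isomorphism $\RHom_R(\Lambda,R)\cong\Lambda$ via the Ext-vanishing from $\Lambda\in\CM R$, and part (2) is deduced from (1) by finite-length local duality. The only difference is that you supply explicit arguments (the trace-pairing/codimension-one argument for the symmetry of $\Lambda$, and the computation $\RHom_R(N,R)\cong\Hom_k(N,k)[-d]$) where the paper instead cites the closure of symmetric algebras under reflexive equivalence and \cite[3.6, 3.7]{IR}; both of your fill-ins are sound.
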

\begin{proof}
(1) By definition of $\mathbb{S}_\Lambda$, we just need to establish that $\RHom_R(\Lambda,R)\cong\Lambda$ as $\Lambda$-$\Lambda$ bimodules.  But by the definition of an NCCR, $\Lambda\in\CM R$ and so $\Ext^i_R(\Lambda,R)=0$ for all $i>0$.  This shows that $\RHom_R(\Lambda,R)\cong \Hom_R(\Lambda,R)$.  The fact that $\Hom_R(\Lambda,R)\cong \Lambda$ as $\Lambda$-$\Lambda$ bimodules follows from the fact that symmetric $R$-algebras are closed under reflexive equivalences (this uses the fact that $R$ is normal --- see fact \ref{facts3and4}(\ref{F3})).  We conclude that $\mathbb{S}_\Lambda=\mathrm{Id}$, and this clearly preserves $\Kb(\proj\Lambda)$.  Thus \eqref{3.5IR} shows that $\mathrm{Id}$ is a Serre functor relative to the canonical $R$.\\
(2) This is in fact a consequence of (1), using local duality.  See \cite[3.6, 3.7]{IR}.
\end{proof}

\subsection{Singular Derived Categories}  From before recall that $\Kb(\proj R)\subseteq\Db(\mod R)$.  

\begin{defin}
We define the singular derived category (sometimes called the triangulated category of singularities, or the singularity category) $\Dsg(R)$ to be the quotient category $\Db(\mod R)/\Kb(\proj R)$.  
\end{defin}

Since $\Kb(\proj R)$ is a full thick triangulated subcategory, for general abstract reasons the quotient $\Dsg(R)$ is also triangulated.  Also, being a localization, morphisms in $\Dsg(R)$ are equivalence classes of morphisms in $\Db(\mod R)$.  But the derived category is itself a localization, so the morphisms in $\Dsg(R)$ are equivalence classes of equivalence classes.  From this perspective, there is no reason to expect that this category should behave well.

If $R$ is a Gorenstein ring, it is thus remarkable that $\Dsg(R)$ can be described easily by using CM $R$-modules.  There is a natural functor
\[
\begin{tikzpicture}
\node (A) at (0,0) {$\CM R$};
\node (B) at (2.5,0) {$\Db(\mod R)$};
\node (C) at (7.25,0) {$\Dsg(R)=\Db(\mod R)/\Kb(\proj R).$};
\draw[->] (A) -- (B);
\draw[->] (B) -- (C);
\end{tikzpicture}
\]
This is not an equivalence, since projective modules $P$ are CM, and these get sent to complexes which by definition are zero in $\Dsg(R)$.  Hence we must `remove' the projectives in $\CM R$.

How to do this is standard, and is known as taking the stable category $\uCM R$.  By definition the objects in $\uCM R$ are just the same objects as in $\CM R$, but morphism spaces are defined as $\u{\Hom}_{R}(X, Y):=\Hom_{R}(X, Y)/\mathcal{P}(X, Y)$ where $\mathcal{P}(X, Y)$ is the subspace of morphisms factoring through $\proj R$.  If $P\in\proj R$ then $id_P$ clearly factors through a projective and so $id_P=0_P$ in $\uCM R$.  This shows that $P\simeq 0$ in $\uCM R$ for all $P\in\proj R$, and consequently the above functor induces a functor
\[
\begin{tikzpicture}
\node (A) at (0,0) {$\uCM R$};
\node (B) at (4,0) {$\Dsg(R)=\Db(\mod R)/\Kb(\proj R).$};
\draw[->] (A) -- node[above] {$\scriptstyle F$}(B);
\end{tikzpicture}
\]
When $R$ is Gorenstein, the category $\uCM R$ is actually triangulated, being the stable category of a Frobenius category.  The shift functor $[1]$ is given by the inverse of the syzygy functor.

\begin{thm} (Buchweitz) Let $R$ be a Gorenstein ring.  Then the natural functor $F$ above is a triangle equivalence, so  $\uCM R\simeq \Dsg(R)$ as triangulated categories.
\end{thm}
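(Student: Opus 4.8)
\medskip
\noindent
\textit{Proof sketch.} The plan has three parts: (a) upgrade $\CM R$ to a Frobenius exact category so that $\uCM R$ is genuinely triangulated and $F$ is a triangle functor; (b) prove $F$ is fully faithful, which is the heart of the matter; (c) deduce essential surjectivity.

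For (a), equip $\CM R$ with the exact structure whose conflations are the short exact sequences $0\to A\to B\to C\to 0$ of $R$-modules with $A,B,C\in\CM R$. Since $R$ is Gorenstein, $\Ext^i_R(X,R)=0$ for all $i>0$ and all $X\in\CM R$, and dually; together with the fact (an instance of the depth lemma, exactly as in Lemma~\ref{CM defin same}) that $\CM R$ is closed under kernels of epimorphisms and cokernels of monomorphisms, this shows that the projective objects of the exact category $\CM R$ are exactly the modules in $\proj R$, that these coincide with the injective objects, and that $\CM R$ has enough of both. Hence $\CM R$ is Frobenius, so $\uCM R$ is triangulated with suspension $[1]=\Omega^{-1}$ (cosyzygy) and distinguished triangles induced by conflations. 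The functor $F$ is the composite $\CM R\hookrightarrow\Db(\mod R)\to\Dsg(R)$; it kills $\proj R$ since projective modules are zero in $\Dsg(R)$, so it factors through $\uCM R$. A conflation of CM modules is in particular a short exact sequence of complexes, hence a triangle in $\Db(\mod R)$ and therefore in $\Dsg(R)$; applying this to $0\to X\to P\to \Omega^{-1}X\to 0$ with $P$ projective gives $\Omega^{-1}X\cong X[1]$ in $\Dsg(R)$, so $F$ intertwines the two suspensions and sends conflations to triangles, i.e.\ $F$ is a triangle functor.

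For (b) I would use complete resolutions. Since $R$ is Gorenstein and $X\in\CM R$, both $X$ and its $R$-dual $X^{*}=\Hom_R(X,R)$ admit ordinary projective resolutions; splicing a projective resolution of $X$ with the $R$-dual of one of $X^{*}$ produces a totally acyclic complex $T_{\bullet}$ of finitely generated projective $R$-modules with $\Cok(T_1\to T_0)\cong X$, acyclicity in negative degrees being guaranteed by $\Ext^{>0}_R(X^{*},R)=0$. Then argue in two steps. \emph{Faithfulness:} a module map $f\colon X\to Y$ between CM modules which becomes zero in $\Dsg(R)$ factors through a perfect complex; lifting $f$ through $T_{\bullet}$ and using that a bounded complex of projectives receives only finitely many nonzero maps from the $T_i$ forces $f$ to factor through a projective $R$-module, i.e.\ $f=0$ in $\uCM R$. \emph{Fullness:} a morphism $X\to Y$ in $\Dsg(R)$ is a roof $X\xleftarrow{s}Z\xrightarrow{g}Y$ with the cone of $s$ in $\Kb(\proj R)$; replacing $Z$ by a sufficiently high syzygy and using $\Ext^{>0}_R(X,R)=0$ (equivalently, that $s$ can be absorbed into the totally acyclic complex of $X$), one shows the roof is equivalent to one of the form $X\xleftarrow{\sim}X\xrightarrow{h}Y$ coming from an honest $R$-module map $h$. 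Together these give that $\u{\Hom}_R(X,Y)\to\Hom_{\Dsg(R)}(X,Y)$ is an isomorphism.

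For (c), given any $M\in\mod R$ the depth lemma applied to the syzygy sequences $0\to\Omega^iM\to P_{i-1}\to\Omega^{i-1}M\to 0$ yields $\depth\Omega^iM\ge\min(d,\depth M+i)$ with $d=\dim R$, so $\Omega^dM\in\CM R$; since each such sequence has projective middle term, it gives $\Omega^{i-1}M\cong(\Omega^iM)[1]$ in $\Dsg(R)$, whence $M\cong(\Omega^dM)[d]$. Thus every module, and every shift of a module, lies in the essential image of $F$, which (by (b), $F$ being a full triangle functor) is a triangulated subcategory of $\Dsg(R)$; since every object of $\Db(\mod R)$ is a finite iterated cone of shifts of modules, so is every object of $\Dsg(R)=\Db(\mod R)/\Kb(\proj R)$, and hence $F$ is essentially surjective, so an equivalence. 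The main obstacle is step (b), and within it the comparison of two different localizations: $\uCM R$ is obtained from $\CM R$ by killing maps through projective \emph{modules}, whereas $\Dsg(R)$ is obtained from $\C(\mod R)$ by inverting quasi-isomorphisms and then Verdier-quotienting by perfect \emph{complexes}. Reconciling them requires the complete-resolution bookkeeping above, and this is exactly where the Gorenstein hypothesis is indispensable: it is what makes complete resolutions totally acyclic, via $\Ext^{>0}_R(X,R)=0=\Ext^{>0}_R(X^{*},R)$ for $X\in\CM R$.
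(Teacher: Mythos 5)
The paper does not actually prove this theorem: it is stated with attribution to Buchweitz and the credits point to \cite{Buch}, so there is no in-text argument to compare yours against. Your sketch is the standard Buchweitz proof and its architecture is correct. Part (a) is right, and the ``enough injectives'' claim you gloss over is exactly the sequence $0\to X\cong X^{**}\to Q^{*}\to(\Omega X^{*})^{*}\to 0$ obtained by dualizing a projective presentation of $X^{*}$ and using $\Ext^{1}_{R}(X^{*},R)=0$; part (c) is also fine, since the essential image is a full triangulated subcategory containing every module (as $M\cong(\Omega^{d}M)[d]$ in $\Dsg(R)$) and every bounded complex is an iterated cone of shifted modules. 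The one place your outline is genuinely thin is fullness in step (b): ``replacing $Z$ by a sufficiently high syzygy'' conceals the real content, which is a cofinality statement --- among all morphisms $s\colon Z\to X$ with perfect cone, those obtained by brutally truncating a complete resolution of $X$ are cofinal, so that $\Hom_{\Dsg(R)}(X,Y)$ is computed as $H^{0}$ of the Hom-complex from the complete resolution of $X$ into $Y$, which in turn is $\u{\Hom}_{R}(X,Y)$. Making that comparison of the two localizations precise is where most of the work in Buchweitz's argument lives; granting it, the rest of your outline assembles correctly, and the Gorenstein hypothesis enters exactly where you say it does.
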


This shows, at least when $R$ is Gorenstein, why the CM modules encode much of the singular behaviour of $R$.  

\begin{remark}
When $R$ is Gorenstein with only isolated singularities, the triangulated category $\uCM R$ is in fact $(\dim R-1)$-CY.  This follows from Auslander--Reiten (=AR) duality in \ref{ARduality} below.  The existence of AR duality links to NCCRs via the CY property, it can be used to prove the existence of AR sequences (which in turn answers the motivating Q1 from \S\ref{L1}), and will appear again in the McKay Correspondence in \S\ref{L5}.
\end{remark}
\begin{remark}
If $R$ is a hypersurface, we have already seen in examples (and it is true generally via matrix factorizations) that $\Omega^2=\mathrm{Id}$, where $\Omega$ is the syzygy functor introduced in \S\ref{L1}.  Consequently $[1]^2=\mathrm{Id}$, so $\uCM R$ can be 2-CY, 4-CY, etc.  This shows that the precise value of the CY property is not unique. 
\end{remark}

\subsection{Auslander--Reiten Duality}\label{AR proof section}
\begin{thm}\label{ARduality} (AR duality)
Let $R$ be a $d$-dimensional equicodimensional Gorenstein ring, with only isolated singularities.  Then there exists a functorial isomorphism
\[
\Hom_{\uCM R}(X,Y)\cong D(\Hom_{\uCM R}(Y,X[d-1]))
\]
for all $X,Y\in\CM R$.
\end{thm}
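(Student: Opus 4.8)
The plan is to reduce the statement to a local–cohomology/Grothendieck-duality computation, passing between $\uCM R$ and the singularity category $\Dsg(R)$ via Buchweitz's theorem. First I would recall that since $R$ has only isolated singularities, for $X,Y\in\CM R$ the $R$-module $\Ext^1_R(X,Y)$ has finite length, and more generally $\u{\Hom}_R(X,Y)$ differs from $\Hom_R(X,Y)$ only by a finite-length piece; this is what makes the $k$-dual $D(-)=\Hom_k(-,k)$ on the right-hand side well-defined. The strategy is then to establish a functorial isomorphism of finite-length $R$-modules
\[
\u{\Hom}_R(X,Y)\cong D\left(\Ext^1_R(Y,X\otimes_R\omega_R)\right),
\]
and then to use that $R$ is Gorenstein, so $\omega_R\cong R$, together with the identification $[1]=\Omega^{-1}$ in $\uCM R$ and Buchweitz's equivalence $\uCM R\simeq\Dsg(R)$, to rewrite $\Ext^1_R(Y,X)$ as $\Hom_{\uCM R}(Y,\Omega^{-1}X)=\Hom_{\uCM R}(Y,X[1])$, and more generally to shift by $d-1$.

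The technical heart is the following. For $X\in\CM R$, take a projective resolution and apply local duality with respect to the maximal ideal $\m$: since $\depth X=d$, the local cohomology $\H^i_\m(X)$ vanishes except in degree $d$, and $\H^d_\m(X)\cong D(\Ext^0_R(X,\omega_R))^{\vee}$ by local duality (here $D$ is Matlis duality, which on finite-length modules agrees with $k$-duality up to the residue field identification). The key step is to splice this with the long exact sequence obtained by applying $\Hom_R(Y,-)$ (for $Y\in\CM R$) to a short exact sequence $0\to X\to P\to \Omega^{-1}X\to 0$ realizing the cosyzygy, and to track where the stable maps sit. One shows that the stable morphisms $\u{\Hom}_R(X,Y)$ are precisely those that do not extend to $P$, and that this subquotient is dual to $\Ext^1$ of the opposite pair twisted by $\omega_R$ — this is essentially the CM analogue of the classical AR-duality argument for Artin algebras, with $D\Tr$ replaced by the syzygy/cosyzygy operations and the ordinary dualizing module replaced by $\omega_R$. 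Because $R$ has isolated singularities all the relevant $\Ext$'s are finite length, so local duality applies cleanly.

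The main obstacle I expect is the bookkeeping in identifying $\Hom_{\uCM R}(Y,X[d-1])$ correctly: one must verify that under Buchweitz's equivalence the shift $[1]$ really is $\Omega^{-1}$ (which requires $R$ Gorenstein), and then iterate, keeping careful track of the $d-1$ versus $d$ discrepancy — the $d$ comes from $\H^d_\m$, and one degree is absorbed by the passage from $\Ext$ to $\u{\Hom}$ (i.e. from a genuine extension to its class modulo maps factoring through projectives). A clean way to organize this is to first prove the $d=1$ case (curve singularities), where the statement reads $\Hom_{\uCM R}(X,Y)\cong D\Hom_{\uCM R}(Y,X)$, i.e. $\uCM R$ is $1$-Calabi–Yau up to the twist, directly from local duality, and then dévissage in higher dimension by reducing modulo a regular sequence, using that a generic hyperplane section of an isolated Gorenstein singularity is again Gorenstein with isolated (or no) singularities and that $\CM$ modules restrict well. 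Alternatively, and perhaps more efficiently, one invokes the functor $\mathbb{S}_R=\RHom_R(-,\omega_R[d])$ of Grothendieck duality on $\Db(\mod R)$, observes it descends to $\Dsg(R)$ because it preserves $\Kb(\proj R)$, computes that on $\uCM R\simeq\Dsg(R)$ it becomes $[d]$ composed with $-\otimes\omega_R\cong\mathrm{Id}$ (Gorenstein) and then corrects by the shift coming from the triangulated structure to land on $[d-1]$; the isolated-singularity hypothesis is exactly what guarantees the relevant $\Hom$ spaces in $\Dsg(R)$ are finite-dimensional so that $D$ makes sense. I would present the argument in this second, more conceptual form, citing local duality and Buchweitz's theorem, and relegate the finite-length verifications to a lemma.
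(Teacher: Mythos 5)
Your overall strategy --- local duality at $\m$ combined with Buchweitz's equivalence, with the isolated-singularity hypothesis supplying the finite-length condition that makes $D$ applicable --- is a legitimate route, and it is in fact what the paper's proof does in disguise: Facts~\ref{ARfacts} (the shape of the minimal injective resolution of $R$, and the vanishing of $\Hom_R(W,I_i)$ for $i\leq d-1$ when $W$ has finite length) are precisely the hands-on form of local duality. But there is a genuine gap at what you yourself call the technical heart. The formula you propose to establish, $\u{\Hom}_R(X,Y)\cong D(\Ext^1_R(Y,X\otimes_R\omega_R))$, is the correct statement only when $d=2$. The general intermediate statement is $D(\u{\Hom}_R(X,Y))\cong\Ext^1_R(Y,\tau X)$ where $\tau X=\Hom_R(\Omega^d\Tr X,R)\cong\Omega^{2-d}X=X[d-2]$ for $R$ Gorenstein; the entire $d$-dependence of the theorem, i.e.\ the shift $[d-1]=[d-2]\circ[1]$, is carried by this $\Omega^{2-d}$, which your formula omits. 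One cannot recover it by ``shifting by $d-1$'' afterwards, and your second, Serre-functor formulation has the same problem: $\RHom_R(-,\omega_R[d])$ is contravariant, and converting it into the covariant shift $[d-1]$ on $\Dsg(R)$ is exactly the computation you defer (``corrects by the shift coming from the triangulated structure'' is where the proof has to happen, and it is not supplied).

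The paper's mechanism for producing the shift is the Auslander--Bridger transpose: one first proves $\tau\cong\Omega^{2-d}$ (Lemma~\ref{tausyz}, using $\Omega^2\Tr(-)\cong\Hom_R(-,R)$), then chains connecting maps through the injective resolution of $R$, using $\Ext^j_R(A,\Hom_R(B,I))\cong\Hom_R(\Tor^R_j(A,B),I)$ and the finite length of $\Tor^R_i(\Tr X,Y)$, to obtain $D(\Tor^R_1(\Tr X,Y))\cong\Ext^{d+1}_R(\Tr X,\Hom_R(Y,R))\cong\Ext^1_R(Y,\tau X)$; the final degree drop from $\Ext^1$ to $\u{\Hom}$ is the standard isomorphism $\Tor^R_1(\Tr X,Y)\cong\Hom_{\uCM R}(X,Y)$. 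You introduce no substitute for the transpose, so this step is missing from your argument. Separately, the d\'evissage alternative you mention would not work as described: reduction modulo a regular element does not induce a full or faithful functor on stable categories of CM modules, so AR duality does not descend by induction on $d$ via generic hyperplane sections.
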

The proof is actually quite straightforward, using only fairly standard homological constructions, given the following two commutative algebra facts.  For a finitely generated $R$-module $M$, we denote $E_{R}(M)$ to be the injective hull of $M$.
\begin{facts}\label{ARfacts} Let $R$ be a $d$-dimensional equi-codimensional Gorenstein ring.  Then \\
\t{(1)} The minimal $R$-injective resolution of $R$ has the form
\begin{equation}\label{min inj res}
0\to R\to I_0:=\bigoplus_{\p: \hgt\p=0} E(R/\p)\to \hdots\to I_d:=\bigoplus_{\p:\hgt\p=d} E(R/\p)\to 0.
\end{equation}
In particular the Matlis dual is $D=\Hom_R(-,I_d)$.  \\
\t{(2)} If $W\in\mod R$ with $\dim W=0$, then $\Hom_R(W,I_i)=0$ for all $0\leq i\leq d-1$.
\end{facts}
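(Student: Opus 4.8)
The plan is to derive both facts from Bass's theory of minimal injective resolutions together with the $\Ext$-characterisation of Gorenstein local rings; everything else is bookkeeping forced by the equicodimensional hypothesis. Recall that over a Noetherian ring every injective module is, uniquely, a direct sum of indecomposable injectives $E(R/\p)$, $\p\in\Spec R$, and that in the minimal injective resolution $0\to R\to I_0\to I_1\to\cdots$ the multiplicity of $E(R/\p)$ in $I_i$ is the Bass number $\mu_i(\p,R)=\dim_{\kappa(\p)}\Ext^i_{R_\p}(\kappa(\p),R_\p)$, where $\kappa(\p)=R_\p/\p R_\p$.

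First I would compute these Bass numbers. For each $\p$ the localisation $R_\p$ is Gorenstein local of Krull dimension $\hgt\p=:n$, so $\Ext^i_{R_\p}(\kappa(\p),R_\p)$ vanishes for $i\neq n$ and is one-dimensional over $\kappa(\p)$ for $i=n$; hence $\mu_i(\p,R)=\delta_{i,\hgt\p}$, which gives $I_i=\bigoplus_{\hgt\p=i}E(R/\p)$ for every $i\ge 0$. Then I would pin down the length: there are no primes of height $>d$ since $\dim R=d$, so $I_i=0$ for $i>d$, while the equicodimensional hypothesis forces every maximal ideal to have height $d$, so $I_d\supseteq\bigoplus_{\m\in\Max R}E(R/\m)\neq 0$. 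This establishes the resolution \eqref{min inj res}.

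For the remaining assertions I would first record that the height-$d$ primes are exactly the maximal ideals: a Gorenstein ring is Cohen--Macaulay, hence catenary with equidimensional localisations, so together with equicodimensionality one gets $\hgt\p+\dim R/\p=d$ for every $\p$, whence $\hgt\p=d\iff\dim R/\p=0$. Consequently $I_d=\bigoplus_{\m\in\Max R}E(R/\m)$ is the injective hull of $\bigoplus_\m R/\m$, the injective module representing Matlis duality in this (non-local but equicodimensional) setting, so $D=\Hom_R(-,I_d)$; this completes (1). For (2), take $W\in\mod R$ with $\dim W=0$, so that $\Supp W\subseteq\Max R$. Since $W$ is finitely generated, $\Hom_R(W,-)$ commutes with the direct sums defining the $I_i$, so $\Hom_R(W,I_i)=\bigoplus_{\hgt\p=i}\Hom_R(W,E(R/\p))$. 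As every element of $R\setminus\p$ acts invertibly on $E(R/\p)$, any $R$-map $W\to E(R/\p)$ factors uniquely through $W_\p$, giving $\Hom_R(W,E(R/\p))\cong\Hom_{R_\p}(W_\p,E(R/\p))$; and for $i<d$ the prime $\p$ is non-maximal by the height formula, hence $\p\notin\Supp W$, i.e.\ $W_\p=0$, so the term vanishes. Thus $\Hom_R(W,I_i)=0$ for $0\le i\le d-1$, which is (2).

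The substantive inputs are only Bass's multiplicity formula and the $\Ext$-characterisation of Gorenstein local rings, both entirely standard, so there is no conceptual obstacle. The single delicate point is the passage from local to global: the local identity $\id_{R_\p}R_\p=\hgt\p$ controls the \emph{shape} of each $I_i$, but the claim that the resolution is concentrated in degrees $0,\dots,d$, and the clean description of $D$, both rely on equicodimensionality via $\hgt\p+\dim R/\p=d$; I would be careful to invoke catenarity of Cohen--Macaulay rings explicitly at that step rather than treat it as automatic.
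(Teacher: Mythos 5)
Your argument is correct and is essentially the paper's: part (1) is exactly the Bass-number computation behind the citation to Bass/Bruns--Herzog (with the equicodimensionality point $\hgt\p+\dim R/\p=d$ handled correctly via the CM local formula in each $R_\m$), and part (2) is the same vanishing statement. The only cosmetic difference is in (2): the paper deduces $\Hom_R(W,E(R/\p))=0$ from $\Ass E(R/\p)=\{\p\}$ together with the criterion that $\Supp X\cap\Ass Y=\emptyset$ forces $\Hom_R(X,Y)=0$, whereas you use the $R_\p$-module structure of $E(R/\p)$ and $W_\p=0$ for non-maximal $\p$ --- two interchangeable standard facts.
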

\begin{proof}
(1) is one of Bass's original equivalent characterizations of Gorenstein rings \cite{Bass}.  For the more general CM version, see \cite[3.2.9, 3.3.10(b)]{BH}.\\
(2) follows from (1), together with the knowledge that (i) $\Ass E_{R}(R/\p)=\{ \p\}$, and (ii) if $X\in \mod R$ and $Y\in \Mod R$ satisfies $\Supp X\cap\Ass Y=\emptyset$, then $\Hom_R(X,Y)=0$.
\end{proof}

Set $(-)^*:=\Hom_R(-,R)$. For any $X\in\mod R$, consider the start of a projective resolution 
\[
P_1\stackrel{f}{\to}P_0\to X\to 0.
\]
We define $\Tr X\in\mod R$ to be the cokernel of $f^*$, that is
\[
0\to X^*\to P_0^*\stackrel{f^*}{\to}P_1^*\to\Tr X\to0.
\]
This gives a duality
\[
\Tr\colon \umod R\xrightarrow{\sim}\umod R
\]
called the Auslander--Bridger transpose.  We denote $\Omega\colon
\umod R\to\umod R$ to be the syzygy functor.  Combining these we define the Auslander--Reiten translation $\tau$ to be
\[
\tau(-):=\Hom_R(\Omega^d\Tr(-) ,R):
\uCM R\to\uCM R.
\]
We let $D:=\Hom(-,I_d)$ denote Matlis duality.

\begin{lemma}\label{tausyz} Suppose that $R$ is Gorenstein with $d:=\dim R$.  Then $\tau\cong\Omega^{2-d}$.
\end{lemma}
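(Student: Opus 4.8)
The plan is to unwind the definition $\tau(-)=\Hom_R(\Omega^d\Tr(-),R)$ on $\uCM R$, using the two consequences of the Gorenstein hypothesis: that $(-)^*:=\Hom_R(-,R)$ restricts to an exact duality on $\CM R$, since $\Ext^i_R(X,R)=0$ for all $X\in\CM R$ and $i>0$; and that $\uCM R$ is the stable category of the Frobenius category $\CM R$, whose projective--injective objects are the free modules, so that the syzygy $\Omega$ is an autoequivalence of $\uCM R$ with quasi-inverse the cosyzygy $\Omega^{-1}$. A preliminary remark, obtained by dualising a short exact sequence $0\to\Omega N\to P\to N\to 0$ with $P$ free and using $\Ext^1_R(N,R)=0$, is that for every $N\in\CM R$ there are natural isomorphisms $(\Omega N)^*\cong\Omega^{-1}(N^*)$ and, dually, $(\Omega^{-1}N)^*\cong\Omega(N^*)$ in $\uCM R$; thus $(-)^*$ interchanges $\Omega$ and $\Omega^{-1}$, and $(\Omega^k N)^*\cong\Omega^{-k}(N^*)$ for all $k\in\mathbb{Z}$ and $N\in\CM R$.

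The key step is to identify $\Omega^d\Tr X$ for $X\in\CM R$. Choose a projective resolution $\cdots\to P_1\xrightarrow{f}P_0\to X\to 0$ and apply $(-)^*$; since $\Ext^{>0}_R(X,R)=0$, the complex $0\to X^*\to P_0^*\xrightarrow{f^*}P_1^*\to P_2^*\to\cdots$ is exact. Put $C:=\Im f^*$. Then $C$ sits in an infinite exact sequence $0\to C\to P_1^*\to P_2^*\to\cdots$ with all terms free of depth $d$, so an iterated application of the depth lemma~(\ref{depth lemma}) gives $\depth C\geq d$, i.e.\ $C\in\CM R$. The short exact sequence $0\to X^*\to P_0^*\to C\to 0$ with free middle term then exhibits $C$ as the cosyzygy $\Omega^{-1}(X^*)$ in $\uCM R$. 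On the other hand $\Tr X=\Cok f^*$ has projective presentation $P_0^*\xrightarrow{f^*}P_1^*\to\Tr X\to 0$, so $C=\Im f^*$ is a first syzygy of $\Tr X$, and by Schanuel's lemma it agrees with $\Omega\Tr X$ up to a free summand, so $\Omega\Tr X\cong C$ in $\uCM R$; since $C\in\CM R$, iterating gives $\Omega^d\Tr X\cong\Omega^{d-1}C$.

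Assembling these, and using $X^{**}\cong X$ for $X\in\CM R$, the computation in $\uCM R$ is
\[
\tau X=(\Omega^d\Tr X)^*\cong(\Omega^{d-1}C)^*\cong\Omega^{-(d-1)}(C^*)\cong\Omega^{-(d-1)}\big((\Omega^{-1}X^*)^*\big)\cong\Omega^{-(d-1)}(\Omega X^{**})\cong\Omega^{2-d}X,
\]
every isomorphism being natural in $X$, which yields $\tau\cong\Omega^{2-d}$ on $\uCM R$.

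Each step is short; the place to be careful --- and the only real obstacle --- is the stable-category bookkeeping: checking at every stage that the module involved ($X^*$, $C$, $\Omega\Tr X$, and the higher syzygies) actually lies in $\CM R$, so that the identity $(\Omega^k N)^*\cong\Omega^{-k}(N^*)$ applies, and confirming that a possibly non-minimal presentation of $\Tr X$ only contributes free summands, which vanish in $\uCM R$. It is worth flagging that $\Tr X$ itself need not be maximal Cohen--Macaulay (its depth can be $d-1$), but its syzygy $\Omega\Tr X$ is, and that is all that is needed.
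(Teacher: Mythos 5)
Your proposal is correct and is essentially the paper's proof: the identification $\Omega\Tr X\cong\Omega^{-1}(X^*)$ is equivalent to the paper's key fact $\Omega^2\Tr(-)\cong\Hom_R(-,R)$, and the commutation rule $(\Omega^k N)^*\cong\Omega^{-k}(N^*)$ together with reflexivity of CM modules is exactly what the paper's one-line computation uses. You have simply made explicit the stable-category and depth-lemma bookkeeping that the paper suppresses.
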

\begin{proof}
We have $\Omega^2\Tr(-)\cong \Hom_R(- ,R)$.
Thus
\begin{eqnarray*}
\tau=\Hom_R(\Omega^d\Tr(-) ,R)
&\cong& \Hom_R(\Omega^{d-2}\Hom_R(-,R),R)\\
&\cong&\Omega^{2-d}\Hom_R(\Hom_R(-,R),R)\\
&\cong&\Omega^{2-d}.
\end{eqnarray*}
\end{proof}

With these facts, we can prove \ref{ARduality}:
\begin{proof}
Denote $T:=\Tr X$.  Since $Y\in\CM R$, $\Ext^{i}_{R}(Y,R)=0$ for all $i>0$ and so applying $\Hom_R(Y,-)$ to (\ref{min inj res}) gives an exact sequence
\[
0\to {}_{R}(Y,R)\to {}_{R}(Y,I_0)\to {}_{R}(Y,I_1)\to\hdots\to {}_{R}(Y,I_{d-1})\to {}_{R}(Y,I_d)\to 0
\]
of $R$-modules, which we split into short exact sequences as
\[
\SelectTips{cm}{10}
\xy0;/r.275pc/:
\POS(5,0)*+{0}="0",(15,0)*+{{}_{R}(Y,R)}="1",(30,0)*+{{}_{R}(Y,I_0)}="2",(45,0)*+{{}_{R}(Y,I_1)}="3",(60,0)*+{{}_{R}(Y,I_2)}="4",(69,0)*+{\,}="4a",(69.5,0)*+{\,}="4b",(80,0)*+{{}_{R}(Y,I_{d-2})}="5",(97,0)*+{{}_{R}(Y,I_{d-1})}="6",(112,0)*+{{}_{R}(Y,I_{d})}="7",(121,0)*+{0}="8",(37.5,-6.5)*+{C_1}="b1",(52.5,-6.5)*+{C_2}="b2",(87.5,-6.5)*+{C_{d-1}}="b3"
\ar"0";"1"
\ar"1";"2"
\ar"2";"3"
\ar"3";"4"
\ar"4";"4a"
\ar@{}"4a";"4b"_{\hdots}
\ar"4b";"5"
\ar"5";"6"
\ar"6";"7"
\ar"7";"8"
\ar@{->>}"2";"b1"
\ar@{^{(}->}"b1";"3"
\ar@{->>}"3";"b2"
\ar@{^{(}->}"b2";"4"
\ar@{->>}"5";"b3"
\ar@{^{(}->}"b3";"6"
\endxy .
\]
Applying $\Hom_{R}(T,-)$ gives exact sequences
\[
\begin{tikzpicture}
\node (a1) at (0,0) {\small$\Ext^1_{R}(T,{}_{R}(Y,I_{d-1}))$};
\node (a2) at (3.5,0) {\small$\Ext^{1}_{R}(T,{}_{R}(Y,I_d))$};
\node (a3) at (6.5,0) {\small$\Ext^{2}_{R}(T,C_{d-1})$};
\node (a4) at (10,0) {\small$\Ext^{2}_{R}(T,{}_{R}(Y,I_{d-1}))$};
\node (b1) at (0,-0.5) {\small$\Ext^2_{R}(T,{}_{R}(Y,I_{d-2}))$};
\node (b2) at (3.5,-0.5) {\small$\Ext^{2}_{R}(T,C_{d-1})$};
\node (b3) at (6.5,-0.5) {\small$\Ext^{3}_{R}(T,C_{d-2})$};
\node (b4) at (10,-0.5) {\small$\Ext^{3}_{R}(T,{}_{R}(Y,I_{d-2}))$};
\node at (6.5,-0.9) {$\vdots$};
\node (c1) at (0,-1.5) {\small$\Ext^{d-1}_{R}(T,{}_{R}(Y,I_1))$};
\node (c2) at (3.5,-1.5) {\small$\Ext^{d-1}_{R}(T,C_2)$};
\node (c3) at (6.5,-1.5) {\small$\Ext^{d}_{R}(T,C_1)$};
\node (c4) at (10,-1.5) {\small$\Ext^{d}_{R}(T,{}_{R}(Y,I_1))$};
\node (d1) at (0,-2) {\small$\Ext^{d}_{R}(T,{}_{R}(Y,I_0))$};
\node (d2) at (3.5,-2) {\small$\Ext^{d}_{R}(T,C_1)$};
\node (d3) at (6.5,-2) {\small$\Ext^{d+1}_{R}(T,{}_{R}(Y,R))$};
\node (d4) at (10,-2) {\small$\Ext^{d+1}_{R}(T,{}_{R}(Y,I_0))$};
\draw[->] (a1) -- (a2);
\draw[->] (a2) -- (a3);
\draw[->] (a3) -- (a4);
\draw[->] (b1) -- (b2);
\draw[->] (b2) -- (b3);
\draw[->] (b3) -- (b4);
\draw[->] (c1) -- (c2);
\draw[->] (c2) -- (c3);
\draw[->] (c3) -- (c4);
\draw[->] (d1) -- (d2);
\draw[->] (d2) -- (d3);
\draw[->] (d3) -- (d4);
\end{tikzpicture}
\]
But whenever $I$ is an injective $R$-module, we have a functorial isomorphism
\[
\Ext^j_{R}(A,{}_{R}(B,I))\cong\Hom_R(\Tor^{R}_j(A,B),I)
\]
and so we may re-write the above as
\[
\begin{tikzpicture}
\node (a1) at (0,0) {\small${}_R(\Tor^{R}_1(T,Y),I_{d-1})$};
\node (a2) at (3.5,0) {\small${}_R(\Tor^{R}_1(T,Y),I_d)$};
\node (a3) at (6.5,0) {\small$\Ext^{2}_{R}(T,C_{d-1})$};
\node (a4) at (10,0) {\small${}_R(\Tor_{2}^{R}(T,Y),I_{d-1})$};
\node (b1) at (0,-0.5) {\small${}_R(\Tor_{2}^{R}(T,Y),I_{d-2})$};
\node (b2) at (3.5,-0.5) {\small$\Ext^{2}_{R}(T,C_{d-1})$};
\node (b3) at (6.5,-0.5) {\small$\Ext^{3}_{R}(T,C_{d-2})$};
\node (b4) at (10,-0.5) {\small${}_R(\Tor_{3}^{R}(T,Y),I_{d-2})$};
\node at (6.5,-0.9) {$\vdots$};
\node (c1) at (0,-1.5) {\small${}_R(\Tor_{d-1}^{R}(T,Y),I_1)$};
\node (c2) at (3.5,-1.5) {\small$\Ext^{d-1}_{R}(T,C_2)$};
\node (c3) at (6.5,-1.5) {\small$\Ext^{d}_{R}(T,C_1)$};
\node (c4) at (10,-1.5) {\small${}_R(\Tor_{d}^{R}(T,Y),I_1)$};
\node (d1) at (0,-2) {\small${}_R(\Tor_{d}^{R}(T,Y),I_0)$};
\node (d2) at (3.5,-2) {\small$\Ext^{d}_{R}(T,C_1)$};
\node (d3) at (6.5,-2) {\small$\Ext^{d+1}_{R}(T,{}_{R}(Y,R))$};
\node (d4) at (10,-2) {\small${}_R(\Tor_{d+1}^{R}(T,Y),I_0)$};
\draw[->] (a1) -- (a2);
\draw[->] (a2) -- (a3);
\draw[->] (a3) -- (a4);
\draw[->] (b1) -- (b2);
\draw[->] (b2) -- (b3);
\draw[->] (b3) -- (b4);
\draw[->] (c1) -- (c2);
\draw[->] (c2) -- (c3);
\draw[->] (c3) -- (c4);
\draw[->] (d1) -- (d2);
\draw[->] (d2) -- (d3);
\draw[->] (d3) -- (d4);
 \draw [dotted] (-1.5,.5) -- (-1.5,-2.5) -- (1.5,-2.5) -- (1.5,.5) -- cycle;
  \draw [dotted] (8.5,.5) -- (8.5,-2.5) -- (11.5,-2.5) -- (11.5,.5) -- cycle;
\end{tikzpicture}
\]
Since $R$ is an isolated singularity, and CM modules are free on regular local rings, $X_\p\in\proj R_\p$ and so $T_{\p}\in\proj R_{\p}$ for all non-maximals primes $\p$.  It follows that $\Tor_i^R(T,Y)$ all have finite length, so by \ref{ARfacts}(2) all the terms in the dotted boxes are zero.  Thus the above reduces to 
\begin{eqnarray}
D(\Tor_1^R(T,Y))\cong\Ext_{R}^{2}(T,C_{d-1})\cong\hdots\cong  \Ext_{R}^{d+1}(T,{}_{R}(Y,R))\label{red1}
\end{eqnarray}
But now 
\begin{equation}\label{red2}
\Ext_{R}^{d+1}(T,{}_{R}(Y,R))\cong \Ext_{R}^{1}(\Omega^d T,{}_{R}(Y,R))\cong \Ext^1_{R}(Y,{}_{R}(\Omega^d T,R))=\Ext^1_R(Y,\tau X),
\end{equation}
and so combining (\ref{red1}) and (\ref{red2}) gives
\[
D(\Tor_1^R(T,Y))\cong \Ext^1_R(Y,\tau X).
\]
Thus the standard functorial isomorphism $\Tor^{R}_1(\Tr X,Y)\cong \Hom_{\uCM R}(X,Y)$ (see e.g.\ \cite{Y}) yields 
\[
D(\Hom_{\uCM R}(X,Y))\cong \Ext^1_R(Y,\tau X)\cong\Hom_{\uCM R}(\Omega Y,\tau X)=\Hom_{\uCM R}(Y[-1],\tau X).
\]
But now by \ref{tausyz} we have $\tau X=\Omega^{2-d}{X}=X[d-2]$, so
\[
D(\Hom_{\uCM R}(X,Y))\cong\Hom_{\uCM R}(Y[-1],X[d-2])=\Hom_{\uCM R}(Y,X[d-1]),
\]
as required.
\end{proof}

\medskip
\medskip
\noindent
{\bf Credits:}  The derived category section is a very brief summary of a course given by Jeremy Rickard in Bristol in 2005.  Further information on derived categories can now be found in many places, for example the notes of Keller \cite{Keller} or Milicic \cite{Milicic}.  Perfect complexes originate from SGA6, but came to prominence first through the work of Thomason--Trobaugh \cite{TT}, then by viewing them as compact objects via Neeman \cite{Neeman}.  Many of the technical results presented here can be found in Orlov \cite{Orlov1, Orlov2} and Rouquier \cite{Rouq}.  

Tilting modules existed in the 1980s, but tilting complexes first appeared in Rickard's Morita theory for derived categories \cite{Rickard}.  Tilting bundles at first were required to have endomorphism rings with finite global dimension but this is not necessary; the precise statement of \ref{VdB Hille} is taken from Hille--Van ben Bergh \cite{HVdB}, based on the tricks and ideas of Neeman \cite{Neeman}.  

Example \ref{P1 Db} is originally due to Beilinson \cite{Beilinson}, who proved it by resolving the diagonal.   The example of the blowup of the origin and the $\mathbb{Z}_3$ singularity both follow from Artin--Verdier \cite{AV}, Esnault \cite{Esnault}, and Wunram \cite{Wunram}, but those papers do not contain the modern derived category language.  A much more general setting was provided by Van den Bergh \cite{VdB1d}, which subsumes all these results.

Crepancy and the CM property in the smooth setting is used in \cite{VdBNCCR}, being somewhat implicit in \cite{BKR}.  The proof here (in the singular setting) can be found in \cite{IW5}, based on the ideas of relative Serre functors due to Ginzburg \cite{GinzCY}.  The fact that all such algebras have the form $\End_R(M)$ is really just a result of Auslander--Goldman \cite{AG}.  All the CY algebra section can be found in Iyama--Reiten \cite[\S3]{IR}. 

The singular derived category section is based on the very influential preprint of Buchweitz \cite{Buch}.  The proof of Auslander--Reiten duality can be found in \cite{Aus3}, and the technical background on injective hulls can be found in commutative algebra textbooks, e.g.\ \cite{BH}.

\subsection{Exercises}

\begin{ex}\label{Ex4.30}
(Torsion-free modules are not needed) Suppose that $R$ is a normal domain, and that $M$ is a torsion-free $R$-module.
\begin{enumerate}
\item Is $\End_R(M)\cong\End_R(M^{**})$
true in general?
\item Suppose further $\End_R(M)\in\refl R$ (e.g.\ if $R$ is CM, this happens when $\End_R(M)\in\CM R$).  In this case, show that $\End_R(M)\cong\End_R(M^{**})$.  Since $M^{**}\in\refl R$, this is why in the definition of NCCR we don't consider torsion-free modules.
\end{enumerate}
\end{ex}

\begin{ex}\label{Ex4.31}
(Much harder, puts restrictions on the rings that can admit NCCRs) Suppose that $R$ is a $d$-dimensional CM domain admitting an NCCR $\Lambda:=\End_R(R\oplus M)$.  Show that
\begin{enumerate}
\item The Azumaya locus of $\Lambda$ is equal to the non-singular locus of $R$.
\item The singular locus $\Sing(R)$ must be closed (!).
\item If further $(R,\m)$ is local, show that the class group of $R$ must be finitely generated.
\end{enumerate}
\end{ex}

\begin{ex}\label{Ex4.32}
Give an example of a CM ring that is not equicodimensional.
\end{ex}

\begin{ex}\label{Ex4.33}
(NCCR characterization on the base, without finite global dimension mentioned)  Let $R$ be a $d$-dimensional local Gorenstein ring.  Then for any $M\in \CM R$ with $R\in \add M$, show that the following are equivalent\\
(1)  $\add M=\{ X\in\CM R \mid \Hom_R(M,X)\in\CM R \}$.\\
(2)  $\add M=\{ X\in\CM R \mid \Hom_R(X,M)\in\CM R \}$.\\
(3) $\End_R(M)$ is a NCCR.\\
(4) $\End_R(M)^{\op}\cong\End_R(M^*)$ is a NCCR.\\
(Hint: the proof should be similar to the Auslander $\gl=2$ proof in \S\ref{L1}.  The key, as always, is the Auslander--Buchsbaum formula and the depth lemma).
\end{ex}

\begin{ex}\label{Z3 Db ex}\label{Ex4.34}
With the setting as in \ref{Z3 Db}, prove that
\[
\End_Y(\c{O}_Y\oplus \c{L}_1)\cong \begin{array}{cc}
\begin{array}{c}
\begin{tikzpicture} [bend angle=45, looseness=1]
\node (C1) at (0,0) [vertex] {};
\node (C2) at (1.5,0)  [vertex] {};
\draw [->,bend left] (C1) to node[gap]  {$\scriptstyle a$} (C2);
\draw [->,bend left=20,looseness=1] (C1) to node[gap]  {$\scriptstyle b$} (C2);
\draw [->,bend left=20,looseness=1] (C2) to node[gap]  {$\scriptstyle t$} (C1);
\end{tikzpicture}
\end{array} &
atb=bta.
\end{array}
\] 
\end{ex}

\newpage
\section{McKay and Beyond}\label{L5}

This section gives an overview of the McKay Correspondence in dimension two, in some of its various different forms.  This then leads into dimension three, where we sketch some results of Bridgeland--King--Reid \cite{BKR} and Van den Bergh \cite{VdBNCCR}.  The underlying message is that there is a very tight relationship between crepant resolutions and NCCRs in low dimension.

\subsection{McKay Correspondence (Surfaces)}
Let $G\leq\SL(2,\mathbb{C})$ be a finite subgroup, and consider $R:=\mathbb{C}[x,y]^G$.  So far we know that $R$ has a NCCR, since $\mathbb{C}[x,y]\# G\cong \End_R
\left(\bigoplus_{\rho\in\Irr G}((\mathbb{C}[x,y]\otimes\rho)^{G})^{\oplus\dim_\mathbb{C}\rho}\right)$ by Auslander (\ref{Auslander skew}), and we have already observed that as a quiver algebra this NCCR has the form
\[
\begin{array}{ccccc}
&&&\\
\begin{array}{c}
\begin{tikzpicture} [bend angle=45, looseness=1]
\node (C1) at (0,0) [vertex] {};
\node (C2) at (1.5,0)  [vertex] {};
\draw [->,bend left] (C1) to  (C2);
\draw [->,bend left=20,looseness=1] (C1) to  (C2);
\draw [->,bend left] (C2) to (C1);
\draw [->,bend left=20,looseness=1] (C2) to (C1);
\end{tikzpicture}
\end{array}
&&
\begin{array}{c}
\begin{tikzpicture} [bend angle=45, looseness=1.2]
\node[name=s,regular polygon, regular polygon sides=3, minimum size=1.75cm] at (0,0) {}; 
\node (C1) at (s.corner 1)  [vertex] {};
\node (C2) at (s.corner 2) [vertex] {};
\node (C3) at (s.corner 3) [vertex] {};
\draw[->] (C2) --  (C1); 
\draw[->] (C1) --  (C3);
\draw[->] (C3) --  (C2); 
\draw [->,bend right] (C1) to  (C2);
\draw [->,bend right] (C2) to  (C3);
\draw [->,bend right] (C3) to  (C1);
\end{tikzpicture}
\end{array}&\quad&\hdots\\
\mathbb{Z}_2&\quad& \mathbb{Z}_3
\end{array}
\]
The first is in the exercises (Exercise~\ref{Ex2.16}), the second is our running example.  There is a pattern, and to describe it requires the concept of the dual graph, which can be viewed as a simplified picture of the minimal resolution of $\Spec R$.
\begin{defin}
Denote by $\{ E_i\}$ the exceptional set of $\mathbb{P}^1$s in the minimal resolution $Y\to\Spec R$ (i.e.\  those $\mathbb{P}^1$s above the origin, as in \ref{Z3 GIT}).  Define the  dual graph as follows: for every $E_i$ draw a dot, and join two dots if the corresponding $\mathbb{P}^1$'s intersect.  
\end{defin}

\begin{example} For the $\mathbb{Z}_4$ singularity we obtain
\[
\begin{array}{c}
\begin{tikzpicture}
\node at (0,0)
{\begin{tikzpicture} [bend angle=25, looseness=1,transform shape, rotate=-10]
\draw[red] (0,0,0) to [bend left=25] (2,0,0);
\draw[red] (1.8,0,0) to [bend left=25] (3.8,0,0);
\draw[red] (-1.8,0,0) to [bend left=25] (0.2,0,0);
\end{tikzpicture} };

\node at (7,0) {
\begin{tikzpicture} [bend angle=25, looseness=1,transform shape, rotate=-10]
\draw[red!40] (0,0,0) to [bend left=25] (2,0,0);
\draw[red!40] (1.8,0,0) to [bend left=25] (3.8,0,0);
\draw[red!40] (-1.8,0,0) to [bend left=25] (0.2,0,0);
\filldraw [black!60] (1,0.25,0) circle (2pt);
\filldraw [black!60] (3,0.25,0) circle (2pt);
\filldraw [black!60] (-1,0.25,0) circle (2pt);
\node (1) at (-1,0.25,0) {};
\node (2) at (1,0.25,0) {};
\node (3) at (3,0.25,0) {};
\draw[black!80] (1) -- (2);
\draw[black!80] (2) -- (3);
\end{tikzpicture}};
\draw [->,decorate, 
decoration={snake,amplitude=.6mm,segment length=3mm,post length=1mm}] 
(2.75,0.2) -- (3.75,0.2);
\end{tikzpicture}
\end{array}
\]
and so the dual graph is simply $\begin{array}{c}
\begin{tikzpicture}[xscale=0.85,yscale=0.85]
 \node (0) at (0,0) [vertex] {};
 \node (1) at (1,0) [vertex] {};
 \node (2) at (2,0) [vertex] {};
 \draw [-] (0) -- (1);
\draw [-] (1) -- (2);
\end{tikzpicture}
\end{array}
$
\end{example}

The finite subgroups of $\SL(2,\mathbb{C})$ are classified.

\begin{thm}
Let $G$ be a finite subgroup of $\SL(2,\mathbb{C})$.  Then $G$ is isomorphic to one of the following groups
\[
\begin{array}{cc}
\textnormal{Type}&\textnormal{Definition}\\[1mm]
\mathbb{A}_n& \frac{1}{n}(1,n-1):=\left\langle \psi_n\right\rangle\\
\mathbb{D}_n& BD_{4n}:=\left\langle \psi_{2n},\tau\right\rangle\\
\mathbb{E}_6& \left\langle \psi_4,\tau, \eta\right\rangle\\
\mathbb{E}_7& \left\langle \psi_{8},\tau,\eta \right\rangle\\
\mathbb{E}_8& \langle \kappa,\omega,\iota\rangle\\
\end{array}
\]
where
\[
\begin{array}{c}
\begin{array}{cccc}
\psi_k= \begin{pmatrix}\e_k & 0\\ 0& \e_k^{-1}
\end{pmatrix} 
&\tau = \begin{pmatrix}0 & \e_4\\ \e_4& 0
\end{pmatrix}
&\eta=\frac{1}{\sqrt{2}} \begin{pmatrix}\e_8 & e_8^3\\ \e_8& \e_8^7 \end{pmatrix}
&\kappa= \begin{pmatrix}0 & -1\\ 1& 0 \end{pmatrix}
\end{array}\\
\begin{array}{cc}
\omega= \begin{pmatrix}\e_5^3 & 0\\ 0& \e_5^2
\end{pmatrix}&
\iota=\frac{1}{\sqrt{5}} \begin{pmatrix}\e_5^4-\e_5 & \e_5^2-\e_5^3\\ \e_5^2-\e_5^3& \e_5-\e_5^4
\end{pmatrix}
\end{array}
\end{array}
\]
and $\e_t$ denotes a primitive ${t}^{\rm th}$ root of unity.  
\end{thm}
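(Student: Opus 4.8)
The plan is to reduce to the classical classification of finite rotation groups by way of the double cover $\mathrm{SU}(2)\to\mathrm{SO}(3,\mathbb{R})$, and then to match the resulting binary polyhedral groups with the explicit matrices listed. First I would conjugate $G$ into a compact form: averaging the standard Hermitian inner product on $\mathbb{C}^2$ over the finite group $G$ produces a $G$-invariant positive definite Hermitian form, and choosing an orthonormal basis for it conjugates $G$ into $\mathrm{U}(2)$; since conjugation preserves $\det=1$, in fact $G$ is conjugate to a finite subgroup of $\mathrm{SU}(2)$. So it suffices to classify finite subgroups of $\mathrm{SU}(2)$ up to conjugacy. (The cyclic case is also visible directly: every element of finite order in $\SL(2,\mathbb{C})$ is diagonalizable with eigenvalues $\lambda,\lambda^{-1}$, so every finite cyclic subgroup is conjugate to $\langle\psi_n\rangle$ for some $n$, which is type $\mathbb{A}_n$.)

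Next I would invoke the surjective homomorphism $\pi\colon\mathrm{SU}(2)\to\mathrm{SO}(3)$ with kernel $\{\pm I\}$, coming from the conjugation action of $\mathrm{SU}(2)$ on the three-dimensional real Euclidean space of trace-zero Hermitian $2\times2$ matrices. For a finite $G\leq\mathrm{SU}(2)$ there are two cases: if $-I\in G$, then $G=\pi^{-1}(\bar G)$ is the full binary lift of $\bar G:=\pi(G)\leq\mathrm{SO}(3)$ and $|G|=2|\bar G|$; if $-I\notin G$, then $\pi|_G$ is injective and $|G|$ is odd, which forces $\bar G$ (hence $G$) cyclic of odd order. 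Thus the problem comes down to classifying finite subgroups of $\mathrm{SO}(3)$ together with their preimages under $\pi$.

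For the $\mathrm{SO}(3)$ classification I would run the standard \emph{counting of poles}. A nontrivial finite $\bar G\leq\mathrm{SO}(3)$ acts on the unit sphere $S^2$; each non-identity rotation fixes exactly two antipodal poles; counting the set $\{(g,p):g\neq1,\ g\cdot p=p\}$ in two ways — summing over $g\in\bar G$, and summing over $\bar G$-orbits of poles via orbit--stabilizer — gives
\[
2-\frac{2}{|\bar G|}=\sum_{\text{orbits }i}\left(1-\frac{1}{|\mathrm{Stab}(p_i)|}\right).
\]
The left-hand side lies in $[1,2)$, so there are exactly two or three orbits of poles, and a short case analysis of this Diophantine constraint produces precisely the cyclic groups $C_n$, the dihedral groups $D_n$ ($n\geq2$), and the three Platonic rotation groups $T\cong A_4$, $O\cong S_4$, $I\cong A_5$; in each case one exhibits a concrete realization (rotations of a regular $n$-gon, of a tetrahedron, of a cube/octahedron, of an icosahedron/dodecahedron) and checks it is unique up to conjugacy. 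The \textbf{main obstacle} is organizing this case analysis cleanly, so that no solution of the equation is overlooked and each one is matched with a genuinely realized group.

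Finally I would take preimages under $\pi$ and identify generators. The preimage $\pi^{-1}(C_n)$ is cyclic of order $2n$, generated by $\psi_{2n}$; together with the odd-order cyclic groups this realizes all of type $\mathbb{A}$. The preimage $\pi^{-1}(D_n)$ is the binary dihedral (dicyclic) group $BD_{4n}=\langle\psi_{2n},\tau\rangle$ of order $4n$, which one verifies from the relations $\tau^2=-I$ and $\tau\psi_{2n}\tau^{-1}=\psi_{2n}^{-1}$; this is type $\mathbb{D}$. The binary lifts $\pi^{-1}(T)$, $\pi^{-1}(O)$, $\pi^{-1}(I)$, of orders $24$, $48$, $120$, are the binary tetrahedral, octahedral and icosahedral groups, of types $\mathbb{E}_6$, $\mathbb{E}_7$, $\mathbb{E}_8$; here I would check directly that $\psi_4,\tau,\eta$ (resp.\ $\psi_8,\tau,\eta$, resp.\ $\kappa,\omega,\iota$) lie in $\mathrm{SU}(2)$ and generate a group of the stated order — for instance that $\eta$ has order $8$ and that the defining relations of the binary polyhedral presentations hold. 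These verifications, like the conjugation reduction in the first step, are routine; only the pole-counting step requires genuine care.
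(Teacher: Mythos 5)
Your outline is correct: it is the standard classical argument (average a Hermitian form to conjugate $G$ into $\mathrm{SU}(2)$, pass through the double cover $\mathrm{SU}(2)\to\mathrm{SO}(3)$, classify finite rotation groups by counting poles, then take binary lifts and match generators). The paper itself gives no proof of this theorem --- it is stated as the known classification --- so there is no argument to compare against; your proposal would serve as a complete proof once the pole-counting case analysis and the order/relation checks for the listed generators are written out, and you have correctly identified those as the only places requiring care. One small point worth making explicit in the step ``$-I\notin G$ implies $|G|$ odd'': the only element of order $2$ in $\mathrm{SU}(2)$ is $-I$, since an order-$2$ element is diagonalizable with eigenvalues $\pm1$ of product $1$.
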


Now $G\leq\SL(2,\mathbb{C})$ so $G$ acts on $V:=\mathbb{C}^2$.  In general, if $G\leq\GL(n,\mathbb{C})$ then the resulting geometry of $\mathbb{C}^n/G$ depends on two parameters, namely the group $G$ and the natural representation $V=\mathbb{C}^n$.  For example, if we fix the group $\mathbb{Z}_2$, then we could consider $\mathbb{Z}_2$ as a subgroup in many different ways, for example 
\[
\left\langle\begin{pmatrix} -1&0\\0&1\end{pmatrix}\right\rangle
\quad\textnormal{and}\quad
\left\langle\begin{pmatrix} -1&0\\0&-1\end{pmatrix}\right\rangle.
\]  
The first gives invariants $\mathbb{C}[x^2,y]$, which is smooth, whilst the latter gives invariants $\mathbb{C}[x^2,xy,y^2]$, which is singular.

Consequently the representation theory by itself will tell us nothing about the geometry (since it is only depends on one variable, namely the group $G$), so we must enrich the representation theory with the action of $G$ on $V$.   In  the following definition, this is why we tensor with $V$.

\begin{defin}
For given finite $G$ acting on $\mathbb{C}^2=V$, the McKay quiver is defined to be the quiver with vertices corresponding to the isomorphism classes of indecomposable representations of $G$, and the number of arrows from $\rho_1$ to $\rho_2$ is defined to be
\[
\dim_{\mathbb{C}}\Hom_{\mathbb{C}G}(\rho_1,\rho_2\otimes V),
\] 
i.e.\ the number of times $\rho_1$ appears in the decomposition of $\rho_2\otimes V$ into irreducibles.
\end{defin}

\begin{example}
We return to our running example \ref{Z3 running}  of the $\mathbb{Z}_3$ singularity.  Over $\mathbb{C}$, the group $\mathbb{Z}_3$ has only three irreducible representations $\rho_0, \rho_1, \rho_2$.   The natural representation is, by definition, $\left\langle\left( \begin{smallmatrix} \varepsilon_3 &0\\ 0&\varepsilon_3^2  \end{smallmatrix}\right)\right\rangle$, which splits as $\rho_1\oplus\rho_2$. Thus
\[
\rho_0\otimes V\cong \rho_1\oplus \rho_2,
\]
and so we conclude that there is precisely one arrow from $\rho_1$ to $\rho_0$, one arrow from $\rho_2$ to $\rho_0$, and no arrows from $\rho_0$ to $\rho_0$.  Thus, so far we have 
\[
\begin{tikzpicture} [bend angle=45, looseness=1.2]
\node[name=s,regular polygon, regular polygon sides=3, minimum size=2cm] at (0,0) {}; 
\node (C1) at (s.corner 1)  {$\scriptstyle \rho_1$};
\node (C2) at (s.corner 2)  {$\scriptstyle \rho_0$};
\node (C3) at (s.corner 3)  {$\scriptstyle \rho_2$};
\draw [->,bend right] (C1) to  (C2);
\draw [->,bend right] (C3) -- (C2);
\end{tikzpicture}
\]
Continuing the calculation, decomposing $\rho_1\otimes V$ and $\rho_2\otimes V$ gives the McKay quiver
\[
\begin{tikzpicture} [bend angle=45, looseness=1.2]
\node[name=s,regular polygon, regular polygon sides=3, minimum size=2cm] at (0,0) {}; 
\node (C1) at (s.corner 1)  {$\scriptstyle \rho_1$};
\node (C2) at (s.corner 2)  {$\scriptstyle \rho_0$};
\node (C3) at (s.corner 3)  {$\scriptstyle \rho_2$};
\draw[->] (C2) --  (C1); 
\draw[->] (C1) --  (C3);
\draw[->] (C3) --  (C2); 
\draw [->,bend right] (C1) to  (C2);
\draw [->,bend right] (C2) to (C3);
\draw [->,bend right] (C3) to (C1);
\end{tikzpicture}
\]
which coincides with the quiver from \ref{quiver of Z3}.
\end{example}

\begin{example}\label{BD8 McKay}
As a second example, consider the group $BD_8$.  Being non-abelian of order 8, necessarily it must have four one-dimensional representations and one irreducible two-dimensional representation.  The natural representation $V$ is the irreducible two-dimensional representation.  A calculation, using only character theory (Exercise~\ref{Ex5.23}), shows that the McKay quiver is

\[
\begin{tikzpicture}[xscale=1.5,yscale=1.5,bend angle=15, looseness=1]
 \node (1) at (1,0) {$\scriptstyle 1$};
 \node (1b) at (2,1) {$\scriptstyle 1$};
 \node (2) at (2,0) {$\scriptstyle 2$};
\node (3) at (3,0) {$\scriptstyle 1$};
\node (R) at (2,-1) {$\scriptstyle \star$};
\draw [bend right,->] (1) to (2);
\draw [bend right,->] (2) to (1);
\draw [bend right,->] (2) to (3);
\draw [bend right,->] (3) to (2);
\draw [bend right,->] (2) to (1b);
\draw [bend right,->] (1b) to (2);
\draw [bend right,->] (2) to (R);
\draw [bend right,->] (R) to (2);
\end{tikzpicture}
\]
where $\star$ is the trivial representation.  The numbers on the vertices correspond to the dimension of the corresponding irreducible representations.
\end{example}

Given the character table, the McKay quiver is a combinatorial object which is easy to construct.

\begin{thm} ($\SL(2,\mathbb{C})$ McKay Correspondence, combinatorial version).\\
Let $G\leq \SL(2,\mathbb{C})$ be a finite subgroup and let $Y\to \mathbb{C}^2/G=\Spec R$ denote the minimal resolution.  Then\\
(1) There exist 1-1 correspondences 
\[
\begin{array}{rcl}
\{ \t{exceptional curves}\}&\leftrightarrow& \{ \t{non-trivial irreducible representations} \}\\
&\leftrightarrow&\{ \t{non-free indecomposable CM $R$-modules} \}\\
\end{array}
\]
where the right hand sides are taken up to isomorphism.\\
(2) The dual graph is an ADE Dynkin diagram, of type corresponding to $\mathbb{A}$, $\mathbb{D}$ and $\mathbb{E}$ in the classification of the possible groups.\\
(3) (McKay)  There is a correspondence
\[
\begin{tikzpicture}[bend angle=20, looseness=1]
\node (A) at (-1.25,0) {$\{\t{dual graph} \}$};
\node (B) at (2.5,0) {$\{ \t{McKay quiver} \}$};
\draw[->,bend left] (0,0.1) to (1,0.1);
\draw[->,bend left] (1,-0.1) to (0,-0.1);
\end{tikzpicture}
\]
To go from the McKay quiver to the dual graph, simply kill the trivial representation, then merge every pair of arrows in opposite directions (to get an undirected graph).  To go from the dual graph to the McKay quiver, simply add a vertex to make the corresponding extended Dynkin diagram, then double the resulting graph to get a quiver.  
\end{thm}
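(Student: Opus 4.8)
\medskip
\noindent
\textbf{Proof proposal.}
The plan is to assemble the theorem from three separate inputs: the algebra of Lectures~\ref{L1} and~\ref{L2}, one piece of classical surface geometry, and McKay's purely representation-theoretic observation. For part~(1), I would combine Theorem~\ref{Auslander skew} with Theorem~\ref{thm Auslander} in the case $\dim R=2$: the former identifies $\mathbb{C}[[x,y]]\#G$ with $\End_R(\bigoplus_{\rho\in\Irr G}((\mathbb{C}[[x,y]]\otimes\rho)^G)^{\oplus\dim\rho})$, and since $\gl\,\mathbb{C}[[x,y]]\#G\le 2$, the latter then says that the modules $S_i:=(\mathbb{C}[[x,y]]\otimes\rho_i)^G$, $\rho_i\in\Irr G$, form a complete and irredundant list of the indecomposable CM $R$-modules, with $\rho_0$ (trivial) giving $S_0=R$. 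Removing the trivial representation and the free module from both sides is the second bijection of~(1). To bring in the exceptional curves of the minimal resolution $f\colon Y\to\Spec R$, I would use the ``special'' reflexive sheaves on $Y$: each indecomposable non-free $S_i$ arises as $f_*\c{L}_i$ for a distinguished line bundle $\c{L}_i$ on $Y$, and $E\mapsto$ (the summand $\c{L}_i$ whose first Chern class has degree $1$ on $E$ and degree $0$ on every other exceptional curve) is the sought bijection with $\{E_i\}$. In the language of Lecture~\ref{L4} this is the statement that $\c{V}=\bigoplus\c{L}_i$ is a tilting bundle on $Y$ with $\End_Y(\c{V})\cong\End_R(\bigoplus S_i)$, so its summands match the $S_i$ and the non-trivial ones are distinguished by which curve carries their Chern class.

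For part~(2) I would argue geometrically. Since $G\le\SL(2,\mathbb{C})$ the ring $R$ is Gorenstein with $\omega_R\cong R$, so $f$ is crepant, i.e.\ $\omega_Y\cong\c{O}_Y$; hence $K_Y\cdot E_i=0$ for each exceptional $E_i\cong\mathbb{P}^1$, and adjunction forces $E_i^2=-2$. The intersection matrix $(E_i\cdot E_j)$ is negative definite (Grauert/Mumford, for the exceptional divisor of any resolution of a normal surface singularity) and the dual graph is connected (connectedness of the exceptional fibre). Thus $-(E_i\cdot E_j)$ is a connected, symmetric, positive-definite generalized Cartan matrix with diagonal entries $2$ and off-diagonal entries in $\{0,-1\}$, and the elementary classification of such matrices identifies the dual graph with an $\mathbb{A}$, $\mathbb{D}$ or $\mathbb{E}$ Dynkin diagram.

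For part~(3) I would prove the recipe via the integer matrix $N=(N_{ij})$ with $N_{ij}:=\dim_{\mathbb{C}}\Hom_{\mathbb{C}G}(\rho_i,\rho_j\otimes V)$, which is the adjacency matrix of the doubled McKay quiver. Self-duality of the $2$-dimensional natural representation $V$ makes $N$ symmetric, and there are no loops (immediate on decomposing $\rho_i\otimes V$; in particular $V^G=0$, since a nontrivial element of a finite subgroup of $\SL(2,\mathbb{C})$ has no eigenvalue $1$). Counting dimensions in the decomposition of $\rho_i\otimes V$ gives $\sum_j N_{ij}\dim\rho_j=2\dim\rho_i$, that is $(2I-N)d=0$ with $d=(\dim\rho_i)$ a strictly positive vector; a one-line quadratic-form identity then shows $2I-N$ is positive semidefinite of corank exactly $1$, with kernel spanned by $d$, and connectedness follows from $V$ being faithful. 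Deleting the row and column of $\rho_0$ yields a positive-definite symmetric Cartan matrix of finite type ($\mathbb{A}$, $\mathbb{D}$ or $\mathbb{E}$), whose diagram is precisely the one obtained from the McKay quiver by removing the trivial vertex and collapsing each opposite pair of arrows to an edge; adding $\rho_0$ back restores the extended Dynkin diagram with $d$ as vector of marks. Reading this in reverse gives the stated passage from the dual graph to the McKay quiver: adjoin the affine node, then double every edge. Identifying the finite type read off here with the type of the dual graph from part~(2) is then either checked case-by-case along the classification of $G$ (the example-driven route appropriate to these notes), or obtained conceptually from the sheaf-theoretic dictionary of part~(1), which matches $-(E_i\cdot E_j)$ with the corresponding submatrix of $2I-N$.

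The step I expect to be the genuine obstacle is that last identification --- the geometric half of parts~(1) and~(3): showing that the indecomposable CM modules correspond to exceptional curves compatibly enough that the module-theoretic matrix $2I-N$ and the geometric intersection matrix agree. The representation theory (Auslander's theorems) and the linear-algebra classification of simply-laced Cartan matrices are already in hand, but the curve/module dictionary is real input --- it is the content of the Artin--Verdier/Esnault/Wunram circle of results, or, in modern terms, of the tilting bundle on $Y$ from Lecture~\ref{L4} --- and it is where the word ``correspondence'' earns its name.
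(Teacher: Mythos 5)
You should know at the outset that the paper does not prove this theorem: it is stated as classical background, with each part delegated to the cited literature (McKay \cite{McKay} for (3), the Du Val/ADE classification for (2), and Artin--Verdier \cite{AV}, Esnault \cite{Esnault} and Wunram \cite{Wunram} together with Auslander's theorems \ref{thm Auslander} and \ref{Auslander skew} for (1)). Your outline assembles exactly these ingredients, and the architecture is sound: Auslander's two theorems give the bijection between non-trivial irreducibles and non-free indecomposable CM modules; crepancy plus adjunction plus Mumford's negative definiteness gives (2); and the Perron--Frobenius analysis of $2I-N$ gives (3), with an honest acknowledgement that matching the representation-theoretic ADE type to the geometric one is the genuine input (the Artin--Verdier circle of results, or the tilting bundle of \S\ref{L4}).

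Two points need repair. First, the lifts of the non-free indecomposable CM modules to the minimal resolution are \emph{not} line bundles in general: $S_\rho=(\mathbb{C}[[x,y]]\otimes\rho)^G$ has rank $\dim_{\mathbb{C}}\rho$, so outside type $\mathbb{A}$ the Artin--Verdier/Wunram sheaves $\mathcal{M}_\rho$ are vector bundles of rank up to $6$; the correspondence with exceptional curves is still governed by $c_1(\mathcal{M}_\rho)\cdot E_j=\delta_{\rho j}$, and the paper's own solution to Exercise~\ref{Ex5.24} works with exactly this rank-$r$ lift. Second, two standard but non-trivial facts are elided. Completeness of the list $\{S_\rho\}$ follows from \ref{thm Auslander}, but indecomposability and irredundancy do not; one gets them from projectivization \ref{facts1and2}(\ref{F1}), matching the $S_\rho$ with the indecomposable projectives of $\mathbb{C}[[x,y]]\# G$, which correspond to $\Irr G$. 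And ``no loops in the McKay quiver'' is not immediate: $V^G=0$ handles only the trivial vertex, while for $\rho_i\not\cong\rho_0$ one needs a separate argument (e.g.\ the central element $-I$ acts by different signs on $\rho_i$ and on $\rho_i\otimes V$ whenever $-I\in G$, with the odd cyclic case checked directly). Neither gap is fatal, but both deserve a sentence before the linear algebra can start.
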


The last part of the theorem is illustrated by the picture
\[
\begin{array}{c}
\begin{tikzpicture}
\node at (0,0)
{\begin{tikzpicture} [bend angle=25, looseness=1,transform shape, rotate=-10]
\draw[red] (0,0,0) to [bend left=25] (2,0,0);
\draw[red] (1.8,0,0) to [bend left=25] (3.8,0,0);
\draw[red] (-1.8,0,0) to [bend left=25] (0.2,0,0);
\node (0) at (1,-1.25,0) {};
\end{tikzpicture} };

\node at (7,0.2) {
\begin{tikzpicture} [bend angle=25, looseness=1,transform shape, rotate=-10]
\draw[red!50] (0,0,0) to [bend left=25] (2,0,0);
\draw[red!50] (1.8,0,0) to [bend left=25] (3.8,0,0);
\draw[red!50] (-1.8,0,0) to [bend left=25] (0.2,0,0);
\filldraw [black!60] (1,0.25,0) circle (2pt);
\filldraw [black!60] (3,0.25,0) circle (2pt);
\filldraw [black!60] (-1,0.25,0) circle (2pt);
\filldraw [black!60] (1,-1.25,0) circle (2pt);
\node (1) at (-1,0.25,0) {};
\node (2) at (1,0.25,0) {};
\node (3) at (3,0.25,0) {};
\node (0) at (1,-1.25,0) {};
\draw[->,black!60] (1) -- (2);
\draw[->,black!60] (2) -- (3);
\draw[->,black!60,bend right] (2) to (1);
\draw[->,black!60, bend right] (3) to (2);
\draw[->,black!60] (0) -- (1);
\draw[->,black!60] (3) -- (0);
\draw[->,bend right,black!60] (1) to (0);
\draw[->, bend right,black!60] (0) to (3);
\end{tikzpicture}};
\draw [->,decorate, 
decoration={snake,amplitude=.6mm,segment length=3mm,post length=1mm}] 
(3,0.4) -- (4,0.4);
\end{tikzpicture}
\end{array}
\]
\begin{example}
The correspondence for our running $\mathbb{Z}_3$ example is precisely 
\[
\begin{tikzpicture} [bend angle=45, looseness=1.2]
\node[name=s,regular polygon, regular polygon sides=3, minimum size=2cm] at (0,0) {}; 
\node (C1) at (s.corner 1)  {$\scriptstyle 1$};
\node (C2) at (s.corner 2)  {$\scriptstyle \star$};
\node (C3) at (s.corner 3)  {$\scriptstyle 1$};
\draw[->] (C2) -- (C1); 
\draw[->] (C1) -- (C3);
\draw[->] (C3) -- (C2); 
\draw [->,bend right] (C1) to (C2);
\draw [->,bend right] (C2) to (C3);
\draw [->,bend right] (C3) to (C1);
\node (1b) at ($(s.corner 3)+(2,0)$) [vertex] {};
\node (2) at ($(s.corner 1)+(2,0)$) [vertex] {};
\draw [-] (2) -- (1b);
\end{tikzpicture}
\]
\end{example}

\begin{example}
The correspondence for the previous example \ref{BD8 McKay} is precisely 
\[
\begin{tikzpicture}
\node at (0,2.5) {\begin{tikzpicture}[xscale=1.5,yscale=1.5,bend angle=15, looseness=1]
 \node (1) at (1,0) {$\scriptstyle 1$};
 \node (1b) at (2,1) {$\scriptstyle 1$};
 \node (2) at (2,0) {$\scriptstyle 2$};
\node (3) at (3,0) {$\scriptstyle 1$};
\node (R) at (2,-1) {$\scriptstyle \star$};
\draw [bend right,->] (1) to (2);
\draw [bend right,->] (2) to (1);
\draw [bend right,->] (2) to (3);
\draw [bend right,->] (3) to (2);
\draw [bend right,->] (2) to (1b);
\draw [bend right,->] (1b) to (2);
\draw [bend right,->] (2) to (R);
\draw [bend right,->] (R) to (2);
\end{tikzpicture}};
\node at (4.5,3.25) 
{\begin{tikzpicture}[xscale=1.5,yscale=1.5]
 \node (0) at (0,0) [vertex] {};
 \node (1) at (1,0) [vertex] {};
 \node (1b) at (1,1) [vertex] {};
 \node (2) at (2,0) [vertex] {};
 \draw [-] (0) -- (1);
\draw [-] (1) -- (2);
\draw [-] (1) -- (1b);
\end{tikzpicture}};
\end{tikzpicture}
\]
\end{example}
This all takes place at a combinatorial level, to gain more we must add structure.

\subsection{Auslander's McKay Correspondence (Surfaces)}
In its simplest form, Auslander's version of the McKay Correspondence states that the geometry of the minimal resolution can be reconstructed using homological algebra on the category $\CM\mathbb{C}[[x,y]]^G$.  Reinterpreted, the CM modules on the singularity $\Spec\mathbb{C}[[x,y]]^G$ encode information about the resolution.

For what follows we must use the completion $\mathbb{C}[[x,y]]^G$ (not $\mathbb{C}[x,y]^G$) so that the relevant categories are Krull--Schmidt.  This is mainly just for technical simplification; with more work it is possible to phrase the results in the non-complete setting.  Throughout this subsection, to simplify notation we set $R:=\mathbb{C}[[x,y]]^G$.

\begin{defin}
We say that a short exact sequence 
\[
0\to A\to B\stackrel{f}{\to} C\to 0
\]
in the category $\CM R$ is an Auslander--Reiten (=AR) sequence if every $D\to C$ in $\CM R$ which is not a split epimorphism factors through $f$.
\end{defin}

Since $R$ is an isolated singularity (normal surface singularities are always isolated), it is a consequence of AR duality (\S\ref{AR proof section}) that AR sequences exist in the category $\CM R$, see for example \cite[\S3]{Y}.  Now given the existence of AR sequences, we attach to the category $\CM R$ the AR quiver as follows.

\begin{defin}
The AR quiver of the category $\CM R$ has as vertices the isomorphism classes of the indecomposable CM $R$-modules.  As arrows, for each indecomposable $M$ we consider the AR sequence ending at M 
\[
0\to \tau(M)\to E\stackrel{f}{\to} M\to 0,
\]
decompose $E$ into indecomposable modules as $E\cong \bigoplus_{i=1}^nM_i^{\oplus a_i}$ and for each $i$ draw $a_i$ arrows from $M_i$ to $M$.
\end{defin}

As a consequence of \S\ref{L1} (precisely \ref{thm Auslander} and \ref{Auslander skew}) we know that there are only finitely many CM $R$-modules up to isomorphism.  Further, by projectivization \ref{facts1and2}(\ref{F1}) there is an equivalence of categories
\[
\CM R\simeq \proj\mathbb{C}[[x,y]]\# G.
\]  
The quiver of the skew group ring is well-known.
\begin{lemma}\label{skew Morita}
Let $G\leq\GL(V)$ be a finite subgroup.  Then the skew group ring $\mathbb{C}[V]\# G$ is always Morita equivalent to the McKay quiver, modulo some relations. 
\end{lemma}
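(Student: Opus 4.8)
The plan is to produce a \emph{full} idempotent in the skew group ring, reduce via Morita theory to the corner ring at that idempotent, and then identify the corner ring as the (completed) path algebra of the McKay quiver with relations. Write $S:=\mathbb{C}[V]$, replaced by its completion $\mathbb{C}[[V]]$ so that the relevant module categories are Krull--Schmidt (as stipulated in this subsection), and set $\Lambda:=S\#G$. Since $\mathbb{C}$ has characteristic zero, Maschke's theorem gives $\mathbb{C}G\cong\prod_{\rho\in\Irr G}\End_{\mathbb{C}}(\rho)$; inside $\mathbb{C}G\subseteq\Lambda$ choose a primitive idempotent $e_\rho$ in the matrix block indexed by $\rho$ and put $e:=\sum_{\rho\in\Irr G}e_\rho$. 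First I would check that $e$ is full, i.e.\ $\Lambda e\Lambda=\Lambda$: because $e$ meets each matrix block of $\mathbb{C}G$ in a rank-one idempotent, the two-sided ideal it generates in the semisimple ring $\mathbb{C}G$ is all of $\mathbb{C}G$, so $1_\Lambda=1_{\mathbb{C}G}\in\mathbb{C}G\,e\,\mathbb{C}G\subseteq\Lambda e\Lambda$. For a full idempotent it is standard that $\Lambda$ and $e\Lambda e$ are Morita equivalent, via the progenerator $\Lambda e$, so it remains to identify $e\Lambda e$.

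Next I would check that $e\Lambda e$ is a basic algebra with vertex set $\Irr G$. Since $S$ is complete local and $\Lambda$ is module-finite over $S$, the ideal $\mathfrak{m}\#G$ (with $\mathfrak{m}\subset S$ the maximal ideal) is the Jacobson radical of $\Lambda$ and $\Lambda/(\mathfrak{m}\#G)\cong\mathbb{C}G$ is semisimple; hence primitivity of idempotents is detected modulo the radical, so each $e_\rho$ stays primitive in $\Lambda$, the simple $\Lambda$-modules are exactly the irreducible $G$-representations, and $e_\rho\Lambda\not\cong e_\sigma\Lambda$ for $\rho\not\cong\sigma$. Thus $e\Lambda e$ is a basic, complete, semiperfect $\mathbb{C}$-algebra whose primitive idempotents $\{e_\rho\}$ — hence whose vertices in the sense of Gabriel's theorem — biject with $\Irr G$, and the completed form of Gabriel's theorem yields a unique quiver $Q$ and a closed ideal $\mathcal{R}$ inside the square of the arrow ideal with $e\Lambda e\cong\widehat{\mathbb{C}Q}/\mathcal{R}$. (In the non-complete graded setting one argues instead that $e\Lambda e$ is an $\mathbb{N}$-graded algebra with semisimple degree-zero part and gets $\mathbb{C}Q/\mathcal{R}$ with $\mathcal{R}$ graded in degrees $\ge 2$.)

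Then I would pin down $Q$. The number of arrows from $\rho$ to $\sigma$ equals $\dim_{\mathbb{C}}\Ext^1_{\Lambda}(\rho,\sigma)$, where $\rho,\sigma$ are viewed as $\Lambda$-modules with $\mathfrak{m}$ acting by zero. Using the functorial isomorphism $\Ext^i_{S\#G}(-,-)\cong\Ext^i_{S}(-,-)^{G}$ established in the previous section (exactness of $G$-invariants over $\mathbb{C}$), together with the Koszul computation $\Ext^1_{S}(\mathbb{C},\mathbb{C})\cong(\mathfrak{m}/\mathfrak{m}^2)^{*}\cong V$ as $G$-representations, one obtains
\[
\Ext^1_{\Lambda}(\rho,\sigma)\cong\big(\Hom_{\mathbb{C}}(\rho,\sigma)\otimes_{\mathbb{C}}V\big)^{G}\cong\Hom_{\mathbb{C}G}(\rho,\sigma\otimes_{\mathbb{C}}V),
\]
which is exactly the arrow count in the definition of the McKay quiver (up to the standard duality conventions there). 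Hence $Q$ is the McKay quiver and $\mathbb{C}[V]\#G$ is Morita equivalent to $\widehat{\mathbb{C}Q}/\mathcal{R}$, as claimed.

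The main obstacle is this last step: one must be honest about which category the Ext groups live in, and about the $V$-versus-$V^{*}$ bookkeeping forced by the ``inverse transpose'' action on $\mathbb{C}[V]$ (the reason $\Ext^1_{S}(\mathbb{C},\mathbb{C})$ comes out as $V$ rather than $V^{*}$), and — if one prefers the graded to the complete formulation — about verifying that $e\Lambda e$ is generated in degree one over its degree-zero part. The relations $\mathcal{R}$ themselves are deliberately left unspecified by the lemma; identifying them explicitly (the preprojective relations when $G\le\SL(2,\mathbb{C})$, or those coming from a superpotential when $G\le\SL(3,\mathbb{C})$) is a separate and more delicate matter that these notes take up afterwards.
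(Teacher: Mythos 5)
Your proof is correct, and it is essentially the standard argument that the paper itself gestures at but does not write out (the lemma is stated without proof, with the Koszul complex mentioned immediately afterwards and the result attributed to \cite{BSW} and \cite{RV89} in the credits): idempotent Morita reduction to the basic corner algebra $e(S\#G)e$, followed by the arrow count $\dim\Ext^1_{S\#G}(\rho,\sigma)\cong\dim(\Hom_{\mathbb{C}}(\rho,\sigma)\otimes V)^G=\dim\Hom_{\mathbb{C}G}(\rho,\sigma\otimes V)$ using the invariants formula $\Ext^i_{S\#G}(-,-)\cong\Ext^i_S(-,-)^G$ already established in \S\ref{L1} together with the ($G$-equivariant) Koszul resolution of the residue field. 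The points you flag as needing care --- fullness of $e$, completeness/gradedness for Gabriel's theorem, degree-one generation of the corner, and the $V$ versus $V^*$ conventions --- are exactly the right ones, and none of them causes a problem.
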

The relations in \ref{skew Morita} can be described, but are not needed below.  Some further work involving the Koszul complex gives the following.
\begin{thm} (Auslander McKay Correspondence) 
Let $G$ be a finite subgroup of $\SL(2,\mathbb{C})$.  Then the AR quiver of $\CM\mathbb{C}[[x,y]]^G$ equals the McKay quiver of $G$, and so there is a correspondence
\[
\begin{tikzpicture}[bend angle=20, looseness=1]
\node (A) at (-1.25,0) {$\{\t{dual graph} \}$};
\node (B) at (3.55,0) {$\{ \t{AR quiver of }\CM\mathbb{C}[[x,y]]^G$ \}};
\draw[->,bend left] (0,0.1) to (1,0.1);
\draw[->,bend left] (1,-0.1) to (0,-0.1);
\end{tikzpicture}
\]
\end{thm}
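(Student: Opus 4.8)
\medskip
\noindent\textbf{Proof proposal.}
The plan is to reduce the statement to a computation with the skew group ring $\Lambda:=S\#G$, where $S:=\mathbb{C}[[x,y]]$, and then to read off the AR sequences of $\CM R$ from the $G$-equivariant Koszul complex on $S$. Put $N:=\bigoplus_{\rho\in\Irr G}((S\otimes\rho)^{G})^{\oplus\dim\rho}$ and $M_\rho:=(S\otimes\rho)^{G}$. By Theorem~\ref{thm Auslander} we have $\add N=\CM R$, by Theorem~\ref{Auslander skew} we have $\End_R(N)\cong\Lambda$, and so projectivization (Fact~\ref{facts1and2}(\ref{F1})) gives an equivalence $\Hom_R(N,-)\colon\CM R\xrightarrow{\sim}\proj\Lambda$ under which $M_\rho$ corresponds to the indecomposable projective $P_\rho$ at the vertex $\rho$. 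Combined with Lemma~\ref{skew Morita} this already identifies the vertices of the AR quiver of $\CM R$ with those of the McKay quiver; it remains to match the arrows.

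Next I would invoke the standard description of AR sequences over an Auslander algebra: since $\Lambda=\End_R(N)$ is the Auslander algebra of the category $\CM R$ (which is of finite CM type by Theorem~\ref{thm Auslander}), the AR sequence $0\to\tau M_\rho\to E_\rho\to M_\rho\to 0$ ending at an indecomposable non-free $M_\rho$ is exactly what $\Hom_R(N,-)$ turns into the start of the \emph{minimal} projective $\Lambda$-resolution of the vertex simple $S_\rho$, namely $0\to\Hom_R(N,\tau M_\rho)\to\Hom_R(N,E_\rho)\to\Hom_R(N,M_\rho)\to S_\rho\to 0$ (of length $2$, since $\gl\Lambda=2$); conversely, transporting that resolution back through the equivalence recovers the AR sequence. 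So the whole problem is to compute this resolution, and here is where $G\leq\SL(2,\mathbb{C})$ is used. Writing $W:=\mathbb{C}x\oplus\mathbb{C}y$ for the space of linear forms --- the dual $V^{*}$ of the natural representation $V=\mathbb{C}^{2}$ --- we have $\wedge^{2}W\cong(\det V)^{*}=$ the trivial representation, so the $\rho$-twist of the $G$-equivariant Koszul complex of $S$ closes up into
\[
0\to P_\rho\to\bigoplus_{\sigma\in\Irr G}P_\sigma^{\oplus a_{\rho\sigma}}\to P_\rho\to S_\rho\to 0,\qquad a_{\rho\sigma}:=\dim_{\mathbb{C}}\Hom_{\mathbb{C}G}(\sigma,\rho\otimes V),
\]
using the self-duality $V\cong V^{*}$ to rewrite $W\otimes\rho\cong V\otimes\rho$; minimality holds because every Koszul differential is multiplication by a linear form. (Equivalently, one takes $G$-invariants of the $\rho$-twisted Koszul complex and, since $\rho^{G}=0$ for $\rho$ non-trivial, obtains directly the short exact sequence $0\to M_\rho\to\bigoplus_\sigma M_\sigma^{\oplus a_{\rho\sigma}}\to M_\rho\to 0$ of $R$-modules, with $\tau M_\rho\cong M_\rho$ as already forced by Lemma~\ref{tausyz} in the case $d=2$.)

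Transporting the displayed resolution back, the AR sequence ending at $M_\rho$ has middle term $\bigoplus_\sigma M_\sigma^{\oplus a_{\rho\sigma}}$, so the AR quiver of $\CM R$ has exactly $a_{\rho\sigma}=\dim_{\mathbb{C}}\Hom_{\mathbb{C}G}(\sigma,\rho\otimes V)$ arrows $M_\sigma\to M_\rho$, which is precisely the number of arrows $\sigma\to\rho$ in the McKay quiver. (The vertex $R=M_{\rho_0}$ is the projective object of $\CM R$, so it has no AR sequence ending at it; its incident arrows are the irreducible morphisms to and from $R$, and one checks these reproduce the McKay arrows at the trivial vertex $\star$, using that $\rho_0$ is self-dual.) This yields the equality of the two quivers. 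The remaining assertion, the correspondence between dual graphs and AR quivers of $\CM\mathbb{C}[[x,y]]^{G}$, is then immediate by composing this identification with the combinatorial $\SL(2,\mathbb{C})$ McKay correspondence proved above, which relates the McKay quiver of $G$ to the dual graph of the minimal resolution of $\Spec R$.

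The hard part will be the second paragraph: one must set up the Auslander-algebra description of AR sequences with care (why the cokernel of $\Hom_R(N,E_\rho)\to\Hom_R(N,M_\rho)$ is the simple $S_\rho$, minimality, and that $\gl\Lambda=2$ truncates the resolution), and one must be scrupulous about conventions when twisting and decomposing the Koszul complex and when matching $\dim_{\mathbb{C}}\Hom_{\mathbb{C}G}(\sigma,\rho\otimes V)$ to the McKay quiver. Conceptually, though, the only real input beyond the general machinery (projectivization, AR duality from \S\ref{AR proof section}, Auslander algebras, and Lemma~\ref{tausyz}) is the observation that the $\SL(2,\mathbb{C})$-hypothesis trivialises $\wedge^{2}V$; this is what makes the twisted Koszul complex periodic and closes it into the four-term resolution above.
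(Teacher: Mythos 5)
Your proposal is correct and follows exactly the route the paper sketches: finite CM type plus projectivization identifies $\CM R$ with $\proj S\#G$, Lemma~\ref{skew Morita} matches the vertices, and the ``further work involving the Koszul complex'' alluded to in the text is precisely your observation that the $\SL(2,\mathbb{C})$-hypothesis trivialises $\wedge^{2}V$, closing the $\rho$-twisted Koszul complex into the four-term minimal projective resolution of the vertex simple, whose transport back through the equivalence is the AR sequence. This is Auslander's original argument, which the paper defers to \cite{Aus3} and \cite[\S10]{Y} rather than writing out.
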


\subsection{Derived McKay Correspondence (Surfaces)}
The Auslander McKay Correspondence upgraded the combinatorial version by considering CM modules over the commutative ring $R$ (to give more structure), and applied methods in homological algebra to obtain the dual graph.  The derived version of the McKay Correspondence improves this further, since it deals with not just the dual graph, which is a simplified version of the minimal resolution, but with the minimal resolution itself.  The upshot is that by considering the NCCR given by the skew group ring (the Auslander algebra of \S\ref{L1}) seriously as an object in its own right, we are in fact able to obtain the minimal resolution as a quiver GIT, and thus obtain the geometry from the noncommutative resolution without assuming that the geometry exists. 

\begin{thm}\label{Db McKay}(Derived $\SL(2,\mathbb{C})$ McKay Correspondence). Let $G\leq \SL(2,\mathbb{C})$ be a finite subgroup and let $Y\to \mathbb{C}^2/G=\Spec R$ denote the minimal resolution.  Then
\begin{itemize}
\item[(1)] $\Db(\mod \mathbb{C}[x,y] \# G)\simeq \Db(\coh Y)$.
\item[(2)] Considering $\mathbb{C}[x,y] \# G$ as the McKay quiver subject to the preprojective relations, choose the dimension vector $\alpha$ corresponding to the dimensions of the irreducible representations.  Then for \emph{any} generic stability condition $\theta$, we have
\begin{itemize}
\item[(a)] $\Db(\coh \c{M}_{\theta}^{s})\simeq\Db(\mod \mathbb{C}[x,y] \# G)$.
\item[(b)] $\c{M}_{\theta}^{s}$ is the minimal resolution of $\Spec R$.
\end{itemize}
\end{itemize}
\end{thm}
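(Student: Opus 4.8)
The plan is to deduce (1) from (2), and to prove (2) by realising the moduli space as the target of a tilting equivalence. Write $S:=\mathbb{C}[x,y]$, $R:=S^G$, let $M_i:=(S\otimes\rho_i)^G$ as $\rho_i$ ranges over $\Irr G$, and set $\Lambda:=\End_R(\bigoplus_i M_i)$. By Auslander (\ref{Auslander skew}) and \ref{skew Morita}, $\mathbb{C}[x,y]\# G$ is Morita equivalent to $\Lambda$, which is presented by the doubled McKay quiver $Q$ subject to the preprojective relations; moreover $\Lambda$ is an NCCR of $R$, since $\gl\Lambda\le\gl\mathbb{C}[x,y]=2$ and $\Lambda\in\CM R$ exactly as in \S\ref{L1}. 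In particular $\Db(\mod\,\mathbb{C}[x,y]\# G)\simeq\Db(\mod\Lambda)$, so it suffices to work with $\Lambda$. Finally note that $Y\to\Spec R$ is crepant, every exceptional curve of a Kleinian singularity being a $(-2)$-curve.

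First I would set up the geometric side. Fix the dimension vector $\alpha=(\dim_{\mathbb{C}}\rho_i)$ and a generic stability $\theta$, and put $\mathcal{M}:=\mathcal{M}^{s}_{\theta}(\Lambda,\alpha)$, where $\mathcal{R}:=\Rep(\Lambda,\alpha)$ carries the usual $G_\alpha:=\prod_i\GL(\alpha_i,k)$-action. A standard computation --- essentially Kronheimer's and Cassens--Slodowy's description of Kleinian singularities as quiver varieties, cf.\ the remark following \ref{Z3 GIT} --- identifies $k[\mathcal{R}]^{G_\alpha}\cong R$, producing a projective morphism $\pi\colon\mathcal{M}\to\Spec R$ that restricts to an isomorphism over the smooth locus of $\Spec R$ (there it is just $(\mathbb{C}^2\setminus 0)/G$); hence $\mathcal{M}\ne\emptyset$ and $\pi$ is birational. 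On the stable locus the quotient is geometric, so the universal representation descends to tautological bundles $\mathcal{R}_i$ on $\mathcal{M}$ of rank $\alpha_i$, normalised so that $\mathcal{R}_0=\c{O}_{\mathcal{M}}$; set $\mathcal{V}:=\bigoplus_i\mathcal{R}_i$, and record the canonical algebra map $\Lambda\to\End_{\mathcal{M}}(\mathcal{V})$ obtained by letting the arrows of $Q$ act on the universal representation.

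The technical heart is to show that $\mathcal{V}$ is a tilting bundle and that $\Lambda\xrightarrow{\sim}\End_{\mathcal{M}}(\mathcal{V})$. For generation I would invoke Neeman's trick (\ref{Neeman trick}): $\mathcal{M}$ is quasi--projective over $\Spec R$ with ample line bundle $\c{L}$ a suitable tensor combination of the $\det\mathcal{R}_i$ (the $\c{O}(1)$ of the $\mathrm{Proj}$), and the negative tensor powers of $\c{L}$ lie in $\thick(\mathcal{V})$ because the preprojective relations yield tautological Koszul-type (``Euler sequence'') short exact sequences on $\mathcal{M}$ relating the $\mathcal{R}_i$, pulled back from the Koszul resolution of $S$ over $S\# G$. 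For the $\Ext$-vanishing $\Ext^{>0}_{\mathcal{M}}(\mathcal{V},\mathcal{V})=0$ I would prove $R^1\pi_*(\mathcal{R}_i^{\vee}\otimes\mathcal{R}_j)=0$ and that $\pi_*(\mathcal{R}_i^{\vee}\otimes\mathcal{R}_j)$ has no higher cohomology --- equivalently that the tautological bundles $\mathcal{R}_i$ are ``special'' in Wunram's sense, which is forced by $\theta$-stability. This cohomological control of the tautological bundles is the step I expect to be the main obstacle, and it is where the geometry genuinely enters. Granting it, $\pi_*\mathcal{R}_i$ is a reflexive $R$-module agreeing with $M_i$ away from the singular point, hence $\pi_*\mathcal{R}_i\cong M_i$ (as $M_i$ is reflexive by \ref{facts1and2}(\ref{F2}) and $R$ is normal), and crepancy forces $\mathbf{R}\pi_*=\pi_*$ on these sheaves, so $\End_{\mathcal{M}}(\mathcal{V})\cong\End_R(\bigoplus_iM_i)=\Lambda$, with the canonical map an isomorphism.

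Finally I would collect the consequences. By \ref{VdB Hille}(1), $\RHom_{\mathcal{M}}(\mathcal{V},-)$ gives $\Db(\coh\mathcal{M})\simeq\Db(\mod\Lambda)\simeq\Db(\mod\,\mathbb{C}[x,y]\# G)$, which is (2)(a); by \ref{VdB Hille}(2), $\mathcal{M}$ is smooth because $\gl\Lambda=2<\infty$. Thus $\pi\colon\mathcal{M}\to\Spec R$ is a projective birational morphism from a smooth surface, and it is crepant --- either because its exceptional curves are $(-2)$-curves or because $\Lambda\in\CM R$ together with \ref{CMiffcrepant}. A smooth crepant resolution of a Kleinian (canonical) surface singularity has no $(-1)$-curves, hence is \emph{the} minimal resolution, so $\mathcal{M}\cong Y$ --- this is (2)(b), and it holds for every generic $\theta$. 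Combining (2)(a) and (2)(b) yields $\Db(\mod\,\mathbb{C}[x,y]\# G)\simeq\Db(\mod\Lambda)\simeq\Db(\coh\mathcal{M})\simeq\Db(\coh Y)$, which is (1). (The passage between $\mathbb{C}[x,y]\# G$ and the completed $\mathbb{C}[[x,y]]^G$ used elsewhere in \S\ref{L5} is standard faithfully flat base change and affects nothing above.)
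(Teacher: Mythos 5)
Your architecture is genuinely different from the paper's. The paper does not prove \ref{Db McKay} directly: it deduces it as the $d=2$ case of \ref{VdB main}, whose proof is the Bridgeland--King--Reid/Van den Bergh argument --- one studies the Fourier--Mukai kernel $\c{Q}=\c{M}\boxtimes^{\bf L}_{\Lambda}\c{M}^\vee$ of $\Psi\Phi$, shows it is supported on the diagonal by playing the Calabi--Yau vanishing \ref{CY general} and the support lemmas \ref{supp lemmas} off against the New Intersection Theorem \ref{intersection theorem}, and then obtains smoothness of the moduli space and fully faithfulness of $\Phi$ simultaneously from \ref{intersection theorem}(2); no cohomology vanishing for the tautological bundles is ever computed. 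Your route is the older Artin--Verdier/Esnault/Wunram/Van den Bergh one: exhibit the tautological bundle as a tilting bundle and apply \ref{VdB Hille}. Both are known to work (the credits to \S\ref{L5} say exactly this), and yours has the virtue of producing the tilting bundle and the isomorphism $\End_{\c{M}}(\c{V})\cong\Lambda$ explicitly; the paper's has the virtue of generalising verbatim to dimension three and of needing no a priori geometric input about the moduli space.

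That last point is where your sketch has a genuine gap. The step you flag as ``the main obstacle'' really is one, and your plan for it is circular in a way that matters: the vanishing $R^{1}\pi_*(\c{R}_i^{\vee}\otimes\c{R}_j)=0$ (Wunram's specialness) is normally proved \emph{on the minimal resolution}, using that it is a smooth surface with one-dimensional fibres and known intersection theory, whereas you must run it on $\c{M}^{s}_{\theta}$ \emph{before} knowing that space is smooth, normal, irreducible, or even that $\pi$ has fibres of dimension $\leq 1$ --- smoothness is supposed to come out of \ref{VdB Hille}(2) only at the end. ``Forced by $\theta$-stability'' is an assertion, not an argument; converting stability of the fibres of $\c{V}$ into $R^1\pi_*$-vanishing is precisely the hard content of this approach. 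Relatedly, the identifications $\pi_*\c{R}_i\cong M_i$ and $\End_{\c{M}}(\c{V})\cong\Lambda$ invoke reflexivity and agreement in codimension two on a space whose normality you have not yet established, and you silently replace the distinguished component $\c{U}_\theta$ of \ref{VdB main} by all of $\c{M}^{s}_{\theta}$, which needs an irreducibility argument (Cassens--Slodowy supply one for Kleinian singularities, but it must be cited or proved). None of this is fatal --- the route can be completed --- but as written the crux is assumed rather than proved, which is exactly the difficulty the paper's New Intersection Theorem argument is designed to sidestep.
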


There are now many different proofs of both (1) and (2), and some are mentioned in the credits below.  A sketch proof is given in the next section, which also covers the dimension three situation.   

\begin{remark}
With regards to the choice of stability, once \ref{Db McKay} has been established for one generic $\theta$, it actually follows that the theorem holds for all generic stability conditions simply by tracking $\theta$ through derived equivalences induced by spherical twists and their inverses. This observation also has applications in higher dimensions.
\end{remark}

\begin{example}
Returning to the running $\mathbb{Z}_3$ example, we computed in \ref{Z3 GIT} the moduli space for a very specific  generic parameter, remarked it was the minimal resolution and showed in \ref{Z3 Db} that the NCCR and the minimal resolution were derived equivalent.  We know that NCCRs are unique up to Morita equivalence in dimension two (\ref{NCCR dim 2}), so this establishes the above theorem in the special case $G=\mathbb{Z}_3$ and $\theta=(-2,1,1)$.  The theorem extends our result by saying we could have taken \emph{any} generic parameter, and in fact any group $G$.
\end{example}

\subsection{From NCCRs to Crepant Resolutions}\label{last section} The main theorem in the notes, which encapsulates both Gorenstein quotient singularities in dimension two and three, and also many other situations, is the following.  Everything in this section is finite type over $\mathbb{C}$, and by a point we mean a \emph{closed} point.

\begin{thm}\label{VdB main} (NCCRs give crepant resolutions in dimension $\leq 3$)\\
Let $R$ be a Gorenstein normal domain with $\dim R=n$, and suppose that there exists an NCCR $\Lambda:=\End_R(M)$.  Choose the dimension vector corresponding to the ranks of the indecomposable summands of $M$ (in some decomposition of $M$ into indecomposables), and for any generic stability $\theta$ consider the moduli space $\c{M}^s_\theta$.  Let $X_1$ be the locus of $\Spec R$ where $M$ is locally free, and define $\c{U}_\theta$ to be the unique irreducible component of $\c{M}^s_\theta$ that maps onto $X_1$.  If $\dim(\c{U}_\theta\times_R\c{U}_\theta)\leq n+1$, then
\begin{itemize}
\item[(1)] There is an equivalence of triangulated categories
\[
\Db(\coh\c{U}_\theta)\simeq \Db(\mod\Lambda).
\]
\item[(2)] $\c{U}_\theta$ is a crepant resolution of $\Spec R$.
\end{itemize}
\end{thm}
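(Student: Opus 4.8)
\textbf{Proof proposal for Theorem~\ref{VdB main}.}

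The plan is to run the standard ``tilting bundle on a fine moduli space'' machine, in the form pioneered by Bridgeland--King--Reid and refined by Van den Bergh. The key observation is that $\c{M}^s_\theta$ carries a tautological bundle: since $\theta$ is generic, semistability equals stability, the good quotient is a geometric quotient, and the moduli space is fine, so it comes equipped with a universal representation, i.e.\ a sheaf of $\Lambda$-modules $\c{T}$ on $\c{U}_\theta$ which is a direct sum $\c{T}=\bigoplus_{i}\c{T}_i$ with $\c{T}_i$ a vector bundle of rank $\alpha_i$, and $\End_{\c{U}_\theta}(\c{T})$ has a natural algebra map from $\Lambda^{\op}$ (or $\Lambda$, after the usual left/right bookkeeping). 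The first step is to record these facts about the fine moduli space, and to observe that over the open locus $X_1$ where $M$ is locally free the map $\c{U}_\theta\to\Spec R$ is an isomorphism, since there $\Lambda$ is Azumaya and the only stable representation of the given dimension vector over a point of $X_1$ is the obvious one; this identifies $\c{U}_\theta$ as a partial compactification of $X_1$ that is birational over $\Spec R$.

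Next I would prove that $\c{T}$ (or rather $\c{T}^\vee$, depending on conventions) is a tilting bundle on $\c{U}_\theta$. Ext-vanishing is the part that uses the hypothesis $\dim(\c{U}_\theta\times_R\c{U}_\theta)\leq n+1$: one shows $\Ext^i_{\c{U}_\theta}(\c{T},\c{T})=0$ for $i>0$ by a local-to-global spectral sequence / dimension count on the fibre product, exactly the trick in Van den Bergh's paper --- the higher Ext sheaves are supported on the locus where the map is not an isomorphism, and the dimension bound forces them to vanish in the relevant degrees because $\Lambda$ is a $d$-CY algebra (Theorem~\ref{CY general}) and hence has the appropriate self-duality. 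Generation of $\Perf(\c{U}_\theta)$ by $\c{T}$ I would get from Neeman's generation trick (\ref{Neeman trick}) applied relative to $\Spec R$, or directly from the fact that $\c{U}_\theta$ is projective over the affine $\Spec R$ and $\c{T}$ restricts to a generator on the dense open $X_1$. This yields part (1): by \ref{VdB Hille} the functor $\RHom_{\c{U}_\theta}(\c{T},-)$ gives $\Db(\coh\c{U}_\theta)\simeq\Db(\mod\End_{\c{U}_\theta}(\c{T}))$, and the map $\Lambda\to\End_{\c{U}_\theta}(\c{T})$ is an isomorphism because it is one over the dense open $X_1$ and both sides are reflexive $R$-modules (using $\End_R(M)\in\CM R$ and normality of $R$, as in fact~\ref{facts3and4}(\ref{F3})).

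For part (2): since $\Lambda$ is an NCCR it has finite global dimension, so $\Kb(\proj\Lambda)=\Db(\mod\Lambda)$; transporting this through the equivalence of (1) gives $\Perf(\c{U}_\theta)=\Db(\coh\c{U}_\theta)$, hence $\c{U}_\theta$ is smooth. It is projective and birational over $\Spec R$ by construction, so it is a resolution; crepancy then follows from Theorem~\ref{CMiffcrepant} (or its corollary \ref{NCCRdimd}), because $\c{U}_\theta$ is derived equivalent to $\Lambda\cong\End_R(M)\in\CM R$. I expect the main obstacle to be the Ext-vanishing step: verifying that the self-duality of the NCCR (the $d$-CY property) combines with the fibre-product dimension hypothesis to kill all the higher self-extensions of the tautological bundle requires a careful spectral-sequence argument on $\c{U}_\theta\times_R\c{U}_\theta$, and is where essentially all the content --- and the restriction to $n\leq 3$ --- lives; everything else is formal once that is in place.
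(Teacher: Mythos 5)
Your overall architecture --- tautological bundle on the fine moduli space, derived equivalence, then smoothness from $\gl\Lambda<\infty$ and crepancy from $\Lambda\in\CM R$ via \ref{CMiffcrepant} --- is reasonable, and your remarks on generation, on identifying $\Lambda\cong\End_{\c{U}_\theta}(\c{T})$ by reflexivity over the locus $X_1$, and on deducing part (2) from part (1) are all fine. The gap sits exactly where you say all the content lives, and your proposed mechanism there does not work. Since $\c{T}$ is locally free, $\Ext^i_{\c{U}_\theta}(\c{T},\c{T})=H^i(\c{U}_\theta,\c{E}nd(\c{T}))$, and on the \emph{a priori singular} variety $\c{U}_\theta$ no local-to-global spectral sequence or dimension count kills this cohomology: knowing the relevant sheaves are supported over the non-isomorphism locus bounds their support, not their vanishing, and the $d$-CY property of $\Lambda$ (\ref{CY general}) says nothing yet about a bundle on a space not yet known to be derived equivalent to $\Lambda$. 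Your route is also circular at the smoothness step: you get smoothness from the equivalence, the equivalence from the tilting property, and the tilting property is precisely what cannot be established before the geometry is under control.

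The paper uses the hypothesis $\dim(\c{U}_\theta\times_R\c{U}_\theta)\leq n+1$ in a genuinely different way, through the New Intersection Theorem (\ref{intersection theorem}) applied to the Fourier--Mukai kernel $\c{Q}=\c{M}\boxtimes^{\bf L}_\Lambda\c{M}^\vee$ of $\Psi\Phi$ on $\c{U}_\theta\times\c{U}_\theta$. The CY duality confines $\Ext^i_\Lambda(\c{M}_y,\c{M}_{y'})$ for $y\neq y'$ to the range $1\leq i\leq n-1$, so $\homdim\c{Q}\leq n-2$ off the diagonal by \ref{supp lemmas}, and \ref{intersection theorem}(1) then forces $\dim\Supp\c{Q}\geq 2n-(n-2)=n+2$ there; since $\c{Q}$ is supported on $\c{U}_\theta\times_R\c{U}_\theta$, of dimension at most $n+1$, it must vanish off the diagonal. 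Smoothness is then obtained \emph{pointwise} from \ref{intersection theorem}(2) applied to $\Psi\Phi\c{O}_y$, after Bridgeland's Kodaira--Spencer injectivity and a spectral sequence give $H^0(\Psi\Phi\c{O}_y)\cong\c{O}_y$; only after that does full faithfulness follow from the spanning class of skyscrapers. None of this can be replaced by a spectral-sequence computation of $\Ext^*_{\c{U}_\theta}(\c{T},\c{T})$; if it could, Bridgeland--King--Reid and Van den Bergh would not have needed the New Intersection Theorem at all. To repair your write-up you would need to import that entire kernel-support argument, at which point the tilting-bundle packaging becomes a corollary of the theorem rather than a route to it.
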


Part (2) is an immediate consequence of part (1) by \ref{NCCRdimd}, and note that the assumption on the dimension of the product is automatically satisfied if $n\leq 3$.  We sketch the proof of \ref{VdB main} below.   One of the key innovations in \cite{BKR}, adapted from \cite{BM}, is that the new intersection theorem from commutative algebra can be used to establish regularity.

Recall that if $E\in\Db(\coh Y)$, then the support of $E$, denoted $\Supp E$, is the closed subset of $Y$ obtained as the union of the support of its cohomology sheaves.  On the other hand, the homological dimension of $E$, denoted $\homdim E$, is the smallest integer $i$ such that $E$ is quasi-isomorphic to a complex of locally free sheaves of length $i$.  Note that by convention, a locally free sheaf considered as a complex in degree zero has length $0$.   We need the following lemma due to Bridgeland--Maciocia \cite[5.3, 5.4]{BM}.
\begin{lemma}\label{supp lemmas}
Suppose that $Z$ is a scheme of finite type over $\mathbb{C}$, and let $E\in\Db(\coh Z)$.\\
(1) Fix a point $z\in Z$.  Then 
\[
z\in\Supp E\iff \Hom_{\D(Z)}(E,\c{O}_z[i])\neq 0 \mbox{ for some }i.
\]
(2) Suppose further that $Z$ is quasi-projective.  If there exists $j\in\mathbb{Z}$ and $s\geq 0$ such that for all points $z\in Z$
\[
\Hom_{\D(Z)}(E,\c{O}_z[i])=0\mbox{ unless }j\leq i\leq j+s,
\]
then $\homdim E\leq s$.
\end{lemma}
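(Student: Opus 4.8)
\textbf{Proof proposal for Lemma~\ref{supp lemmas}.}

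The plan is to prove both parts by reducing to standard facts about derived categories of quasi-projective schemes over $\mathbb{C}$, with the second part using the first together with the new intersection theorem (or a Bridgeland--Maciocia type argument). For part (1), the forward direction is essentially formal: if $z\in\Supp E$, then some cohomology sheaf $\c{H}^p(E)$ has $z$ in its support, so $(\c{H}^p(E))_z\neq 0$ as a module over the local ring $\c{O}_{Z,z}$. By Nakayama, there is a nonzero map from $\c{H}^p(E)_z$ to the residue field, which globalizes (after possibly twisting by an ample line bundle and taking sections, or working locally) to a nonzero map $\c{H}^p(E)\to\c{O}_z$ of sheaves, and hence, via the truncation triangles for $E$, to a nonzero morphism $E\to\c{O}_z[i]$ in $\D(Z)$ for a suitable $i$ (one picks the largest $p$ with $z\in\Supp\c{H}^p(E)$ so that the map does not get killed). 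For the reverse direction, if $z\notin\Supp E$ then on a neighbourhood of $z$ the complex $E$ is exact, i.e.\ $E_z\simeq 0$ in $\D(\c{O}_{Z,z})$, and since $\c{O}_z$ is supported at $z$, any morphism $E\to\c{O}_z[i]$ factors through the restriction of $E$ to that neighbourhood and hence is zero.

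For part (2), I would argue locally: the statement $\homdim E\leq s$ is local on $Z$, so fix a point $z$ and work over the regular... no --- $Z$ need not be regular, so instead I would use that $Z$ is quasi-projective to embed matters suitably, or more directly follow Bridgeland--Maciocia. The cleanest route is: the hypothesis says the morphisms $\Hom_{\D(Z)}(E,\c{O}_z[i])$ are concentrated in a window of length $s$, uniformly in $z$. One shows first that $E$ has bounded and coherent cohomology concentrated in $s+1$ consecutive degrees (after a shift, assume degrees $0,\dots,s$), using part (1) applied to the truncations of $E$ to detect where cohomology lives. Then, to control the homological dimension, one replaces $E$ by a bounded-above complex of locally free sheaves and uses the Auslander--Buchsbaum formula / new intersection theorem at each local ring $\c{O}_{Z,z}$: the vanishing of $\Hom_{\D(Z)}(E,\c{O}_z[i])$ for $i$ outside the window forces the local $\Tor$ against the residue field to vanish outside a range of length $s$, which bounds the projective dimension of the (stalk of the) complex, hence $\homdim_z E\leq s$. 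Since this holds at every point and $Z$ is Noetherian, one can glue the local length bounds to a global finite locally free resolution of length $s$.

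The main obstacle I expect is the gluing/uniformity step in part (2): passing from pointwise bounds on the homological dimension of $E_z$ to a single globally bounded complex of locally free sheaves of length $s$ requires knowing that the locus where $\homdim E > s$ is closed (it is, by semicontinuity of projective dimension over a Noetherian scheme) and then actually producing the length-$s$ resolution, which is where quasi-projectivity is used (to have enough locally free sheaves and to truncate a locally free resolution in a controlled way, as in \cite[\S1.6]{Orlov1} or the standard stupid-truncation argument). The input from commutative algebra --- that a perfect-looking $\Tor$-vanishing bound against residue fields yields a projective dimension bound --- is precisely the new intersection theorem, so I would invoke \cite{BM} for the technical heart rather than reprove it, and present part (1) in full since it is elementary.
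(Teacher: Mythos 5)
The paper offers no proof of this lemma at all: it is quoted verbatim from Bridgeland--Maciocia \cite[5.3, 5.4]{BM}, so there is no argument in the text to compare yours against. Judged on its own merits, your sketch follows the standard (Bridgeland--Maciocia) line and is essentially sound. For (1), the identification $\Hom_{\D(Z)}(E,\c{O}_z[i])\cong\Hom_{\D(\c{O}_{Z,z})}(E_z,k(z)[i])$ (because $\c{O}_z$ is a skyscraper) makes both directions clean; no twisting by an ample bundle is needed for the forward direction, since a surjection $\c{H}^p(E)_z\twoheadrightarrow k(z)$ already \emph{is} a sheaf map $\c{H}^p(E)\to\c{O}_z$, and your choice of extremal $p$ is exactly what makes the relevant term survive the spectral sequence $\Ext^a(\c{H}^{-b}(E),\c{O}_z)\Rightarrow\Hom(E,\c{O}_z[a+b])$. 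For (2), the route you describe --- bounded-above locally free resolution (quasi-projectivity), smart truncation, and freeness of the kernel sheaf checked stalkwise --- is the right one.

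One correction of attribution, which matters for the logic of the section: the commutative-algebra input in part (2) is \emph{not} the new intersection theorem, nor Auslander--Buchsbaum. The step ``hyper-$\Tor$ against $k(z)$ vanishes outside a window of length $s$ $\Rightarrow$ the stalk is perfect of amplitude $s$'' is the elementary fact that the ranks of the terms of a \emph{minimal} free resolution over the Noetherian local ring $\c{O}_{Z,z}$ are the dimensions of these $\Tor$ groups; no deep theorem is needed, and in particular Auslander--Buchsbaum is unavailable here since $\c{O}_{Z,z}$ need not be regular or Gorenstein. The new intersection theorem enters only in the companion result \ref{intersection theorem}, which bounds $\dim(\Supp E)$ from below and detects regularity --- a genuinely deeper statement. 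Your hedge of citing \cite{BM} for ``the technical heart'' is therefore unnecessary for \ref{supp lemmas} itself but would be essential if you tried the same strategy on \ref{intersection theorem}.
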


The next result is much deeper, and follows from the New Intersection Theorem in commutative algebra \cite[1.2]{BI}.
 \begin{thm}\label{intersection theorem}
Let $Z$ be an irreducible scheme of dimension $n$ over $\mathbb{C}$, and let $E$ be a non-zero object of $\Db(\coh Z)$.  Then\\
(1) $\dim(\Supp E)\geq \dim Z-\homdim E$.\\
(2)  Suppose there is a point $z'\in Z$ such that the skyscraper $\c{O}_{z'}$ is a direct summand of $H^0(E)$, and further
\[
\Hom_{\D(Z)}(E,\c{O}_z[i])=0\quad\mbox{unless }z=z'\mbox{ and }0\leq i\leq n. 
\]
Then $Z$ is nonsingular at $z'$ and $E\cong H^0(E)$ in $\D(Z)$.
\end{thm}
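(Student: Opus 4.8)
The plan is to derive both parts from the New Intersection Theorem \cite[1.2]{BI}, using the support lemmas of \ref{supp lemmas} to set up the reductions. For part (1) we may assume $\homdim E<\infty$ (otherwise the inequality is trivial), so $E$ is a perfect complex. Put $d:=\dim(\Supp E)$, choose a closed point $z\in\Supp E$, and let $i\colon W\hookrightarrow Z$ be the intersection of $Z$ with a general linear subspace of codimension $d$ through $z$, so that $\dim\c{O}_{W,z}=n-d$ and $\Supp E\cap W$ is zero-dimensional near $z$. Since $E$ is perfect, $\mathbf{L}i^{*}E$ is again perfect with $\homdim(\mathbf{L}i^{*}E)\leq\homdim E$ and $\Supp(\mathbf{L}i^{*}E)=\Supp E\cap W$; localising at $z$ produces a non-zero perfect complex over $\c{O}_{W,z}$ with finite-length cohomology. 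The New Intersection Theorem then forces its homological dimension to be at least $\dim\c{O}_{W,z}=n-d$, whence $\homdim E\geq n-d$, which rearranges to the claimed inequality.

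For part (2) I would first pin down the support and the homological dimension. Since $\Hom_{\D(Z)}(E,\c{O}_z[i])=0$ for every $z\neq z'$, Lemma~\ref{supp lemmas}(1) gives $\Supp E\subseteq\{z'\}$, and because $\c{O}_{z'}$ is a summand of $H^{0}(E)$ we have $\Supp E=\{z'\}$; then part (1) yields $\homdim E\geq n$. On the other hand $\Hom_{\D(Z)}(E,\c{O}_z[i])=0$ for $i\notin[0,n]$, so Lemma~\ref{supp lemmas}(2) (with $j=0$, $s=n$) gives $\homdim E\leq n$. Hence $\homdim E=n$, and everything is concentrated at the closed point $z'$. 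Writing $S:=\c{O}_{Z,z'}$, a local ring of dimension $n$, the stalk $E_{z'}$ is a perfect complex over $S$ with finite-length cohomology in degrees $[-n,0]$, and its top cohomology $H^{0}(E_{z'})$ has the residue field $k$ as a direct summand. Because $\tau^{\leq-1}E_{z'}$ lives in strictly negative cohomological degrees, the canonical truncation map induces an isomorphism $\Hom_{\D(S)}(k,E_{z'})\cong\Hom_{S}(k,H^{0}(E_{z'}))$, so the split inclusion $k\hookrightarrow H^{0}(E_{z'})$ lifts to a split monomorphism $k\to E_{z'}$ in $\D(S)$. Thus $k$ is a direct summand of a perfect complex, so $\pd_{S}k<\infty$, and by the Auslander--Buchsbaum--Serre theorem $S$ is regular, i.e.\ $Z$ is nonsingular at $z'$.

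It remains to show $E_{z'}\cong H^{0}(E_{z'})$, which I would do by contradiction. Let $j_{0}<0$ be the lowest cohomological degree in which $E_{z'}$ has nonzero cohomology; then $H^{j_{0}}(E_{z'})$ is a nonzero finite-length module over the regular local ring $S$ of dimension $n$, so $\Ext^{n}_{S}(H^{j_{0}}(E_{z'}),k)\neq 0$ by Auslander--Buchsbaum. Applying $\Hom_{\D(S)}(-,k[m])$ to the truncation triangle $H^{j_{0}}(E_{z'})[-j_{0}]\to E_{z'}\to\tau^{>j_{0}}E_{z'}\to$ with $m=n-j_{0}$, and using $\Hom_{\D(S)}(E_{z'},k[m])=0$ for $m>n$ (from $\homdim E_{z'}=n$), one gets $\Hom_{\D(S)}(\tau^{>j_{0}}E_{z'},k[\,n-j_{0}+1\,])\neq 0$; but $\tau^{>j_{0}}E_{z'}$ has cohomology in degrees $[j_{0}+1,0]$, so over the regular ring $S$ it has a finite free resolution in degrees $[j_{0}+1-n,0]$ and hence $\RHom_{S}(\tau^{>j_{0}}E_{z'},k)$ is concentrated in degrees $[0,n-j_{0}-1]$ --- a contradiction, since $n-j_{0}+1>n-j_{0}-1$. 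Therefore $j_{0}=0$, so $E_{z'}$, and with it $E$, is concentrated in cohomological degree $0$. The step I expect to be the main obstacle is part (1): making the general-slice argument precise --- controlling the local dimension of $W=Z\cap L$ at $z$, ensuring $\Supp E\cap W$ is genuinely zero-dimensional there, and verifying that derived restriction of a perfect complex neither annihilates it nor raises its homological dimension --- together with citing the New Intersection Theorem in the form valid for complexes rather than modules. Once part (1) is in hand, part (2) is a fairly mechanical assembly of truncation triangles, the support lemmas, and Auslander--Buchsbaum over the regular local ring $S$.
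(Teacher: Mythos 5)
The paper itself does not prove this theorem: it is quoted, with part (1) coming from Bridgeland--Maciocia and part (2) being precisely the regularity criterion of Bridgeland--Iyengar \cite[1.2]{BI}, so you are attempting to supply an argument the paper deliberately outsources. Your part (1) is essentially right; a cleaner route that avoids all the general-position bookkeeping is to localise $E$ at the generic point $\eta$ of a top-dimensional component of $\Supp E$: since $Z$ is irreducible and of finite type over $\mathbb{C}$ one has $\dim\c{O}_{Z,\eta}=n-d$, the localisation of $E$ is a non-zero perfect complex over $\c{O}_{Z,\eta}$ with finite-length homology, and the New Intersection Theorem gives $\homdim E\geq n-d$ directly. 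Likewise your deduction of $\Supp E=\{z'\}$, of $\homdim E=n$, and the final truncation argument showing $E\cong H^0(E)$ \emph{once regularity is known} are all correct.

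The genuine gap is the regularity step in part (2). You claim that, because $\tau^{\leq-1}E_{z'}$ lives in strictly negative degrees, the truncation induces an isomorphism $\Hom_{\D(S)}(k,E_{z'})\cong\Hom_S(k,H^0(E_{z'}))$, so that the split inclusion $k\hookrightarrow H^0(E_{z'})$ lifts and $k$ becomes a summand of a perfect complex. This is false in general. The obstruction to lifting lies in $\Hom_{\D(S)}(k,\tau^{\leq-1}E_{z'}[1])$, which by the hypercohomology spectral sequence receives contributions $\Ext^p_S(k,H^{1-p}(E_{z'}))$ for $p\geq 2$; these have no reason to vanish, and indeed the underlying principle already fails for a complex concentrated in a single negative degree, since $\Hom_{\D(S)}(k,k[1])=\Ext^1_S(k,k)\neq 0$ for every local ring $S$ that is not a field. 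The hypotheses of the theorem control maps \emph{out of} $E$ into skyscrapers, not maps \emph{from} $k$ into the cohomology of $E$, so they do not kill these obstruction groups. What you are really asserting --- that a perfect complex of length exactly $\dim S$ with finite-length homology whose top cohomology has $k$ as a direct summand forces $S$ to be regular --- is exactly the content of \cite[1.2]{BI}, and its proof requires substantially more than truncation triangles (amplitude inequalities and big Cohen--Macaulay module techniques layered on top of the New Intersection Theorem). As written, your argument reduces the theorem to its hardest ingredient and then disposes of that ingredient with an incorrect one-line claim; the correct fix is simply to localise at $z'$ and invoke \cite[1.2]{BI} for the regularity and concentration statements, as the paper does.
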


Recall that a set $\Omega\subseteq\Db(\coh Y)$ is called a {\em spanning class} if (a) $\RHom_Y(a,c)=0$ for all $c\in\Omega$ implies $a=0$, and (b) $\RHom_Y(c,a)=0$ for all $c\in\Omega$ implies $a=0$.
\begin{lemma}\label{spanning lemma}
Suppose that $Z$ is a smooth variety of finite type over $\mathbb{C}$, projective over $\Spec T$ where $T$ is a Gorenstein ring.  Then $\Omega:=\{\c{O}_z\mid z\in Z\}$ is a spanning class. 
\end{lemma}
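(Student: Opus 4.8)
The plan is to verify the two conditions in the definition of a spanning class directly, using Serre duality relative to the Gorenstein base $T$ (which exists by Lemma~\ref{geometry Serre}, since $Z$ is smooth and hence Gorenstein, and is projective over the Gorenstein ring $T$). Write $\Omega=\{\c{O}_z\mid z\in Z\}$ and suppose first that $a\in\Db(\coh Z)$ satisfies $\RHom_Z(a,\c{O}_z)=0$ for all closed points $z$. The key point is that for a nonzero complex $a$, its support is nonempty and closed (being the union of supports of its finitely many cohomology sheaves), so pick a closed point $z$ in $\Supp a$. By Lemma~\ref{supp lemmas}(1), $z\in\Supp a$ forces $\Hom_{\D(Z)}(a,\c{O}_z[i])\neq 0$ for some $i$, which contradicts $\RHom_Z(a,\c{O}_z)=0$. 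Hence $a=0$, giving condition (a).

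For condition (b), suppose $\RHom_Z(\c{O}_z,a)=0$ for all closed points $z$. Here I would invoke the relative Serre functor $\mathbb{S}_Z=-\otimes_Z f^!\c{O}_T$ from Lemma~\ref{geometry Serre}(2): since $f^!\c{O}_T\cong\c{L}[\dim Z-\dim T]$ for a line bundle $\c{L}$, we have for each closed point $z$
\[
\RHom_T(\RHom_Z(\c{O}_z,a),\c{O}_T)\cong\RHom_Z(a,\mathbb{S}_Z(\c{O}_z))\cong\RHom_Z(a,\c{O}_z\otimes\c{L}[\dim Z-\dim T]).
\]
Since $\c{O}_z\otimes\c{L}\cong\c{O}_z$ (tensoring a skyscraper with a line bundle leaves it unchanged, as $\c{L}$ is locally free of rank one), the right-hand side is $\RHom_Z(a,\c{O}_z)$ up to shift. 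Thus $\RHom_Z(\c{O}_z,a)=0$ for all $z$ implies $\RHom_Z(a,\c{O}_z)=0$ for all $z$, and we are back in the situation of condition (a), so $a=0$.

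The only mild subtlety is making the duality step rigorous: one needs $\RHom_Z(\c{O}_z,a)$ to be a perfect (bounded, finite-type) complex over $T$ so that applying $\RHom_T(-,\c{O}_T)$ and biduality recovers it, and one needs $\c{O}_z\in\Perf(Z)$ so that Lemma~\ref{geometry Serre}(2) applies with $\c{F}=\c{O}_z$ — this holds precisely because $Z$ is smooth, so every coherent sheaf, in particular each skyscraper, is perfect. I expect this bookkeeping with perfect complexes and the identification $\c{O}_z\otimes\c{L}\cong\c{O}_z$ to be the main (though still routine) obstacle; once that is in place, the argument is simply: support is nonempty for nonzero complexes, plus Serre duality to swap the two vanishing conditions.
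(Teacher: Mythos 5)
Your proof is correct and follows essentially the same route as the paper: condition (a) via Lemma~\ref{supp lemmas}(1) applied to a point of the (nonempty) support, and condition (b) by using the relative Serre functor of Lemma~\ref{geometry Serre}(2) with $\c{F}=\c{O}_z$ (perfect because $Z$ is smooth) to reduce to (a), noting that $\mathbb{S}_Z\c{O}_z$ is a shift of $\c{O}_z$. The one remark I would drop is the worry about biduality over $T$: you only use the implication ``$\RHom_Z(\c{O}_z,a)=0$ implies its $T$-dual vanishes,'' which is trivial, so no perfection of $\RHom_Z(\c{O}_z,a)$ over $T$ is required.
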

\begin{proof}
By \ref{supp lemmas}(1) $z\in\Supp E\iff \RHom_Z(E,\c{O}_z)\neq 0$, hence $\RHom_Z(E,\c{O}_z)=0$ for all $\c{O}_z\in\Omega$ implies that $\Supp E$ is empty, thus $E=0$.  This shows (a).  To check the condition (b), suppose that $\RHom_Z(\c{O}_z,E)=0$ for all $\c{O}_z\in\Omega$.  Then
\[
0=\RHom_T(\RHom_Z(\c{O}_z,E),T)
\stackrel{\scriptsize\mbox{\ref{geometry Serre}}}{\cong}\RHom_Z(E,\mathbb{S}_Z\c{O}_z).
\]
But since $Z$ and $T$ are Gorenstein, again by \ref{geometry Serre} $\mathbb{S}_Z$ is simply tensoring by a line bundle and (possibly) shifting.  Tensoring a skyscraper by a line bundle gives back the same skyscraper, so the above implies that $\RHom_Z(E,\c{O}_z)=0$ for all $\c{O}_z\in\Omega$.  As above, this gives $E=0$.
\end{proof}

As one final piece of notation, if $Y$ is a $\mathbb{C}$-scheme and $\Gamma$ is a $\mathbb{C}$-algebra, we define 
\[
\c{O}_Y^\Gamma:=\c{O}_Y\otimes_{\underline{\mathbb{C}}}\underline{\Gamma},
\] 
where $\underline{\mathbb{C}}$ and $\underline{\Gamma}$ are the constant sheaves associated to $\mathbb{C}$ and $\Gamma$. 

We can now sketch the proof of \ref{VdB main}.
\begin{proof}
For notational simplicity in the proof, denote $\c{U}_\theta$ simply as $Y$.  The tautological bundle from the GIT construction restricted to $Y$ gives us a bundle $\c{M}$ on $Y$, which is a sheaf of $\Lambda$-modules.   There is a projective birational morphism $f\colon Y\to\Spec R$, see \ref{map to R proof}.

Consider the commutative diagram
\begin{eqnarray}
\begin{array}{c}
\begin{tikzpicture}
\node (a1) at (0,1.5) {$Y\times Y$};
\node (a2) at (2,1.5) {$Y$};
\node (b1) at (0,0) {$Y$};
\node (b2) at (2,0) {$\Spec\mathbb{C}$};
\draw[->] (a1) -- node[above] {$\scriptstyle p_1$} (a2);
\draw[->] (a1) -- node[left] {$\scriptstyle p_2$} (b1);
\draw[->] (a2) -- node[right] {$\scriptstyle \pi$} (b2);
\draw[->] (b1) -- node[above] {$\scriptstyle \pi$} (b2);
\end{tikzpicture}
\end{array}\label{BC 1}
\end{eqnarray}
There is a natural functor
\[
\Phi:=\mathbf{R}\pi_*(-\otimes^{{\bf L}}_{\c{O}_Y}\c{M})\colon\D(\Qcoh Y)\to\D(\Mod \Lambda)
\]
which has left adjoint $\Psi:=\pi^*(-)\otimes^{\bf{L}}_{\c{O}_Y^\Lambda}\c{M}^\vee$.  There are natural isomorphisms
\begin{align*}
\Psi\Phi(-)&=\pi^*(\mathbf{R}\pi_*(-\otimes^{{\bf L}}_{\c{O}_Y}\c{M}))\otimes^{\bf{L}}_{\c{O}_Y^\Lambda}\c{M}^\vee\\
&\cong\mathbf{R}{p_2}_*p_1^*(-\otimes^{{\bf L}}_{\c{O}_Y}\c{M})\otimes^{\bf{L}}_{\c{O}_Y^\Lambda}\c{M}^\vee\tag{flat base change}\\
&\cong\mathbf{R}{p_2}_*(p_1^*(-\otimes^{{\bf L}}_{\c{O}_Y}\c{M})\otimes^{\bf{L}}_{\c{O}_{Y\times Y}^\Lambda}p_2^*\c{M}^\vee)\tag{projection formula}\\
&\cong\mathbf{R}{p_2}_*(p_1^*(-)\otimes^{{\bf L}}_{\c{O}_{Y\times Y}}p_1^*\c{M}\otimes^{\bf{L}}_{\c{O}_{Y\times Y}^\Lambda}p_2^*\c{M}^\vee).
\end{align*}
It follows that  $\Psi\Phi$ is the Fourier--Mukai functor with respect to 
\[
\c{Q}:=\c{M}\boxtimes_{\Lambda}^{\bf{L}}\c{M}^\vee:=p_1^*\c{M}\otimes^{\bf{L}}_{\c{O}_{Y\times Y}^\Lambda}p_2^*\c{M}^\vee.
\]
It is known that any Fourier--Mukai functor applied to a skyscraper $\c{O}_y$ is the derived restriction of the corresponding kernel via the morphism $i_y\colon\{y\}\times Y\to Y\times Y$, so since $\Psi\Phi$ is the Fourier--Mukai functor with kernel $\c{Q}$,
\begin{eqnarray}
\Psi\Phi \c{O}_y\cong \mathbf{L}i_y^*\c{Q}.\label{Q formula}
\end{eqnarray}
We claim that $\c{Q}$ is supported on the diagonal.  For this, we let $\c{M}_y$ be the fibre of $\c{M}$ at $y$, and first remark that
\begin{eqnarray}
\Ext^i_{\Lambda}(\c{M}_y,\c{M}_{y'})=0\mbox{ for all }y\neq y' \mbox{ if }i\notin[1,n-1].\label{Ext for Q 1}
\end{eqnarray}
The case $i=0$ is clear, the case $i=n$ follows from CY duality \ref{CY general}(2) on $\Lambda$, and the remaining cases $i<0$ and $i>n$ follow since $\c{M}_y$ and $\c{M}_{y'}$ are modules, and $\gl\Lambda=n$.  

Now the chain of isomorphisms
\begin{align*}
\Hom_{D(Y\times Y)}(\c{Q},\c{O}_{y,y'}[i])
&\cong \Hom_{D(Y\times Y)}(\c{Q},{i_y}_*\c{O}_{y'}[i])\\
&\cong \Hom_{D(Y)}({\mathbf L}i_y^*\c{Q},\c{O}_{y'}[i])\tag{adjunction}\\
&\cong \Hom_{D(Y)}(\Psi\Phi\c{O}_y,\c{O}_{y'}[i])\tag{by \eqref{Q formula}}\\
&\cong \Hom_{D(\Lambda)}(\Phi\c{O}_y,\Phi\c{O}_{y'}[i])\tag{adjunction}\\
&\cong \Hom_{D(\Lambda)}(\c{M}_y,\c{M}_{y'}[i])\\
&\cong \Ext^i_{\Lambda}(\c{M}_y,\c{M}_{y'})
\end{align*}
shows two things. 
\begin{enumerate}
\item First, if $(y,y')\notin Y\times_R Y$ then $f(y)\neq f(y')$ and so $\c{M}_y$ and $\c{M}_{y'}$ are finite length $\Lambda$-modules which, when viewed as $R$-modules, are supported at different points of $R$.  Since 
\[
\Ext^i_{\Lambda}(\c{M}_y,\c{M}_{y'})_\m\cong\Ext^i_{{\Lambda}_\m}({\c{M}_y}_\m,{\c{M}_{y'}}_\m)
\]
for all maximal ideals $\m$ of $R$, we deduce that there are no Homs or Exts between $\c{M}_y$ and $\c{M}_{y'}$, so the above chain of isomorphisms gives $\Hom_{Y\times Y}(\c{Q},\c{O}_{y,y'}[i])=0$ for all $i$.  Hence, by \ref{supp lemmas}(1), we see that $\c{Q}$ is supported on $Y\times_R Y$, and so 
\[
\dim(\Supp\c{Q}|_{(Y\times Y)\backslash \Delta})\leq \dim(Y\times_R Y)\leq n+1,
\]
where the last inequality holds by assumption.  
\item Second, if $(y,y')\in (Y\times Y)\backslash\Delta$ (i.e.\ $y\neq y'$), then by \eqref{Ext for Q 1}
\[
\Hom_{D(Y\times Y)}(\c{Q},\c{O}_{y,y'}[i])=0\mbox{ unless }1\leq i\leq n-1. 
\]
Applying \ref{supp lemmas}(2) with $j=1$ gives $\homdim \c{Q}|_{(Y\times Y)\backslash \Delta}\leq n-2$, thus by \ref{intersection theorem}(1)
\begin{align*}
\dim(\Supp\c{Q}|_{(Y\times Y)\backslash \Delta})&\geq \dim((Y\times Y)\backslash \Delta)-\homdim \c{Q}|_{(Y\times Y)\backslash \Delta}\\
&\geq 2n-(n-2)=n+2.
\end{align*}
\end{enumerate}

Combining (1) and (2) gives a contradiction unless $\c{Q}|_{(Y\times Y)\backslash \Delta}=0$, so indeed $\c{Q}$ is supported on the diagonal.  In particular, by \ref{supp lemmas}(1) it follows that $\Hom_{\D(Y\times Y)}(\c{Q},,\c{O}_{y,y'}[i])=0$ for all $i\in\mathbb{Z}$ (provided that $y\neq y'$), so the above chain of isomorphisms shows that
\begin{eqnarray}
\Ext^{i}_\Lambda(\c{M}_{y},\c{M}_{y'})=0\mbox{ for all }i\in\mathbb{Z},\mbox{ provided that }y\neq y'.\label{M Ext vanishing 2}
\end{eqnarray}

Pick a point $y\in Y$ and next consider $\Psi\Phi\c{O}_y$.  The counit of the adjunction gives us a natural map $\Psi\Phi\c{O}_y\to \c{O}_y$ and thus a triangle
\begin{eqnarray}
c_y\to\Psi\Phi\c{O}_y\to \c{O}_y\to.\label{defin c triangle}
\end{eqnarray}
Applying $\Hom_Y(-,\c{O}_y)$ and using the adjunction gives an exact sequence
\begin{align*}
0\to&\Hom_Y(c_y,\c{O}_y[-1])\to\Hom_Y(\c{O}_y,\c{O}_y)\xrightarrow{\alpha} \Hom_\Lambda(\Phi\c{O}_y,\Phi\c{O}_y)\to \Hom_Y(c_y,\c{O}_y)\to\\
& \Hom_Y(\c{O}_y,\c{O}_y[1])\xrightarrow{\beta} \Hom_\Lambda(\Phi\c{O}_y,\Phi\c{O}_y[1])\to \Hom_Y(c_y,\c{O}_y[1])\to \hdots
\end{align*}
The domain and the codomain of $\alpha$ are isomorphic to $\mathbb{C}$, and $\Phi$ (being a functor) takes the identity to the identity, thus $\alpha$ is an isomorphism.  By a result of Bridgeland \cite[4.4]{BridgelandLMS}, $\beta$ can be identified with the Kodaira--Spencer map, and is injective.  We conclude that $\Hom_Y(c_y,\c{O}_y[i])=0$ for all $i\leq 0$, from which the spectral sequence
\[
E_2^{p,q}=\Ext^p(H^{-q}(c_y),\c{O}_y)\Rightarrow \Hom(c_y,\c{O}_y[p+q])
\]
(see e.g. \cite[2.70(ii)]{HuybrechtsFM}) yields $H^i(c_y)=0$ for all $i\geq 0$.   Taking cohomology of \eqref{defin c triangle} then gives $H^0(\Psi\Phi\c{O}_y)\cong \c{O}_y$.  Set $E:=\Psi\Phi\c{O}_y$, then since further
\[
\Hom_{\D(Y)}(E,\c{O}_{y'}[i])\cong\Ext^i_{\Lambda}(\c{M}_y,\c{M}_{y'})=0
\]
for all $y\neq y'$ by \eqref{M Ext vanishing 2}, and also for all $i<0$ and all $i>n$ in the case $y=y'$ (since $\Lambda$ has global dimension $n$), it follows from the New Intersection Theorem  \ref{intersection theorem}(2) that $Y$ is nonsingular at $y$, and $E\cong H^0(E)$, thus also $c_y=0$.

Since the above holds for all $y\in Y$ we conclude that $Y$ is smooth.  Thus $\Perf(Y)=\Db(\coh Y)$, and further since $\Lambda$ has finite global dimension it is also true that $\Kb(\proj\Lambda)=\Db(\mod\Lambda)$. Consequently it is easy to see that 
\[
\Phi=\mathbf{R}\pi_*(-\otimes^{{\bf L}}_{Y}\c{M})\colon\Db(\coh Y)\to\Db(\mod \Lambda)
\]
and its left adjoint $\Psi=-\otimes^{{\bf L}}_{\Lambda}\c{M}^\vee$ also preserves boundedness and coherence.  Further, by \ref{geometry Serre}(2), $\mathbb{S}_Y=-\otimes f^!\c{O}_R$ is a Serre functor on $Y$ relative to the canonical $R$, and $\mathbb{S}_\Lambda=\mathrm{Id}$ is a Serre functor on $\Lambda$ relative to the canonical $R$ by \ref{CY general}(1).  Using
\begin{align*}
\RHom_\Lambda(\Phi(a),b)&\cong\RHom_R(\RHom_\Lambda(\mathbb{S}_\Lambda^{-1}b,\Phi(a)),R)\\
&\cong \RHom_R(\RHom_Y(\Psi\mathbb{S}_\Lambda^{-1}b,a),R)\tag{adjunction}\\
&\cong \RHom_Y(a,\mathbb{S}_Y\Psi\mathbb{S}_\Lambda^{-1}b),
\end{align*}
taking degree zero cohomology we see that $\mathbb{S}_Y\circ\Psi \circ\mathbb{S}_\Lambda^{-1}$ is right adjoint to $\Phi$.

Now,  since $Y$ is smooth, $\{ \c{O}_y \mid y\in Y\}$ is a spanning class by \ref{spanning lemma}.  Hence since $\Phi$ has both right and left adjoints, and further $\Phi\c{O}_y=\c{M}_y$, to show that $\Phi$ is fully faithful we just have to check that the natural maps 
\begin{eqnarray}
\Hom_{\Db(\coh Y)}(\c{O}_{y},\c{O}_{y'}[i])\to\Hom_{\Db(\mod\Lambda)}(\c{M}_{y},\c{M}_{y'}[i])\label{LRspanningcheck}
\end{eqnarray}
are bijections for all $y,y'\in Y$ and $i\in\mathbb{Z}$ \cite[1.49]{HuybrechtsFM}.  

But by \eqref{defin c triangle} (and the fact we now know that $c_y=0$), the counit $\Psi\Phi\to 1$ is an isomorphism on all the skyscrapers $\c{O}_y$, and hence also on their shifts.  Thus all the skyscrapers and all their shifts belong to the full subcategory on which $\Phi$ is fully faithful, and so in particular \eqref{LRspanningcheck} must be bijective for all $y,y'\in Y$ and all $i\in\mathbb{Z}$.   We deduce that $\Phi$ is fully faithful.

Finally, since $\Db(\coh Y)$ is non-trivial and $\Db(\mod\Lambda)$ is indecomposable, to show that $\Phi$ is an equivalence we just need that $\mathbb{S}_\Lambda\Phi\c{O}_y\cong\Phi \mathbb{S}_Y\c{O}_y$ for all $y\in Y$ (the proof of this fact follows in a similar way to \cite[1.56]{HuybrechtsFM}, with Serre functors replaced by the RHom versions).  But, as above $\mathbb{S}_\Lambda=\mathrm{Id}$ and $\mathbb{S}_Y$ is tensoring by a line bundle, so $\Phi\mathbb{S}_Y\c{O}_y=\Phi(\c{L}\otimes\c{O}_y)\cong\Phi\c{O}_y\cong\mathrm{Id}\Phi\c{O}_y\cong \mathbb{S}_\Lambda\Phi\c{O}_y$.  It follows that $\Phi$ is an equivalence.
\end{proof}

\begin{remark}
The irreducible component appears in the statement of the theorem since (as in Exercise~\ref{Ex3.12}) often components arise in quiver GIT.  However, with the hypotheses as in \ref{VdB main}, there is still no known example of when $\c{U}_\theta\neq\c{M}^{s}_\theta$.  In the case when $R$ is complete local and $\dim R=3$, $\c{U}_\theta=\c{M}^{s}_\theta$ by \cite[6.6.1]{VdBNCCR}.
\end{remark}

\begin{remark}\label{map to R proof}
The above proof of \ref{VdB main} skips over the existence of a projective birational map $Y\to\Spec R$.  Recall (from \ref{remark warning for later}) that the quiver GIT only gives a projective map $Y\to \Spec \mathbb{C}[\mathcal{R}]^G$, but $ \mathbb{C}[\mathcal{R}]^G$ need not equal $R$ by Exercise~\ref{Ex3.15}.  For details on how to overcome this problem, see \cite[\S6.2]{VdBNCCR} or \cite[2.2, 2.3]{CI}.
\end{remark}

\medskip
\noindent
{\bf Credits:} The geometry of the minimal resolutions of the quotient singularities arising from finite subgroups of $\SL(2,\mathbb{C})$ has a long history stretching back to at least du Val.  The relationship with the representation theory was discovered by McKay in \cite{McKay}, which in turned produced many geometric interpretations and generalizations, e.g.\ in \cite{AV}, \cite{Esnault} and \cite{Wunram}.  Auslander's version of the McKay Correspondence was proved in \cite{Aus3}, and is summarised in \cite[\S10]{Y}.

The fact that the skew group ring is always Morita equivalent to the McKay quiver was written down in \cite{BSW}, but it was well-known to Reiten--Van den Bergh \cite{RV89}, Crawley-Boevey and many others well before then.

It is possible to use \cite{AV,Esnault,Wunram} to establish the derived equivalence in \ref{Db McKay}, but there are many other proofs of this.  The first was Kapranov--Vasserot \cite{KV}, but it also follows from \cite{VdB1d}, \cite{BKR} or \cite{VdBNCCR}.  The main result, \ref{VdB main}, is Van den Bergh's \cite{VdBNCCR} interpretation of Bridgeland--King--Reid \cite{BKR}.   See also  \cite[\S7]{Crawnotes}.

\subsection{Exercises}

\begin{ex}\label{Ex5.22}
We can use weighted $\mathbb{C}^*$-actions on polynomial rings as an easy way to produce NCCRs.   Consider the polynomial ring $S=\mathbb{C}[x_1,\hdots, x_n,y_1,\hdots, y_m]$, with $n,m\geq 2$,  and non-negative integers $a_1,\hdots,a_n,b_1,\hdots, b_m\in\mathbb{N}$.  We define a $\mathbb{C}^*$-action on $S$ by
\[
\lambda\cdot x_i:=\lambda^{a_i}x_i\quad\textnormal{and}\quad \lambda\cdot y_i:=\lambda^{-b_i}y_i.
\]
As shorthand, we denote this action by $(a_1,\hdots,a_n,-b_1,\hdots,-b_m)$. We consider
\[
S_i:=\{ f\in S\mid \lambda\cdot f=\lambda^if \}
\]
for all $i\in\mathbb{Z}$.  It should be fairly clear that $S=\oplus_{i\in\mathbb{Z}} S_i$.  We will consider the invariant ring $S_0$, which is known to be CM, of dimension $n+m-1$.  It is Gorenstein if the sum of the weights is zero, i.e.\ $a_1+\cdots +a_n-b_1-\cdots b_m=0$.  Note that the $S_i$ are modules over $S_0$, so this gives a cheap supply of $S_0$-modules.
\begin{enumerate}
\item (The easiest case). Consider the weights $(1,1,-1,-1)$.  Here 
\[
S_0=\mathbb{C}[x_1y_1,x_1y_2,x_2y_1,x_2y_2]\cong \mathbb{C}[a,b,c,d]/ac=bd.
\]
We have seen this example before.  Which of the $S_i\in\CM S_0$?  (The result is combinatorial, but quite hard to prove).  Show that $\End_{S_0}(S_1\oplus S_2)\cong\End_{S_0}(S_0\oplus S_1)$.  This gives an example of an NCCR given by a reflexive module that is not CM.
\item Find $S_0$ for the action $(2,1,-2,-1)$.  We have also seen this before.  How to build an NCCR for this example?
\item Experiment with other $\mathbb{C}^*$ actions.  Is there any structure?
\end{enumerate} 
\end{ex}

\begin{ex}\label{Ex5.23}
Compute the character table for the groups $\mathbb{A}_n$ and $\mathbb{D}_n$ and hence determine their McKay quivers.
\end{ex}

\begin{ex}\label{Ex5.24}
For $R=\mathbb{C}[[x,y]]^G$ with $G$ a finite subgroup of $\SL(2,\mathbb{C})$, prove that for each non-free CM $R$-module, the minimal number of generators is precisely  twice its rank.
\end{ex}

\noindent{\bf Computer Exercises:}
\begin{ex}\label{Ex5.25}
(How to show infinite global dimension without actually computing it). By Exercise~\ref{Ex4.33}, when $R$ is local Gorenstein, $M\in\CM  R$ with $R\in\add M$, and $\End_R(M)\in\CM R$, then 
\[
\gl\End_R(M)<\infty\iff \add M=\{ X\in\CM R\mid \Hom_R(M,X)\in\CM R\}.
\]
Thus to show $\End_R(M)$ has infinite global dimension, we just need to find a CM module $X$, with $X\notin\add M$, such that $\Hom_R(M,X)\in\CM R$.  Once we have guessed an $X$, we can check the rest on Singular. Consider the ring $R:=\mathbb{C}[[u,v,x,y]]/(uv-x(x^2+y^7))$.
\begin{enumerate}
\item  Let $M:=R\oplus (u,x)$. Show that $\End_R(M)\in\CM R$.
\item Consider the module $X$ given as 
\[
R^4 \xrightarrow{\left(\begin{smallmatrix}-x&-y&-u&0\\ -y^6& x& 0&-u\\ v&0&x^2&xy\\ 0&v&xy^6&-x^2  \end{smallmatrix}\right)}
R^4 \to X \to 0
\]
Show that $\Hom_R(M,X)\in\CM R$, and so $\End_R(M)$ has infinite global dimension.   (aside:\ how we produce such a counterexample $X$ is partially explained in Exercise~\ref{Ex5.26})
\end{enumerate}
\end{ex}

\begin{ex}\label{Ex5.26}
(Kn\"orrer periodicity) Consider a hypersurface $R:=\mathbb{C}[[x,y,]]/(f)$ where $f\in\m:=(x,y)$.  We could take $f=x(x^2+y^7)$, as in the last example. Consider a CM $R$-module $X$ with given projective presentation 
\[
R^a\stackrel{\varphi}{\to}R^a\stackrel{\psi}\to R^a\to  X\to 0
\]
(the free modules all having the same rank is actually forced).  The Kn\"orrer functor takes $X$ to a module $K(X)$ for the ring $R^\prime:=\mathbb{C}[[u,v,x,y]]/(uv-f)$, defined as the cokernel
\[
(R^\prime)^{2a}\xrightarrow{\left(\begin{smallmatrix}-\varphi&-u\mathbb{I}\\ v\mathbb{I}& \psi\end{smallmatrix}\right)} (R^\prime)^{2a}\to  K(X)\to 0
\]
\begin{enumerate}
\item Experimenting with different $f$, show that $K(X)\in\CM R^\prime$.  For example, if we write $f$ into irreducibles $f=f_1\hdots f_n$, then
\[
R^a\xrightarrow{f_1}R^a\xrightarrow{f_2\hdots f_n} R^a\to  X\to 0
\]
are examples of CM $R$-modules on which to test the hypothesis.
\item When $X,Y\in\CM R$, compute both $\Ext^1_R(X,Y)$ and $\Ext^1_{R^\prime}(K(X), K(Y))$.  Is there a pattern?
\end{enumerate}
\end{ex}

\newpage
\section{Appendix: Quiver Representations}\label{QuiverAppendix}

Quivers provide a method to visualize modules (=representations) and are very useful tool to explicitly write down examples of modules.   They have many uses throughout mathematics.

\begin{defin}
A (finite) quiver $Q$ is a directed graph with finitely many vertices and finitely many arrows. 
\end{defin}
We often label the vertices with numbers.
\begin{example}\label{7.2}
For example 
\[
\begin{array}{ccc}
Q_{1}=
\begin{array}{c}
\begin{tikzpicture}
\node (a) at (-1,0) [vertex] {};
\node (b) at (0,0) [vertex] {};
\node (c) at (1,0) [vertex] {};
\node (a1) at (-1,-0.2) {$\scriptstyle 1$};
\node (b1) at (0,-0.2) {$\scriptstyle 2$};
\node (c1) at (1,-0.2) {$\scriptstyle 3$};
\draw[->] (a) to (b);
\draw[->] (c) to (b);
 \draw[->]  (c) edge [in=55,out=120,loop,looseness=8]  (c);
\end{tikzpicture}
\end{array}
&&
Q_{2}=
\begin{array}{c}
\begin{tikzpicture}[bend angle=15, looseness=1]
\node (a) at (-1,0) [vertex] {};
\node (b) at (0,0) [vertex] {};
\node (c) at (1,0) [vertex] {};
\node (d) at (2,0) [vertex] {};
\node (a1) at (-1,-0.2) {$\scriptstyle 1$};
\node (b1) at (0,-0.2) {$\scriptstyle 2$};
\node (c1) at (1,-0.2) {$\scriptstyle 3$};
\node (d1) at (2,-0.2) {$\scriptstyle 4$};
\draw[->,bend right] (a) to (b);
\draw[->,bend left] (a) to (b);
\draw[<-,bend right] (b) to (c);
\draw[->,bend left] (b) to (c);
 \draw[->]  (d) edge [in=55,out=120,loop,looseness=8]  (d);
\end{tikzpicture}
\end{array}
\end{array}
\]
are both quivers. 
\end{example}
Important technical point:  for every vertex $i$ in a quiver we should actually add in a loop at that vertex (called the trivial loop, or trivial path) and denote it by $e_i$, but we do not draw these loops.  Thus really in \ref{7.2} the quivers are 
\[
\begin{array}{ccc}
Q_{1}=
\begin{array}{c}
\begin{tikzpicture}
\node (a) at (-1,0) [vertex] {};
\node (b) at (0,0) [vertex] {};
\node (c) at (1,0) [vertex] {};
\draw[->] (a) to (b);
\draw[->] (c) to (b);
\draw[->]  (c) edge [in=55,out=120,loop,looseness=8]  (c);
 \draw[<-,densely dotted]  (a) edge [in=-120,out=-55,loop,looseness=8] node[below] {$\scriptstyle e_{1}$} (a);
  \draw[<-,densely dotted]  (b) edge [in=-120,out=-55,loop,looseness=8] node[below] {$\scriptstyle e_{2}$} (b);
   \draw[<-,densely dotted]  (c) edge [in=-120,out=-55,loop,looseness=8] node[below] {$\scriptstyle e_{3}$} (c);
 \end{tikzpicture}
\end{array}
&&
Q_{2}=
\begin{array}{c}
\begin{tikzpicture}[bend angle=15, looseness=1]
\node (a) at (-1,0) [vertex] {};
\node (b) at (0,0) [vertex] {};
\node (c) at (1,0) [vertex] {};
\node (d) at (2,0) [vertex] {};
\draw[->,bend right] (a) to (b);
\draw[->,bend left] (a) to (b);
\draw[<-,bend right] (b) to (c);
\draw[->,bend left] (b) to (c);
 \draw[->]  (d) edge [in=55,out=120,loop,looseness=8]  (d);
  \draw[<-,densely dotted]  (a) edge [in=-120,out=-55,loop,looseness=8] node[below] {$\scriptstyle e_{1}$} (a);
  \draw[<-,densely dotted]  (b) edge [in=-120,out=-55,loop,looseness=8] node[below] {$\scriptstyle e_{2}$} (b);
   \draw[<-,densely dotted]  (c) edge [in=-120,out=-55,loop,looseness=8] node[below] {$\scriptstyle e_{3}$} (c);
     \draw[<-,densely dotted]  (d) edge [in=-120,out=-55,loop,looseness=8] node[below] {$\scriptstyle e_{4}$} (d);
\end{tikzpicture}
\end{array}
\end{array}
\]
but we do not usually draw the dotted loops.

Given any quiver we can produce a $k$-algebra as follows:
\begin{defin}
\textnormal{(1)} For a quiver $Q$ denote the set of vertices by $Q_0$ and the set of arrows by $Q_1$.  For every arrow $a\in Q_1$ we define the head of $a$ (denoted $h(a)$) to be the vertex that $a$ points to, and we define the tail of $a$ to be the vertex that $a$ starts from.  For example if
\[
Q=
\begin{array}{c}
\begin{tikzpicture}
\node (a) at (-1,0) [vertex] {};
\node (b) at (0,0) [vertex] {};
\node (a1) at (-1,-0.2) {$\scriptstyle 1$};
\node (b1) at (0,-0.2) {$\scriptstyle 2$};
\draw[->] (a) to node[above] {$\scriptstyle a$} (b);
 \end{tikzpicture}
\end{array}
\]
then $h(a)=2$ and $t(a)=1$.\\[1mm]
\textnormal{(2)}  A non-trivial path in $Q$ is just a formal expression $a_1\cdot a_2\cdot \hdots\cdot  a_n$ where $a_1,\hdots, a_n$ are non-trivial arrows in $Q$ satisfying $h(a_i)=t(a_{i+1})$ for all $i$ such that $1\leq i\leq n-1$.  Pictorially this means we have a sequence of arrows
\[
\begin{array}{c}
\begin{tikzpicture}
\node (a) at (-1,0) [vertex] {};
\node (b) at (0,0) [vertex] {};
\node (c) at (1,0) [vertex] {};
\node (c2) at (1.5,0) {$\hdots$};
\node (d) at (2,0) [vertex] {};
\node (e) at (3,0) [vertex] {};
\draw[->] (a) to node[above] {$\scriptstyle a_{1}$} (b);
\draw[->] (b) to node[above] {$\scriptstyle a_{2}$} (c);
\draw[->] (d) to node[above] {$\scriptstyle a_{n}$} (e);
 \end{tikzpicture}
\end{array}
\]
in $Q$ and we just write down the formal expression $a_1\cdot a_2\cdot \hdots\cdot  a_n$.  We define the head and the tail of a path in the obvious way, namely $h(a_1\cdot a_2\cdot \hdots\cdot  a_n)=h(a_n)$ and $t(a_1\cdot a_2\cdot \hdots\cdot  a_n)=t(a_1)$.\\[1mm]
\textnormal{(3)} For a quiver $Q$ we define the path algebra $kQ$ as follows.  $kQ$ has a $k$-basis given by all non-trivial paths in $Q$ together with the trivial loops.  Multiplication is defined by 
\[
pq:=\left\{\begin{array}{cl}p\cdot q & \mbox{if }h(p)=t(q)\\ 0&\mbox{else}\end{array}\right.\qquad e_ip:=\left\{\begin{array}{cl}p & \mbox{if }t(p)=i\\ 0&\mbox{else}\end{array}\right.\qquad pe_i:=\left\{\begin{array}{cl}p & \mbox{if }h(p)=i\\ 0&\mbox{else}\end{array}\right.
\]
for all paths $p$ and $q$ and then extend by linearity.
\end{defin}

\begin{remark} (1) Although $kQ$ may be infinite dimensional, by definition every element of $kQ$ is a \emph{finite} sum $\sum\lambda_pp$ over some paths in $Q$.\\[1mm]
(2) Pictorially multiplication is like composition, i.e. if $p=\begin{array}{c}
\begin{tikzpicture}[xscale=0.75]
\node (a) at (-1,0) [vertex] {};
\node (b) at (0,0) [vertex] {};
\node (c) at (1,0) [vertex] {};
\node (c2) at (1.5,0) {$\hdots$};
\node (d) at (2,0) [vertex] {};
\node (e) at (3,0) [vertex] {};
\draw[->] (a) to node[above] {$\scriptstyle a_{1}$} (b);
\draw[->] (b) to node[above] {$\scriptstyle a_{2}$} (c);
\draw[->] (d) to node[above] {$\scriptstyle a_{n}$} (e);
 \end{tikzpicture}
\end{array}$ and $q=\begin{array}{c}
\begin{tikzpicture}[xscale=0.75]
\node (a) at (-1,0) [vertex] {};
\node (b) at (0,0) [vertex] {};
\node (c) at (1,0) [vertex] {};
\node (c2) at (1.5,0) {$\hdots$};
\node (d) at (2,0) [vertex] {};
\node (e) at (3,0) [vertex] {};
\draw[->] (a) to node[above] {$\scriptstyle b_{1}$} (b);
\draw[->] (b) to node[above] {$\scriptstyle b_{2}$} (c);
\draw[->] (d) to node[above] {$\scriptstyle b_{m}$} (e);
 \end{tikzpicture}
\end{array}$ then
\[
pq=\left\{\begin{array}{cc}\begin{array}{c}
\begin{tikzpicture}[xscale=0.65]
\node (1) at (-5,0) [vertex] {};
\node (2) at (-4,0) [vertex] {};
\node (3) at (-3,0) [vertex] {};
\node (c2) at (-2.5,0) {$\hdots$};
\node (4) at (-2,0) [vertex] {};
\node (a) at (-1,0) [vertex] {};
\node (b) at (0,0) [vertex] {};
\node (c) at (1,0) [vertex] {};
\node (c2) at (1.5,0) {$\hdots$};
\node (d) at (2,0) [vertex] {};
\node (e) at (3,0) [vertex] {};
\draw[->] (1) to node[above] {$\scriptstyle a_{1}$} (2);
\draw[->] (2) to node[above] {$\scriptstyle a_{2}$} (3);
\draw[->] (4) to node[above] {$\scriptstyle a_{n}$} (a);
\draw[->] (a) to node[above] {$\scriptstyle b_{1}$} (b);
\draw[->] (b) to node[above] {$\scriptstyle b_{2}$} (c);
\draw[->] (d) to node[above] {$\scriptstyle b_{m}$} (e);
 \end{tikzpicture}
\end{array}& \mbox{if } h(a_n)=t(b_1) \mbox{ i.e. }h(p)=t(q)\\ 0&\mbox{else}\end{array}\right.
\]
This means that $pq$ is equal to the formal expression $a_1\cdot\hdots a_n\cdot b_1\cdot \hdots b_m$ if $p$ and $q$ can be composed, and is equal to zero if $p$ and $q$ cannot be composed.  In practice this means that $kQ$ is often noncommutative.\\[1mm]
(3) $kQ$ is an algebra, with identity $1_{kQ}=\sum_{i\in Q_0}e_i$. 
\end{remark}

\begin{examples}\label{ex1} (1) Consider $Q=
\begin{tikzpicture}
\node (d) at (2,0) [vertex] {};
\end{tikzpicture}
$ (recall we never draw the trivial loop, but it is there).  Then the basis of $kQ$ is given by $e_1$, the only path.  Hence every element in $kQ$ looks like $\lambda e_1$ for some $\lambda\in k$.  Multiplication is given by $e_1e_1=e_1$ extended by linearity, which means
$(\lambda e_1)(\mu e_1)=(\lambda\mu)e_1e_1=(\lambda\mu)e_1$.  This implies that $kQ$ is just the field $k$.\\[1mm]
(2) Consider $Q=
\begin{array}{c}
\begin{tikzpicture}
\node (a) at (-1,0) [vertex] {};
\node (b) at (0,0) [vertex] {};
\node (a1) at (-1,-0.2) {$\scriptstyle 1$};
\node (b1) at (0,-0.2) {$\scriptstyle 2$};
\draw[->] (a) to node[above] {$\scriptstyle a$} (b);
 \end{tikzpicture}
\end{array}$.  The basis for $kQ$ is given by $e_1$, $e_2$, $a$.  An element of $kQ$ is by definition $\lambda_1 e_1+\lambda_2 e_2+\lambda_3 a$ for some $\lambda_1,\lambda_2,\lambda_3\in k$.  Multiplication is given by
\begin{equation*}\begin{split}
(\lambda_1 e_1+\lambda_2 e_2+\lambda_3 a)(\mu_1 e_1+\mu_2 e_2+\mu_3 a)&=\lambda_1\mu_1e_1e_1+\lambda_1\mu_2e_1e_2+\lambda_1\mu_3e_1a\\ 
&+\lambda_2\mu_1e_2e_1+\lambda_2\mu_2e_2e_2+\lambda_2\mu_3e_2a\\
&+\lambda_3\mu_1ae_1+\lambda_3\mu_2ae_2+\lambda_3\mu_3aa\\
\end{split}\end{equation*}
which is equal to $\lambda_1\mu_1 e_1+0+\lambda_1\mu_3a+0+\lambda_2\mu_2e_2+0+0+\lambda_3\mu_2a+0$.  That is
\[
(\lambda_1 e_1+\lambda_2 e_2+\lambda_3 a)(\mu_1 e_1+\mu_2 e_2+\mu_3 a)=\lambda_1\mu_1 e_1+\lambda_2\mu_2e_2+(\lambda_1\mu_3+\lambda_3\mu_2)a
\]
This should be familiar. If we write $\lambda_1 e_1+\lambda_2 e_2+\lambda_3 a$ as $\left(\begin{array}{cc} \lambda_1&\lambda_3\\ 0&\lambda_2\end{array}\right)$, then the above multiplication is simply
\[
\left(\begin{array}{cc} \lambda_1&\lambda_3\\ 0&\lambda_2\end{array}\right)\left(\begin{array}{cc} \mu_1&\mu_3\\ 0&\mu_2\end{array}\right)=\left(\begin{array}{cc} \lambda_1\mu_1&\lambda_1\mu_3+\lambda_3\mu_2\\ 0&\lambda_2\mu_2\end{array}\right)
\]
which shows that $kQ\cong U_2(k)$, upper triangular matrices.   More formally define $\psi:kQ\to U_2(k)$ by $e_1\mapsto E_{11}$, $e_2\mapsto E_{22}$ and $a\mapsto E_{12}$ and extend by linearity.  By above this is a $k$-algebra homomorphism which is clearly surjective.  Since both sides have dimension three, $\psi$ is also injective. \\[1mm]
(3) Consider  $Q=
\begin{tikzpicture}[bend angle=15, looseness=1]
\node (d) at (2,0) [vertex] {};
 \draw[->]  (d) edge [in=55,out=120,loop,looseness=8] node[above] {$\scriptstyle \alpha$}  (d);
\end{tikzpicture}
$.  The basis of $kQ$ is given by $e_1,\alpha, \alpha\cdot\alpha,\alpha\cdot\alpha\cdot\alpha,\hdots$ and so $kQ$ is infinite dimensional.  If we agree to write $\underbrace{\alpha\cdot\hdots\cdot\alpha}_{n}:=\alpha^n$ then every element of $kQ$ is by definition a \emph{finite} sum of paths in $Q$, i.e. a polynomial in $\alpha$.  Since all paths can be composed the multiplication in $kQ$ is 
\[
\alpha^i\alpha^j=(\underbrace{\alpha\cdot\hdots\cdot\alpha}_{i})(\underbrace{\alpha\cdot\hdots\cdot\alpha}_{j})=\underbrace{\alpha\cdot\hdots\cdot\alpha}_{i+j}=\alpha^{i+j}
\]
extended by linearity, i.e. polynomial multiplication.  This shows that $kQ\cong k[X]$.\\[1mm]
(4) Consider $Q=
\begin{array}{c}
\begin{tikzpicture}
\node (a) at (-1,0) [vertex] {};
\node (b) at (0,0) [vertex] {};
\node (c) at (1,0) [vertex] {};
\node (a1) at (-1,-0.2) {$\scriptstyle 1$};
\node (b1) at (0,-0.2) {$\scriptstyle 2$};
\node (c1) at (1,-0.2) {$\scriptstyle 3$};
\draw[->] (a) to node[above] {$\scriptstyle a$} (b);
\draw[->] (b) to node[above] {$\scriptstyle b$} (c);
 \end{tikzpicture}
\end{array}$.  The basis for $kQ$ is given by $e_1$, $e_2$, $e_3$, $a$, $b$ and $a\cdot b$. and so $kQ$ is six-dimensional.   In $kQ$ the product $ba$ equals zero whereas the product $ab$ equals the path $a\cdot b$.  Some other products:
\[
aa=0\quad e_1a=a\quad e_2a=0\quad e_1e_2=0\quad (a\cdot b)e_3=a\cdot b
\]
In fact, $kQ\cong  U_3(k)$ in a similar way to (2).  See Exercise~\ref{Ex6.22}.
\end{examples}

Thus by studying quivers we have recovered many of the algebras that we already know.  In fact if we now study \emph{quivers with relations} we can obtain even more:
\begin{defin}
For a given quiver $Q$ a relation is just a $k$-linear combination of paths in $Q$, each with the same head and tail.  Given a finite number of specified relations $R_1,\hdots, R_n$ we can form the two-sided ideal $R:=kQR_1kQ+\hdots+kQR_nkQ$ of $kQ$.  We call $(Q,R)$ a quiver with relations and we call $kQ/R$ the path algebra of a quiver with relations. 
\end{defin}

\begin{remark}
Informally think of a relation $p-q$ as saying `going along path $p$ is the same as going along path $q$' since $p=q$ in $kQ/R$.  Because of this we sometimes say `subject to the relation $p=q$' when we really mean `subject to the relation $p-q$'.
\end{remark}

\begin{examples}\label{ex2}
(1)  Consider $Q=
\begin{tikzpicture}[bend angle=15, looseness=1]
\node (d) at (2,0) [vertex] {};
 \draw[->]  (d) edge [in=55,out=120,loop,looseness=8] node[above] {$\scriptstyle \alpha$}  (d);
\end{tikzpicture}
$  subject to the relation $\alpha\cdot\alpha$.  Then $kQ\cong k[X]$ and under this isomorphism the two-sided ideal generated by $\alpha\cdot\alpha$ corresponds to the ideal generated by $X^2$ in $k[X]$.  Thus $kQ/R\cong k[X]/(X^2)$. \\
(2) Consider $Q=
\begin{array}{c}
\begin{tikzpicture}[bend angle=20, looseness=1]
\node (a) at (-1,0) [vertex] {};
\node (b) at (0,0) [vertex] {};
\node (a1) at (-1,-0.2) {$\scriptstyle 1$};
\node (b1) at (0,-0.2) {$\scriptstyle 2$};
\draw[->,bend left] (b) to node[below] {$\scriptstyle b$} (a);
\draw[->,bend left] (a) to node[above] {$\scriptstyle a$} (b);
\end{tikzpicture}
\end{array}
$ subject to the relations $a\cdot b-e_1$ and $b\cdot a-e_2$.  Then $kQ/R\cong M_2(k)$.  To see this notice (in a very similar way to \ref{ex1}(2)) that there is a $k$-algebra homomorphism $\psi:kQ\to M_2(k)$ by sending $e_1\mapsto E_{11}$, $e_2\mapsto E_{22}$, $a\mapsto E_{12}$, $b\mapsto E_{21}$.  Now
\[
\begin{array}{c}
\psi(a\cdot b-e_1)=E_{12}E_{21}-E_{11}=E_{11}-E_{11}=0\\
\psi(b\cdot a-e_2)=E_{21}E_{12}-E_{22}=E_{22}-E_{22}=0
\end{array}
\]
and so $\psi$ induces a well-defined algebra homomorphism $kQ/R\to M_2(k)$.  It is clearly surjective.  But the dimension of $kQ/R$ is four which is the same as the dimension of $M_2(k)$.  Hence the map is also injective, so $kQ/R\cong M_2(k)$.
\end{examples}

\begin{defin}
(1)  Let $kQ$ be the path algebra of a quiver $Q$.  A finite dimensional quiver representation of $Q$ is the assignment to every vertex $i\in Q_0$ a finite dimensional vector space $V_i$ and to every arrow $a$ a linear map $f_a:V_{t(a)}\to V_{h(a)}$. We sometimes denote this data by $(V_i,f_a)$.  Note that by convention we always assign to the trivial loops $e_i$ the identity linear map.\\
(2) If $(Q,R)$ is a quiver with relations, we define a finite dimensional quiver representation of $(Q,R)$  to be a finite dimensional quiver representation of $Q$ such that for all relations $R_i$, if $R_i=\sum\lambda_pp$ then $\sum\lambda_pf_p=0_{\rm map}$.
\end{defin}
Note that if there are no relations (i.e. $R=0$) then trivially a finite dimensional quiver representation of $Q$ is the same thing as a finite dimensional quiver representation of $(Q,R)$.
\begin{example}
 Consider $Q=
\begin{array}{c}
\begin{tikzpicture}[bend angle=20, looseness=1]
\node (a) at (-1,0) [vertex] {};
\node (b) at (0,0) [vertex] {};
\node (c) at (1,0) [vertex] {};
\draw[->,bend right] (a) to node[below] {$\scriptstyle b$} (b);
\draw[->,bend left] (a) to node[above] {$\scriptstyle a$} (b);
\draw[->] (b) -- node[above] {$\scriptstyle c$} (c);
\end{tikzpicture}
\end{array}$
 subject to the relation $a\cdot c-b\cdot c$.  If we denote
\[
M:=\begin{array}{c}
\begin{tikzpicture}[bend angle=20, looseness=1,xscale=1.5]
\node (a) at (-1,0)  {$k$};
\node (b) at (0,0) {$k$};
\node (c) at (1,0)  {$k$};
\draw[->,bend right] (a) to node[below] {$\scriptstyle f_{b}=id$} (b);
\draw[->,bend left] (a) to node[above] {$\scriptstyle f_{a}=id$} (b);
\draw[->] (b) -- node[above] {$\scriptstyle f_{c}=id$} (c);
\end{tikzpicture}
\end{array}\quad\mbox{and}\quad N:=\begin{array}{c}
\begin{tikzpicture}[bend angle=20, looseness=1,xscale=1.5]
\node (a) at (-1,0)  {$k$};
\node (b) at (0,0) {$k$};
\node (c) at (1,0)  {$k$};
\draw[->,bend right] (a) to node[below] {$\scriptstyle f_{b}=0$} (b);
\draw[->,bend left] (a) to node[above] {$\scriptstyle f_{a}=id$} (b);
\draw[->] (b) -- node[above] {$\scriptstyle f_{c}=id$} (c);
\end{tikzpicture}
\end{array}
\]
then $M$ is a quiver representation of $(Q,R)$ since the relation $f_a\cdot f_c-f_b\cdot f_c=0$ holds.  However $N$ is not a quiver representation of $(Q,R)$ since the relation does not hold.
\end{example}

To make quiver representations into a category, we must define morphisms:
\begin{defin}
Suppose $(Q,R)$ is a quiver with relations, and $V=(V_i,f_a)$ and $W=(W_i,g_a)$ are quiver representations for $(Q,R)$.  A morphism of quiver representations $\psi$ from $V$ to $W$ is given by specifying, for every vertex $i$, a linear map $\psi_i:V_i\rightarrow W_i$ such that for every arrow $a\in Q_1$
\[
\begin{array}{c}
\begin{tikzpicture}[xscale=1.2,yscale=1.2]
\node (a) at (-1.5,0)  {$V_{t(a)}$};
\node (b) at (0,0) {$V_{h(a)}$};
\node (a1) at (-1.5,-1.5)  {$W_{t(a)}$};
\node (b1) at (0,-1.5) {$W_{h(a)}$};
\draw[->] (a) -- node[above] {$\scriptstyle f_{a}$} (b);
\draw[->] (a1) -- node[above] {$\scriptstyle g_{a}$} (b1);
\draw[->] (a) -- node[left] {$\scriptstyle\psi_{t(a)}$} (a1);
\draw[->] (b) -- node[right] {$\scriptstyle\psi_{h(a)}$}(b1);
\end{tikzpicture}
\end{array}
\] 
we have $f_a\cdot \psi_{h(a)}=\psi_{t(a)}\cdot g_a$.
\end{defin}

\begin{examples}\label{ex4}
(1) Consider $Q=
\begin{tikzpicture}
\node (a) at (-1,0) [vertex] {};
\node (b) at (0,0) [vertex] {};
\draw[->] (a) to node[above] {$\scriptstyle a$} (b);
 \end{tikzpicture}$ (no relations).  Consider $M$ and $N$ defined in \ref{ex3}(2).  To specify a morphism of quiver representations from $M$ to $N$ we must find linear maps $\psi_1$ and $\psi_2$ such that 
\[
\begin{array}{c}
\begin{tikzpicture}[xscale=1,yscale=1]
\node (a) at (-1.5,0)  {$k$};
\node (b) at (0,0) {$k$};
\node (a1) at (-1.5,-1.5)  {$k$};
\node (b1) at (0,-1.5) {$0$};
\draw[->] (a) -- node[above] {$\scriptstyle\rm id$} (b);
\draw[->] (a1) -- node[above] {$\scriptstyle\rm 0$} (b1);
\draw[->] (a) -- node[left] {$\scriptstyle\psi_{1}$} (a1);
\draw[->] (b) -- node[right] {$\scriptstyle\psi_{2}$}(b1);
\end{tikzpicture}
\end{array}
\] 
commutes.  Note $\psi_2$ is the zero map, whereas $\psi_1$ can be an arbitrary scalar.  Now to specify  a morphism of quiver representations from $N$ to $M$ we must find linear maps $\phi_1$ and $\phi_2$ such that 
\[
\begin{array}{c}
\begin{tikzpicture}[xscale=1,yscale=1]
\node (a) at (-1.5,0)  {$k$};
\node (b) at (0,0) {$k$};
\node (a1) at (-1.5,-1.5)  {$k$};
\node (b1) at (0,-1.5) {$0$};
\draw[->] (a) -- node[above] {$\scriptstyle\rm id$} (b);
\draw[->] (a1) -- node[above] {$\scriptstyle\rm 0$} (b1);
\draw[<-] (a) -- node[left] {$\scriptstyle\phi_{1}$} (a1);
\draw[<-] (b) -- node[right] {$\scriptstyle\phi_{2}$}(b1);
\end{tikzpicture}
\end{array}
\] 
commutes.  Note $\phi_2$ is the zero map, and the fact that the diagram commutes forces $\phi_1$ to be the zero map too.  This shows that the only morphism of quiver representations from $N$ to $M$ is the zero morphism.\\[1mm]
(2) Consider $Q=
\begin{array}{c}
\begin{tikzpicture}[bend angle=20, looseness=1]
\node (a) at (-1,0) [vertex] {};
\node (b) at (0,0) [vertex] {};
\draw[->,bend left] (b) to node[below] {$\scriptstyle b$} (a);
\draw[->,bend left] (a) to node[above] {$\scriptstyle a$} (b);
\end{tikzpicture}
\end{array}
$ subject to the relations $a\cdot b-e_1$ and $b\cdot a-e_2$.  By \ref{ex2}(2) $kQ/R\cong M_2(k)$.  Suppose that 
$M:=
\begin{array}{c}
\begin{tikzpicture}[bend angle=20, looseness=1]
\node (a) at (-1.5,0)  {$k^{n}$};
\node (b) at (0,0) {$k^{n}$};
\draw[->,bend left] (b) to node[below] {$\scriptstyle f_{b}$} (a);
\draw[->,bend left] (a) to node[above] {$\scriptstyle f_{a}$} (b);
\end{tikzpicture}
\end{array}
$
is a quiver representation of $(Q,R)$.  Note that since $f_a\cdot f_b=id$ and $f_b\cdot f_a=id$, both $f_a$ and $f_b$ must be linear isomorphisms.  Now it is clear that 
$
\begin{array}{c}
\begin{tikzpicture}[bend angle=20, looseness=1]
\node (a) at (-1.5,0)  {$k^{n}$};
\node (b) at (0,0) {$k^{n}$};
\draw[->,bend left] (b) to node[below] {$\scriptstyle\rm id$} (a);
\draw[->,bend left] (a) to node[above] {$\scriptstyle\rm id$} (b);
\end{tikzpicture}
\end{array}
$ 
is a quiver representation of $(Q,R)$ and further
\[
\begin{array}{c}
\begin{tikzpicture}[bend angle=20, looseness=1]
\node (a) at (-1.5,0)  {$k^{n}$};
\node (b) at (0,0) {$k^{n}$};
\node (a1) at (-1.5,-1.5)  {$k^{n}$};
\node (b1) at (0,-1.5) {$k^{n}$};
\draw[->,bend left] (b) to node[below] {$\scriptstyle f_{b}$} (a);
\draw[->,bend left] (a) to node[above] {$\scriptstyle f_{a}$} (b);
\draw[->,bend left] (b1) to node[below] {$\scriptstyle\rm id$} (a1);
\draw[->,bend left] (a1) to node[above] {$\scriptstyle\rm id$} (b1);
\draw[->] (a1) -- node[left] {$\scriptstyle\rm id$} (a);
\draw[->] (b1) -- node[right] {$\scriptstyle f_{a}$}(b);
\end{tikzpicture}
\end{array}
\] 
is a morphism of quiver representations since both 

\[
\begin{array}{ccc}
\begin{array}{c}
\begin{tikzpicture}[bend angle=20, looseness=1]
\node (a) at (-1.5,0)  {$k^{n}$};
\node (b) at (0,0) {$k^{n}$};
\node (a1) at (-1.5,-1.5)  {$k^{n}$};
\node (b1) at (0,-1.5) {$k^{n}$};
\draw[->,bend left] (a) to node[above] {$\scriptstyle f_{a}$} (b);
\draw[->,bend left] (a1) to node[above] {$\scriptstyle\rm id$} (b1);
\draw[->] (a1) -- node[left] {$\scriptstyle\rm id$} (a);
\draw[->] (b1) -- node[right] {$\scriptstyle f_{a}$}(b);
\node at (1.5,-0.75) {and};
\node (a) at (3,0)  {$k^{n}$};
\node (b) at (4.5,0) {$k^{n}$};
\node (a1) at (3,-1.5)  {$k^{n}$};
\node (b1) at (4.5,-1.5) {$k^{n}$};
\draw[->,bend left] (b) to node[below] {$\scriptstyle f_{b}$} (a);
\draw[->,bend left] (b1) to node[below] {$\scriptstyle\rm id$} (a1);
\draw[->] (a1) -- node[left] {$\scriptstyle\rm id$} (a);
\draw[->] (b1) -- node[right] {$\scriptstyle f_{a}$}(b);
\end{tikzpicture}
\end{array}
\end{array}
\]
commute.
\end{examples}

\begin{defin}\label{defin:catquivermod}
For a quiver with relations $(Q,R)$ we define $\tt{fRep}(kQ,R)$ to be the category of all finite dimensional quiver representations, where the morphisms are defined to be all morphisms of quiver representations. We denote by $\fdmod kQ/R$ the category of finite dimensional right $kQ/R$-modules
\end{defin}

The category $\tt{fRep}(kQ,R)$ is visual and computable, whereas the category $\fdmod kQ/R$ is more abstract. The following result is one of the main motivations for studying quivers.

\begin{thm}\label{module}
Suppose $(Q,R)$ is a quiver with relations.  Then finite dimensional quiver representations of $(Q,R)$ are the same as finite dimensional right $kQ/R$-modules.  More specifically there is an equivalence of categories
\[
\tt{fRep}(kQ,R)\simeq \fdmod kQ/R.
\]
\end{thm}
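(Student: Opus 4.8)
The plan is to write down explicit functors in both directions and verify that they are mutually quasi-inverse; the only genuine content is the observation that $kQ/R$ is generated as a $\mathbb{C}$-algebra by the trivial paths $\{e_i\}_{i\in Q_0}$ together with the arrows $\{a\}_{a\in Q_1}$, so that a (right) module structure, and a homomorphism of modules, are determined by — and may be freely prescribed on — these generators, subject only to the algebra relations (orthogonality $e_ie_j=\delta_{ij}e_i$, the identities $e_{t(a)}a=a=ae_{h(a)}$, and the chosen relations $R$). Everything else is bookkeeping.

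First I would define $F\colon\tt{fRep}(kQ,R)\to\fdmod kQ/R$. Given a representation $(V_i,f_a)$, set $V:=\bigoplus_{i\in Q_0}V_i$, let each $e_i$ act as the projection onto the summand $V_i$, and let an arrow $a$ act by $v\cdot a:=f_a(v)$ for $v\in V_{t(a)}$ and by $0$ on the other summands. Because $t(a)=i$ and $h(a)=j$ force $e_iae_j=a$ in $kQ$, this prescription is compatible with the idempotent action, and since $kQ$ is the path algebra it extends (via $v\cdot(a_1\cdots a_n)=(\cdots((v\cdot a_1)\cdot a_2)\cdots)\cdot a_n$) to a well-defined right $kQ$-module structure on the finite-dimensional space $V$; the defining condition $\sum\lambda_pf_p=0$ for each relation $R_i=\sum\lambda_pp$ is exactly the assertion that $R$ annihilates $V$, so $V\in\fdmod kQ/R$. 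On morphisms, a family $(\psi_i)$ satisfying the commuting squares in the definition of a morphism of representations assembles into $\bigoplus_i\psi_i\colon V\to W$, and those squares say precisely that this map commutes with the action of every arrow (commutation with the $e_i$ being automatic from the block form), hence that it is $kQ/R$-linear.

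Next I would define $G\colon\fdmod kQ/R\to\tt{fRep}(kQ,R)$. For a right module $M$ put $V_i:=Me_i$; since $1_{kQ/R}=\sum_ie_i$ with the $e_i$ orthogonal idempotents, $M=\bigoplus_iMe_i$ as vector spaces, each summand finite-dimensional. For an arrow $a\colon i\to j$ define $f_a\colon V_i\to V_j$ by $f_a(v):=va$, which lands in $Me_j=V_j$ because $ae_j=a$. The relations hold since $\sum\lambda_pf_p(v)=v\bigl(\sum\lambda_pp\bigr)=0$ in $M$ for each $R_i$, so $(V_i,f_a)\in\tt{fRep}(kQ,R)$; a module map $\phi\colon M\to N$ restricts to $\phi_i\colon Me_i\to Ne_i$, and $(\phi_i)$ is a morphism of representations exactly because $\phi$ commutes with right multiplication by each arrow.

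Finally I would check that $GF$ and $FG$ are naturally isomorphic to the identities. For $GF$ this is immediate on the nose: $G$ applied to $V=\bigoplus_jV_j$ recovers $Ve_i=V_i$ and the maps $v\mapsto v\cdot a=f_a(v)$. For $FG$, applying $F$ to $(Me_i,\,\cdot\,a)$ produces the module $\bigoplus_iMe_i$, and the canonical summation map $\bigoplus_iMe_i\to M$ is a $kQ/R$-module isomorphism, natural in $M$, once more because $1=\sum_ie_i$; naturality of both transformations is routine since the functors involve no choices. I do not expect a serious obstacle; the single point needing care is the left/right bookkeeping — reconciling the convention that paths compose when $h(a_i)=t(a_{i+1})$ with \emph{right} modules, i.e.\ $v\cdot(pq)=(v\cdot p)\cdot q$, and with the requirement $f_a\colon V_{t(a)}\to V_{h(a)}$ — together with the verification that an assignment on the generators $\{e_i\}\cup Q_1$ really does extend to an action of all of $kQ/R$, which is exactly where the presentation of $kQ$ by vertices and arrows is used.
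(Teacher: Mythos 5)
Your proposal is correct and follows exactly the route of the paper's own (sketch) proof: send a representation $(V_i,f_a)$ to $\bigoplus_iV_i$ with the arrows acting through the $f_a$, send a module $M$ to $(Me_i,\,\cdot\,a)$, and check these are quasi-inverse via the orthogonal idempotent decomposition $1=\sum_ie_i$. You supply the bookkeeping (well-definedness on generators, compatibility of right-module conventions with path composition, naturality) that the paper explicitly leaves as "fairly straightforward," so there is nothing to correct.
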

\begin{proof}
(Sketch proof)  If $M$ is a finite dimensional right $kQ/R$-module define a finite dimensional quiver representation of $(Q,R)$ by setting $V_i=Me_i$ and $f_a:Me_{t(a)}\to Me_{h(a)}$ by $f_a(x)=xa$.  Conversely given a quiver representation $(V_i,f_a)$ of $(Q,R)$ define $M=\oplus_{i\in Q_0}V_i$.  Denote $V_i\stackrel{\iota_i}{\to}M\stackrel{\pi_i}{\to}V_i$ to be inclusion and projection, then $M$ is a right $kQ/R$-module via
\begin{eqnarray*}
x\cdot(a_1\cdot\hdots\cdot a_m) &:=&\iota_{h(a_m)}f_{a_m}\circ\hdots\circ f_{a_1}\pi_{t(a_1)}(x)\\
x\cdot e_i &:=&\iota_i\circ\pi_i(x)
\end{eqnarray*}
It is fairly straightforward to show that these are inverses.
\end{proof}

\begin{remark}(1) Suppose we want to understand the modules of some algebra $A$.   Theorem~\ref{module} says that, provided we can find a quiver $Q$ with relations $R$ such that $A\cong kQ/R$, then $A$-modules are precisely the same as quiver representations of $(Q,R)$.  This means that $A$-modules are very easy to write down!  Hence \ref{module} gives us a method to visualize modules.\\[1mm]
(2) Note that in the proof of \ref{module} if $(V_i,f_a)$ is a quiver representation of $(Q,R)$ then the corresponding $kQ/R$ module has dimension $\sum_{i\in Q_0}{\rm dim}_kV_i$. 
\end{remark}
\begin{examples}\label{ex3}
(1) Consider $Q=
\begin{tikzpicture}
\node (d) at (2,0) [vertex] {};
\end{tikzpicture}
$ (no relations).  Then by \ref{ex1}(1) $kQ\cong k$ and so $k$-modules are the same as quiver representations for $Q$.  But here to specify a quiver representation we just need to assign a vector space to the only vertex, and so quiver representations are precisely the same as vector spaces.  This just says that $k$-modules are the same as vector spaces.\\[1mm]
(2) Consider  $Q=
\begin{tikzpicture}
\node (a) at (-1,0) [vertex] {};
\node (b) at (0,0) [vertex] {};
\draw[->] (a) to node[above] {$\scriptstyle a$} (b);
 \end{tikzpicture}$ (no relations).  By \ref{ex1}(2) $kQ\cong U_2(k)$ and so $U_2(k)$-modules are the same as quiver representations of $Q$.  Hence examples of $U_2(k)$-modules include
\[
M:=\left(
\begin{array}{c}\begin{tikzpicture}
\node (a) at (-1,0) {$k$};
\node (b) at (0,0) {$k$};
\draw[->] (a) to node[above] {$\scriptstyle\rm id$} (b);
 \end{tikzpicture}
 \end{array}\right)\quad\mbox{and}\quad N:=\left(
 \begin{array}{c}\begin{tikzpicture}
\node (a) at (-1,0) {$k$};
\node (b) at (0,0) {$0$};
\draw[->] (a) to node[above] {$\scriptstyle 0$} (b);
 \end{tikzpicture}
 \end{array}
 \right).
\]
(3) Consider $Q=
\begin{tikzpicture}[bend angle=15, looseness=1]
\node (d) at (2,0) [vertex] {};
 \draw[->]  (d) edge [in=55,out=120,loop,looseness=8] node[above] {$\scriptstyle \alpha$}  (d);
\end{tikzpicture}
$ (no relations).  By \ref{ex1}(3) $kQ\cong k[X]$ and so $k[X]$-modules are the same as quiver representations of $Q$.  For example
\[
\begin{tikzpicture}[bend angle=15, looseness=1]
\node (d) at (2,0)  {$k$};
 \draw[->]  (d) edge [in=55,out=120,loop,looseness=6] node[above] {$\scriptstyle\rm id$}  (d);
\end{tikzpicture}
\]
is a quiver representation of $Q$ and so is a $k[X]$-module.   Now here a quiver representation of $Q$ is given by specifying a vector space $V$ together with a linear map from $V$ to itself and so (since $kQ\cong k[X]$)  $k[X]$-modules are given by specifying a vector space $V$ and a linear map $V\stackrel{\alpha}{\to}V$.\\[1mm]
(4) Consider $Q=
\begin{tikzpicture}[bend angle=15, looseness=1]
\node (d) at (2,0) [vertex] {};
 \draw[->]  (d) edge [in=55,out=120,loop,looseness=8] node[above] {$\scriptstyle \alpha$}  (d);
\end{tikzpicture}
$ subject to the relation $\alpha\cdot\alpha$.  By \ref{ex2}(1) $kQ/R\cong k[X]/(X^2)$.  Now 
\[
\begin{tikzpicture}[bend angle=15, looseness=1]
\node (d) at (2,0) {$k$};
 \draw[->]  (d) edge [in=55,out=120,loop,looseness=6] node[above] {$\scriptstyle f_{\alpha}=\rm id$}  (d);
\end{tikzpicture}
\] 
is not a quiver representation for $(Q,R)$ since the relation $f_\alpha\cdot f_\alpha=0$ does not hold ($id\cdot id\neq 0$), hence it is not  a module for $k[X]/(X^2)$.
\end{examples}

\begin{remark}
If $(Q,R)$ is a quiver with relations, then under \ref{module}  morphisms of quiver representations correspond to $kQ/R$-module homomorphisms.  Further\\
$\psi=(\psi_i)$ corresponds to a $\left\{ \begin{array}{l} \mbox{monomorphism}\\\mbox{epimorphism}\\\mbox{isomorphism} \end{array} \right.$ $\iff$ each $\psi_i$ is an $\left\{ \begin{array}{l} \mbox{injective}\\\mbox{surjective}\\\mbox{bijective} \end{array} \right.$ linear map.  Thus if we have a morphism of quiver representations $\psi=(\psi_i):N\to M$ in which each $\psi_i$ is an injective linear map, we call $N$ a subrepresentation of $M$ (since under the above correspondence $N$ embeds into the module of $M$, so we can view $N$ as a submodule of $M$).
\end{remark}

\begin{example}\label{ex5}
In Example~\ref{ex4}(2) the morphism of quiver representations
\[
\begin{array}{c}
\begin{tikzpicture}[bend angle=20, looseness=1]
\node (a) at (-1.5,0)  {$k^{n}$};
\node (b) at (0,0) {$k^{n}$};
\node (a1) at (-1.5,-1.5)  {$k^{n}$};
\node (b1) at (0,-1.5) {$k^{n}$};
\draw[->,bend left] (b) to node[below] {$\scriptstyle f_{b}$} (a);
\draw[->,bend left] (a) to node[above] {$\scriptstyle f_{a}$} (b);
\draw[->,bend left] (b1) to node[below] {$\scriptstyle\rm id$} (a1);
\draw[->,bend left] (a1) to node[above] {$\scriptstyle\rm id$} (b1);
\draw[->] (a1) -- node[left] {$\scriptstyle\rm id$} (a);
\draw[->] (b1) -- node[right] {$\scriptstyle f_{a}$}(b);
\end{tikzpicture}
\end{array}
\] 
is an isomorphism since both the connector maps $\rm id$ and $f_a$ are linear isomorphisms.  This shows that as $kQ/R\cong M_2(k)$-modules,
\[
M=\left(\begin{array}{c}
\begin{tikzpicture}[bend angle=20, looseness=1]
\node (a) at (-1.5,0)  {$k^{n}$};
\node (b) at (0,0) {$k^{n}$};
\draw[->,bend left] (b) to node[below] {$\scriptstyle f_{b}$} (a);
\draw[->,bend left] (a) to node[above] {$\scriptstyle f_{a}$} (b);
\end{tikzpicture}
\end{array}\right)\cong
\left(\begin{array}{c}
\begin{tikzpicture}[bend angle=20, looseness=1]
\node (a) at (-1.5,0)  {$k^{n}$};
\node (b) at (0,0) {$k^{n}$};
\draw[->,bend left] (b) to node[below] {$\scriptstyle\rm id$} (a);
\draw[->,bend left] (a) to node[above] {$\scriptstyle\rm id$} (b);
\end{tikzpicture}
\end{array}\right).
\]
\end{example}

Taking the direct sum of modules can also be visualized easily in the language of quivers:
\begin{defin}
Suppose $(Q,R)$ is a quiver with relations, and $V=(V_i,f_a), W=(W_i,g_a)$ are quiver representations of $(Q,R)$.  Then we define the direct sum $V\oplus W$ to be the quiver representation of $(Q,R)$ given by $(V_i\oplus W_i,\left(\begin{smallmatrix}f_a&0\\ 0&g_a\end{smallmatrix}\right))$
\end{defin}

\begin{example}\label{quiver example decompose}
(1)  Consider $Q=
\begin{tikzpicture}
\node (a) at (-1,0) [vertex] {};
\node (b) at (0,0) [vertex] {};
\draw[->] (a) to node[above] {$\scriptstyle a$} (b);
 \end{tikzpicture}$ (no relations) as in \ref{ex3}(2). Then 
\[
\left(
\begin{array}{c}
\begin{tikzpicture}
\node (a) at (-1,0) {$k$};
\node (b) at (0,0) {$k$};
\draw[->] (a) to node[above] {$\scriptstyle id$} (b);
 \end{tikzpicture}
 \end{array}\right)\bigoplus\left(\begin{array}{c}
\begin{tikzpicture}
\node (a) at (-1,0) {$k$};
\node (b) at (0,0) {$0$};
\draw[->] (a) to node[above] {$\scriptstyle 0$} (b);
 \end{tikzpicture}
 \end{array}\right)=
 \begin{array}{c}
\begin{tikzpicture}[xscale=2]
\node (a) at (-1,0) {$k\oplus k$};
\node (b) at (0,0) {$k\oplus 0$};
\draw[->] (a) to node[above] {$\scriptstyle  \left(\begin{smallmatrix}{\rm id}&0\\ 0&0\end{smallmatrix}\right)$} (b);
 \end{tikzpicture}
 \end{array}
 =\begin{array}{c}
\begin{tikzpicture}[xscale=1.5]
\node (a) at (-1,0) {$k^{2}$};
\node (b) at (0,0) {$k$};
\draw[->] (a) to node[above] {$\scriptstyle  \left(\begin{smallmatrix}{\rm id}\\ 0\end{smallmatrix}\right)$} (b);
 \end{tikzpicture}
 \end{array}
\]
(2)  In Example~\ref{ex5}
\[
M\cong \left(
\begin{array}{c}
\begin{tikzpicture}[bend angle=20, looseness=1]
\node (a) at (-1.5,0)  {$k^{n}$};
\node (b) at (0,0) {$k^{n}$};
\draw[->,bend left] (b) to node[below] {$\scriptstyle\rm id$} (a);
\draw[->,bend left] (a) to node[above] {$\scriptstyle\rm id$} (b);
\end{tikzpicture}
\end{array}
\right)=\bigoplus_{i=1}^{n}\left(
\begin{array}{c}
\begin{tikzpicture}[bend angle=20, looseness=1]
\node (a) at (-1.5,0)  {$k$};
\node (b) at (0,0) {$k$};
\draw[->,bend left] (b) to node[below] {$\scriptstyle\rm id$} (a);
\draw[->,bend left] (a) to node[above] {$\scriptstyle\rm id$} (b);
\end{tikzpicture}
\end{array}
\right).
\]
\end{example}

\subsection{Exercises}

\begin{ex}\label{Ex6.21}
Write down the dimension (if it is finite) of the following quiver algebras, where there are no relations.
\[
\begin{array}{ccc}
\begin{array}{cl}
\rm{(1)} &
\begin{array}{c}
\begin{tikzpicture}[bend angle=15, looseness=1]
\node (a) at (0,0) [vertex] {};
\node (b) at (1,0) [vertex] {};
\node (c) at (0,1) [vertex] {};
\draw[->] (a) -- (b);
\draw[->] (c) -- (b);
\draw[->,bend left=30,looseness=1] (c) to (b);
\draw[->] (c) -- (a);
\end{tikzpicture} 
\end{array}
\end{array}&
\begin{array}{cl}
\rm{(2)} &
\begin{array}{c}
\begin{tikzpicture}[bend angle=15, looseness=1]
\node (a) at (0,0) [vertex] {};
\node (b) at (1,0) [vertex] {};
\node (c) at (0,1) [vertex] {};
\node (d) at (1,1) [vertex] {};
\draw[->] (a) -- (b);
\draw[->] (a) -- (c);
\draw[->] (b) -- (d);
\draw[->] (c) -- (d);
\draw[->] (a) -- (d);
\end{tikzpicture} 
\end{array}
\end{array}&
\begin{array}{cl}
\rm{(3)} &
\begin{array}{c}
\begin{tikzpicture}[bend angle=15, looseness=1]
\node (a) at (0,0) [vertex] {};
\node (b) at (1,0) [vertex] {};
\node (c) at (0,1) [vertex] {};
\node (d) at (1,1) [vertex] {};
\draw[->] (a) -- (b);
\draw[->] (a) -- (c);
\draw[->] (b) -- (d);
\draw[->] (c) -- (d);
\draw[<-] (a) -- (d);
\end{tikzpicture} 
\end{array}
\end{array}
\end{array}
\]
What is the general result? 
\end{ex}

\begin{ex} \label{Ex6.22}
\begin{enumerate}
\item Show that the algebra $U_n(k)$ of upper triangular matrices is algebra-isomorphic to the path algebra of the quiver
\[
\begin{array}{c}
\begin{tikzpicture}[xscale=1]
\node (a) at (-1,0) [vertex] {};
\node (b) at (0,0) [vertex] {};
\node (c) at (1,0) [vertex] {};
\node (c2) at (1.5,0) {$\hdots$};
\node (d) at (2,0) [vertex] {};
\node (e) at (3,0) [vertex] {};
\node at (-1,-0.25) {$\scriptstyle 1$};
\node at (0,-0.25)  {$\scriptstyle 2$};
\node at (1,-0.25)  {$\scriptstyle 3$};
\node at (2,-0.25)  {$\scriptstyle n-1$};
\node at (3,-0.25)  {$\scriptstyle n$};
\draw[->] (a) to (b);
\draw[->] (b) to (c);
\draw[->] (d) to (e);
 \end{tikzpicture}
\end{array}
\]
subject to no relations.  How do we view the algebra $D_n(k)$ of diagonal matrices in the above picture?
\item {} (This question shows that $U_2(k)$ is not a semisimple algebra).  Consider the case $n=2$ (i.e. $U_2(k)\cong\begin{array}{c}
\begin{tikzpicture}[xscale=1]
\node (a) at (-1,0) [vertex] {};
\node (b) at (0,0) [vertex] {};
\draw[->] (a) to (b);
 \end{tikzpicture}
\end{array}$) and let $M$ be the quiver representation (= $U_2(k)$-module) 
\[
\begin{array}{c}
\begin{tikzpicture}[xscale=1]
\node (a) at (-1,0) {$k$};
\node (b) at (0,0)  {$k$};
\draw[->] (a) to node[above] {$\scriptstyle\rm id$} (b);
 \end{tikzpicture}
\end{array}
\]
Show that $M$ has a subrepresentation (= $U_2(k)$ submodule) $ N:=\begin{array}{c}
\begin{tikzpicture}[xscale=1]
\node (a) at (-1,0) {0};
\node (b) at (0,0) {$k$};
\draw[->] (a) to node[above] {$\scriptstyle 0$} (b);
 \end{tikzpicture}
\end{array}$ and that there does not exist a submodule $N^\prime$ such that $M=N\oplus N^\prime$.
\end{enumerate}
\end{ex}

\begin{ex}\label{Ex6.23}
(From quivers to algebras). Consider the following quivers with relations
\[
\begin{array}{ccc}
\begin{array}{cl}
\rm{(1)} &
\begin{array}{c}
\begin{tikzpicture}[bend angle=15, looseness=1]
\node (d) at (2,0) [vertex] {};
 \draw[->]  (d) edge [in=55,out=120,loop,looseness=8] node[above] {$\scriptstyle \alpha$}  (d);
\end{tikzpicture} \\ \\
\alpha^3=e_1
\end{array}
\end{array}&
\begin{array}{cl}
\rm{(2)} &
\begin{array}{c}
\begin{tikzpicture}[bend angle=15, looseness=1]
\node (d) at (2,0) [vertex] {};
 \draw[->]  (d) edge [in=145,out=-145,loop,looseness=8] node[left] {$\scriptstyle \alpha$}  (d);
  \draw[<-]  (d) edge [in=-35,out=35,loop,looseness=8] node[right] {$\scriptstyle \beta$}  (d);
\end{tikzpicture} \\ \\
\alpha\beta=\beta\alpha
\end{array}
\end{array}&
\begin{array}{cl}
\rm{(3)} &
\begin{array}{c}
\begin{tikzpicture}[bend angle=15, looseness=1]
\node at (1,0) [vertex] {};
\node at (2,0) [vertex] {};
 \draw[->]  (d) edge [in=55,out=120,loop,looseness=8] node[above] {$\scriptstyle \alpha$}  (d);
\end{tikzpicture} \\ \\
\rm{(no\,\, relations)}
\end{array}
\end{array}
\end{array}
\]
Identify each with an algebra you are already familiar with.  If $k=\mathbb{C}$ is there a quiver with no relations which is isomorphic to the quiver with relations in (1)?
\end{ex}

\begin{ex}\label{Ex6.24}
(From algebras to quivers). Write the following algebras as quivers with relations (there is not a unique way of answering these --- try to solve them in as many ways as possible):
\begin{enumerate}
\item The free algebra in $n$ variables.
\item The polynomial ring in $n$ variables.
\item $R\times S$, given knowledge of $R$ and $S$ as quivers with relations.
\item The group algebra $\mathbb{C}G$, where $G$ is any finite group.
\item Any $k$-algebra given by a finite number of generators and a finite number of relations.
\end{enumerate}
\end{ex}

\begin{ex}\label{Ex6.25}
\begin{enumerate}
\item Show that $M_n(k)$ is algebra-isomorphic to the quiver
\[
\begin{array}{c}
\begin{tikzpicture}[xscale=1.25,bend angle=20,looseness=1]
\node (a) at (-1,0) [vertex] {};
\node (b) at (0,0) [vertex] {};
\node (c) at (1,0) [vertex] {};
\node (c2) at (1.5,0) {$\hdots$};
\node (d) at (2,0) [vertex] {};
\node (e) at (3,0) [vertex] {};
\node at (-1,-0.2) {$\scriptstyle 1$};
\node at (0,-0.2)  {$\scriptstyle 2$};
\node at (1,-0.2)  {$\scriptstyle 3$};
\node at (2,-0.2)  {$\scriptstyle n-1$};
\node at (3,-0.2)  {$\scriptstyle n$};
\draw[->,bend left] (a) to node[above] {$\scriptstyle f_{1}$} (b);
\draw[->,bend left] (b) to  node[above] {$\scriptstyle f_{2}$}(c);
\draw[->,bend left] (d) to  node[above] {$\scriptstyle f_{n-1}$}(e);
\draw[->,bend left] (b) to node[below] {$\scriptstyle g_{1}$} (a);
\draw[->,bend left] (c) to  node[below] {$\scriptstyle g_{2}$}(b);
\draw[->,bend left] (e) to  node[below] {$\scriptstyle g_{n-1}$}(d);
 \end{tikzpicture}
\end{array}
\]
subject to the relations $f_i g_i=e_{i}$ and $g_i f_i=e_{i+1}$ for all $i$ such that $1\leq i\leq n-1$.
\item (The quiver proof that $M_n(k)$ is semisimple).  By (1) representations of the above quiver with relations are the same thing as finite dimensional $M_n(k)$-modules.  Using the quiver with relations, show that there is precisely one simple $M_n(k)$-module, and it has dimension $n$.  Further show directly that every finite dimensional $M_n(k)$-module is the finite direct sum of this simple module.
\item (Direct proof of Morita equivalence). Using (1) and (2), show that there is an equivalence of categories between $\mod k$ and $\mod M_{n}(k)$.
\end{enumerate}
\end{ex}

\begin{ex}\label{Ex6.26}
Show, using quivers, that $k[x]/x^{n}$ has precisely one simple module (up to isomorphism), and it has dimension one.
\end{ex}

\begin{ex}\label{Ex6.27}
Let $k$ be a field of characteristic zero. Let $A$ be the first Weyl algebra, that is the path algebra of the quiver
\[
\begin{tikzpicture}[bend angle=15, looseness=1]
\node (d) at (2,0) [vertex] {};
 \draw[->]  (d) edge [in=145,out=-145,loop,looseness=8] node[left] {$\scriptstyle X$}  (d);
  \draw[<-]  (d) edge [in=-35,out=35,loop,looseness=8] node[right] {$\scriptstyle Y$}  (d);
\end{tikzpicture}
\]
subject to the relation $XY-YX=1$.  Show that $\{ 0\}$ is the only finite dimensional $A$-module.
\end{ex}

\newpage
\section{Solution to Exercises}

\begin{proof}[Solution to Exercise \ref{Ex1.13}]
(2) $S_0$ is generated by $x^r$, $y^r$ and $xy$, so it is isomorphic as a ring to $\mathbb{C}[a,b,c]/(ab-c^r)$.  As an $S_0$-module, for $i>0$ we have $S_i$ is generated by $x^i$ and $y^{r-i}$ (depending on conventions). This leads to the quiver 
\[
\begin{tikzpicture} [bend angle=45, looseness=1.2]
\node[name=s,regular polygon, regular polygon sides=6, minimum size=3.5cm] at (0,0) {}; 
\node (C1) at (s.corner 1)  {};
\node (C2) at (s.corner 2)  {$\scriptstyle S_2$};
\node (C3) at (s.corner 3)  {$\scriptstyle S_1$};
\node (C4) at (s.corner 4)  {$\scriptstyle S_0$};
\node (C5) at (s.corner 5)  {$\scriptstyle S_{r-1}$};
\node (C6) at (s.corner 6)  {$\scriptstyle S_{r-2}$};
\draw[->] (C4) -- node[gap] {$\scriptstyle x$} (C3); 
\draw[->] (C3) -- node[gap] {$\scriptstyle x$} (C2);
\draw[densely dotted] (C2) -- (C1);
\draw[densely dotted] (C1) -- (C6);
\draw[->] (C6) -- node[gap] {$\scriptstyle x$} (C5);
\draw[->]  (C5) -- node[gap] {$\scriptstyle x$} (C4);
\draw [densely dotted,bend right] (C1) to  (C2);
\draw [->,bend right] (C2) to node[gap] {$\scriptstyle y$} (C3);
\draw [->,bend right] (C3) to node[gap] {$\scriptstyle y$} (C4);
\draw [->,bend right] (C4) to node[gap] {$\scriptstyle y$} (C5);
\draw [->,bend right] (C5) to node[gap] {$\scriptstyle y$} (C6);
\draw [densely dotted,bend right] (C6) to (C1);
\end{tikzpicture} 
\]
(3) For $\frac{1}{3}(1,1)$ $S_0$ is generated as a ring by $x^3, x^2y, xy^2, y^3$, whereas  $S_1$ is generated as an $S_0$-module by $x$ and $y$, and $S_2$ is generated as an $S_0$-module by $x^2, xy, y^2$.  The quivers are
\[
\begin{array}{ccc}
\begin{array}{c}
\begin{tikzpicture}[bend angle=5,looseness=1.2] 
\node[name=s,regular polygon, regular polygon sides=3, minimum size=2cm] at (0,0) {}; 
\node (C1) at (s.corner 1)  {$\scriptstyle S_1$};
\node (C2) at (s.corner 2) {$\scriptstyle S_0$};
\node (C3) at (s.corner 3) {$\scriptstyle S_2$};
\draw[->] (C2)+(50:6pt) -- ($(C1)+(-110:6pt)$); 
\draw[->] (C2)+(70:6pt) -- ($(C1)+(-130:6pt)$); 
\draw[->] (C1)+(-50:6pt) -- ($(C3)+(110:6pt)$); 
\draw[->] (C1)+(-70:6pt) -- ($(C3)+(130:6pt)$); 
\draw[->] (C3)+(170:6pt) -- ($(C2)+(10:6pt)$); 
\draw[->] (C3)+(-170:6pt) -- ($(C2)+(-10:6pt)$); 
\end{tikzpicture}
\end{array} 
&&
\begin{array}{c}
\begin{tikzpicture} [bend angle=25, looseness=1]
\node (C1) at (0,0) {$\scriptstyle S_0$};
\node (C2) at (1.5,0)  {$\scriptstyle S_1$};
\draw[->, bend left] (C1)+(10:8pt) to ($(C2)+(170:8pt)$); 
\draw[->, bend left] (C1)+(25:8pt) to ($(C2)+(155:8pt)$); 
\draw[->, bend left] (C2)+(-170:8pt) to ($(C1)+(-10:8pt)$); 
\draw[->, bend left] (C2)+(-155:8pt) to ($(C1)+(-25:8pt)$); 
\draw[->, bend left] (C2)+(-140:8pt) to ($(C1)+(-40:8pt)$); 
%
\end{tikzpicture}
\end{array}
\end{array}
\]
For $\frac{1}{5}(1,2)$ the generators of the ring (again, up to conventions) are $x^5, x^3y, xy^2, y^5$.  As $S_0$-modules, $S_1$ is generated by $x$ and $y^3$, $S_2$ by $x^2$ and $y$, $S_3$ by $x^3, xy, y^4$, and $S_4$ by $x^4, x^2y, y^2$.  The quivers are  
\[
\begin{array}{ccc}
\begin{array}{c}
\begin{tikzpicture}[bend angle=5,looseness=1.2] 
\node[name=s,regular polygon, regular polygon sides=5, minimum size=2.5cm] at (0,0) {}; 
\node (C1) at (s.corner 1)  {$\scriptstyle S_2$};
\node (C2) at (s.corner 2) {$\scriptstyle S_1$};
\node (C3) at (s.corner 3) {$\scriptstyle S_0$};
\node (C4) at (s.corner 4) {$\scriptstyle S_4$};
\node (C5) at (s.corner 5) {$\scriptstyle S_3$};
\draw[->] (C2) -- (C1); 
\draw[->] (C1) -- (C5); 
\draw[->] (C5) -- (C4);
\draw[->] (C4) -- (C3);
\draw[->] (C3) -- (C2); 
\draw[->] (C3) -- (C1); 
\draw[->] (C2) -- (C5); 
\draw[->] (C1) -- (C4);
\draw[->] (C5) -- (C3);
\draw[->] (C4) -- (C2); 
\end{tikzpicture}
\end{array} 
&&
\begin{array}{c}
\begin{tikzpicture} [bend angle=45, looseness=1.2]
\node[name=s,regular polygon, regular polygon sides=3, minimum size=1.75cm] at (0,0) {}; 
\node (C1) at (s.corner 1) {$\scriptstyle S_1$};
\node (C2) at (s.corner 2) {$\scriptstyle S_0$};
\node (C3) at (s.corner 3) {$\scriptstyle S_2$};
\draw[->] (C2) --  (C1);
\draw[->] (C1) --  (C3);
\draw [->,bend right] (C1) to  (C2);
\draw [->,bend right] (C2) to (C3);
\draw [->,bend right] (C3) to  (C1);
\draw[->] (C3)++(160:8pt) --  ($(C2) + (20:8pt)$);
\draw[->] (C3)++(180:8pt) --  ($(C2) + (0:8pt)$);
\end{tikzpicture} 
\end{array}
\end{array}
\]
\end{proof}

\begin{proof}[Solution to Exercise \ref{Ex1.14}]
(1), (2) and (4) are CM.  (3) is not.
\end{proof}

\begin{proof}[Solution to Exercise \ref{Ex1.15}]
(1)(a),(b) are not CM, whereas (c) and (d) are.\\
(2)(a),(b),(c) are not CM, whereas (d) is CM, as are $(u,x)$ and $(u,y)$.  The module $(u^2,ux,x^2)$ is not.  Part (3) is similar.
\end{proof}

\begin{proof}[Solution to Exercise \ref{Ex1.16}]
All hypersurfaces are Gorenstein, so this implies that (1),(2) and (3) in Exercise \ref{Ex1.15} are Gorenstein, as are (1),(4) in \ref{Ex1.14}.  The ring in (2) in \ref{Ex1.14} is not Gorenstein.
\end{proof}

\begin{proof}[Solution to Exercise \ref{Ex2.16}]
Label the algebras $A, B, C, D$ from left to right.  Then
\begin{center}
\begin{tabular}{ccccc}
Question & $A$ & $B$& $C$&$D$\\ \hline
(1) & $\mathbb{C}[x,y]$&$\mathbb{C}[x,y]^{\frac{1}{2}(1,1)}$&$\mathbb{C}[x,y,z]^{\frac{1}{3}(1,2)}$ &$\mathbb{C}[x,y,z]^{\frac{1}{3}(1,1)}$\\
(2)a &  $M=R\oplus (x,y)$ & $M=S_0\oplus S_1$ & $M=S_0\oplus S_1$ & $M=S_0\oplus S_1$\\
(2)b & Not CM &Yes CM &Yes CM&Yes CM \\  
(3) & No &Yes &Yes&Yes \\  
(4) & $2, 1$ &$2, 2$ &$\infty, \infty$&$2, 3$ \\  
(5) &&&resolutions are periodic &\\
(6) &$2$ & $2$&$\infty$ & $3$\\
(7) & &NCCR &&\\
(8) & $\mathcal{O}_{\mathbb{P}^1}(-1)$ & $\mathcal{O}_{\mathbb{P}^1}(-2)$ & $X_1$&$\mathcal{O}_{\mathbb{P}^1}(-3)$ \\
\end{tabular}
\end{center}
where for (2)a we use the notation from Exercise \ref{Ex1.13}, and in (8)  $X_1$ is one of the partial resolutions of $\mathbb{C}[x,y]^{\frac{1}{3}(1,2)}$ containing only one curve.  The fact that $X_1$ has only hypersurface singularities is the phenomenon that explains the periodicity in (5).
\end{proof}

\begin{proof}[Solution to Exercise \ref{Ex2.17}]
(1) Set $M_1:=(u,x-1)$, $M_2:=(u,x(x-1))$ and $M_3:=(u,x^2(x-1))$.  The main calculation is 
\[
\begin{array}{cc}
\begin{array}{c}
{\begin{tikzpicture} [bend angle=45,looseness=1]
\node[name=s,regular polygon, regular polygon sides=4, minimum size=2.5cm] at (0,0) {}; 
\node (C1) at (s.corner 1) {$\scriptstyle M_2$};
\node (C2) at (s.corner 2) {$\scriptstyle M_1$};
\node (C4) at (s.corner 4) {$\scriptstyle M_3$};
\node (C3) at (s.corner 3) {$\scriptstyle Z$};
\draw[->] (C4) -- node[gap] {$\scriptstyle \frac{x-1}{u}$} (C3); 
\draw[->] (C3) -- node[gap] {$\scriptstyle x-1$} (C2); 
\draw[->] (C2) -- node[gap] {$\scriptstyle x$} (C1); 
\draw[->] (C1) -- node[gap] {$\scriptstyle x$} (C4);
\draw [->,bend right] (C1) to node[above]  {$\scriptstyle inc$} (C2);
\draw [->,bend right] (C2) to node[left]  {$\scriptstyle inc$} (C3);
\draw [->,bend right] (C3) to node[below]  {$\scriptstyle u$} (C4);
\draw [->,bend right] (C4) to node[right]  {$\scriptstyle inc$} (C1);
\end{tikzpicture}}
\end{array}
\end{array}
\]
from which, after some work, the presentation follows.\\
(2) We can check whether $\Lambda\cong\End_Z(Z\oplus M_1\oplus M_2\oplus M_3)$ is a NCCR by localizing to the maximal ideals, and there (up to Morita equivalence) we find the NCCR from \ref{Ex2.16}.\\
(3) There are many.  Set $M:=Z\oplus (u,x-1)\oplus (u,x(x-1))\oplus (u,x^2(x-1))$, the module from (1), then for example taking $N:=Z\oplus (u,x)\oplus (u,x-1)$ we see that $\End_Z(N)$ is Morita equivalent to $\End_Z(M)$, since $\add M=\add N$.
\end{proof}

\begin{proof}[Solution to Exercise \ref{Ex2.18}]
(2) As right modules, the projective resolutions of the simples $S_1, S_2, S_3$ are
\begin{align*}
0\to e_3A \to
(e_1A)^{2}\to e_{3}A\oplus e_{2}A\to
e_1A\to S_1\to0\\
0\to  e_2A\to e_1A\oplus e_3A\to e_2A\to S_2\to0\\
0\to  (e_3A)^{\oplus 2}\to
e_2A\oplus (e_1A)^{\oplus 2}\to e_3A\to S_3\to0
\end{align*}
See \cite[6.9]{WemReconA} for the general form of the projective resolutions.  Part (1) is similar.  For (3), the key point is that $Z(\Lambda)$ is not Gorenstein, and in general we can't apply Auslander--Buchsbaum unless $\Lambda$ is a Gorenstein  $Z(\Lambda)$-order, which it is not.
\end{proof}

\begin{proof}[Solution to Exercise \ref{Ex2.19}]
Label the algebras $A, B, C, D$ from left to right.  Then
\begin{center}
\begin{tabular}{ccccc}
Question & $A$ & $B$& $C$&$D$\\ \hline
(2) & $\dfrac{\mathbb{C}[u,v,x,y]}{(uv-xy)}$ & $\dfrac{\mathbb{C}[u,v,x,y]}{(uv-x^2)}$ & $\dfrac{\mathbb{C}[u,v,x,y]}{(uv-(x-y^2)(x+y^2))}$ & $\mathbb{C}[x,y,z]^{\frac{1}{3}(1,1,1)}$\\ \\
(3) &  e.g.\ consider &e.g.\ consider&e.g.\ consider&Consider \\ 
 & $R\oplus (u,x)$ & $R\oplus (u,x)$ & $R\oplus (u,x+y^2)$ & $S_0\oplus S_1\oplus S_2$\\ \\
(4) & $\mathcal{O}_{\mathbb{P}^1}(-1)^{\oplus 2}$ & $\mathcal{O}_{\mathbb{P}^1}(-2)\oplus \mathcal{O}_{\mathbb{P}^1}$ & $Y_1$&$\mathcal{O}_{\mathbb{P}^2}(-3)$ \\
\end{tabular}
\end{center}
where for (3)$D$ we use the notation from Exercise \ref{Ex1.13}, and in (4)  $Y_1$ is the blowup of the ideal $(u,x+y^2)$, which forms one half of the Pagoda flop.   The first three examples capture the phenomenon of Type $A$ contractions in 3-folds; in example $A$ the curve has width $1$, in Example $C$ the curve has width $2$ (changing the $2$ to $n$ in the relations gives the example with width $n$), and in example $B$ the curve has width $\infty$.
\end{proof}

\begin{proof}[Solution to Exercise \ref{Ex2.20}]
(3) With $R$ a complete local CM ring of dimension three, the general result is that $\Hom_R(M,N)\in\CM R$ if and only if $\depth_R\Ext^1_R(M,N)>0$.  See for example \cite[2.7]{IW3}.
\end{proof}

\begin{proof}[Solution to Exercise \ref{Ex2.21}]
(3) The general theorem due to Watanabe is that if $G$ has no complex reflections, then the invariant ring is always CM, and it is Gorenstein if and only if $G\leq \SL(n,\mathbb{C})$.
\end{proof}

\begin{proof}[Solution to Exercise \ref{Ex3.12}]
Label the algebras $A, B, C, D$ from left to right.  Then
\begin{center}
\begin{tabular}{ccccc}
 & $A$ & $B$& $C$&$D$\\ \hline
$\theta=(-1,1)$ & $\mathcal{O}_{\mathbb{P}^1}(-1)$ & $\mathcal{O}_{\mathbb{P}^1}(-2)$ & $Y_1$&$\mathcal{O}_{\mathbb{P}^1}(-3)$ \\
$\theta=(1,-1)$ & $\mathbb{C}^2$ & $\mathcal{O}_{\mathbb{P}^1}(-2)$ & $Y_2$&$Z_1$ 
\end{tabular}
\end{center}
where $Y_1$ is one of the partial resolutions of the $\frac{1}{3}(1,2)$ singularity containing only one curve, $Y_2$ is the other, and $Z_1$ is a scheme with two components, one of which is $\mathcal{O}_{\mathbb{P}^1}(-3)$.  
\end{proof}

\begin{proof}[Solution to Exercise \ref{Ex3.13}]
Label the algebras $A, B, C$ from left to right.  Then
\begin{center}
\begin{tabular}{cccc}
 & $A$ & $B$& $C$\\ \hline
$\theta=(-1,1)$ & $\mathcal{O}_{\mathbb{P}^1}(-1)^{\oplus 2}$ & $\mathcal{O}_{\mathbb{P}^1}(-2)\oplus\mathcal{O}_{\mathbb{P}^1}$ & $Y_1$ \\
$\theta=(1,-1)$ & $\mathcal{O}_{\mathbb{P}^1}(-1)^{\oplus 2}$ & $\mathcal{O}_{\mathbb{P}^1}(-2)\oplus\mathcal{O}_{\mathbb{P}^1}$ & $Y_2$
\end{tabular}
\end{center}
where $Y_1$ is blowup of $\Spec\frac{\mathbb{C}[u,v,x,y]}{(uv-(x-y^2)(x+y^2))}$ at the ideal $(u,x+y^2)$ and $Y_2$ is blowup at the ideal $(u,x-y^2)$.   In examples $A$ and $C$, the two spaces are abstractly isomorphic, but not isomorphic in a compatible way over the base; they are examples of flops. 
\end{proof}

\begin{proof}[Solution to Exercise \ref{Ex3.14}]
Since $\theta_0=-\theta_1-\theta_2$, the stability condition is determined by the pair $(\theta_1,\theta_2)$.  The chamber structure is
\[
\begin{tikzpicture}
\draw[->] (-2,0) -- (2,0);
\node at (2.2,0) {$\scriptstyle \theta_1$};
\node at (0,2.2) {$\scriptstyle \theta_2$};
\draw[->] (0,-2) -- (0,2);
\draw[-] (-1.5,1.5) -- (1.5,-1.5);
\node at (1,1) {\begin{tikzpicture}[scale=0.4]
\node at (0,2) [vertex] {};
\node at (0,1) [vertex] {};
\node at (0,0) [vertex] {};
\node at (1,1) [vertex] {};
\node at (1,0) [vertex] {};
\coordinate (A) at (0,2);
\coordinate  (B) at (0,1);
\coordinate (C) at (0,0);
\coordinate (D) at (1,1);
\coordinate (E) at (1,0);
\draw (A) --(B)-- (C)--(E)--(D)--(A);
\draw (B)--(D);
\draw (B)--(E);
\end{tikzpicture}};
\node at (-1,-1) {\begin{tikzpicture}[scale=0.4]
\node at (0,2) [vertex] {};
\node at (0,1) [vertex] {};
\node at (0,0) [vertex] {};
\node at (1,1) [vertex] {};
\node at (1,0) [vertex] {};
\coordinate (A) at (0,2);
\coordinate  (B) at (0,1);
\coordinate (C) at (0,0);
\coordinate (D) at (1,1);
\coordinate (E) at (1,0);
\draw (A) --(B)-- (C)--(E)--(D)--(A);
\draw (B)--(D);
\draw (B)--(E);
\end{tikzpicture}};
\node at (1.3,-0.5) {\begin{tikzpicture}[scale=0.4]
\node at (0,2) [vertex] {};
\node at (0,1) [vertex] {};
\node at (0,0) [vertex] {};
\node at (1,1) [vertex] {};
\node at (1,0) [vertex] {};
\coordinate (A) at (0,2);
\coordinate  (B) at (0,1);
\coordinate (C) at (0,0);
\coordinate (D) at (1,1);
\coordinate (E) at (1,0);
\draw (A) --(B)-- (C)--(E)--(D)--(A);
\draw (A)--(E);
\draw (B)--(E);
\end{tikzpicture}};
\node at (0.5,-1) {\begin{tikzpicture}[scale=0.4]
\node at (0,2) [vertex] {};
\node at (0,1) [vertex] {};
\node at (0,0) [vertex] {};
\node at (1,1) [vertex] {};
\node at (1,0) [vertex] {};
\coordinate (A) at (0,2);
\coordinate  (B) at (0,1);
\coordinate (C) at (0,0);
\coordinate (D) at (1,1);
\coordinate (E) at (1,0);
\draw (A) --(B)-- (C)--(E)--(D)--(A);
\draw (A)--(E);
\draw (B)--(E);
\end{tikzpicture}};
\node at (-1,0.5) {\begin{tikzpicture}[scale=0.4]
\node at (0,2) [vertex] {};
\node at (0,1) [vertex] {};
\node at (0,0) [vertex] {};
\node at (1,1) [vertex] {};
\node at (1,0) [vertex] {};
\coordinate (A) at (0,2);
\coordinate  (B) at (0,1);
\coordinate (C) at (0,0);
\coordinate (D) at (1,1);
\coordinate (E) at (1,0);
\draw (A) --(B)-- (C)--(E)--(D)--(A);
\draw (B)--(D);
\draw (C)--(D);
\end{tikzpicture}};
\node at (-0.5,1.3) {\begin{tikzpicture}[scale=0.4]
\node at (0,2) [vertex] {};
\node at (0,1) [vertex] {};
\node at (0,0) [vertex] {};
\node at (1,1) [vertex] {};
\node at (1,0) [vertex] {};
\coordinate (A) at (0,2);
\coordinate  (B) at (0,1);
\coordinate (C) at (0,0);
\coordinate (D) at (1,1);
\coordinate (E) at (1,0);
\draw (A) --(B)-- (C)--(E)--(D)--(A);
\draw (B)--(D);
\draw (C)--(D);
\end{tikzpicture}};
\end{tikzpicture}
\]
where in each chamber is the toric fan corresponding to the quiver GIT for that chamber.  There are thus three crepant resolutions.
\end{proof}

\begin{proof}[Solution to Exercise \ref{Ex3.15}]
The invariants are generated by $R_1:=as$, $R_2:=at=bs$, $R_3:=bt$, $v$ and $w$, and abstractly the invariant ring is isomorphic to
\[
\frac{\mathbb{C}[R_1,R_2,R_3,v,w]}{\begin{array}{l} R_2^2-R_1R_3\\
(v-w)R_1\\
(v-w)R_2\\
(v-w)R_3
 \end{array}}
\]
\end{proof}

\begin{proof}[Solution to Exercise~\ref{Ex4.30}]
(1) No.  Take for example $R=\mathbb{C}[x,y]$ and $M=R\oplus (x,y)$.  (2) See \cite[4.1]{AG} 
\end{proof}

\begin{proof}[Solution to Exercise~\ref{Ex4.31}]
(1) Note that $\End_R(R\oplus M)\in\CM R$ implies that $M\in\CM R$.  Because of this, $\Lambda_\p\cong\End_{R_\p}(R_\p\oplus M_\p)$ implies that for any prime $\p$ not in the singular locus, $\Lambda_\p\cong M_n(R_\p)$ for some $n$.  On the other hand, finite global dimension is preserved under localization, so if $\p$ is in the singular locus then $\Lambda_\p$ cannot be a matrix algebra over $R_\p$ (since they have infinite global dimension).  Thus the Azumaya locus equals the nonsingular locus.  (2) Note in (1) that both loci equal the locus on which $M$ is not free. This can be described as the support of the module $\Ext^1_R(M,\Omega M)$, and hence is closed.  (3) The $K_0$ group of $\End_R(R\oplus M)$, which is finitely generated since the global dimension is finite (and $R$ is now local), surjects onto the class group of $R$.
\end{proof}

\begin{proof}[Solution to Exercise~\ref{Ex4.32}]
$\mathbb{C}\oplus \mathbb{C}[x]$.
\end{proof}

\begin{proof}[Solution to Exercise~\ref{Ex4.33}]
See \cite[5.4]{IW3}.
\end{proof}

\begin{proof}[Solution to Exercise~\ref{Ex4.34}]
The most direct way is to establish (using for example the snake lemma) that $\End_Y(\mathcal{O}_Y\oplus \mathcal{L})\cong\End_R(R\oplus (x,y))$ via the global sections functor, where $R=\mathbb{C}[x,y]$.  From there, in the presentation the arrow $a$ corresponds to multiplication by $x$ taking an element from $R$ to $(x,y)$, similarly the arrow $b$ to multiplication by $y$, and the arrow $t$ corresponds to the inclusion of the ideal $(x,y)$ into $R$.
\end{proof}

\begin{proof}[Solution to Exercise~\ref{Ex5.22}]
(2) is the singularity $uv=x^2y$ from Example~\ref{SSP text} and Exercise~\ref{Ex3.14}.  For the remainder, see \cite[\S16]{Y}. 
\end{proof}

\begin{proof}[Solution to Exercise~\ref{Ex5.24}]
By Artin--Verdier theory, taking the torsion free lift $\mathcal{M}$ of $M$ to the minimal resolution, there are short exact sequences $0\to \mathcal{O}^{\oplus r}\to \mathcal{M}\to \mathcal{O}_D\to 0$ and $0\to \mathcal{M}^{*}\to {\mathcal{O}}^{\oplus r}\to \mathcal{O}_D\to 0$ where $D$ is a divisor transversal to the curve corresponding to $\mathcal{M}$.  Taking the pullback of these sequences, the middle exact sequence splits giving an exact sequence $0\to \mathcal{M}^{*}\to {\mathcal{O}}^{\oplus 2r}\to \mathcal{M}\to 0$, proving the statement.
\end{proof}

\begin{proof}[Solution to Exercise~\ref{Ex5.26}]
The Ext groups are isomorphic, since the  Kn\"orrer functor gives an equivalence of categories $\underline{\CM}R\simeq \underline{\CM}R'$, known as Kn\"orrer periodicity.  See for example \cite[\S12]{Y}.
\end{proof}

\begin{proof}[Solution to Exercise~\ref{Ex6.21}]
(1) $8$, (2) $11$, (3) $\infty$.  The general result is that the path algebra is finite dimensional if and only if there is no oriented cycle.
\end{proof}

\begin{proof}[Solution to Exercise~\ref{Ex6.23}]
(1) $k[x]/(x^3-1)$.  When $k=\mathbb{C}$ this is isomorphic to $ k\oplus k\oplus k$ which can be viewed as just a quiver with three dots. (2) $k[x,y]$, (3) $k\oplus k[x]$. 
\end{proof}

\begin{proof}[Solution to Exercise~\ref{Ex6.24}]
(1) One vertex, $n$ loops, no relations.  (2) One vertex, $n$ loops, the commutativity relations.  (3) Draw the quivers side-by side, and take the union of the relations.  (4) Since $\mathbb{C}G$ is semisimple, it is a direct product of matrix rings.  Combine answers for (3) above and \ref{Ex6.25}(1) below.  Alternatively, work up to Morita equivalence, where $\mathbb{C}G$ is then just a finite number of dots, with no relations.  (5)  One vertex, number of loops=number of generators, then the finite number of relations.
\end{proof}

\begin{proof}[Solution to Exercise~\ref{Ex6.25}]
(1) is an easy extension of \ref{ex2}(2).  (2) then follows as in \ref{quiver example decompose}(2) and \ref{ex5}.  (3) Consider the functor $\mod k\to \mod M_n(k)$ 
\[
V\mapsto
\begin{array}{c}
\begin{tikzpicture}[bend angle=20]
\node (a) at (-1,0)  {$\scriptstyle V$};
\node (b) at (0,0) {$\scriptstyle V$};
\node (c) at (1,0) {$\scriptstyle V$};
\node (c2) at (1.5,0) {$\hdots$};
\node (d) at (2,0) {$\scriptstyle V$};
\node (e) at (3,0) {$\scriptstyle V$};
\draw[->,bend left] (a) to node[above] {$\scriptstyle 1$} (b);
\draw[->,bend left] (b) to  node[above] {$\scriptstyle 1$}(c);
\draw[->,bend left] (d) to  node[above] {$\scriptstyle 1$}(e);
\draw[->,bend left] (b) to node[below] {$\scriptstyle 1$} (a);
\draw[->,bend left] (c) to  node[below] {$\scriptstyle 1$}(b);
\draw[->,bend left] (e) to  node[below] {$\scriptstyle 1$}(d);
 \end{tikzpicture}
\end{array}
\]
This is clearly fully faithful, and is essentially surjective by (2).
\end{proof}

\begin{proof}[Solution to Exercise~\ref{Ex6.26}]
Take an arbitrary simple module, which is necessarily finite dimensional.  View as a quiver representation, with vector space $V$ and loop corresponding to a linear map $f\colon V\to V$ such that $f^n=0$.  Consider the kernel, then this gives a submodule, so necessarily the kernel must be everything (since $f$ cannot be injective).  Thus $f$ must be the zero map, so $V$ must be one-dimensional else the representation decomposes. 
\end{proof}

\begin{proof}[Solution to Exercise~\ref{Ex6.27}]
View any such simple as a finite dimensional representation of the quiver.  Thus $X$ and $Y$ are linear maps from a finite dimensional vector space to itself, and they must satisfy the relation $XY-YX=1$.  Taking the trace of this equation gives $0=n$, which is a contradiction.
\end{proof}

\newpage

\end{document}